\title{KPP traveling waves in the half-space}
\author{Julien Berestycki}
\address{JB: Department of Statistics and Magdalen College, University of Oxford, UK}
\email{\tt julien.berestycki@stats.ox.ac.uk}
\author{Cole Graham}
\address{CG: Division of Applied Mathematics, Brown University, 182 George St, Providence, RI 02906, USA}
\email{\tt cole\_graham@brown.edu}
\author{Yujin H. Kim}
\address{YK: Courant Institute, New York University, 251 Mercer Street, New York, NY 10012, USA}
\email{\tt yujin.kim@courant.nyu.edu}
\author{Bastien Mallein}
\address{BM: LAGA UMR 7539, Université Sorbonne Paris Nord, 99 avenue Jean-Baptiste Clément, F-93430, Villetaneuse, France}
\email{\tt mallein@math.univ-paris13.fr}
\begin{document}
\begin{abstract}
  We study traveling waves of the KPP equation in the half-space with Dirichlet boundary conditions.
  We show that minimal-speed waves are unique up to translation and rotation but faster waves are not.
  
  We represent our waves as Laplace transforms of martingales associated to branching Brownian motion in the half-plane with killing on the boundary.
  We thereby identify the waves' asymptotic behavior and uncover a novel feature of the minimal-speed wave $\Phi$.
  Far from the boundary, $\Phi$ converges to a \emph{logarithmic shift} of the 1D wave $w$ of the same speed:
  $\displaystyle \lim_{y \to \infty} \Phi\big(x + \tfrac{1}{\sqrt{2}}\log y, y\big) = w(x)$.
\end{abstract}
\maketitle

\section{Introduction}

We study the KPP equation in the Dirichlet half-space:
\begin{equation}
  \label{eq:KPP}
  \begin{cases}
    \partial_t u = \frac{1}{2} \Delta u + u - u^2 & \text{in } \H^d,\\
    u = 0 & \text{on } \partial \H^d.
  \end{cases}
\end{equation}
Here $\H^d \coloneqq \R^{d-1} \times \R_+$ and $d \geq 2$.
This reaction--diffusion equation exhibits a wealth of propagation phenomena including \emph{traveling waves}---solutions that move at constant speed parallel to the boundary.
In this paper, we exploit the close relationship between \eqref{eq:KPP} and branching Brownian motion to construct a host of traveling waves and characterize those of minimal speed.
We focus on the quadratic nonlinearity in \eqref{eq:KPP} for simplicity, but our results extend to more general equations; see Remark~\ref{rem:reaction} for details.

\subsection*{Motivation}

Reaction--diffusion equations model phenomena in fields ranging from chemistry to sociology.
They can describe the progression of a chemical reaction through a medium or a species invading new territory.
Fundamentally, reaction--diffusion equations combine growth and dispersal; together, these features generate spatial propagation.
At long times, such propagation commonly settles into a constant-speed pattern known as a traveling wave.
Rigorously, on the line, solutions of reaction--diffusion equations with localized initial data often converge to traveling waves in suitable moving frames~\cite{KPP, AW}.
In multiple dimensions, the same holds in the whole space~\cite{Ducrot} and in cylinders with compact cross-section~\cite{BN, MJM}.

The half-space is a complex intermediate---both anisotropic and transversally noncompact.
In~\cite{BeG}, H. Berestycki and the second author construct traveling waves of any speed $c \geq \sqrt{2}$ in the half-space and show that localized disturbances roughly propagate at speed $\sqrt{2}$.
Two major questions remain: are the traveling waves unique up to translation, and do parabolic solutions converge to such waves in a suitable frame?
Here, we address the first question; we consider the second in a forthcoming work.

In one dimension, traveling waves of a given speed are unique up to translation.
In multiple dimensions, however, traveling waves in the whole space with supercritical speeds $c > \sqrt{2}$ are \emph{not} unique.
This multiplicity is due to waves with level sets oblique to the direction of propagation~\cite{HMR}.
In contrast, the minimal speed $\sqrt{2}$ does not support oblique level sets.
Minimal-speed waves are planar and unique up to translation and rotation~\cite{HN}.
We show that the half-space exhibits similar behavior.
Up to isometry, the half-space supports a single minimal-speed wave but many supercritical waves.
To our knowledge, this is the first proof of uniqueness for traveling waves with nontrivial and noncompact transverse structure.

We are further motivated by the remarkable relationship between the PDE \eqref{eq:KPP} and the stochastic branching particle system known as branching Brownian motion (BBM).
Precisely, solutions of \eqref{eq:KPP} constitute the Laplace transform of BBM.
First observed by McKean \cite{McKean75}, this relationship has long been used to study both \eqref{eq:KPP} and BBM.
For example, one-dimensional traveling waves can be expressed as Laplace transforms of martingales associated to BBM~\cite{LS87,HHK06,Kyprianou04,Harris99}.

Here, we develop this theory in the half-space.
We express our traveling waves on $\H^d$ as Laplace transforms of certain martingales associated to BBM in $\H^d$.
Using this representation, we determine the large-scale structure of said waves and uncover unexpected asymptotic phenomena in the minimal-speed setting.
Our approach interweaves analytic and probabilistic arguments in novel fashion.
This is not a mere convenience---we are presently unable to prove the full complement of our results using either discipline alone.
Our reasoning and results thus shed light on the deep relationship between \eqref{eq:KPP} and BBM in the half-space.

\subsection*{Results}

We denote coordinates on $\H^d$ by $\tbf{x} = (x, \tbf{x}', y) \in \R \times \R^{d-2} \times \R_+$.
We study traveling-wave solutions of \eqref{eq:KPP} that move parallel to the boundary.
Due to the rotational symmetry of $\H^d$ orthogonal to $\partial\H^d$, we are free to assume that our waves move in the $+x$ direction.
Then a traveling wave solution of \eqref{eq:KPP} of speed $c \geq 0$ takes the form $\Psi(x - c t, \tbf{x}', y)$ for some $\Psi \in \m{C}^2(\H^d) \cap \m{C}(\closure)$.
It follows that $\Psi$ satisfies the elliptic reaction--diffusion equation
\begin{equation}
  \label{eq:main}
  \begin{cases}
    \frac{1}{2} \Delta \Psi + c \partial_x \Psi + \Psi - \Psi^2 = 0 & \text{in } \H^d,\\
    \Psi = 0 & \text{on } \partial \H^d.
  \end{cases}
\end{equation}
We restrict our attention to bounded solutions of \eqref{eq:main}.
By the maximum principle, all such solutions lie between $0$ and $1$.

Some solutions of \eqref{eq:main} depend solely on the distance $y$ to the boundary $\partial\H^d$.
In this case the drift term $c \partial_x \Psi$ vanishes, so such solutions are steady states of the parabolic problem \eqref{eq:KPP}.
H.~Berestycki and the second author have shown that the half-space supports precisely two nonnegative bounded steady states \mbox{\cite[Theorem~1.1(A)]{BeG}}.
These are $0$ and $\varphi(y)$, the unique positive bounded solution of the following ODE on $\R_+$:
\begin{equation}
  \label{eq:steady}
  \frac{1}{2} \varphi'' + \varphi - \varphi^2 = 0, \quad \varphi(0) = 0.
\end{equation}
To ensure our traveling waves vary in $x$, we forbid these two ``trivial'' solutions.
\begin{definition}
  \label{def:TW}
  A \emph{traveling wave} of speed $c \geq 0$ is a nonnegative bounded solution of \eqref{eq:main} that is neither $0$ nor $\varphi$.
\end{definition}
In~Theorem~1.4(A) of \cite{BeG}, H.~Berestycki and the second author also considered the existence of traveling waves: $\H^d$ supports a traveling wave of speed $c$ if and only if $c \geq c_* \coloneqq \sqrt{2}$.
In this paper, we consider the uniqueness and structure of such waves.
We first discuss uniqueness.
\begin{theorem}
  \label{thm:unique}
  For each $d \geq 2$, there is exactly one traveling wave on $\H^d$ of speed $c_* = \sqrt{2}$, up to translation.
  In contrast, for every $c > c_*$, there exist infinitely many traveling waves of speed $c$ that are distinct modulo translation.
\end{theorem}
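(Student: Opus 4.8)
The plan is to treat the two assertions by quite different means. For the supercritical multiplicity, the idea is to import the one-dimensional picture: for each speed $c > c_*$ the line supports the classical KPP wave $w_c$, and for each angle $\theta$ with $c\cos\theta \geq c_*$ one obtains an oblique planar wave on $\R^d$ by writing $w_{c\cos\theta}$ in a rotated coordinate. To pass to the half-space I would multiply such an oblique whole-space wave by the steady state profile $\varphi(y)$—or rather, use it as a supersolution/subsolution sandwich and solve \eqref{eq:main} between them. Concretely: fix $c > c_*$ and a small tilt so that the level sets of a candidate wave meet $\partial\H^d$ transversally; build a subsolution from a tilted $w$ localized near the boundary and a supersolution from $\min(1,\text{tilted }w)$, then invoke the sub/supersolution method (as in \cite{BeG}) to get a traveling wave whose level sets are genuinely oblique. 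Varying the tilt angle in a nonempty interval yields a continuum of waves, and two waves with different oblique level-set directions cannot be translates of one another—this distinctness can be read off from the wave's behavior as $y\to\infty$, where (by the asymptotics developed in this paper) it approaches a shift of a 1D wave along the level-set normal, and different normals give non-translate limits.

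For the uniqueness at $c = c_*$, existence is already given by \cite{BeG}, so the content is uniqueness up to translation. I would proceed by the sliding method of Berestycki--Nirenberg, adapted to handle the noncompact transverse direction $y$. Let $\Phi$ be the wave constructed in this paper (via the BBM martingale representation) and let $\Psi$ be an arbitrary minimal-speed wave. The first and most important step is a precise characterization of the behavior of any minimal-speed wave as $y \to \infty$ and as $x \to \pm\infty$: near the boundary both waves vanish like $\varphi(y) \sim cy$ for small $y$ (elliptic boundary regularity plus the Hopf lemma), and far from the boundary the logarithmic-shift asymptotic $\lim_{y\to\infty}\Psi(x+\tfrac{1}{\sqrt2}\log y, y) = w(x)$ should hold for \emph{every} minimal-speed wave, not just $\Phi$. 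Granting this, one knows that $\Psi$ and $\Phi$ have the same leading behavior at all four "ends" of $\H^d$ up to a bounded horizontal translation. The second step is the sliding: for $\tau$ large, the translate $\Phi(\cdot + \tau, \cdot)$ lies above $\Psi$ everywhere (using the end asymptotics to control the noncompact regions and the strong maximum principle plus Hopf on compact regions); then decrease $\tau$ to the critical value $\tau_* = \inf\{\tau : \Phi(\cdot+\tau,\cdot) \geq \Psi\}$ and show by the strong maximum principle that $\Phi(\cdot + \tau_*, \cdot) \equiv \Psi$, for otherwise one could slide slightly further, contradicting minimality. The elliptic comparison argument uses that $u \mapsto u - u^2$ has negative derivative where $u$ is close to $1$ and that the linearized operator at $0$ is at the critical speed, so the KPP nonlinearity is of "pulled" type and the relevant exponential rates match—this is exactly why the logarithmic correction appears and why it must be the same for $\Phi$ and $\Psi$.

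The main obstacle I anticipate is making the sliding argument rigorous in the transversally noncompact half-space: the comparison $\Phi(\cdot+\tau,\cdot)\geq\Psi$ must be verified uniformly as $y\to\infty$, where both functions degenerate to (shifted) 1D waves, and the logarithmic shift means the two profiles are only asymptotically comparable after a $y$-dependent horizontal displacement. To handle this I would localize: in the region $y \leq R$ use that $\partial\H^d$-behavior forces $\Psi \leq \Phi(\cdot+\tau,\cdot)$ directly once $\tau$ is large, and for $y \geq R$ compare both waves to the common limit profile $w$ using the uniform logarithmic asymptotic, absorbing the error into a slightly larger $\tau$. Equivalently, one can run the sliding on the rescaled functions $\Phi(x+\tfrac1{\sqrt2}\log y, y)$ and $\Psi(x + \tfrac1{\sqrt2}\log y, y)$, for which the transverse degeneration is to a fixed profile $w(x)$, and a Berestycki--Nirenberg argument in the strip-like geometry applies. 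The second delicate point is establishing the logarithmic-shift asymptotic for an \emph{arbitrary} minimal-speed wave: for $\Phi$ this comes from the probabilistic representation, but for a general $\Psi$ I expect to need an a priori upper and lower bound (e.g. $cy\,e^{-\sqrt2 x} \lesssim \Psi \lesssim (1+x_+)\,y\,e^{-\sqrt2 x}$ for $x$ large) obtained from the maximum principle against explicit super/subsolutions built from the critical exponential $y e^{-\sqrt2 x}$ and its logarithmic correction, followed by a compactness-and-classification argument identifying the limit along $y\to\infty$ as a minimal-speed 1D wave, hence $w$ up to translation. Once these asymptotics are in hand, the sliding method closes the proof.
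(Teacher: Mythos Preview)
Your supercritical multiplicity argument is close in spirit to the paper's, though the constructions differ: the paper builds the family $\Phi_{\lambda,\mu}$ as Laplace transforms of the additive martingale limits $W_\infty^{\lambda,\mu}$ (Proposition~\ref{prop:super}) rather than by sub/supersolution, and then distinguishes them by showing (Proposition~\ref{prop:wave}) that the asymptotic inclination angle $\arctan(\mu/\lambda)$ of the level sets varies along $\mathcal{P}_c$. Your oblique-wave construction via barriers should also work, and the distinctness criterion you suggest is exactly the one the paper uses.

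For uniqueness at $c_*$, however, your route is genuinely different from the paper's and has a real gap. The paper does \emph{not} use sliding. Instead, after proving only an \emph{upper} bound (tameness, Proposition~\ref{prop:a-priori}: $\Psi \lesssim (1+x_+)\,y\,\e^{-\sqrt{2}x}$), it runs a probabilistic \emph{disintegration} (Proposition~\ref{prop:uniquenessRephrased}): form the product martingale $T_t(x)=\prod_{u\in\mathcal{N}_t^+}\big[1-\Psi(\sqrt{2}t+x-X_t(u),Y_t(u))\big]$, use tameness to show $F^\alpha(x,y)\coloneqq\E_y[-\log T_\infty^\alpha(x)]$ is finite, observe that $\e^{\sqrt{2}x}F^\alpha$ is a nonnegative Dirichlet harmonic function on a quarter-plane and hence equal to $\kappa_\alpha(x+\alpha)y$, and conclude $\Psi(x,y)=1-\E_y\exp(-\kappa\e^{-\sqrt{2}x}Z_\infty)$, i.e.\ a translate of $\Phi$. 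The paper explicitly flags this as the point of the hybrid method: one needs only the upper bound, not sharp two-sided asymptotics.

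The gap in your plan is precisely the step you flag as ``delicate'': proving $\sigma_\Psi(y)=\tfrac{1}{\sqrt{2}}\log y + O(1)$ for an \emph{arbitrary} minimal-speed wave. The paper's analytic Section~\ref{sec:PDE} only yields $\sigma(y)\le[\sqrt{2}+o(1)]\log y$ (Corollary~\ref{cor:polynomial}); there is no matching lower bound prior to uniqueness. The sharp asymptotic for $\Phi$ comes \emph{after} uniqueness, from the probabilistic representation (Propositions~\ref{prop:tw} and~\ref{prop:cvZy}). Moreover, the a~priori bounds you propose, $cy\,\e^{-\sqrt{2}x}\lesssim\Psi\lesssim(1+x_+)\,y\,\e^{-\sqrt{2}x}$, are mismatched by the polynomial factor $x$; at the critical speed this gap is fatal for sliding, since the true tail is $\Psi\sim K_*[x-\tfrac{1}{\sqrt{2}}\log_+\|\mathbf{x}\|]\,y\,\e^{-\sqrt{2}x}$ (Theorem~\ref{thm:tail}) and any sliding comparison must match that prefactor exactly. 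Building sub/supersolutions on $\H$ with the correct $xy\,\e^{-\sqrt{2}x}$ behavior and the logarithmic correction is not a routine exercise---the paper's potential-theoretic work to get even the one-sided bound is substantial---so as written your plan does not close.
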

We note that a traveling wave on $\H^2$ extends to a wave on $\H^d$. 
Hence the unique minimal-speed wave on $\H^d$ depends on $x$ and $y$ alone.
In fact, the reduction to two dimensions (Proposition~\ref{prop:monotone} below) is an important step in the proof of Theorem~\ref{thm:unique}.
Thanks to this reduction, most of our analysis takes place in the half-\emph{plane} $\H \coloneqq \H^2 = \R \times \R_+$.

To prove Theorem~\ref{thm:unique}, we exploit the connection between the KPP equation \eqref{eq:main} and branching Brownian motion.
In our probabilistic analysis, we fix $d = 2$ and thus work on $\H = \R \times \R_+$.
Let $\big(X_t(u), Y_t(u) \midsemi u \in \mathcal{N}_t\big)$ denote a BBM in $\R^2$ without killing; $\mathcal{N}_t$ is the set of particles alive at time $t$ and $\big(X_t(u),Y_t(u)\big)$ is the position of particle $u$ at time $t$.
Each particle moves in $\R^2$ according to an independent two-dimensional Brownian motion and splits at unit rate into two child particles.
Given $u \in \mathcal{N}_t$ and $s \leq t$, we write $\big(X_s(u), Y_s(u)\big)$ for the position at time $s$ of the unique ancestor of $u$ alive at time $s$.
Let $\P_y$ denote the law of the BBM started from a single particle at position $(0, y)$.

Our half-plane BBM is the process $\big(X_t(u), Y_t(u) \midsemi u \in \mathcal{N}_t^+\big)$, where
\begin{equation*}
  \mathcal{N}_t^+ \coloneqq \big\{u \in \mathcal{N}_t : \inf_{s \leq t} Y_s(u) > 0\big\}.
\end{equation*}
In words, it is a branching Brownian motion whose particles are killed when they hit the boundary $\partial \H$.
In analogy with BBM in $\R$, we define an associated derivative martingale
\begin{align}
  \label{eqn:defdermartin}
  Z_t &\coloneqq \sum_{u \in \mathcal{N}_t^+} \big[\sqrt{2} t - X_t(u)\big] Y_t(u) \e^{\sqrt{2}X_t(u) - 2t}.
\end{align}
We also define a two-parameter family of additive martingales
\begin{equation}
  \label{eqn:supercritical-mg}
  W_t^{\lambda,\mu} \coloneqq \sum_{u \in \mathcal{N}^+_t} \e^{\lambda X_t(u)} \sinh[\mu Y_t(u)] \e^{-(\lambda^2/2 + \mu^2/2 + 1)t} \quad \text{for } \lambda, \mu > 0.
\end{equation}
The long-time limits of these martingales play a central role in our analysis.
\newpage
\begin{proposition}
  \label{prop:mg-convergence} 
  The following hold for any $y>0$\textup{:}
  \begin{enumerate}[label = \textup{(\roman*)}, itemsep = 0.7ex]
  \item
    $Z$ is a $\P_y$-martingale with a.s. limit $Z_\infty \gneqq 0$.
    
  \item
    $W^{\lambda,\mu}$ is a nonnegative $\P_y$-martingale with a.s. limit $W_\infty^{\lambda, \mu}$.
    If $\lambda^2 +\mu^2 < 2$, then $W_\infty^{\lambda, \mu} \gneqq 0$.
    Otherwise, $W_\infty^{\lambda, \mu} = 0$ $\P_y$-a.s.
  \end{enumerate}
\end{proposition}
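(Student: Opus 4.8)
The plan is to read off every martingale statement from the many-to-one lemma applied to two explicit space-time harmonic functions, to resolve the dichotomy for $W^{\lambda,\mu}$ by comparison with a free-space additive martingale, and to treat the derivative martingale $Z$ by a truncation argument in the style of Lalley--Sellke \cite{LS87}. The many-to-one lemma for the killed BBM reads
\[
  \E_{(x_0,y_0)}\Bigl[\,\sum_{u\in\mathcal N_t^+} g\bigl(X_t(u),Y_t(u)\bigr)\Bigr] = \e^{t}\,\E_{(x_0,y_0)}\bigl[g(B_t,\tilde B_t)\,\mathbf{1}_{\{\inf_{s\le t}\tilde B_s>0\}}\bigr],
\]
where $(B,\tilde B)$ is a planar Brownian motion; more generally, by the branching property, $t\mapsto\sum_{u\in\mathcal N_t^+}F(t,X_t(u),Y_t(u))$ is a martingale as soon as $F$ solves $\partial_t F+\tfrac12\Delta F+F=0$ in $\H$ with $F=0$ on $\partial\H$ and a suitable first-moment bound holds (the unit branching rate supplies the zeroth-order term $+F$). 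One checks by direct differentiation that $F_{\lambda,\mu}(t,x,y)\coloneqq\e^{\lambda x}\sinh(\mu y)\,\e^{-(\lambda^2/2+\mu^2/2+1)t}$ and $F_Z(t,x,y)\coloneqq(\sqrt2\,t-x)\,y\,\e^{\sqrt2 x-2t}$ both satisfy this, the crucial spatial fact being that $\e^{\lambda x}\sinh(\mu y)$ is an eigenfunction, $\tfrac12\Delta(\e^{\lambda x}\sinh\mu y)=\tfrac{\lambda^2+\mu^2}{2}\e^{\lambda x}\sinh\mu y$, and that both functions vanish on $\{y=0\}$. The required first moments follow from the many-to-one formula, Girsanov on the $x$-coordinate, and the identities $\E_y[\sinh(\mu\tilde B_t)\mathbf{1}_{\{\tau_0>t\}}]=\e^{\mu^2t/2}\sinh(\mu y)$ and $\E_y[\tilde B_t\mathbf{1}_{\{\tau_0>t\}}]=y$ for Brownian motion killed at $0$: one gets $\E_y W_t^{\lambda,\mu}=\sinh(\mu y)$ and $\E_y\sum_{u\in\mathcal N_t^+}\lvert\sqrt2\,t-X_t(u)\rvert Y_t(u)\e^{\sqrt2 X_t(u)-2t}=y\sqrt{2t/\pi}$, which pin down the martingale property in both cases. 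Since $\sinh(\mu Y_t(u))>0$ on $\mathcal N_t^+$, the process $W^{\lambda,\mu}$ is nonnegative and hence converges $\P_y$-a.s.

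For the dichotomy in (ii), note $\sinh(\mu y)\le\tfrac12\e^{\mu y}$ and $\mathcal N_t^+\subseteq\mathcal N_t$, so $0\le W_t^{\lambda,\mu}\le\tfrac12 M_t$, where $M_t\coloneqq\sum_{u\in\mathcal N_t}\e^{\lambda X_t(u)+\mu Y_t(u)-(\lambda^2/2+\mu^2/2+1)t}$ is the additive martingale of the \emph{free} planar BBM in the direction $(\lambda,\mu)$. Projecting each particle onto the unit vector $(\lambda,\mu)/\sqrt{\lambda^2+\mu^2}$ turns the planar BBM into a one-dimensional one and identifies $M$ with the one-dimensional additive martingale of parameter $\sqrt{\lambda^2+\mu^2}$, whose behavior is classical \cite{Harris99,Kyprianou04,HHK06}: uniformly integrable when $\lambda^2+\mu^2<2$, with a.s.\ zero limit when $\lambda^2+\mu^2\ge2$. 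In the supercritical and critical range this forces $W_t^{\lambda,\mu}\le\tfrac12 M_t\to0$, so $W_\infty^{\lambda,\mu}=0$ $\P_y$-a.s. In the subcritical range, the inclusion $\{W_t^{\lambda,\mu}>K\}\subseteq\{M_t>2K\}$ gives $\E_y[W_t^{\lambda,\mu}\mathbf{1}_{\{W_t^{\lambda,\mu}>K\}}]\le\tfrac12\E_y[M_t\mathbf{1}_{\{M_t>2K\}}]$, so uniform integrability passes from $M$ to $W^{\lambda,\mu}$; hence $\E_y W_\infty^{\lambda,\mu}=\E_y W_0^{\lambda,\mu}=\sinh(\mu y)>0$, and $W_\infty^{\lambda,\mu}\gneqq0$.

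For (i) we also need the ``critical'' additive martingale $V_t\coloneqq\sum_{u\in\mathcal N_t^+}Y_t(u)\e^{\sqrt2 X_t(u)-2t}$, which is nonnegative (harmonic function $y\e^{\sqrt2 x-2t}$) with $\E_y V_t=y$; a truncated first-moment argument (discarding particles whose ancestral $x$-path is atypically high) shows $V_t\to0$ $\P_y$-a.s., the half-plane counterpart of the vanishing of the critical additive martingale. Next, for $a>0$ set
\[
  Z_t^{(a)}\coloneqq\sum_{u\in\mathcal N_t^+}\bigl(\sqrt2\,t-X_t(u)+a\bigr)\,Y_t(u)\,\mathbf{1}_{\{X_s(u)\le\sqrt2\,s+a\ \forall s\le t\}}\,\e^{\sqrt2 X_t(u)-2t}.
\]
On the indicator event the first factor is $\ge0$, so $Z_t^{(a)}\ge0$; moreover $Z^{(a)}$ is a supermartingale by an optional-stopping argument at the barrier hitting times (standard; see \cite{LS87,Kyprianou04}), hence converges $\P_y$-a.s.\ to $Z_\infty^{(a)}\ge0$. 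Because the $x$-coordinates form a one-dimensional BBM, the event $E_a$ that no free particle ever exceeds the line $s\mapsto\sqrt2\,s+a$ satisfies $\P_y(E_a)\to1$ as $a\to\infty$; on $E_a$ one has $Z_t^{(a)}=Z_t+aV_t$, so using $V_t\to0$ we get $Z_t=Z_t^{(a)}-aV_t\to Z_\infty^{(a)}\ge0$. Letting $a\to\infty$ yields $Z_t\to Z_\infty\ge0$ $\P_y$-a.s., with $Z_\infty=Z_\infty^{(a)}$ on $E_a$. Finally, $Z_\infty\gneqq0$ follows from a truncated second moment: working with a doubly truncated process $\widetilde Z^{(a)}\le Z^{(a)}$ that additionally discards particles whose ancestral $x$-path leaves a window of width growing like $\sqrt{s}$, a many-to-two (two-spine) computation---parallel to the one-dimensional case but carrying the boundary factor $Y_t(u)$, controlled through Gaussian estimates for the killed Brownian motion---gives $\sup_t\E_y[(\widetilde Z_t^{(a)})^2]<\infty$; choosing the window so that $\inf_t\E_y[\widetilde Z_t^{(a)}]>0$ makes $\widetilde Z^{(a)}$ uniformly integrable with a strictly positive limiting mean, so $\P_y(\widetilde Z_\infty^{(a)}>0)>0$ by Paley--Zygmund. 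Since $\widetilde Z_\infty^{(a)}\le Z_\infty^{(a)}=Z_\infty$ on $E_a$ and $\P_y(E_a)\to1$, we conclude $\P_y(Z_\infty>0)>0$.

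The martingale identities and the dichotomy for $W^{\lambda,\mu}$ are essentially routine once the harmonic functions are identified. The genuine difficulty is the positivity $Z_\infty\gneqq0$: the process $Z$ is not uniformly integrable (indeed $\E_y Z_t\equiv0$, since the initial particle sits at $x=0$), so one cannot take expectations directly, and the whole point of the double truncation is to keep the second moment finite while ensuring that a definite fraction of the mass survives. The half-space setting compounds this: typical contributing particles sit at height of order $\sqrt{t}$, and these heights---entering linearly through $Y_t(u)$ and cut off by the Dirichlet killing---must be carried through every moment estimate.
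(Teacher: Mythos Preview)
Your treatment of part~(ii) is correct and takes a genuinely different route from the paper. The paper builds a spine decomposition for the tilted law $\frac{W^{\lambda,\mu}}{\sinh(\mu y)}\P_y$ (the spine moves as a Brownian motion with drift $\lambda$ in $x$ and a Brownian motion with drift $\mu$ conditioned to stay positive in $y$), and then reads off both the UI case and the degenerate case from the spine's contribution. Your comparison $W_t^{\lambda,\mu}\le\tfrac12 M_t$ with the free-space additive martingale, together with the projection onto the $(\lambda,\mu)$ direction, reduces everything to the one-dimensional theory and is more elementary: no new spine construction is needed in the half-plane. The domination argument for UI is clean and works exactly as you wrote it.

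For part~(i), your skeleton matches the paper's---truncate to $Z^{(a)}$, show the critical additive martingale $V_t$ vanishes, and recover $Z_t=Z_t^{(a)}-aV_t$ on the event $E_a$---but the two proofs diverge at the crucial step, the nondegeneracy $Z_\infty\gneqq0$. The paper shows directly that $Z^{(a)}$ is \emph{uniformly integrable} via a spine decomposition (under the tilt by $Z^{(a)}$ the spine moves as two independent Bessel-3 processes in $\sqrt{2}t+a-X_t(\xi_t)$ and $Y_t(\xi_t)$), which immediately yields $\E_y Z_\infty^{(a)}=ay>0$. Your proposed second-moment/Paley--Zygmund route with a doubly truncated process is plausible in principle but is only sketched: the many-to-two computation with the extra $Y_t(u)$ factors and the killed-Brownian-motion estimates is precisely where the difficulty lies, and you have not carried it out. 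The paper's UI argument sidesteps this computation entirely. Similarly, your one-line claim that $V_t\to0$ via ``a truncated first-moment argument'' is underspecified; the paper again uses the spine (under the $V$-tilt, the spine's $x$-displacement is a driftless Brownian motion, so $V_t\ge Y_t(\xi_t)\e^{\sqrt{2}X_t(\xi_t)-2t}$ has unbounded $\limsup$), though a short alternative is to note $\E[V_t\mid\mathcal H]=yA_t\to0$ and apply Fatou to the a.s.\ limit $V_\infty$.
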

\noindent
The notation $M \gneqq 0$ indicates a nonnegative random variable $M$ that is not almost surely zero.
For analogous results in one dimension, see, for example, \cite{LS87, Kyprianou04}.

We now construct KPP traveling waves from the Laplace transforms of these martingale limits.
In the following, $\E_y$ denotes expectation with respect to $\P_y$.
\begin{theorem}
  \label{thm:construction}
  The function
  \begin{equation}
    \label{eq:defTW}
    \Phi(x ,y) \coloneqq 1 - \E_y \exp\left(-\e^{-\sqrt{2}x} Z_\infty\right)
  \end{equation}
  is a shift of the unique minimal-speed traveling wave on $\H$.
  Moreover, for all $\lambda,\mu > 0$ such that $\lambda^2 + \mu^2 < 2$,
  \begin{equation}
    \label{eq:def-super}
    \Phi_{\lambda,\mu} (x,y) \coloneqq 1 - \E_y \exp\left(-\e^{-\lambda x} W_\infty^{\lambda,\mu}\right)
  \end{equation}
  is a traveling wave of speed $(\lambda^2 + \mu^2 + 2)/(2 \lambda) > \sqrt{2}$.
\end{theorem}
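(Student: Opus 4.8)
The plan is to use the classical McKean-type martingale identity connecting BBM in the half-plane to the elliptic equation \eqref{eq:main}, together with the convergence of $Z$ and $W^{\lambda,\mu}$ from Proposition~\ref{prop:mg-convergence}. First, for the additive martingales, fix $\lambda,\mu>0$ with $\lambda^2+\mu^2<2$ and set $c \coloneqq (\lambda^2+\mu^2+2)/(2\lambda)$. For each $t\ge 0$ define $v_t(x,y) \coloneqq \E_y\exp(-\e^{-\lambda(x+ct)}W_t^{\lambda,\mu})$; I would check by conditioning on the first branching event and the first exit time from the half-plane (the standard BBM "many-to-one"/first-split decomposition) that $v_t$ solves the parabolic KPP equation $\partial_t v = \tfrac12\Delta v - (v - v^2)$ on $\H$ with Dirichlet data $v=1$ on $\partial\H$ — here the Dirichlet condition on $\H$ for the BBM corresponds to $\sinh(\mu\cdot 0)=0$ in the martingale and hence $v=1$ on the boundary. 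The reaction sign is arranged so that $w_t \coloneqq 1-v_t$ solves $\partial_t w = \tfrac12\Delta w + w - w^2$ with $w=0$ on $\partial\H$; this is exactly \eqref{eq:KPP} with the translating argument $x+ct$. Then, using the $\P_y$-a.s. (and, after a uniform-integrability check, $L^1$) convergence $W_t^{\lambda,\mu}\to W_\infty^{\lambda,\mu}$, I would pass to the limit $t\to\infty$ in a moving frame: $\Phi_{\lambda,\mu}(x,y) = \lim_{t\to\infty}\big(1-v_t(x-ct,y)\big)$, and parabolic regularity (interior Schauder estimates plus boundary estimates) upgrades this pointwise limit to a bounded classical solution of the traveling-wave equation \eqref{eq:main} with speed $c$. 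Nonnegativity and boundedness between $0$ and $1$ are immediate from the Laplace-transform representation, and $\Phi_{\lambda,\mu}\not\equiv 0$ because $W_\infty^{\lambda,\mu}\gneqq 0$ forces $\Phi_{\lambda,\mu}>0$ somewhere, while $\Phi_{\lambda,\mu}\not\equiv\varphi$ because $\Phi_{\lambda,\mu}$ genuinely depends on $x$: as $x\to+\infty$ the exponent $\e^{-\lambda x}W_\infty^{\lambda,\mu}\to 0$ so $\Phi_{\lambda,\mu}(x,y)\to 0$, whereas $\varphi>0$. Hence $\Phi_{\lambda,\mu}$ is a traveling wave of speed $c>\sqrt2$ in the sense of Definition~\ref{def:TW}.

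The argument for $\Phi$ is structurally identical but uses the derivative martingale $Z$ in place of $W^{\lambda,\mu}$. Here the relevant parabolic object is $\E_y\exp(-\e^{-\sqrt2(x+\sqrt2 t)}Z_t)$; because $Z_t$ is built from the critical exponent $\lambda=\sqrt2$ together with the linear weight $[\sqrt2 t - X_t(u)]$ and the linear-in-$y$ factor $Y_t(u)$ (which is the $\mu\to0$ derivative of $\sinh(\mu Y)$, i.e. the critical harmonic function $y$ for the killed motion), the same first-split computation shows $1-\E_y\exp(-\e^{-\sqrt2 x}Z_t)$ solves \eqref{eq:KPP} in the frame moving at speed $\sqrt2$, now using the a.s. convergence $Z_t\to Z_\infty$ from Proposition~\ref{prop:mg-convergence}(i). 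Passing to the limit and invoking parabolic estimates as before yields a bounded classical solution $\Phi$ of \eqref{eq:main} with $c=\sqrt2$, nontrivial for the same reasons ($Z_\infty\gneqq0$ gives positivity somewhere; $\Phi(x,y)\to0$ as $x\to+\infty$ distinguishes it from $\varphi$). Finally, the uniqueness half of Theorem~\ref{thm:unique} — which I am free to assume, since it precedes this statement — identifies $\Phi$ as a translate of the unique minimal-speed wave; one still must verify that $\Phi$ is genuinely a minimal-\emph{speed} wave rather than a faster one, which follows from the fact that its spatial decay rate as $x\to+\infty$ is governed by $\e^{-\sqrt2 x}$ (by dominated convergence, $1-\Phi(x,y)\sim \e^{-\sqrt2 x}\E_y[Z_\infty]$ up to the derivative-martingale logarithmic correction), the critical decay rate that only speed-$\sqrt2$ waves exhibit.

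I expect the main obstacle to be the rigorous justification of the limit exchange and the regularity upgrade. Two points need care. First, to conclude that the pointwise limits define classical solutions of the elliptic equation, one needs local uniform control: the functions $v_t(\cdot-ct,\cdot)$ are uniformly bounded in $[0,1]$, so interior parabolic Schauder estimates give equicontinuity on compact subsets of $\H$ and a subsequential limit solving \eqref{eq:main}; identifying the full limit requires knowing the limit is unique, which is where the a.s.\ martingale convergence (and monotonicity of $t\mapsto v_t$ in the moving frame, if available) is used. Second, the boundary behavior: one must show the limiting $\Phi_{\lambda,\mu}$ (resp.\ $\Phi$) actually attains the value $0$ continuously on $\partial\H$, which follows from the vanishing of $\sinh(\mu Y)$ and of $Y$ at $y=0$ and a barrier argument near the boundary. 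The probabilistic side — that $t\mapsto W_t^{\lambda,\mu}$ and $t\mapsto Z_t$ are martingales and the first-split identity holds — is routine once the killed BBM is set up, but one must be attentive to integrability when conditioning on the exit time, so that the exponential moments defining $v_t$ are finite and differentiation under the expectation (for the $Z$ case, obtained as a $\mu$-derivative of the $\sinh$ family) is justified.
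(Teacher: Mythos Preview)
Your approach differs from the paper's and, for the supercritical waves, is essentially sound modulo a bookkeeping slip. The paper does \emph{not} pass through a finite-$t$ parabolic object and a limiting argument. Instead it proves a \emph{smoothing equation} for the martingale limit itself (Lemma~\ref{lem:smoothing} and its analogue \eqref{eq:newSmoothing}): for every $t>0$,
\[
Z_\infty \;=\; \sum_{u\in\mathcal{N}_t^+} \e^{\sqrt{2}X_t(u)-2t}\,Z_\infty(u),
\]
with the $Z_\infty(u)$ conditionally independent given $\mathcal{F}_t$ and distributed as $Z_\infty$ under $\P_{Y_t(u)}$. Taking Laplace transforms of both sides yields directly the functional identity
\[
1-\Phi(x,y)=\E_y\prod_{u\in\mathcal{N}_t^+}\big[1-\Phi\big(\sqrt{2}t+x-X_t(u),Y_t(u)\big)\big],
\]
whence McKean (Proposition~\ref{prop:McK}) shows $\Phi(x-\sqrt{2}t,y)$ solves \eqref{eq:KPP}. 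No regularity upgrade or limit exchange is needed: the elliptic equation falls out in one line. Your parabolic-limit route for $\Phi_{\lambda,\mu}$ is a legitimate alternative (though your $v_t$ has a sign error in the $ct$ shift: the solution of \eqref{eq:KPP} with initial data $1-\exp(-\e^{-\lambda x}\sinh(\mu y))$ is, in the frame moving at speed $c$, exactly $1-\E_y\exp(-\e^{-\lambda x}W_t^{\lambda,\mu})$, not $1-v_t(x-ct,y)$ with your $v_t$).

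The genuine gap is in the critical case. Your claim that ``the same first-split computation shows $1-\E_y\exp(-\e^{-\sqrt2 x}Z_t)$ solves \eqref{eq:KPP} in the frame moving at speed $\sqrt2$'' is false. At $t=0$ the single initial particle contributes $[\sqrt{2}\cdot 0-0]\cdot y=0$, so $Z_0=0$ and your putative solution has initial data identically $0$; if it solved \eqref{eq:KPP} in any frame it would remain $0$, yet it converges to $\Phi\not\equiv 0$. The obstruction is structural: $Z_t$ has indefinite sign, so $1-\exp(-\e^{-\sqrt2 x}Z_t)$ is not a McKean-type product over particles of factors in $[0,1]$, and there is no initial condition $\phi\in[0,1]$ whose McKean evolution coincides with your finite-$t$ expression. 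Differentiating the additive family at $\mu=0$ does not salvage this, because the derivative does not commute with the nonlinear product structure. The paper circumvents the issue entirely by working with $Z_\infty$ (which \emph{is} nonnegative by Proposition~\ref{prop:derivativeMartingale}) and proving the smoothing equation for the limit; that lemma is the key step you are missing. Finally, your closing remark about needing to rule out ``a faster one'' is unnecessary: once the functional equation holds for all $t$, McKean gives $\Phi(x-\sqrt{2}t,y)$ as a solution of \eqref{eq:KPP}, so $\Phi$ solves \eqref{eq:main} with $c=\sqrt{2}$ by construction.
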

Theorems~\ref{thm:unique} and \ref{thm:construction} are closely related.
To prove minimal-speed uniqueness in Theorem~\ref{thm:unique}, we relate an arbitrary minimal-speed wave to the particular wave $\Phi$ defined in \eqref{eq:defTW}.
Drawing on the comparison principle and potential theory, we show that all minimal-speed traveling waves satisfy a certain tail bound.
In probability, this is known as tameness---traveling waves cannot be too exotic.
Following~\cite{AlM22}, we then use a probabilistic ``disintegration'' argument to show that every tame wave is necessarily a shift of $\Phi$.

This strategy differs from purely analytic approaches to traveling-wave uniqueness.
It has been standard practice in the analytic literature to prove sharp asymptotic behavior as a precursor to uniqueness; see, e.g., \cite{BN}.
Here, we only need an \emph{upper} bound in the form of tameness; the probabilistic disintegration handles the rest.
It seems likely that this hybrid approach could bear fruit in other problems.

The multiplicity of supercritical traveling waves in Theorem~\ref{thm:unique} follows from the fact that we construct a \emph{two}-parameter family of waves in Theorem~\ref{thm:construction}.
As a result, there are generally many waves with the same speed.
Let $\m{Q} \subset \R_+^2$ denote the open quarter-disk of radius $\sqrt{2}$ centered at the origin.
Given $c > c_*$, we define
\begin{equation}
  \label{eq:speed-set}
  \m{P}_c \coloneqq \big\{(\lambda,\mu) \in \m{Q} : (\lambda -c)^2 + \mu^2 = c^2 - 2\big\}.
\end{equation}
Then $\m{P}_c$ is the set of parameters $(\lambda, \mu)$ such that $\Phi_{\lambda,\mu}$ is a traveling wave with speed $c$.
The arcs $\m{P}_c$ foliate the quarter-disk $\m{Q}$ by speed; see Figure~\ref{fig:speeds}.
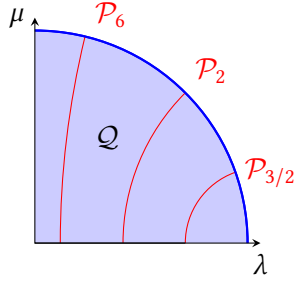
\begin{figure}[ht]
  \centering
  \begin{tikzpicture}[scale = 2]
    \fill[color=blue!20] (0,0) -- (0,1.414) arc[start angle =90,end angle = 0, radius = 1.414] -- cycle;
    \draw[thick, color=blue] (0,1.414) arc[start angle =90,end angle = 0, radius = 1.414];
    
    \draw[color = red]  (0.586,0) arc[start angle = 180, end angle = 135, radius = 1.414] node[above right] {$\mathcal{P}_2$};
    \draw[color = red]  (1,0) arc[start angle = 180, end angle = 110, radius = 0.5] node[right] {$\mathcal{P}_{3/2}$};
    \draw[color = red]  (0.169,0) arc[start angle = 180, end angle = 166.4, radius = 5.831] node[above right] {$\mathcal{P}_6$};
    \draw[thick, color=blue] (0,1.414) arc[start angle =90,end angle = 0, radius = 1.414];

    \node[] at (0.48,0.7) {$\m{Q}$};
    
    \draw[-stealth] (1,0) -- (0,0) -- (0,1.5) node[left] {$\mu$};
    \draw[-stealth] (0,0) -- (1.5,0) node[below] {$\lambda$};
  \end{tikzpicture}
  \caption{
    Parameter space for the supercritical waves $\Phi_{\lambda,\mu}$.
    Waves corresponding to $(\lambda, \mu) \in \mathcal{P}_c$ move with speed $c > c_*$.
  }
  \label{fig:speeds}
\end{figure}

\noindent
As $c \to c_*$, the set $\mathcal{P}_c$ converges to the single point $(\sqrt{2},0)$, which is formally associated with the derivative martingale $Z$.
This collapse hints at the uniqueness of the minimal-speed wave.
For a more complete discussion of the phenomenology along the boundary of $\m{Q}$, see Section~\ref{sec:supercritical}.

We now turn to the asymptotic behavior of our traveling waves.
The limits as $x \to \pm \infty$ are fairly simple: our waves are heteroclinic orbits connecting the steady state $\varphi$ on the left with $0$ on the right.
The waves exhibit more subtle behavior when we take $y \to \infty$.
In this regime, the boundary recedes and our waves become asymptotically one-dimensional.
Given $c \geq c_*$, let $w_c$ denote the unique (up to translation) one-dimensional traveling wave of speed~$c$, which satisfies the ODE
\begin{equation*}
  \frac{1}{2} w_c'' + c w_c' + w_c - w_c^2 = 0, \quad w_c(-\infty) = 1, \quad w_c(+\infty) = 0.
\end{equation*}
To fully determine $w_c$, we work with the translate given by the Laplace transform of a $c$-dependent martingale related to one-dimensional BBM; see \eqref{eqn:defw} and \eqref{eq:supercritical-1D-wave} for details.

At the minimal speed, we show that $\Phi$ converges to $w_{c_*}$ as $y \to \infty$ after a horizontal shift that is \emph{logarithmic in $y$}.
This novel phenomenon reflects the delicate structure of the derivative martingale $Z$, as we discuss below.
Supercritical waves exhibit a different complication: they are asymptotically one-dimensional but tilted with respect to the coordinate axes.
Given $(\lambda, \mu) \in \m{Q}$, let $R_{\lambda, \mu}$ denote clockwise rotation by the angle $\arctan(\mu/\lambda)$.
We show that the rotated wave $\Phi_{\lambda, \mu} \circ R_{\lambda, \mu}$ converges to a one-dimensional wave of speed
\begin{equation}
  \label{eq:speed}
  c(\lambda, \mu) \coloneqq \frac{\lambda^2 + \mu^2 + 2}{2\sqrt{\lambda^2 + \mu^2}}.
\end{equation}
To simplify the resulting statement, we extend $\Phi_{\lambda, \mu}$ by $0$ to the entire plane $\R^2$.
\begin{theorem}
  \label{thm:asymptotics}
  Every wave $\Phi^*$ in the collection $\{\Phi, \Phi_{\lambda, \mu}\}_{(\lambda, \mu) \in \m{Q}}$ satisfies $0 < \Phi^* < 1$, $\partial_x \Phi^* < 0$, and $\partial_y \Phi^* > 0$.
  The limits
  \begin{equation*}
    \Phi^*(-\infty, \anon) = \varphi \And \Phi^*(+\infty, \anon) = 0
  \end{equation*}
  hold uniformly and locally uniformly, respectively.
  Moreover, as $y \to \infty$,
  \begin{equation}
    \label{eq:y-limits}
    \Phi\big(x + \tfrac{1}{\sqrt{2}}\log y, y\big) \to w_{c_*}(x) \And \Phi_{\lambda, \mu} \circ R_{\lambda, \mu}(x, y) \to w_{c(\lambda, \mu)}(x)
  \end{equation}
  uniformly in $x$ for all $(\lambda, \mu) \in \m{Q}$, with $c(\lambda, \mu)$ given in \eqref{eq:speed}.
\end{theorem}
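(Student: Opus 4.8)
The plan is to leverage the probabilistic representations \eqref{eq:defTW}–\eqref{eq:def-super} from Theorem~\ref{thm:construction}. First I would establish the qualitative properties. The strict inequality $0 < \Phi^* < 1$ is immediate from the strong maximum principle and the Hopf lemma applied to \eqref{eq:main} (the wave is neither $0$ nor $\varphi$). For monotonicity in $x$: the derivative martingale limit $Z_\infty$ and the additive limit $W_\infty^{\lambda,\mu}$ are nonnegative, so $\Phi(x,y) = 1 - \E_y \exp(-\e^{-\sqrt 2 x} Z_\infty)$ is manifestly nonincreasing in $x$, and strict monotonicity follows from $Z_\infty \gneqq 0$ together with analyticity or a sliding argument. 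For monotonicity in $y$, I would use the fact established in the (earlier) reduction Proposition~\ref{prop:monotone} that bounded waves are monotone in $y$; alternatively, a probabilistic coupling of the killed BBM started from $(0,y)$ and $(0,y')$ with $y < y'$ shows $Z_\infty$ is stochastically larger under $\P_{y'}$. The limits $\Phi^*(-\infty,\anon) = \varphi$ and $\Phi^*(+\infty,\anon) = 0$: as $x \to +\infty$, $\e^{-\sqrt 2 x} Z_\infty \to 0$ a.s., so dominated convergence gives $\Phi \to 0$, locally uniformly in $y$; as $x \to -\infty$, $\e^{-\sqrt 2 x} Z_\infty \to +\infty$ on $\{Z_\infty > 0\}$, and one must argue $\P_y(Z_\infty > 0) = 1$ — this uses that $Z_\infty$ inherits a $0$–$1$ law along the spine or that $\{Z_\infty = 0\}$ is invariant — and then the monotone limit $\Phi(-\infty,\anon)$ is a bounded steady state trapped between $0$ and $1$, hence equals $\varphi$ by the Berestycki–Graham dichotomy \cite[Theorem~1.1(A)]{BeG}; uniformity in $y$ comes from monotonicity (Dini).

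The heart of the theorem is the logarithmic shift in \eqref{eq:y-limits}. The key is to understand $Z_\infty$ under $\P_y$ as $y \to \infty$. The plan is to show that under $\P_y$,
\begin{equation*}
  \frac{1}{y}\, Z_\infty \xrightarrow[y\to\infty]{} \wtil Z_\infty \quad \text{in law},
\end{equation*}
where $\wtil Z_\infty$ is (a constant multiple of) the limit of the \emph{one-dimensional} derivative martingale $\wtil Z_t = \sum_{u} (\sqrt 2 t - X_t(u)) \e^{\sqrt 2 X_t(u) - 2t}$ for BBM on $\R$ with no killing. Heuristically, when the BBM starts at height $y \gg 1$, it takes a long time for particles to feel the boundary at $0$; over the relevant timescale the $Y$-coordinate behaves like a Brownian motion that has not yet been killed, and $\E[\,\cdot\,]$ of the factor $Y_t(u)$ contributes the linear-in-$y$ normalization via the heat kernel on $\R_+$ (the factor $\sinh(\mu y)/\mu \to y$ as $\mu \to 0$ is the same phenomenon). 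Making this rigorous, I would compute the joint Laplace transform $\E_y \exp(-\theta\, y^{-1} Z_\infty)$ — equivalently evaluate $\Phi(x,y)$ at $x = \frac{1}{\sqrt 2}\log y + x_0$, which replaces $\e^{-\sqrt 2 x} Z_\infty$ by $y^{-1}\e^{-\sqrt 2 x_0} Z_\infty$ — and show it converges to $\E \exp(-\e^{-\sqrt 2 x_0}\, \wtil Z_\infty) = w_{c_*}(x_0)$, using the known Laplace-transform representation \eqref{eqn:defw} of the 1D minimal-speed wave. The uniformity in $x_0$ then follows from monotonicity in $x_0$ on both sides plus continuity of the limit, again by a Dini-type argument. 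An alternative, more PDE-flavored route: set $\Phi_y(x) \coloneqq \Phi(x + \frac{1}{\sqrt 2}\log y, y)$, derive from \eqref{eq:main} the equation satisfied by $\Phi_y$, obtain uniform (in $y$) Schauder estimates and a uniform tail bound from the representation, extract a locally uniform limit along a subsequence, identify it as a bounded entire solution of the 1D traveling-wave ODE at speed $c_*$ lying strictly between $0$ and $1$, hence a translate of $w_{c_*}$, and finally pin down the translate by matching the precise tail constant $\lim_{x\to\infty} \e^{\sqrt 2 x} \Phi_y(x) = \E[\wtil Z_\infty]$ coming from the martingale normalization.

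For the supercritical waves, the plan is analogous but cleaner because no anomalous shift appears. Fix $(\lambda,\mu) \in \m{Q}$, write $r = \sqrt{\lambda^2+\mu^2} < \sqrt 2$ and let $\theta = \arctan(\mu/\lambda)$, so that in the rotated frame $(\xi,\eta) = R_{\lambda,\mu}(x,y)$ we have $\lambda x - \text{(the relevant part of } \mu y) = r\,\xi$ after accounting for the rotation of the Brownian motion (which is rotation-invariant). Concretely, $\lambda X_t(u) + \mu' $-type combinations reorganize so that $\e^{\lambda X_t(u)}\sinh(\mu Y_t(u))$, after rotating coordinates and using $\sinh(\mu Y) \sim \tfrac12 \e^{\mu Y}$ on the bulk of the relevant event, becomes comparable to $\e^{r \Xi_t(u)}$ for the rotated BBM, whose additive martingale at parameter $r$ has a nondegenerate limit (since $r^2 < 2$); the boundary correction $\sinh$ versus $\tfrac12\e^{\mu\cdot}$ becomes negligible as $\eta \to \infty$ because the killed and free BBM couple with probability tending to $1$ far from the boundary. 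Thus $\Phi_{\lambda,\mu}\circ R_{\lambda,\mu}(\xi,\eta) \to 1 - \E \exp(-\e^{-r\xi}\, W^{r}_\infty)$, the 1D additive-martingale wave of speed $c(\lambda,\mu) = (r^2+2)/(2r)$, which matches \eqref{eq:speed} and \eqref{eq:supercritical-1D-wave}. Uniformity in $\xi$ follows as before from monotonicity.

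The main obstacle is the rigorous proof that $y^{-1} Z_\infty \Rightarrow \wtil Z_\infty$ under $\P_y$. The subtlety is that $Z$ is the \emph{derivative} martingale — it is not $L^1$-bounded uniformly and carries a signed term $(\sqrt 2 t - X_t(u))$ that can be negative — so one cannot simply pass to the limit in moments. I expect to handle this via a spine decomposition / change of measure (the Chang measure for $Z$), under which the spine particle's $Y$-coordinate is a Bessel-type process started from $y$; one then shows that as $y\to\infty$ the contribution of the boundary killing to the spinal process is asymptotically negligible on the timescale where $Z_t$ stabilizes, reducing the half-space spine to the whole-line spine, while the linear factor $y$ is precisely the ratio of the $\R_+$-Brownian normalization to the $\R$-Brownian normalization. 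Controlling the error uniformly — in particular truncating the martingale at a large but finite time and showing the tail contributions vanish uniformly in $y$ — is where the real work lies; this is presumably also where the authors' "interweaving of analytic and probabilistic arguments" is essential, e.g. using the PDE tameness bound to control the Laplace transform tails that the bare probabilistic estimates miss.
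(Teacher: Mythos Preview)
Your overall architecture is right: represent the waves via the martingale limits, read off monotonicity and the $x\to\pm\infty$ limits from the Laplace-transform structure, and reduce \eqref{eq:y-limits} to the distributional convergence $y^{-1}Z_\infty \to D_\infty$ (resp.\ $\e^{-\mu y}W_\infty^{\lambda,\mu} \to A_\infty^{\lambda,\mu}$). Two points deserve correction.

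First, the claim $\P_y(Z_\infty > 0) = 1$ is false: the killed BBM dies out with positive probability, and in fact $\{Z_\infty > 0\}$ coincides a.s.\ with the survival event (Corollary~\ref{cor:survival}), so $\P_y(Z_\infty > 0) = \varphi(y) < 1$. This is precisely what gives $\Phi(-\infty, y) = \varphi(y)$ directly from dominated convergence, with no need for a $0$--$1$ law. Your fallback (identify the monotone limit as a bounded steady state and invoke the dichotomy from~\cite{BeG}) would still reach the right conclusion, but the premise you state for it is wrong.

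Second, and more substantively, the paper's route to $y^{-1}Z_\infty(y) \to D_\infty$ is \emph{not} via a spine change of measure as you propose, and in particular does not use the PDE tameness bound. Instead the paper conditions on the horizontal filtration $\mathcal{H} = \sigma\big(X_s(u)\big)$ and exploits the independence of horizontal and vertical motion: one shows $\E[Z_\infty(y) \mid \mathcal{H}] = y D_\infty$ exactly (Lemma~\ref{lem:firstMoment}, via the shaved martingales and the martingale property of killed Brownian motion), and then $\E[Z_\infty(y)^2 \mid \mathcal{H}] \le y^2 D_\infty^2 + y\,\Upsilon_\infty$ for an a.s.\ finite $\mathcal{H}$-measurable $\Upsilon_\infty$ (Lemma~\ref{lem:secondMoment}). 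Conditional Chebyshev then gives convergence in probability, hence of Laplace transforms. This conditional first-and-second-moment method sidesteps exactly the obstacle you flag (the derivative martingale's lack of uniform integrability), because conditionally on $\mathcal{H}$ the only remaining randomness is the vertical motion, whose killed-Brownian moments are explicit. Your spine-based program might be made to work, but it is less direct, and your expectation that analytic tameness enters here is mistaken---tameness is used for uniqueness, not for Theorem~\ref{thm:asymptotics}. For the supercritical case the paper's argument (Proposition~\ref{prop:wave}) is likewise simpler than the coupling you sketch: one observes that $y \mapsto \e^{-\mu y}W_t^{\lambda,\mu}(y)$ is monotone and dominated by the whole-plane additive martingale $A_t^{\lambda,\mu}$, then matches expectations via uniform integrability and monotone convergence.
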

The speed of $\Phi_{\lambda, \mu}$ in Theorem~\ref{thm:construction} is $(\lambda^2 + \mu^2 + 2)/(2 \lambda)$, which differs from $c(\lambda, \mu)$ in \eqref{eq:speed} and Theorem~\ref{thm:asymptotics}.
The former is the speed of $\Phi_{\lambda, \mu}$ in the $x$-direction.
The latter is the apparent motion of $\Phi_{\lambda, \mu}$ perpendicular to its level sets in the $y \to \infty$ limit.
The discrepancy reflects the fact that the asymptotic level sets of $\Phi_{\lambda, \mu}$ are tilted at angle $\theta(\lambda, \mu) \coloneqq \arctan(\mu/\lambda)$ relative to vertical.
Thus the speeds differ by the geometric factor $\cos \theta(\lambda, \mu)$.

Traveling waves with asymptotically oblique level sets have been previously studied in the whole space for a variety of reactions~\cite{BH,HMR}.
In the KPP setting, Hamel and Nadirashvili~\cite{HN} have constructed an infinite-dimensional manifold of multidimensional traveling waves.
We believe that their construction is closely related to the parameter space $\m{Q}$ depicted in Figure~\ref{fig:speeds}.
In the half-space, one can construct an analogous manifold of waves of a given speed $c > c_*$ by taking arbitrary convex combinations of the additive martingales $\{W^{\lambda, \mu}\}_{(\lambda, \mu) \in \m{P}_c}$.
We speculate that the link between traveling waves and martingales can be used to rigorously classify \emph{all} traveling waves on $\H^d$, and indeed on $\R^d$.

As alluded to above, the most surprising feature of Theorem~\ref{thm:asymptotics} is the logarithmic shift $\tfrac{1}{\sqrt{2}} \log y$ in $\Phi$ as $y \to \infty$.
From a probabilistic standpoint, this novel phenomenon can be explained as follows.
Recall that \eqref{eq:defTW} expresses $\Phi$ in terms of the derivative martingale $Z$ defined in \eqref{eqn:defdermartin}.
As we move away from the boundary, the role of killing lessens, and we might expect $Z$ to resemble a one-dimensional derivative martingale.
In this spirit, define
\begin{equation*}
  D_t \coloneqq \sum_{u \in \m{N}_t} \big[\sqrt{2}t - X_t(u)\big] \e^{\sqrt{2} X_t(u) - 2t}.
\end{equation*}
Note that this sum ranges over the entire population $\m{N}_t$ of the BBM in $\R^2$.
Thus $D$ neglects killing, and is in fact the classical one-dimensional derivative martingale.
It has an a.s. positive limit $D_\infty$ whose Laplace transform is the minimal-speed one-dimensional traveling wave:
\begin{equation}
  \label{eqn:defw}
  w_{c_*}(x) \coloneqq 1 - \E \exp\left(-\e^{-\sqrt{2}x} D_\infty\right).
\end{equation}
In Proposition~\ref{prop:cvZy}, we use a first and second moment method conditioned on horizontal motion to show that
\begin{equation}
  \label{eq:1D-limit}
  Z_\infty(y)/y \to D_\infty \quad \text{in probability as } y \to \infty
\end{equation}
for a family of random variables $Z_\infty(y)$ with the law of $Z_\infty$ under $\P_y$.
Interpreting \eqref{eq:1D-limit} through the definitions \eqref{eq:defTW} and \eqref{eqn:defw}, we find
\begin{equation*}
  \Phi\left(x + \tfrac{1}{\sqrt{2}}\log y, y\right) = 1 - \E \exp\left(- \e^{-\sqrt{2}x} Z_\infty(y)/y\right) \to w_{c_*}(x) \quad \text{as } y \to \infty.
\end{equation*}
Thus, the limiting relation \eqref{eq:1D-limit} between the martingales $Z$ and $D$ implies the asymptotic behavior of $\Phi$ in Theorem~\ref{thm:asymptotics}.
We take a similar (simpler) approach to the asymptotics of the supercritical waves $\Phi_{\lambda, \mu}$; see Proposition~\ref{prop:wave} for details.

The asymptotic tail behavior of the minimal-speed wave has historically played an important role in the study of KPP propagation~\cite{LS87,HNRR}.
We expect the same will be true on the half-space.
We therefore develop a more precise understanding of the asymptotics of $\Phi$ as $x \to \infty$.
These are related to the well-known tail behavior of the one-dimensional wave: there exist $K_* > 0, a \in \R$, and $\delta > 0$ such that
\begin{equation}
  \label{eq:wave-constant}
  w_{c_*}(x) = K_* \big[x + a + \m{O}\big(\e^{-\delta x}\big)\big] \e^{-\sqrt{2}x} \quad \text{as } x \to \infty.
\end{equation}
In the following, we let $\log_+s \coloneqq \max\{\log s, 0\}$ and recall that $\tbf{x} \coloneqq (x, y)$ on $\H$.
\begin{theorem}
  \label{thm:tail}
  There exists $E \in L^\infty(\H)$ such that if $x > \tfrac{1}{\sqrt{2}} \log_+ y$,
  \begin{equation}
    \label{eq:tail-asymp-intro}
    \Phi(x, y) = K_*\big[x - \tfrac{1}{\sqrt{2}}\log_+ \norm{\tbf{x}} + E(x, y)\big] y \e^{-\sqrt{2} x}.
  \end{equation}
\end{theorem}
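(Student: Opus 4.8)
The plan is to extract the tail asymptotics from the probabilistic representation \eqref{eq:defTW} together with the convergence \eqref{eq:1D-limit}, the one-dimensional tail \eqref{eq:wave-constant}, and a quantitative first/second moment analysis of $Z_\infty(y)$. The starting point is the elementary estimate $0 \le 1 - \E_y\exp(-\e^{-\sqrt 2 x}Z_\infty) - \e^{-\sqrt 2 x}\E_y Z_\infty \lesssim \e^{-2\sqrt 2 x}\E_y[Z_\infty^2]$, valid whenever the second moment is controlled. Thus, away from the boundary region where $\e^{-\sqrt2 x}Z_\infty$ is not small — precisely the regime $x > \tfrac1{\sqrt2}\log_+ y$ — one has $\Phi(x,y) = \e^{-\sqrt2 x}\E_y[Z_\infty]\,(1 + \text{error})$, and the whole problem reduces to (a) computing $\E_y[Z_\infty]$ and (b) bounding the nonlinear correction. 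Since $Z_t$ is a martingale, $\E_y[Z_t] = \E_y[Z_0] = \sqrt2 \cdot 0 \cdot y \cdot \e^{0} $... wait — more carefully, one computes $\E_y[Z_t]$ via the many-to-one lemma: it equals $y$ times a harmonic-type factor, and in the $t\to\infty$ limit, by uniform integrability of $Z$ (which must be established, perhaps only after a spine change of measure or truncation), $\E_y[Z_\infty] = \kappa\, y$ for an explicit constant $\kappa$. This already produces the leading order $\kappa\, y\, \e^{-\sqrt2 x}$ and pins down that the bracket in \eqref{eq:tail-asymp-intro} is $\m{O}(1)$-close to a constant; the logarithmic correction $-\tfrac1{\sqrt2}\log_+\norm{\tbf x}$ is more delicate.

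The logarithmic term comes from the interplay between the two asymptotic regimes. When $x$ is large but $y$ is comparable to or larger than $\e^{\sqrt2 x / \text{something}}$... more precisely, the transition is governed by the identity $Z_\infty(y)/y \to D_\infty$ from \eqref{eq:1D-limit}: for $y \gg 1$, $\Phi(x + \tfrac1{\sqrt2}\log y, y) \approx w_{c_*}(x)$, and feeding the one-dimensional tail \eqref{eq:wave-constant} into this gives, for $x$ also large, $\Phi(x + \tfrac1{\sqrt2}\log y, y) \approx K_*(x + a)\e^{-\sqrt2 x}$; rewriting in terms of the original variable $\xi = x + \tfrac1{\sqrt2}\log y$ yields $\Phi(\xi, y) \approx K_*(\xi - \tfrac1{\sqrt2}\log y + a)\, y\, \e^{-\sqrt2 \xi}$. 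Comparing with \eqref{eq:tail-asymp-intro}, when $y \gtrsim \e^{\sqrt2 \xi}$ — equivalently $\norm{\tbf x} \asymp y$ — the term $\tfrac1{\sqrt2}\log_+\norm{\tbf x}$ matches $\tfrac1{\sqrt2}\log y$, whereas for $1 \ll y \lesssim \e^{\sqrt2\xi}$ we expect $\norm{\tbf x} \asymp \e^{\sqrt2\xi} \cdot (\text{something})$ and the effective shift should be $\tfrac1{\sqrt2}\log \e^{\sqrt2 \xi} = \xi$... no. Let me restate: the claim is that $\norm{\tbf x} = \sqrt{x^2+y^2}$ is the correct scale, interpolating between $x$ (when $y \lesssim x$) and $y$ (when $y \gtrsim x$), and $E$ absorbs the $\m{O}(1)$ discrepancies. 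The rigorous route is to establish a \emph{uniform} version of \eqref{eq:1D-limit} with explicit error — i.e., $Z_\infty(y) = y D_\infty + o(y)$ with control on the $o(y)$ via the second moment conditioned on horizontal motion, as in the cited Proposition~\ref{prop:cvZy} — then transport the sharp one-dimensional expansion \eqref{eq:wave-constant} through the Laplace transform, keeping track of errors uniformly over the stated range of $(x,y)$. The two boundary cases $y \lesssim 1$ and $x \gg \log y$ require a separate, direct argument: there $\e^{-\sqrt2 x} Z_\infty(y)/y$ is uniformly small (once $x > \tfrac1{\sqrt2}\log_+y$), so $\Phi(x,y) = \e^{-\sqrt2 x}\E_y[Z_\infty] - \tfrac12 \e^{-2\sqrt2 x}\E_y[Z_\infty^2] + \cdots$, and one needs the sharp asymptotics of $\E_y[Z_\infty]$ and $\E_y[Z_\infty^2]$ in $y$; the second-moment term, being $\m{O}(\e^{-2\sqrt2 x})$ relative to the first which is $\m{O}(y\e^{-\sqrt2 x})$, contributes to $E$ the bounded quantity $\tfrac12 \e^{-\sqrt2 x}\E_y[Z_\infty^2]/(\E_y[Z_\infty])$ — this is $\m{O}(1)$ precisely when $x > \tfrac1{\sqrt2}\log_+\norm{\tbf x} + \m{O}(1)$, which is essentially the hypothesis, and where $\E_y[Z_\infty^2] \lesssim y(1 + y)$ or similar must be shown.

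The main obstacle I anticipate is the uniform second-moment control: showing $\E_y[Z_\infty^2] \lesssim y \max\{y,1\}$ (or whatever the sharp bound is) with a constant independent of $y$, and more importantly converting the \emph{in-probability} convergence \eqref{eq:1D-limit} into a statement with quantitative, uniform-in-$y$ error bounds suitable for plugging into the Laplace transform and recovering the $\log\norm{\tbf x}$ term rather than merely $\log y$. A clean way to organize this is: (i) use the truncated/spine decomposition of $Z$ to get $\E_y[Z_\infty]$ exactly and $\E_y[Z_\infty^2]$ up to constants; (ii) prove a coupling estimate between $Z_\infty(y)$ and $y D_\infty$ by conditioning the spine's horizontal displacement — the horizontal motion of the BBM is independent of the vertical killing, so conditionally on the horizontal trajectory the derivative martingale $Z$ becomes a functional of a one-dimensional killed BBM whose normalization is asymptotically $y$; (iii) feed \eqref{eq:wave-constant} through, being careful that the error $\m{O}(\e^{-\delta x})$ in the one-dimensional expansion, after the shift $x \mapsto x - \tfrac1{\sqrt2}\log y$, becomes $\m{O}((y/\e^{\sqrt2 x})^{\delta/\sqrt2})$, which is bounded — indeed small — on $\{x > \tfrac1{\sqrt2}\log_+\norm{\tbf x}\}$ and thus can be absorbed into $E$; (iv) patch the two regimes along $\norm{\tbf x} \asymp y$ using continuity of $\Phi$ and the matching of the two expansions there. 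The bookkeeping of which $\m{O}(1)$ terms land in $E$ versus which must be displayed explicitly, and verifying $E \in L^\infty(\H)$ globally rather than just in each regime, will be the most technical part.
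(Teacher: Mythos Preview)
Your approach has a fundamental gap at the outset: the Taylor expansion
\[
1 - \E_y\exp\big(-\e^{-\sqrt{2}x}Z_\infty\big) \approx \e^{-\sqrt{2}x}\,\E_y[Z_\infty]
\]
fails because $\E_y[Z_\infty] = +\infty$. Indeed, $Z$ is a mean-zero martingale ($\E_y[Z_t]=0$ for all $t$, as shown in the paper), yet $Z_\infty \gneqq 0$ a.s.; hence $Z$ is not uniformly integrable and its limit has infinite mean. More concretely, Lemma~\ref{lem:firstMoment} gives $\E[Z_\infty(y)\mid\mathcal{H}]=yD_\infty$, and it is classical that $\E[D_\infty]=+\infty$. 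The shaved martingales do not rescue this: $\E_y[Z^\alpha_\infty]=\alpha y$ diverges as $\alpha\to\infty$. The non-integrability of $D_\infty$ is precisely the source of the polynomial prefactor in the one-dimensional tail \eqref{eq:wave-constant}, so you cannot hope to recover that structure from a first-moment expansion. Your second-moment scheme inherits the same problem, since $\E[D_\infty^2]=\infty$ as well.

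Your second idea---transporting the one-dimensional expansion \eqref{eq:wave-constant} through the convergence $Z_\infty(y)/y\to D_\infty$---is closer in spirit to something workable, but it only addresses the regime $y\to\infty$ with $x-\tfrac{1}{\sqrt{2}}\log y$ moderate. The theorem must hold throughout $\{x>\tfrac{1}{\sqrt{2}}\log_+ y\}$, including $y$ bounded and $x\to\infty$, where the convergence in probability gives no information and where you fell back on the flawed moment expansion. The paper avoids all of this by taking a purely analytic route: it studies $\Theta=\e^{\sqrt{2}x}\Phi$ as a nearly-harmonic function, constructs an explicit conformal map $\eta$ from the quarter-plane onto (essentially) $\{x>\tfrac{1}{\sqrt{2}}\log_+ y\}$, controls the anharmonic part via Green-function estimates, and applies the Herglotz representation plus Phragm\'en--Lindel\"of to obtain $\Theta\circ\eta=[Ax+\m{O}(1)]y$. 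The $\log_+\norm{\tbf{x}}$ term then falls out of the explicit form of $\eta^{-1}$, and the constant $A$ is matched to $K_*$ using Theorem~\ref{thm:asymptotics} only at the very end.
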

\noindent
Our proof is rooted in potential theory and uses Theorem~\ref{thm:asymptotics} as input.

Note that if we evaluate $\Phi$ at $\big(x + \tfrac{1}{\sqrt{2}}\log y, y\big)$ and take $y \to \infty$, the asymptotic behavior in \eqref{eq:tail-asymp-intro} comports with \eqref{eq:y-limits} and \eqref{eq:wave-constant}.
We highlight one final curiosity: if we instead hold $y$ fixed and take $x \to \infty$, we find
\begin{equation*}
  \Phi(x, y) = K_* \big[x - \tfrac{1}{\sqrt{2}} \log x + \m{O}_y(1)\big] y \e^{-\sqrt{2}x},
\end{equation*}
where the implied constant in $\m{O}_y(1)$ depends on $y$.
This hearkens to \eqref{eq:wave-constant} but includes a $\log x$ correction in the algebraic prefactor.
We are unaware of an analogue of this behavior in any other context.
\begin{remark}
  \label{rem:reaction}
  For simplicity, we focus on rate-1 binary branching Brownian motion.
  More broadly, one can consider BBMs with branching rate $r>0$ in which a branching particle has a random number $\m{Z}$ of offspring.
  If $g(s) \coloneqq \E s^{\m{Z}}$ denotes the probability generating function of $\m{Z}$ and $f(s) \coloneqq r[1 - s - g(1 - s)]$, then this generalized BBM is linked to the reaction--diffusion equation
  \begin{equation}
    \label{eq:KPP-general}
    \partial_t v = \frac{1}{2} \Delta v + f(v)
  \end{equation}
  via the renewal argument of McKean~\cite{McKean75}.
  In the rate-1 binary case, $\m{Z} \equiv 2$ and we recover the reaction $f(s) = s - s^2$ appearing in \eqref{eq:KPP}.
  
  Provided $\E \m{Z}^{1 + \gamma} < \infty$ for some $\gamma > 0$, this generalized BBM behaves much like binary BBM, and our methods and results apply with minor modifications.
  In particular, \eqref{eq:KPP-general} has a unique traveling wave (modulo isometry) in the half-space of minimal speed $\sqrt{2f'(0)} = \sqrt{2r(\E \m{Z} - 1)}.$
  We can thus treat a wide variety of reactions $f$ corresponding to rates $r$ and random variables $\m{Z}$ as described above.
  However, this ``probabilistic'' class does not exhaust the broader ``KPP'' class of reactions satisfying $f(s) \leq f'(0)s$.
  We anticipate that variations on our main results hold for all KPP reactions, but a proof seems to require new analytic ideas.
\end{remark}

\subsection*{Organization}

The remainder of the paper is organized as follows.
In Section~\ref{sec:BBM}, we develop the theory of BBM in the half-plane and prove Proposition~\ref{prop:mg-convergence}.
We construct our traveling waves in Section~\ref{sec:construction} and thus prove Theorem~\ref{thm:construction}.
In Section~\ref{sec:PDE}, we use purely analytic methods to prove a sharp upper bound---tameness---for all minimal-speed traveling waves.
We employ this tameness in a disintegration argument to prove the uniqueness of the minimal-speed wave in Section~\ref{sec:uniqueness}.
Finally, Section~\ref{sec:asymptotics} concerns the asymptotic behavior of our traveling waves and concludes the proofs of Theorems~\ref{thm:unique}, \ref{thm:asymptotics}, and \ref{thm:tail}.

\section*{Acknowledgements}
This research project was initiated during the thematic semester in probability and PDEs at the Centre de recherches mathématiques of the Université de Montréal led by Louigi Addario-Berry.
All authors warmly thank Louigi and the wider organizational team for their hospitality.
We are also grateful to Fran\c{c}ois Hamel for directing us to the reference~\cite{HN}, which plays an important role in our analytic arguments.

JB and BM acknowledge support from the Simons Foundation and the Centre de recherches mathématiques, JB as a Simons-CRM professor and BM as a Simons-CRM scholar in residence.
BM also acknowledges partial support from the CNRS UMI--CRM and from the GdR \emph{Branchement}.
CG is supported by the NSF Mathematical Sciences Postdoctoral Research Fellowship program through the grant DMS-2103383.
YHK was partially supported by the NSF Graduate Research Fellowship 1839302.

\section{Branching Brownian motion in the half-plane}
\label{sec:BBM}

In this section, we develop the theory of branching Brownian motion in the half-plane.
Motivated by the traveling wave construction in Theorem~\ref{thm:construction}, we investigate the convergence of various martingales associated to the BBM.
In particular, in this section we prove Proposition~\ref{prop:mg-convergence}.
Throughout, we take $d = 2$ and thus work on the half-plane $\H \coloneqq \R \times \R_+$.

Recall that we are interested in the derivative martingale $Z$ and the additive martingales $W^{\lambda,\mu}$ defined in \eqref{eqn:defdermartin} and \eqref{eqn:supercritical-mg}, respectively.
The convergence of the latter is relatively straightforward, as $W^{\lambda,\mu}$ is uniformly integrable if and only if $\lambda^2 +\mu^2 < 2$; see Section~\ref{subsec:wlambdamu} below.
The derivative martingale $Z$ poses more of a challenge---it has indefinite sign and is not uniformly integrable.
We therefore treat its convergence in several steps.
We begin in Section~\ref{sec:cvAdd} by showing that the so-called \emph{critical additive martingale}
\begin{equation*}
  W_t \coloneqq \sum_{u \in \mathcal{N}_t^+} Y_t(u) \e^{\sqrt{2} X_t(u) - 2t}
\end{equation*}
converges to $0$ almost surely as $t \to \infty$.
Next, we define ``shaved'' approximations of the derivative martingale $Z$ and prove their uniform integrability in Section~\ref{sec:cvShaved}.
In combination, these results suffice to show that the convergence of the derivative martingale $Z$.

\subsection{Branching Brownian motion in one dimension}
\label{sec:preliminaries}

Our analysis relies on the standard theory of BBM in one dimension.
Here we recall various elements of this theory, including the so-called spine decomposition.
We will subsequently adapt these ideas to the half-plane setting.

Recall that $(X_t(u), Y_t(u) \midsemi u \in \mathcal{N}_t)$ is a branching Brownian motion in the plane $\R^2$ without killing.
It follows that $(X_t(u) \midsemi u \in \mathcal{N}_t)$ is a BBM on the line $\R$ started from $0$.
Given $t \geq 0$, we let
\begin{equation*}
  \mathcal{H}_t \coloneqq \sigma\big(X_s(u) \midsemi u \in \mathcal{N}_s, s \leq t\big)
\end{equation*}
denote the filtration associated to this horizontal part of the BBM.
Note that the genealogical tree $\mathcal{T}$ is measurable with respect to $\mathcal{H}$.
Moreover, conditionally on $\mathcal{H}$, the process $(Y_t(u) \midsemi u \in \mathcal{N}_t)$ can be seen as an independent Brownian motion indexed by $\mathcal{T}$.

The critical additive and derivative martingales of the BBM $(X_t(u)\midsemi u \in \mathcal{N}_t)$ are the processes
\begin{align}
  \label{eq:add1D}
  A_t &\coloneqq \sum_{u \in \mathcal{N}_t} \e^{\sqrt{2}X_t(u) - 2t},\\
  D_t &\coloneqq \sum_{u \in \mathcal{N}_t} \big[\sqrt{2}t-X_t(u)\big] \e^{\sqrt{2} X_t(u) - 2t}.\nonumber
\end{align}
The asymptotic behavior of these martingales is closely related to the asymptotic behavior of extremal particles in BBM.
In \cite{LS87}, Lalley and Sellke proved that
\begin{equation}
  \label{eqn:defDinfty}
  \lim_{t \to \infty} A_t = 0 \And \lim_{t \to \infty} D_t = D_\infty > 0 \quad \text{a.s.}
\end{equation}
In turn, the Laplace transform of $D_\infty$ yields a minimal-speed traveling wave of the KPP equation in one dimension; see \eqref{eqn:defw}.

We now describe the main steps in the proof of \eqref{eqn:defDinfty} and thereby introduce several important tools that we subsequently deploy in the half-plane.
Following \cite{ChR,LPP,Lyo}, we study the law of BBM biased by an associated nonnegative martingale.
This allows us to make use of the following characterization of absolutely continuous random measures.
\begin{proposition}
  \label{prop:Durrett}
  Let $(M_t)_{t \geq 0}$ be a mean one nonnegative $(\mathcal{F}_t)_{t \geq 0}$-martingale under law $\P$. 
  Define a new probability measure $\Q$ on $\mathcal{F}_\infty$ via $\Q(E) \coloneqq \E_{\P}\left( M_t \indset{E} \right)$ for all $t \geq 0$ and $E \in \mathcal{F}_t$, written $\Q=M \P$ for short.
  The martingale $M_t$ converges almost surely under both $\P$ and $\Q$; we write $M_{\infty}$ for the a.s. limit.
  Then
  \begin{equation*}
    \Q(E) = \E_{\P}\left( M_\infty \indset{E} \right) + \Q(E \cap \{M_\infty = \infty\}) \ForAll E \in \mathcal{F}_\infty.
  \end{equation*}
  In particular, the following are equivalent:
  \begin{enumerate}[label = \textup{(\roman*)}, itemsep = 1ex]
  \item $M$  is uniformly integrable with respect to $\P$;

  \item $\E_{\P} \, M_\infty = 1$;

  \item $\Q(M_\infty = \infty) = 0$.
  \end{enumerate}
\end{proposition}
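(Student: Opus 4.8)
The plan is to run the classical ``Lebesgue decomposition'' argument for a change of measure along a filtration, built around truncating $M$ at its passage times. Consistency of the prescription $\Q(E) = \E_\P(M_t\,\indset{E})$ across the $\sigma$-algebras $\mathcal F_t$ is immediate from the martingale property of $M$; granting, as the statement does, that this consistent family extends to a probability measure $\Q$ on $\mathcal F_\infty$, it remains to establish the a.s.\ convergence, the decomposition identity, and the trichotomy.

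\emph{Convergence of $M$.} Under $\P$, $(M_t)$ is a nonnegative martingale, so it converges a.s.; Doob's maximal inequality moreover gives $\P(\sup_t M_t = \infty) = 0$, so the limit $M_\infty$ is $\P$-a.s.\ finite. Under $\Q$, I would first note that $\Q(M_t = 0) = \E_\P(M_t\,\indset{M_t = 0}) = 0$ for every $t$, and that a nonnegative martingale which reaches $0$ stays there, so that $\Q$-a.s.\ $M_t > 0$ for all $t$; a short computation with the identity $\Q = M_t\P$ on $\mathcal F_t$ then shows that $(1/M_t)$ is a nonnegative $\Q$-supermartingale. Hence it converges $\Q$-a.s.\ to a finite limit, so $M_t$ converges $\Q$-a.s.\ to a limit in $(0,\infty]$; this is the same random variable $\lim_t M_t$ that appears in the $\P$-a.s.\ statement, now permitted to equal $+\infty$, and we write $M_\infty$ for it.

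\emph{The decomposition identity.} For $K > 0$ put $T_K \coloneqq \inf\{t : M_t > K\}$. The stopped process $M^{T_K}_t \coloneqq M_{t \wedge T_K}$ is a martingale dominated by $K + M_{T_K} \in L^1(\P)$ (the integrability of $M_{T_K}$ coming from Fatou's lemma applied to $M^{T_K}$), hence uniformly integrable under $\P$; consequently the probability measure $\Q_K$ determined by $\Q_K|_{\mathcal F_t} = M^{T_K}_t\P$ satisfies $\Q_K(E) = \E_\P(M^{T_K}_\infty\,\indset{E})$ for \emph{every} $E \in \mathcal F_\infty$. Since $M^{T_K}_s = M_s$ on $\{T_K \ge s\}$, one checks that $\Q$ and $\Q_K$ agree on sets of the form $E \cap \{T_K \ge s\}$ with $E \in \mathcal F_s$; letting $s \to \infty$ and using that a finite measure on $\mathcal F_\infty$ is determined by its restriction to the algebra $\bigcup_s \mathcal F_s$, this yields $\Q(E \cap \{T_K = \infty\}) = \E_\P(M_\infty\,\indset{E \cap \{T_K = \infty\}})$ for all $E \in \mathcal F_\infty$, using $M^{T_K}_\infty = M_\infty$ on $\{T_K = \infty\}$. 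Finally, $\{T_K = \infty\} = \{\sup_t M_t \le K\}$ increases as $K \to \infty$ to $\{\sup_t M_t < \infty\}$, and this event equals $\{M_\infty < \infty\}$ because the paths of $M$ are càdlàg, hence locally bounded; as $M_\infty < \infty$ $\P$-a.s., monotone convergence gives $\Q(E \cap \{M_\infty < \infty\}) = \E_\P(M_\infty\,\indset{E})$, and adding $\Q(E \cap \{M_\infty = \infty\})$ produces the asserted formula.

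\emph{The trichotomy.} Taking $E = \Omega$ in the identity gives $1 = \E_\P M_\infty + \Q(M_\infty = \infty)$, so (ii) $\Leftrightarrow$ (iii) is immediate. For (i) $\Rightarrow$ (ii), uniform integrability gives $L^1(\P)$-convergence of $M_t$ to $M_\infty$, whence $\E_\P M_\infty = \lim_t \E_\P M_t = 1$. For (ii) $\Rightarrow$ (i), conditional Fatou together with the martingale property always gives $\E_\P(M_\infty \mid \mathcal F_t) \le M_t$, and when $\E_\P M_\infty = 1$ both sides have mean one and hence agree a.s.; thus $(M_t)$ is closed by the integrable variable $M_\infty$, and in particular uniformly integrable. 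The one genuinely delicate step---and the place I would expect to spend the most effort---is the truncation argument for the decomposition identity: checking that each $M^{T_K}$ is uniformly integrable under $\P$ and that $\Q$ and $\Q_K$ patch together correctly across the filtration as $s$ and then $K$ tend to infinity.
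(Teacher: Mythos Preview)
The paper does not prove this proposition; it is stated without proof as a standard tool (the label \texttt{prop:Durrett} indicates it is quoted from Durrett's textbook), and is simply invoked in the spine-decomposition arguments that follow. Your proof is correct and is essentially the classical one: convergence under $\P$ by nonnegative martingale convergence, convergence under $\Q$ via the $\Q$-supermartingale $1/M$, the decomposition identity via truncation at the passage times $T_K$ and uniform integrability of $M^{T_K}$, and the trichotomy from taking $E=\Omega$ together with the closed-martingale characterization of uniform integrability. The two implicit hypotheses you flag---c\`adl\`ag paths (needed for $\{\sup_t M_t<\infty\}=\{M_\infty<\infty\}$) and the existence of the extension of $\Q$ to $\mathcal F_\infty$---are indeed glossed over in the paper's statement but hold in every application the paper makes of this result.
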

\noindent
Suppose we wish to show that  $M$ converges  $\P$-almost surely to a nondegenerate limit.
By Proposition~\ref{prop:Durrett}, it suffices to show that $\liminf_{t \to \infty} M_t < \infty$ $\Q$-a.s.
Likewise, the inverse result follows if $\limsup_{t \to \infty} M_t = \infty$ $\Q$-a.s.
Thus to prove that $M_t \to 0$ under $\P$, one need only show that $M$ diverges under $\Q$; this is typically much easier (see the proof of Proposition~\ref{prop:additive}).

In the remainder of the subsection, let $\P$ denote the law of the one-dimensional BBM $(X_t(u) \midsemi u \in \mathcal{N}_t)$.
Recalling the critical additive martingale $A$ from \eqref{eq:add1D}, we define the tilted probability measure $\tilde\P \coloneqq A \P$.
The seminal \emph{spine decomposition} of \cite{Lyo, ChR} states that under $\tilde{\P}$, the process $(X_t(u) \midsemi u \in \mathcal{N}_t)$ is a branching Brownian motion with a \emph{spine}: a distinguished particle that moves and reproduce differently.

More precisely, let $\check{\P}$ denote the law of the following process $(X_t(u) \midsemi u \in \mathcal{N}_t)$ augmented with a distinguished spine particle $\xi_t \in  \m{N}_t$.
The process begins with a single spine particle at the origin, which performs a Brownian motion with drift $\sqrt{2}$.
After an independent exponential time $t$ of parameter $2$, this particle splits into two children, one of which is designated the new spine particle $\xi_t$.
The new spine performs a copy of the above process from its birth location, while the non-spine child starts an independent BBM (without spine).
Spine decomposition theorems for branching Brownian motion are originally due to Chauvin and Rouault~\cite{ChR}.
A trajectorial construction was first developed by Lyons, Pemantle, and Peres for the Galton--Watson process~\cite{LPP}; Lyons later generalized this to spatial branching processes~\cite{Lyo}.
\begin{proposition}
  \label{prop:spine1d}
  For all $t \geq 0$ and $E \in \mathcal{H}_t$, we have $\check{\P}(E) = \tilde{\P}(E)$. Moreover,
  \begin{equation*}
    \check{\P}\big(\xi_t = u \mid \mathcal{H}_t\big) = A_t^{-1}\e^{\sqrt{2}X_t(u) - 2t} \ForAll u \in \m{N}_t.
  \end{equation*}
\end{proposition}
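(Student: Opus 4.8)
The plan is to obtain both assertions at once by writing $\check\P$ and $\tilde\P$ as two different projections of a single change of measure on the space of \emph{spine-marked} branching Brownian motions.

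First I would record the elementary facts underpinning the construction. The process $A_t = \sum_{u\in\mathcal{N}_t}\e^{\sqrt2 X_t(u)-2t}$ is a mean-one nonnegative $(\mathcal{H}_t)$-martingale; this follows from the exponential martingale $\e^{\sqrt2 B_t - t}$ of a single Brownian motion together with the branching property, exactly as in the classical one-dimensional theory. Hence $\tilde\P = A\P$ is consistently defined on each $\mathcal{H}_t$. Next I would introduce a reference measure $\P^\ast$ on spine-marked trees: run the BBM under $\P$, declare the initial particle to be the spine, and, each time the current spine particle branches, let one of its two children---chosen uniformly at random---become the new spine. Write $\mathcal{G}_t \supseteq \mathcal{H}_t$ for the $\sigma$-field generated by this marked process up to time $t$. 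By construction $\P^\ast$ agrees with $\P$ on $\mathcal{H}_t$, and, since the genealogy $\mathcal{T}$ is $\mathcal{H}$-measurable,
\[
  \P^\ast\big(\xi_t = u \mid \mathcal{H}_t\big) = 2^{-n(u)} \ForAll u \in \mathcal{N}_t,
\]
where $n(u)$ counts the branch events on the ancestral lineage of $u$; note $\sum_{u\in\mathcal{N}_t} 2^{-n(u)} = 1$ because the spine mass is halved at every branch point.

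The core step is to identify the Radon--Nikodym derivative of $\check\P$ relative to $\P^\ast$ on the marked filtration,
\[
  \frac{d\check\P}{d\P^\ast}\bigg|_{\mathcal{G}_t} = 2^{\,n(\xi_t)}\,\e^{\sqrt2 X_t(\xi_t) - 2t},
\]
which I would prove by comparing the two descriptions of the marked process factor by factor. Along the spine trajectory, $\check\P$ runs a Brownian motion with drift $\sqrt2$ whereas $\P^\ast$ runs a driftless one, so Girsanov's theorem contributes $\e^{\sqrt2 X_t(\xi_t) - t}$. The spine branches at rate $2$ under $\check\P$ and at rate $1$ under $\P^\ast$, so comparing the two Poisson processes of branch times on $[0,t]$ contributes $2^{n(\xi_t)}\e^{-t}$. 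The uniformly random choice of the continuing child at each spine branch, and the laws of the ordinary BBMs issued by the non-spine children, are identical under the two measures and contribute nothing. Multiplying the two nontrivial factors yields the stated density. The one genuinely technical point---and the main obstacle---is to make this factorization rigorous, i.e.\ to describe the spine-marked process as assembled from independent Brownian increments, exponential branch times, and offspring subtrees; I would handle it by an induction over successive spine branch times, or, alternatively, by discretizing the branching structure and passing to the limit.

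Finally, I would project onto $\mathcal{H}_t$. For $E \in \mathcal{H}_t$,
\[
  \check\P(E) = \E_{\P^\ast}\!\left[\indset{E}\,\E_{\P^\ast}\!\left[\,2^{n(\xi_t)}\e^{\sqrt2 X_t(\xi_t) - 2t} \,\middle|\, \mathcal{H}_t\right]\right],
\]
and the inner conditional expectation is $\sum_{u\in\mathcal{N}_t} 2^{-n(u)}\cdot 2^{n(u)}\e^{\sqrt2 X_t(u)-2t} = A_t$, so the $2^{n(\cdot)}$ factors cancel; since $A_t$ is $\mathcal{H}_t$-measurable and $\P^\ast = \P$ on $\mathcal{H}_t$, this gives $\check\P(E) = \E_\P[\indset{E} A_t] = \tilde\P(E)$. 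Running the same computation with $\indset{E}$ replaced by $\indset{E}\indset{\{\xi_t = u\}}$ yields $\check\P\big(\{\xi_t = u\} \cap E\big) = \E_\P\big[\indset{E}\,\e^{\sqrt2 X_t(u) - 2t}\big]$ for every $E \in \mathcal{H}_t$, which is precisely the claim $\check\P(\xi_t = u \mid \mathcal{H}_t) = A_t^{-1}\e^{\sqrt2 X_t(u) - 2t}$.
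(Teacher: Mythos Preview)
The paper does not actually prove Proposition~\ref{prop:spine1d}; it states the result and attributes it to the literature (Chauvin--Rouault~\cite{ChR} for the spine decomposition, and Lyons--Pemantle--Peres~\cite{LPP} and Lyons~\cite{Lyo} for the trajectorial construction). Your sketch is precisely the Lyons approach the paper is invoking: introduce a reference measure $\P^\ast$ on spine-marked trees with uniform spine selection, compute the Radon--Nikodym derivative $d\check\P/d\P^\ast|_{\mathcal{G}_t} = 2^{n(\xi_t)}\e^{\sqrt2 X_t(\xi_t)-2t}$ via Girsanov and a Poisson rate change, and then project onto $\mathcal{H}_t$. The argument is correct, and there is nothing to compare against in the paper itself.
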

\noindent
In other words, the law of $(X_t(u) \midsemi u \in \mathcal{N}_t)$ is identical under $\check{\P}$ and $\tilde{\P}$.
Moreover, conditionally on the position of the particles, the spine particle is $u \in \mathcal{N}_t$ with probability proportional to $\e^{\sqrt{2}X_t(u)}$.

This proposition can be combined with Proposition~\ref{prop:Durrett} to show that $A_t \to 0$ almost surely.
Indeed, note that
\begin{equation*}
  A_t = \sum_{u \in \mathcal{N}_t} \e^{\sqrt{2}X_t(u) - 2t} \geq \e^{\sqrt{2} X_t(\xi_t) - 2t}.
\end{equation*}
Under $\check{\P}$, the process $t \mapsto \sqrt{2} X_t(\xi_t) - 2t$ is a driftless Brownian motion, so $\limsup_{t \to \infty} A_t = \infty$ $\check{\P}$-a.s.
Applying Proposition~\ref{prop:Durrett} to the event $E=\{A_\infty <\infty\}$, we obtain
\begin{equation*}
  \E_{\P} A_\infty = \check{\P}(A_\infty < \infty) = 0.
\end{equation*}
Since $A_\infty$ is nonnegative, $A_\infty = 0$ $\P$-a.s.

To treat the \emph{derivative} martingale $D_t$, we ``shave'' it, using the same technique as in \cite{BiK04}.
Given $\al > 0$ and $t \geq 0$, let
\begin{equation*}
  \mathcal{N}_t^\alpha \coloneqq \big\{ u \in \mathcal{N}_t : X_s(u) \leq \sqrt{2} s + \alpha \text{ for all } s\le t\big\}
\end{equation*}
denote the population of particles that remain below the curve $\sqrt{2}t + \al$.
Then we define the shaved derivative martingale
\begin{equation}
  \label{eqn:shavedDerivativeMartingaleDimension1}
  D^\alpha_t \coloneqq \sum_{u \in \mathcal{N}_t^\alpha} \big[\sqrt{2} t + \alpha - X_t(u)\big] \e^{\sqrt{2} X_t(u) - 2t}.
\end{equation}
For all $\alpha > 0$, $D^\alpha$ is a nonnegative martingale converging almost surely to some $D^\alpha_\infty$ as $t \to \infty$.
Using Proposition~\ref{prop:spine1d}, one can associate a spine decomposition to the law $\Q^\alpha = D^\alpha \P$ under which $\sqrt{2} t + \alpha - X_t(\xi_t)$ is a Bessel process of dimension~$3$.
Applying Proposition~\ref{prop:Durrett}, one can then show that $D^\alpha$ is uniformly integrable, and therefore $D^\alpha_\infty$ is nondegenerate.

To relate $D^\alpha$ to $D$, we use the following fact.
With probability $1$, there exists (random) $\alpha_0 > 0$ such that $\mathcal{N}^\alpha_t =\mathcal{N}_t$ for all $t \geq 0$ and $\alpha > \alpha_0$.
More precisely, if $M_t = \max_{\m{N}_t} X_t(u)$ denotes the maximal displacement at time $t$, then it is well known (see, e.g., \cite{LS87}) that
\begin{equation}
  \label{eqn:formulaForMaxDep}
  \liminf_{t \to \infty} \sqrt{2} t - M_t = +\infty\quad \text{a.s.}
\end{equation}
In particular, it follows that $\bar{M} \coloneqq \sup_{t \geq 0}( M_t - \sqrt{2} t) < \infty$ a.s.
Hence for all $\alpha > \bar{M}$ and $t \geq 0$, we have
\begin{equation*}
  D^\alpha_t = \sum_{u \in \mathcal{N}_t} \big[\sqrt{2} t + \alpha - X_t(u)\big] \e^{\sqrt{2} X_t(u) - 2t} = D_t + \alpha A_t.
\end{equation*}
We showed above that $A_t \to 0$ as $t \to \infty$.
It follows that $D_t$ has limit $D_\infty = D^\alpha_\infty$ a.s. when $\alpha > \bar{M}$.
This also shows that the map $\alpha \mapsto D^\alpha_\infty$ is constant above the random threshold $\bar{M}$.
Hence $D_t$ converges almost surely as $t \to \infty$ to the limit
\begin{equation*}
  D_\infty = \lim_{\alpha \to \infty} D^\alpha_\infty.
\end{equation*}

In the remainder of the section, we adapt this approach to show that the derivative martingale $Z$ in the half-plane has a nondegenerate long-time limit.

\subsection{The critical additive martingale in the half-plane}
\label{sec:cvAdd}

We now consider our BBM in the half-plane $\H$.
Given $t \geq 0$, let
\begin{equation*}
  \mathcal{F}_t \coloneqq \sigma\big(X_s(u),Y_s(u) \midsemi u \in \mathcal{N}_s,\, s \leq t\big) \And \m{F}_\infty = \sigma(\cup_{t \geq 0} \m{F}_t)
\end{equation*}
denote the filtration associated with the BBM in the entire plane $\R^2$.
We implicitly use the term martingale with respect to this filtration.
Recall the critical additive martingale defined by
\begin{equation*}
  W_t = \sum_{u \in \mathcal{N}_t^+} Y_t(u) \e^{\sqrt{2}X_t(u)-2t}.
\end{equation*}
In this subsection, we show that $W$ vanishes in the long-time limit.
\begin{proposition}
  \label{prop:additive}
  The process $W$ is a $\P_y$-martingale and $W_t \to 0$ $\P_y$-a.s. as $t \to \infty$.
\end{proposition}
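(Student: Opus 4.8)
\emph{Martingale property.} Write $W_t = \sum_{u \in \mathcal{N}_t^+} g\big(t, X_t(u), Y_t(u)\big)$ with $g(t,x,y) \coloneqq y\,\e^{\sqrt 2 x - 2t}$. A direct computation gives $\partial_t g + \tfrac12 \Delta g + g = 0$, and $g$ vanishes on $\partial\H$, so a particle carries no mass at the instant it is killed. Combining the branching property of the killed BBM with the many-to-one lemma, the martingale identity reduces to checking that for a single killed two-dimensional Brownian motion started at $(x,y)$ one has $\E_{(x,y)}\big[\sum_{v \in \mathcal{N}_r^+} Y_r(v)\,\e^{\sqrt 2 X_r(v) - 2r}\big] = y\,\e^{\sqrt 2 x}$ for every $r \geq 0$. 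Since the two coordinates are independent, this follows from $\E\big[\e^{\sqrt 2 (X_r - x)}\big] = \e^{r}$ together with $\E_y\big[Y_r \mathbf{1}_{\tau_0 > r}\big] = y$, where $\tau_0$ is the first hitting time of the origin; the latter holds because the one-dimensional Brownian motion stopped at $\tau_0$ is a martingale. Hence $\E_y[W_t \mid \mathcal{F}_s] = W_s$, and being nonnegative, $W$ converges $\P_y$-a.s.\ to some $W_\infty \geq 0$.

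\emph{Vanishing of the limit.} To prove $W_\infty = 0$ $\P_y$-a.s., I would apply Proposition~\ref{prop:Durrett} to the mean-one martingale $M_t \coloneqq W_t/y$ and the tilted law $\hat\P_y \coloneqq M\,\P_y$. The essential step is a spine decomposition for $\hat\P_y$, obtained by adapting Proposition~\ref{prop:spine1d} to the half-plane. Under $\hat\P_y$, the BBM carries a distinguished spine particle $\xi$ with $\hat\P_y(\xi_t = u \mid \mathcal{F}_t) = Y_t(u)\,\e^{\sqrt 2 X_t(u) - 2t}/W_t$; the spine branches at rate $2$; the subtrees hanging off the spine are independent copies of the killed BBM; and---because the density $y\,\e^{\sqrt 2 x - 2t}$ factors over the two coordinates---the spine's horizontal coordinate is a Brownian motion with drift $\sqrt 2$, while its vertical coordinate is an \emph{independent} Bessel-$3$ process started from $y$, namely the Doob $h$-transform of the killed vertical Brownian motion by the harmonic function $h(y) = y$. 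In particular the spine is never killed.

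Granting this, the conclusion is short. Retaining only the spine contribution,
\[
  W_t \;\geq\; Y_t(\xi_t)\,\e^{\sqrt 2 X_t(\xi_t) - 2t} \;=\; Y_t(\xi_t)\,\e^{\sqrt 2 \beta_t},
\]
where $\beta$ is a standard Brownian motion (the drift $\sqrt 2 t$ cancels the $-2t$) independent of the Bessel-$3$ process $t \mapsto Y_t(\xi_t)$. Since a Bessel-$3$ process tends to $+\infty$, $\hat\P_y$-a.s.\ we have $Y_t(\xi_t) \geq 1$ for all large $t$; since $\limsup_{t\to\infty}\beta_t = +\infty$, it follows that $\limsup_{t\to\infty} W_t = \infty$ $\hat\P_y$-a.s. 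By Proposition~\ref{prop:Durrett}, $M$ converges $\hat\P_y$-a.s., so in fact $M_t \to \infty$ $\hat\P_y$-a.s.; the same proposition then yields $\E_{\P_y} M_\infty = \hat\P_y(M_\infty < \infty) = 0$, whence $W_\infty = 0$ $\P_y$-a.s.

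\emph{Main obstacle.} The substantive point is the half-plane spine decomposition---in particular, verifying that the change of measure by $W/y$ turns the spine's transverse motion into a Bessel-$3$ process (so the spine is never killed) and keeps the two coordinates independent; once this structural fact is secured, the rest transcribes the one-dimensional argument for $A_t \to 0$ recalled above. It is worth noting why no shortcut by domination is available: unlike $A$, the summands of $W$ interact nontrivially with the transverse conditioning defining $\mathcal{N}_t^+$, so $W$ is not bounded by a multiple of $A$, and it is precisely the spine change of measure that disentangles the killing.
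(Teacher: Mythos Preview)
Your proposal is correct and follows essentially the same route as the paper: the martingale property via many-to-one and independence of the two coordinates, then the spine decomposition under $\hat\P_y = (W/y)\P_y$ with horizontal drift-$\sqrt 2$ Brownian motion and vertical Bessel-$3$, the lower bound $W_t \geq Y_t(\xi_t)\,\e^{\sqrt 2 X_t(\xi_t) - 2t}$, and Proposition~\ref{prop:Durrett} to conclude. The paper isolates the spine decomposition you flag as the ``main obstacle'' into a separate statement (Proposition~\ref{prop:addMart}) and proves it by bootstrapping from the one-dimensional spine of Proposition~\ref{prop:spine1d}, recognizing $y^{-1}Y_t(\xi_t)\mathbf{1}\{Y_s(\xi_s)\geq 0,\,s\leq t\}$ as the Radon--Nikodym derivative of Bessel-$3$ against Wiener measure; your description of the outcome and of the independence of the two spine coordinates is accurate.
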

It is easy to check that $W$ is a nonnegative $\P_y$-martingale.
\begin{proof}[Proof that $W$ is a martingale.]
  We employ the so-called many-to-one lemma: by linearity of expectation,
  \begin{align*}
    \E_y W_t &= \E_y\left(\sum_{u \in \mathcal{N}_t} Y_t(u) \e^{\sqrt{2} X_t(u) - 2t} \ind{Y_s(u) \geq 0 \midsemi s \leq t} \right)\\
             &= \e^{t} \E_y\left(Y_t \e^{\sqrt{2}X_t - 2t} \ind{Y_s \geq 0 \midsemi s \leq t} \right),
  \end{align*}
  where $(X,Y)$ is a 2-dimensional standard Brownian motion started from $(0,y)$.
  We have used the fact that the expected population size at time $t \geq 0$ is $\e^t$, due to the rate-$1$ branching.
  As a result, by independence and the martingale properties of $\e^{\sqrt{2}X_t - t}$ and $Y_t$, we have
  \begin{equation*}
    \E_y W_t = \E\left(\e^{\sqrt{2} X_t - t} \right) \E_y\big(Y_t \ind{Y_s \geq 0 \midsemi s \leq t}\big) = y.
  \end{equation*}
  Then the branching property of the BBM yields
  \begin{equation*}
    \E_y(W_{t+s} \mid \mathcal{F}_t) = \sum_{u \in \mathcal{N}_t^+} \e^{\sqrt{2} X_t(u) - 2t} \E_{Y_t(u)} W_s = W_t
  \end{equation*}
  for all $t,s\geq 0$.
  That is, $W$ is a $\P_y$ martingale.
\end{proof}
\noindent
It follows that $y^{-1}W$ is a mean-$1$ martingale, so we can define a tilted probability measure $\bar{\P}_y \coloneqq y^{-1}W \P_y$.
We relate this to an associated spine decomposition.

Let $\hat{\P}_y$ denote the law of the branching Brownian motion with spine constructed as follows.
Let $B$ be a Brownian motion started from $0$ and $S$ be an independent Bessel process of dimension $3$ started from $y$.
Our BBM with spine starts with a single spine particle that moves according to the process $t \mapsto (B_t + \sqrt{2} t,S_t)$.
After an independent exponential time of parameter $2$, this particle splits into two children, one of which is designated the new spine particle.
The new spine performs a copy of the above process from its birth location, while the non-spine child starts an independent BBM (without spine) in $\H$ with killing on $\partial \H$.
We let $\big(X_t(u), Y_t(u) \midsemi u \in \mathcal{N}^+_t\big)$ denote the positions of the particles at time $t$ and let $\xi_t \in \mathcal{N}_t^+$ denote the label of the spine.
Note that the identity $\xi$ of the spine is not measurable with respect to $\mathcal{F}$.

The spine decomposition theorem states that $\bar{\P}_y=\hat{\P}_y$ on $\mathcal F_\infty$.
\begin{proposition}
  \label{prop:addMart}
  For all  $t \geq 0$ and $E \in \mathcal{F}_t$, we have $\hat{\P}_y(E) = \bar{\P}_y(E)$.
  Moreover,
  \begin{equation}
    \label{eqn:whoIsTheSpine}
    \hat{\P}_y(\xi_t = u \mid \mathcal{F}_t) = W_t^{-1} Y_t(u) \e^{\sqrt{2} X_t(u) - 2 t} \ForAll u \in \m{N}_t^+.
  \end{equation}
\end{proposition}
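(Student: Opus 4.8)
The plan is to establish the spine decomposition for the half-plane additive martingale $W$ by adapting the one-dimensional argument behind Proposition~\ref{prop:spine1d}, exploiting the conditional independence of the vertical motion given the horizontal BBM. The key structural observation is that the tilt $y^{-1} W \P_y$ factors into two pieces: a horizontal tilt by the classical critical additive martingale $A$, and, conditionally on $\m{H}$, a vertical tilt along the genealogical tree by the martingale $y^{-1} Y_t(u) \ind{Y_s(u) > 0 \midsemi s \le t}$ associated with Brownian motion killed at $0$. By the standard $h$-transform for Brownian motion with $h(y) = y$, this vertical tilt turns the killed Brownian motion followed by the spine into a Bessel-3 process started from $y$, while leaving the off-spine subtrees as independent killed Brownian motions. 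The drift $\sqrt{2}$ on the horizontal spine and the rate-$2$ branching come from the classical one-dimensional spine decomposition applied to $A$.

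Concretely, I would proceed as follows. First, verify the many-to-one / change-of-measure identity directly: for $E \in \m{F}_t$, write $\bar\P_y(E) = y^{-1}\E_y(W_t \ind E)$ and expand $W_t$ as a sum over $u \in \m{N}_t^+$. For each fixed particle $u$, the summand $y^{-1} Y_t(u) \e^{\sqrt 2 X_t(u) - 2t}\ind E$ singles out the ancestral line of $u$; using the many-to-one lemma (in its spinal form, as in \cite{Lyo, ChR}) this becomes an expectation over a single trajectory $(X, Y)$ that is a Brownian motion with the reweighting $\e^{\sqrt 2 X_t - 2t}$ on the first coordinate and $y^{-1} Y_t \ind{Y_s > 0\midsemi s \le t}$ on the second, the rest of the tree being an independent (killed) BBM attached along the way. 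Second, identify the law of this reweighted trajectory: the horizontal factor $\e^{\sqrt 2 X_t - t}$ is precisely the Girsanov density turning $X$ into a Brownian motion with drift $\sqrt 2$ (this is the content of Proposition~\ref{prop:spine1d}), and the vertical factor $y^{-1} Y_t \ind{Y_s > 0}$ is the Doob $h$-transform with $h(y) = y$ turning the killed Brownian motion $Y$ into a Bessel-3 process started from $y$; since the two coordinates are independent, the joint spine motion is $t \mapsto (B_t + \sqrt 2 t, S_t)$ with $B, S$ independent as in the statement. Third, the exponential branching clock of rate $2$ and the choice of one child as the new spine are inherited verbatim from the one-dimensional spine construction for $A$, because the vertical $h$-transform does not affect the branching mechanism. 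This gives $\hat\P_y = \bar\P_y$ on each $\m F_t$, hence on $\m F_\infty$. Finally, the conditional law of the spine's identity \eqref{eqn:whoIsTheSpine} falls out of the same computation: each particle $u \in \m N_t^+$ receives spine-weight proportional to its contribution $Y_t(u)\e^{\sqrt 2 X_t(u) - 2t}$ to $W_t$, normalized by $W_t$.

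The main obstacle is bookkeeping the two tilts simultaneously and rigorously — in particular, making precise the claim that conditionally on $\m H$ the vertical motion along the tree is an independent Brownian motion, so that the $h$-transform can be applied fiberwise without disturbing the horizontal spine decomposition. One must check that the killed vertical process $y^{-1} Y_t(u)\ind{Y_s(u) > 0 \midsemi s \le t}$ is genuinely a (mean-one, nonnegative) martingale along each branch and that the resulting change of measure is consistent across the branching times, i.e., that the product structure of $W$ over the tree matches the product of horizontal and vertical spine weights. Once this fiber-wise independence is set up carefully, the proof is a routine combination of the classical spine theorem for $A$ (Proposition~\ref{prop:spine1d}) with the elementary $h$-transform $h(y)=y$ for Brownian motion on $\R_+$; I would therefore keep the exposition at the level of citing \cite{ChR, LPP, Lyo} for the abstract spine machinery and only spell out the new vertical ingredient in detail.
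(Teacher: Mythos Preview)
Your proposal is correct and follows essentially the same approach as the paper: factor the tilt $y^{-1}W\,\P_y$ into the horizontal tilt by the one-dimensional additive martingale $A$ (yielding drift $\sqrt{2}$ and rate-$2$ branching via Proposition~\ref{prop:spine1d}) and a vertical $h$-transform with $h(y)=y$ (turning killed Brownian motion into Bessel-3), then read off the spine identity from the summand weights. The paper makes your ``fiberwise'' step concrete by introducing an intermediate law $\check\P_y$ on $\R^2$---the one-dimensional spine process augmented with independent standard vertical Brownian motion---and then computes $\bar\P_y(E)$ as $y^{-1}\check\E_y\big(\ind_E\, Y_t(\xi_t)\ind{Y_s(\xi_s)\ge 0\midsemi s\le t}\big)$, recognizing the Bessel-3 Radon--Nikodym derivative; this is exactly the bookkeeping device that resolves the obstacle you flagged.
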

\begin{proof}
  To begin, we augment the one-dimensional spine decomposition introduced in the previous subsection with vertical motion.
  We let the spine particle move as an independent standard Brownian motion in the $y$-direction while performing its previously-described horizontal motion.
  Likewise, we let the non-spine children perform standard BBMs in $\R^2$.
  Let $\check \P_y$ denote the law of this spine process in $\R^2$, which of course differs from the law $\h\P$ introduced above.
  Let $\ti \P_y \coloneqq A \P_y$ denote the tilt of $\P_y$ with respect to the horizontal additive martingale $A_t$ defined in \eqref{eq:add1D}.
  Then Proposition~\ref{prop:spine1d} can be easily extended to show that $\check\P_y = \ti\P_y$ on $\m{F}_\infty$ and
  \begin{equation}
    \label{eq:spine-identity}
    \check\P_y(\xi_t = u \mid \m{F}_t) = A_t^{-1} \e^{\sqrt{2}X_t(u) - 2t} \ForAll u \in \m{N}_t.
  \end{equation}

  Now fix $t \geq 0$ and $E \in \m{F}_t$.
  By definition,
  \begin{equation*}
    \bar\P_y(E) = y^{-1} \E_y\left(\indset{E} \sum_{u \in \m{N}_t^+} Y_t(u) \e^{\sqrt{2} X_t(u) - 2t}\right).
  \end{equation*}
  Using \eqref{eq:spine-identity} and the definition of the tilted measure $\ti\P_y$, this becomes
  \begin{equation*}
    \bar\P_y(E) = y^{-1} \ti\E_y\left(\indset{E} \sum_{u \in \m{N}_t^+} Y_t(u) \check\P_y(\xi_t = u \mid \m{F}_t)\right).
  \end{equation*}
  The integrand is measurable with respect to $\m{F}_t$, on which $\check\P_y = \ti\P_y$, so
  \begin{equation*}
    \bar\P_y(E) = y^{-1} \check\E_y\left(\indset{E} \sum_{u \in \m{N}_t^+} Y_t(u) \check\P_y(\xi_t = u \mid \m{F}_t)\right).
  \end{equation*}
  Now linearity of expectation and the tower property yield
  \begin{align}
    \bar\P_y(E) &= y^{-1} \check\E_y \left(\indset{E} \sum_{u \in \m{N}_t^+} Y_t(u) \check\E_y(\ind{\xi_t = u} \mid \m{F}_t)\right)\nonumber\\
                &= y^{-1} \check\E_y\left(\indset{E}\, \check\E_y\left[\sum_{u \in \m{N}_t^+} Y_t(u) \ind{\xi_t = u} \; \Big|\; \m{F}_t \right]\right)\nonumber\\
                &= y^{-1} \check\E_y\left(\indset{E}\, \check\E_y\left[Y_t(\xi_t) \ind{\xi_t \in \m{N}_t^+} \mid \m{F}_t\right]\right)\nonumber\\
                &= y^{-1} \check\E_y\left(\indset{E} Y_t(\xi_t) \ind{Y_s(\xi_s) \geq 0 \midsemi s \leq t} \right).\label{eq:RN}
  \end{align}
  Under the law $\check\P_y$, the process $t \mapsto Y_t(\xi_t)$ is a standard Brownian motion on the real line independent of the relative displacement of all non-spine particles.
  In contrast, under $\h \P_y$, $Y_t(\xi_t)$ is a Bessel process of dimension $3$.
  Now $y^{-1}Y_t(\xi_t) \ind{Y_s(\xi_s) \geq 0 \midsemi s \leq t}$ is the Radon--Nikodym derivative of the law of the Bessel process with respect to the Wiener measure, so \eqref{eq:RN} yields $\bar\P_y(E) = \h\P_y(E)$.

  We now use the connection between the laws $\hat{\P}_y$ and $\check{\P}_y$ to study the distribution of $\xi_t$ conditioned on $\mathcal{F}_t$.
  Fix $t \geq 0$, $E \in \mathcal{F}_t$ and $u \in \mathcal{N}_t^+$.
  Then
  \begin{equation*}
    \h\P_y(\xi_t = u, E) = y^{-1} \check\E_y \left(\indset{E} \ind{\xi_t = u} Y_t(\xi_t) \ind{Y_s(\xi_s) \geq 0 \midsemi s \leq t}\right) = y^{-1} \check\E_y\left(\indset{E} Y_t(u) \ind{\xi_t = u}\right).
  \end{equation*}
  By the tower property and \eqref{eq:spine-identity},
  \begin{align*}
    \h\P_y(\xi_t = u, E) &= y^{-1} \check\E_y\left(\indset{E} Y_t(u) \check\E_y\left[\ind{\xi_t = u} \mid \m{F}_t\right]\right)\\
                         &= y^{-1} \check\E_y\left(\indset{E} Y_t(u) A_t^{-1} \e^{\sqrt{2}X_t(u) - 2t} \right).
  \end{align*}
  Again, the integrand is measurable with respect to $\m{F}_t$, on which $\check\P_y = \ti\P_y$, so by the definition of $\ti\P_y$,
  \begin{equation*}
    \h\P_y(\xi_t = u, E) = y^{-1} \ti\E_y\left(\indset{E} Y_t(u) A_t^{-1} \e^{\sqrt{2}X_t(u) - 2t} \right) = y^{-1} \E_y\left(\indset{E} Y_t(u) \e^{\sqrt{2}X_t(u) - 2t} \right).
  \end{equation*}
  Finally, we use the definition of $\bar\P_y$ and the equality between $\bar\P_y$ and $\h\P_y$ shown above to conclude that
  \begin{equation*}
    \h\P_y(\xi_t = u, E) = \h\E_y\left(\indset{E} W_t^{-1} Y_t(u) \e^{\sqrt{2}X_t(u) - 2t} \right).
  \end{equation*}
  This completes the proof of \eqref{eqn:whoIsTheSpine}.
\end{proof}
Using the spine decomposition and Proposition~\ref{prop:Durrett}, we show that the additive martingale vanishes in the long-time limit.
\begin{proof}[Proof of Proposition~\textup{\ref{prop:additive}}]
  We have already shown that $W$ is a nonnegative martingale.
  Hence by Doob's theorem, $W$ converges almost surely.
  Combining Propositions~\ref{prop:Durrett} and \ref{prop:addMart}, we have
  \begin{equation}
    \label{eq:add-infinity}
    y^{-1}\E_y(W_\infty \ind{W_\infty < \infty}) = \bar{\P}_y\big( \limsup_{t \to \infty} W_t < \infty\big) = \hat{\P}_y\big(\limsup_{t \to \infty} W_t < \infty\big).
  \end{equation}
  Note that under the law $\hat{\P}_y$, $W_t \geq Y_t(\xi_t)\e^{\sqrt{2} X_t(\xi_t) - 2t}$.
  Now $X_t(\xi_t) - \sqrt{2} t$ is a standard Brownian motion, which makes arbitrarily large excursions almost surely.
  Moreover, almost surely, the Bessel process $Y_t(\xi_t)$ does not vanish in the limit.
  It follows that $\limsup_{t \to \infty} W_t  = \infty$ $\hat{\P}_y$-a.s.
  In light of \eqref{eq:add-infinity}, we conclude that
  \begin{equation*}
    \E_y(W_\infty \ind{W_\infty < \infty}) = y \hat{\P}_y(W_\infty < \infty) = 0.
  \end{equation*}
  Since $W_t$ is a martingale, $W_\infty = 0$ $\P_y$-a.s.
\end{proof}

\subsection{Convergence of the derivative martingale and shaving}
\label{sec:cvShaved}

We now turn to the derivative martingale in the half-space.
The main result of this subsection is the following.
\begin{proposition}
  \label{prop:derivativeMartingale}
  The process $Z$ is a $\P_y$-martingale and
  \begin{equation*}
    Z_\infty \coloneqq \lim_{t \to \infty} Z_t \gneqq 0\quad \text{$\P_y$-a.s.}
  \end{equation*}
\end{proposition}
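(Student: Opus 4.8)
The plan is to follow the one-dimensional template recalled in Section~\ref{sec:preliminaries}: shave the derivative martingale, prove uniform integrability of each shaved version via a spine decomposition, and then remove the shaving using the asymptotic behavior of the half-plane maximum together with Proposition~\ref{prop:additive}. First I would define, for $\alpha > 0$, the shaved population $\mathcal{N}_t^{+,\alpha} \coloneqq \{u \in \mathcal{N}_t^+ : X_s(u) \le \sqrt{2}s + \alpha \text{ for all } s \le t\}$ and the corresponding shaved derivative martingale $Z_t^\alpha \coloneqq \sum_{u \in \mathcal{N}_t^{+,\alpha}} [\sqrt{2}t + \alpha - X_t(u)]\, Y_t(u)\, \e^{\sqrt{2}X_t(u) - 2t}$. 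A many-to-one computation (as in the martingale proof for $W$, but now with the horizontal motion being a Brownian motion conditioned to stay below $\sqrt 2 s + \alpha$, which contributes the factor $\sqrt 2 t + \alpha - X_t$, and the vertical motion being a Brownian motion conditioned to stay positive, contributing $Y_t$) shows that $Z^\alpha$ is a nonnegative $\P_y$-martingale; Doob's theorem then gives an a.s. limit $Z_\infty^\alpha$. One should check the normalization so that $(\sqrt 2 \alpha\, y)^{-1} Z^\alpha$ is mean one, or simply carry the constant through.

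Next I would set up the spine decomposition associated to $\Q_y^\alpha \coloneqq \text{const}\cdot Z^\alpha \P_y$, in analogy with Proposition~\ref{prop:addMart}. The key point is the identification of the spine's motion under $\Q_y^\alpha$: combining the horizontal tilt by $[\sqrt 2 t + \alpha - X_t]\e^{\sqrt 2 X_t - 2t}$ with the vertical tilt by $Y_t$, the spine $\xi_t$ should move so that $\sqrt{2}t + \alpha - X_t(\xi_t)$ is a $3$-dimensional Bessel process (this is the half-line analog of the computation behind the 1D shaved martingale, via Doob's $h$-transform of Brownian motion killed at $\sqrt 2 s + \alpha$ with $h(x) = \sqrt 2 t + \alpha - x$) and, independently, $Y_t(\xi_t)$ is a $3$-dimensional Bessel process started from $y$ (the half-line $h$-transform with $h(y) = y$, exactly as for $W$). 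Granting this, uniform integrability of $Z^\alpha$ follows from Proposition~\ref{prop:Durrett}: one must show $\liminf_{t\to\infty} Z_t^\alpha < \infty$ $\Q_y^\alpha$-a.s. Bounding $Z_t^\alpha$ below by the spine term $[\sqrt 2 t + \alpha - X_t(\xi_t)]\,Y_t(\xi_t)\,\e^{\sqrt 2 X_t(\xi_t) - 2t}$ is \emph{not} enough here (that term can be large), so instead I would use the spine decomposition to write $\E_{\Q_y^\alpha}[Z_t^\alpha \mid \mathcal{G}]$ conditionally on the spine trajectory and its branching times as a sum over bushes, and use that a $3$-dimensional Bessel process $R_t$ satisfies $R_t \e^{-\text{(something)}} \to 0$ fast enough that $\sum_k (\text{bush contributions})$ has finite conditional expectation a.s. This is the standard Lalley--Sellke/Kyprianou-type estimate, adapted to the product of two independent Bessel processes; the extra vertical Bessel factor only improves integrability since $\E[R_s] = \m{O}(\sqrt s)$ grows slowly.

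Finally, I would remove the shaving. By \eqref{eqn:formulaForMaxDep}, $\bar M \coloneqq \sup_{t\ge 0}(M_t - \sqrt 2 t) < \infty$ a.s., so for $\alpha > \bar M$ we have $\mathcal{N}_t^{+,\alpha} = \mathcal{N}_t^+$ for all $t$, whence $Z_t^\alpha = Z_t + \alpha W_t$ where $W$ is the critical additive martingale of Section~\ref{sec:cvAdd}. Since $W_t \to 0$ a.s. by Proposition~\ref{prop:additive} and $Z_t^\alpha \to Z_\infty^\alpha$ a.s., it follows that $Z_t$ converges a.s. to $Z_\infty = Z_\infty^\alpha$ for every $\alpha > \bar M$, and that $\alpha \mapsto Z_\infty^\alpha$ is eventually constant; in particular $Z_\infty = \lim_{\alpha\to\infty} Z_\infty^\alpha$. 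That $Z_\infty \gneqq 0$: on the event $\{\bar M < \alpha\}$ we have $Z_\infty = Z_\infty^\alpha$, and $Z_\infty^\alpha \gneqq 0$ by uniform integrability (its mean is positive); letting $\alpha \to \infty$ and using $\P_y(\bar M < \alpha) \to 1$ gives $\P_y(Z_\infty > 0) > 0$, and a $0$--$1$ argument along the BBM (or the branching property, since $\{Z_\infty = 0\}$ has probability either $0$ or, by the tilted-measure dichotomy, is ruled out) upgrades this to $Z_\infty \gneqq 0$. That $Z$ itself is a $\P_y$-martingale can be seen directly from a many-to-one computation as for $W$, or deduced from the convergence $Z_t = Z_t^\alpha - \alpha W_t$ on $\{\bar M < \alpha\}$ combined with the martingale property of $Z^\alpha$ and $W$ after a truncation argument. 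The main obstacle is the spine analysis of $Z^\alpha$: identifying the joint law of the spine (two independent $3$-dimensional Bessel processes) and then carrying out the conditional second-moment/first-moment estimate that delivers $\liminf_t Z_t^\alpha < \infty$ $\Q_y^\alpha$-a.s.; everything else is bookkeeping built on Propositions~\ref{prop:Durrett}, \ref{prop:additive}, and the 1D theory.
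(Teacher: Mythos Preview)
Your plan is correct and follows essentially the same route as the paper: define the shaved martingales $Z^\alpha$, identify the spine under $\Q_y^\alpha$ as moving with two independent Bessel-$3$ processes (horizontally via $\sqrt{2}t+\alpha-X_t(\xi_t)$, vertically via $Y_t(\xi_t)$), prove uniform integrability by bounding $\E_{\Q_y^\alpha}[Z_t^\alpha \mid \mathcal{Y}]$ as a sum over bush contributions at the spine's branching times, and then remove the shaving via $Z_t^\alpha = Z_t + \alpha W_t$ on $\{\bar M < \alpha\}$ together with Proposition~\ref{prop:additive}. Two minor remarks: the correct normalization is $\E_y Z_0^\alpha = \alpha y$ (no $\sqrt{2}$), and for the UI step the paper makes the bush-sum estimate explicit as $\sum_n S_{\tau_n} S'_{\tau_n}\e^{-\sqrt{2}(S_{\tau_n}-\alpha)}$, finite a.s.\ by the LIL for Bessel processes, which is what your ``Lalley--Sellke/Kyprianou-type estimate'' amounts to; also, $Z_\infty \gneqq 0$ already follows from $\P_y(Z_\infty > 0) > 0$, so no $0$--$1$ argument is needed.
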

Again, it is easy to check the martingale property.
\begin{proof}[Proof that $Z$ is a martingale]
  By \eqref{eqn:whoIsTheSpine} Proposition~\ref{prop:addMart},
  \begin{align*}
    \E_y Z_t &= \E_y\left( \sum_{u \in \mathcal{N}_t^+} \big[\sqrt{2}t - X_t(u)\big] Y_t(u) \e^{\sqrt{2} X_t(u) - 2t} \right)\\
             &= y\hat{\E}_y\big[\sqrt{2} t - X_t(\xi_t)\big] = 0.
  \end{align*}
  Here we have have used the fact that $\sqrt{2} t - X_t(\xi_t)$ is a standard Brownian motion under $\h\P_y$.
  Then the branching property and the martingale property of $W$ yield
  \begin{align*}
    \E(Z_{t+s} \mid \mathcal{F}_t) &= \sum_{u \in \mathcal{N}_t^+} \big[\sqrt{2}t - X_t(u)\big] \e^{\sqrt{2} X_t(u) - 2 t} \E_{Y_t(u)} W_s \\
                                   &\hspace{4cm}+ \sqrt{2}s \sum_{u \in \mathcal{N}_t^+} \e^{\sqrt{2} X_t(u) - 2 t} \E_{Y_t(u)} Z_s = Z_t
  \end{align*}
  for all $t,s \geq 0$.
\end{proof}
Because $Z$ has indefinite sign, we cannot immediately deploy the methods of the previous via a probability measure biased by $Z$.
As with $D_t$, we circumvent this issue through a family of shaved derivative martingales whose asymptotic behavior resembles that of $Z_t$.

Given $\alpha > 0$, we define
\begin{equation*}
  Z^\alpha_t \coloneqq \sum_{u \in \mathcal{N}_t^{+,\alpha}} \big[\sqrt{2}t+\alpha-X_t(u)\big] Y_t(u)\e^{\sqrt{2} X_t(u) - 2 t},
\end{equation*}
where $\mathcal{N}_t^{+,\alpha} \coloneqq \{u \in \mathcal{N}_t^+ : X_s(u) \leq \sqrt{2}s + \alpha, s \leq t\}$ denotes the collection of particles in $\m{N}_t^+$ whose trajectories remain below the line $\sqrt{2}s + \al$.
Adapting the calculation above, one can easily check that $Z^\al$ is a martingale.
Moreover, the definition of $\m{N}^{+,\al}$ implies that $Z^\alpha$ is a \emph{nonnegative} martingale and thus converges almost surely to a limit $Z_\infty^\al$.

We now use familiar tools to show that $Z^\alpha$ is uniformly integrable and thus has a nondegenerate limit.
Define the tilted measure
\begin{equation*}
  \shavetilt = \frac{Z^\alpha}{\alpha y} \P_y.
\end{equation*}
Let $\h\Q_y^\al$ denote the law of a BBM in $\H$ with spine $\xi$ such that the spine particle branches at accelerated rate $2$ and moves according to the process $(\sqrt{2} t + \alpha - S_t, S'_t)$, where $S$ and $S'$ are two independent Bessel processes of dimension $3$ started from $\alpha$ and $y$, respectively.
\begin{proposition}
  For all $t \geq 0$ and $E \in \mathcal{F}_t$, we have $\hat{\Q}^\alpha_y(E) = \shavetilt(E)$.
  Moreover,
  \begin{equation*}
    \hat{\Q}^\alpha_y(\xi_t = u \mid \mathcal{F}_t) = (Z_t^\al)^{-1} \big[\sqrt{2} t + \alpha - X_t(u)\big]Y_t(u) \e^{\sqrt{2} X_t(u) - 2 t} \ForAll u \in \m{N}_t^{+,\al}.
  \end{equation*}
\end{proposition}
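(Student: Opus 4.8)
The argument is a two-coordinate transcription of the proof of Proposition~\ref{prop:addMart}, with the \emph{shaved} one-dimensional derivative martingale $D^\alpha$ in the role previously played by the additive martingale $A$. The plan is to change measure in two stages: a ``horizontal'' tilt by $D^\alpha$ that turns $\sqrt{2}t + \alpha - X_t(\xi_t)$ into a Bessel-$3$ process started from $\alpha$, followed by a ``vertical'' Doob $h$-transform with $h(y) = y$ that turns the spine's $y$-coordinate into an independent Bessel-$3$ process started from $y$. The one genuinely new ingredient over Proposition~\ref{prop:addMart} is that we invoke the spine decomposition of the mean-one martingale $D^\alpha/\alpha$ on $\R$ (recalled in Section~\ref{sec:preliminaries}) in place of that of $A$. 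First I would augment this one-dimensional decomposition with vertical motion exactly as in the proof of Proposition~\ref{prop:addMart}: give the spine an independent standard Brownian motion from $y$ in the $y$-direction and let every non-spine child run an independent $2$-dimensional BBM. Writing $\check\Q^\alpha_y$ for the resulting law on $\R^2$ with spine and $\ti\Q^\alpha_y \coloneqq (D^\alpha/\alpha)\,\P_y$, the one-dimensional statement extends verbatim to
\begin{equation*}
  \check\Q^\alpha_y = \ti\Q^\alpha_y \text{ on } \m{F}_\infty, \qquad \check\Q^\alpha_y(\xi_t = u \mid \m{F}_t) = (D^\alpha_t)^{-1}\big[\sqrt{2}t + \alpha - X_t(u)\big]\e^{\sqrt{2}X_t(u) - 2t} \ForAll u \in \m{N}_t^\alpha.
\end{equation*}
Since the Bessel-$3$ process $\sqrt{2}t + \alpha - X_t(\xi_t)$ never vanishes under $\check\Q^\alpha_y$, the spine obeys $X_s(\xi_s) \leq \sqrt{2}s + \alpha$ for all $s$; hence $\ind{\xi_t \in \m{N}_t^{+,\alpha}} = \ind{Y_s(\xi_s) \geq 0 \midsemi s \leq t}$, $\check\Q^\alpha_y$-almost surely.

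Next I would unwind $\shavetilt$, following the chain of identities leading to \eqref{eq:RN}. Fix $t \geq 0$ and $E \in \m{F}_t$. Writing the $u$-summand of $Z^\alpha_t$ as the $u$-summand of $D^\alpha_t$ times $Y_t(u)$, and identifying the former with $D^\alpha_t\,\check\Q^\alpha_y(\xi_t = u \mid \m{F}_t)$ for $u \in \m{N}_t^{+,\alpha} \subseteq \m{N}_t^\alpha$, one obtains
\begin{equation*}
  \shavetilt(E) = \frac{1}{\alpha y}\,\E_y\!\left(\indset{E}\, D^\alpha_t \sum_{u \in \m{N}_t^{+,\alpha}} Y_t(u)\, \check\Q^\alpha_y(\xi_t = u \mid \m{F}_t)\right) = \frac{1}{y}\,\check\E^\alpha_y\!\left(\indset{E} \sum_{u \in \m{N}_t^{+,\alpha}} Y_t(u)\, \check\Q^\alpha_y(\xi_t = u \mid \m{F}_t)\right),
\end{equation*}
where the second equality uses that the sum is $\m{F}_t$-measurable and that $D^\alpha_t/\alpha$ is the density of $\ti\Q^\alpha_y = \check\Q^\alpha_y$ on $\m{F}_t$. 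Collapsing the sum by the tower property and linearity of expectation exactly as in the derivation of \eqref{eq:RN}, and using $\ind{\xi_t \in \m{N}_t^{+,\alpha}} = \ind{Y_s(\xi_s) \geq 0 \midsemi s \leq t}$ from the first paragraph, we arrive at
\begin{equation*}
  \shavetilt(E) = \frac{1}{y}\,\check\E^\alpha_y\!\left(\indset{E}\, Y_t(\xi_t)\ind{Y_s(\xi_s) \geq 0 \midsemi s \leq t}\right).
\end{equation*}
Under $\check\Q^\alpha_y$ the process $t \mapsto Y_t(\xi_t)$ is a standard Brownian motion from $y$ independent of the relative displacements of the non-spine particles, and $y^{-1}Y_t(\xi_t)\ind{Y_s(\xi_s) \geq 0 \midsemi s \leq t}$ is the Radon--Nikodym derivative on $[0,t]$ of the law of a Bessel-$3$ process from $y$ with respect to Wiener measure. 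Since under $\hat{\Q}^\alpha_y$ the spine's vertical coordinate is exactly such a Bessel-$3$ process, independent of everything else, this gives $\shavetilt(E) = \hat{\Q}^\alpha_y(E)$, the first claim. The conditional spine law then follows by the same bookkeeping as at the close of the proof of Proposition~\ref{prop:addMart}: for $u \in \m{N}_t^{+,\alpha}$ and $E \in \m{F}_t$, insert $\ind{\xi_t = u}$ into the last display --- so $Y_t(\xi_t) = Y_t(u)$ and the killing indicator becomes $1$ --- then apply the tower property together with the conditional spine law above and change measure back to $\P_y$ on $\m{F}_t$; this produces $\hat{\Q}^\alpha_y(\xi_t = u, E) = (\alpha y)^{-1}\E_y(\indset{E}\,[\sqrt{2}t + \alpha - X_t(u)]Y_t(u)\,\e^{\sqrt{2}X_t(u) - 2t})$, whose integrand is the $u$-summand of $(\alpha y)^{-1} Z^\alpha_t$, and the claim follows from $\shavetilt = \hat{\Q}^\alpha_y$.

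The step I expect to require the most care is the interplay between the two tilts and the barrier population $\m{N}_t^{+,\alpha}$: one must confirm that the horizontal tilt by $D^\alpha$ makes the barrier constraint automatic \emph{along the spine}, so that the only indicator surviving on the spine term is the killing condition $\ind{Y_s(\xi_s) \geq 0 \midsemi s \leq t}$ --- which is precisely what the vertical $h$-transform is designed to absorb. One should also double-check the normalizations ($D^\alpha_0 = \alpha$ and $Z^\alpha_0 = \alpha y$ under $\P_y$) so that $\shavetilt$ is genuinely a probability measure. Beyond these points, the argument is a routine two-coordinate adaptation of Proposition~\ref{prop:addMart}, and I would not expect further obstacles.
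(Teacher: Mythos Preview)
Your argument is correct and structurally mirrors the paper's, but the two tilts are performed in the opposite order. The paper uses Proposition~\ref{prop:addMart} itself as input: starting from the $W$-spine law $\hat{\P}_y$ (under which the spine already has vertical Bessel-$3$ motion and horizontal Brownian motion with drift $\sqrt{2}$), it then applies the horizontal $h$-transform $\alpha^{-1}[\sqrt{2}t+\alpha-X_t(\xi_t)]\,\ind{\xi_t\in\m{N}_t^{+,\alpha}}$ to convert the horizontal coordinate to Bessel-$3$. You instead start from the one-dimensional $D^\alpha$-spine decomposition (augmented with vertical Brownian motion), under which the horizontal barrier is already automatic, and then apply the vertical $h$-transform $y^{-1}Y_t(\xi_t)\,\ind{Y_s(\xi_s)\geq 0}$. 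Both routes land at the same product of two independent Bessel-$3$ processes; the paper's choice has the minor advantage of reusing a result it has just proved in full, whereas your route invokes the $D^\alpha$-spine decomposition that Section~\ref{sec:preliminaries} only sketches. Your observation that the horizontal barrier becomes $\check\Q^\alpha_y$-surely automatic along the spine, reducing $\ind{\xi_t\in\m{N}_t^{+,\alpha}}$ to the vertical killing indicator, is exactly the counterpart of the paper's use of the fact that under $\hat{\P}_y$ the vertical constraint is automatic.
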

\begin{proof}
  We mimic the proof of Proposition~\ref{prop:addMart}, using the proposition itself as input.
  Fix $t \geq 0$ and $E \in \m{F}_t$.
  Using the definition of $\bar{\P}_y$, \eqref{eqn:whoIsTheSpine}, and $\bar\P_y = \h \P_y$ on $\m{F}_\infty$, we have
  \begin{equation*}
    \shavetilt(E) = \al^{-1} \h{\E}_y\left(\indset{E} \sum_{u \in \m{N}_t^{+,\al}} \big[\sqrt{2} t + \alpha - X_t(u)\big] \h\P_y(\xi_t = u \mid \m{F}_t) \right).
  \end{equation*}
  Writing $\h\P_y(\xi_t = u \mid \m{F}_t)$ as an expectation and manipulating the tower property, we arrive at
  \begin{equation*}
    \shavetilt(E) = \al^{-1}\hat{\E}_y\Big(\indset{E} \big[\sqrt{2} t + \alpha - X_t(\xi_t)\big]\ind{\xi_t \in \mathcal{N}_t^{+,\alpha}}\Big).
  \end{equation*}
  Finally, we recognize the Radon--Nikodym derivative of a Bessel-3 process with respect to the Wiener measure and obtain $\shavetilt(E) = \h\Q_y^\al(E)$.

  Similarly, given $u \in \m{N}_t^{+,\al}$, we find
  \begin{equation*}
    \h\Q_y^\al(\xi_t = u, E) = \al^{-1}\hat{\E}_y\Big(\indset{E} \ind{\xi_t = u} \big[\sqrt{2} t + \alpha - X_t(u)\big]\Big).
  \end{equation*}
  Using the tower property, \eqref{eqn:whoIsTheSpine}, and $\bar\P_y = \h \P_y$ on $\m{F}_\infty$, we have
  \begin{equation*}
    \h\Q_y^\al(\xi_t = u, E) = (\al y)^{-1} \E_y\left(\indset{E} Y_t(u) \big[\sqrt{2} t + \alpha - X_t(u)\big] \e^{\sqrt{2} X_t(u) - 2t}\right).
  \end{equation*}
  Recalling the definition of $\shavetilt$ and the equality between $\shavetilt$ and $\h \Q_y^\al$ shown above, we conclude that
  \begin{equation*}
    \h\Q_y^\al(\xi_t = u, E) = \E_{\h\Q_y^\al}\left(\indset{E} (Z_t^\al)^{-1} Y_t(u) \big[\sqrt{2} t + \alpha - X_t(u)\big] \e^{\sqrt{2} X_t(u) - 2t}\right).
  \end{equation*}
  Allowing $E$ to vary over $\m{F}_t$, we obtain the desired result.
\end{proof}
We now use the spine decomposition to prove uniform integrability.
\begin{lemma}
  \label{lem:cvShavedMartingale}
  For all $\alpha > 0$, $Z^\alpha$ is a uniformly integrable martingale that converges $\P_y$-almost surely as $t \to \infty$ to a nonnegative, nondegenerate random variable $Z^\alpha_\infty$.
\end{lemma}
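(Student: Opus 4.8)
The plan is to follow the one-dimensional shaving argument recalled in Section~\ref{sec:preliminaries}. We already know that $Z^\alpha$ is a nonnegative martingale, so by Doob's theorem it converges $\P_y$-almost surely to a limit $Z^\alpha_\infty \geq 0$; the substance of the lemma is that this limit is nondegenerate. By Proposition~\ref{prop:Durrett}, applied to the mean-one martingale $(\alpha y)^{-1}Z^\alpha$ and its tilt $\shavetilt = \h\Q^\alpha_y$, it suffices to prove that
\[
  \liminf_{t\to\infty} Z^\alpha_t < \infty \quad \h\Q^\alpha_y\text{-almost surely.}
\]
Indeed, this forces $\E_y Z^\alpha_\infty = \alpha y > 0$, hence the uniform integrability of $Z^\alpha$ and $Z^\alpha_\infty \gneqq 0$.

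To control $Z^\alpha$ under $\h\Q^\alpha_y$ I would use the spine decomposition of $\shavetilt = \h\Q^\alpha_y$ just established. Under $\h\Q^\alpha_y$, the spine $\xi$ remains in $\m{N}^{+,\alpha}$ at all times, $S_t \coloneqq \sqrt 2 t + \alpha - X_t(\xi_t)$ is a Bessel-$3$ process started from $\alpha$, $S'_t \coloneqq Y_t(\xi_t)$ is an independent Bessel-$3$ process started from $y$, branch points along the spine arrive at rate $2$, and at each of them a non-spine child founds an independent branching Brownian motion in $\H$ with killing on $\partial\H$. I would then split $Z^\alpha_t$ into the contribution of the spine particle plus the contributions $\zeta^{(v)}_t$ of the bushes rooted at the branch points $v$ with birth time $s_v \leq t$. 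A direct computation gives the spine contribution $\e^{\sqrt 2\alpha} S_t S'_t \e^{-\sqrt 2 S_t}$; moreover, since the trajectory of a particle coincides with that of the spine up to the relevant branch point, the bush at $v$ is, conditionally on the spine, a shifted shaved derivative martingale $Z^{\alpha'_v}$ with parameter $\alpha'_v = S_{s_v} > 0$ started from $\big(X_{s_v}(\xi), Y_{s_v}(\xi)\big)$. In particular $t\mapsto\zeta^{(v)}_t$ is, conditionally on the spine, a nonnegative martingale with value $\zeta^{(v)}_{s_v} = \e^{\sqrt 2\alpha} S_{s_v} S'_{s_v} \e^{-\sqrt 2 S_{s_v}}$ at its birth time.

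Let $\m{G}$ be the $\sigma$-field generated by the spine trajectory, the branch times, and the spine's offspring choices, but not the bushes. Conditional Fatou together with the (conditional) martingale property of each $\zeta^{(v)}$ yields
\[
  \E_{\h\Q^\alpha_y}\big[\liminf_{t\to\infty} Z^\alpha_t \mid \m{G}\big] \leq \liminf_{t\to\infty} \E_{\h\Q^\alpha_y}\big[Z^\alpha_t \mid \m{G}\big] = \lim_{t\to\infty}\Big( \e^{\sqrt 2\alpha} S_t S'_t \e^{-\sqrt 2 S_t} + \sum_{s_v \leq t} \zeta^{(v)}_{s_v} \Big).
\]
Since $S$ is transient, $S_t\to\infty$, so the spine term vanishes in the limit, and it remains to show that $\Sigma \coloneqq \e^{\sqrt 2\alpha}\sum_v S_{s_v} S'_{s_v} \e^{-\sqrt 2 S_{s_v}}$ is $\h\Q^\alpha_y$-a.s. finite. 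Conditioning on $(S, S')$ alone and using that the branch times form a rate-$2$ Poisson process independent of $(S,S')$, one gets $\E[\Sigma \mid S, S'] = 2\e^{\sqrt 2\alpha}\int_0^\infty S_s S'_s \e^{-\sqrt 2 S_s}\,ds$, so it is enough to show this integral is a.s. finite. For this I would use elementary pathwise bounds on the two Bessel-$3$ processes: almost surely $S_s + S'_s = \m{O}(s)$, while by transience $S_s$ grows at least like $\sqrt s/\log s$ for large $s$, so $\e^{-\sqrt 2 S_s}$ ultimately decays faster than any power of $s$ and the integrand is integrable. This gives $\liminf_{t\to\infty} Z^\alpha_t < \infty$ $\h\Q^\alpha_y$-a.s., and Proposition~\ref{prop:Durrett} then delivers uniform integrability, a.s. convergence, and nondegeneracy of $Z^\alpha_\infty$.

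The main obstacle is the final integrability estimate. In one dimension the corresponding bush sum $\int_0^\infty S_s\e^{-\sqrt 2 S_s}\,ds$ already has finite expectation, so a first-moment computation closes the argument; here the extra transverse factor $S'_s$, of order $\sqrt s$, makes $\E\,\Sigma$ infinite, and one must genuinely exploit the pathwise growth of the Bessel-$3$ process $S$ rather than its mean. A secondary point requiring a little care is that the spine contribution $\e^{\sqrt 2\alpha} S_t S'_t \e^{-\sqrt 2 S_t}$ converges to $0$ outright, not merely along a subsequence, which again follows from the transience of $S$ dominating the at-most-polynomial growth of $S_t$ and $S'_t$.
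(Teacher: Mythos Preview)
Your approach is essentially the paper's: spine decomposition under $\h\Q^\alpha_y$, condition on the spine data, apply Fatou, and show that the sum $\sum_n S_{\tau_n}S'_{\tau_n}\e^{-\sqrt 2 S_{\tau_n}}$ (equivalently your integral) is almost surely finite via pathwise growth of the Bessel-$3$ process $S$; the extra conditioning on $(S,S')$ that turns the Poisson sum into an integral is a cosmetic variation.

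One technical slip: the lower bound ``$S_s$ grows at least like $\sqrt s/\log s$'' is false. By the Dvoretzky--Erd\H{o}s test for three-dimensional Brownian motion, $S_s < c\sqrt s/\log s$ infinitely often for every $c>0$, since $\int^\infty (c/\log s)\,s^{-1}\,\d s=\infty$. The bound that does hold, and that the paper invokes, is $S_s\ge s^{1/2-\eps}$ eventually for any fixed $\eps>0$ (now $\int^\infty s^{-1-\eps}\,\d s<\infty$). Combined with $S_s,S'_s\le s^{1/2+\eps}$ from the upper LIL, this gives $S_sS'_s\e^{-\sqrt 2 S_s}\le s^{1+2\eps}\exp(-\sqrt 2\,s^{1/2-\eps})$ for large $s$, which is integrable. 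With this correction your argument closes exactly as written.
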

\begin{proof}
  By Proposition \ref{prop:Durrett}, $Z^\al$ is uniformly integrable if and only if
  \begin{equation}
    \label{eq:UI-goal}
    \shavetilt(Z_\infty^\alpha < \infty) = 1.
  \end{equation}
  Let $(\tau_n)_{n \in \N}$ denote the increasing sequence of times at which the spine gives birth.
  Under $\hat{\Q}^\alpha_y$, the $(\tau_n)_{n \in \N}$ are the atoms of a Poisson process with intensity $2$.
  Let
  \begin{equation*}
    \mathcal{Y} \coloneqq \sigma\left(\big(X_s(\xi_s), Y_s(\xi_s), \tau_n\big) \midsemi s \geq 0, \, n \in \N\right)
  \end{equation*}
  denote the filtration associated to the trajectory of the spine and the birth times.
  The spine has position $(\sqrt{2} t + \alpha - S_t, S'_t)$, where $S$ and $S'$ are two independent Bessel processes of dimension $3$.
  Hence the martingale property of $Z$ for standard BBM yields
  \begin{equation*}
    \E_{\hat{\Q}^\alpha_y}(Z_t^\alpha \mid \mathcal{Y}) = {S_tS_t'} \e^{-\sqrt{2} (S_t - \alpha)} + \sum_{n =1}^\infty S_{\tau_n} S'_{\tau_n} \e^{-\sqrt{2}(S_{\tau_n} - \alpha)} \ind{\tau_n < t} \quad \hat{\Q}^\alpha_y\text{-a.s.}
  \end{equation*}
  Using Fatou's lemma and the transience of Bessel-3 processes, we find
  \begin{equation*}
    \E_{\hat{\Q}^\alpha_y}(Z_\infty^\alpha \mid \mathcal{Y}) \leq \sum_{n = 1}^\infty S_{\tau_n} S'_{\tau_n} \e^{-\sqrt{2}(S_{\tau_n} - \alpha)} \quad \hat{\Q}^\alpha_y\text{-a.s.}
  \end{equation*}
  Now, the law of the iterated logarithm for Bessel processes implies that for any $\eps > 0$, almost surely $t^{1/2- \eps} \leq S_t \leq t^{1/2+\eps}$ for sufficiently large $t$.
  It follows that
  \begin{equation*}
    \sum_{n = 1}^\infty S_{\tau_n} S'_{\tau_n} \e^{-\sqrt{2}(S_{\tau_n} - \alpha)} < \infty \quad \hat{\Q}^\alpha_y\text{-a.s.}
  \end{equation*}
  This proves \eqref{eq:UI-goal}.
  As a result, $Z^\alpha$ is a closed martingale that converges $\P_y$-almost surely and in $L^1$ to $Z^\alpha_\infty$.
  In particular, we have
  \begin{equation*}
    \E_y Z^\alpha_\infty = \E_y Z^\alpha_0 = \alpha y>0,
  \end{equation*}
  which shows that $Z^\alpha_\infty$ is positive with positive probability.
\end{proof}

We now complete the proof of Proposition~\ref{prop:derivativeMartingale} by showing the convergence of the derivative martingale $Z$ to the nonnegative limit $\lim_{\alpha \to \infty} Z_\infty^\alpha$.
\begin{proof}[Proof of Proposition~\textup{\ref{prop:derivativeMartingale}}]
  We first note that $\alpha \mapsto Z^\alpha_t$ is increasing for all $t > 0$, so $\alpha \mapsto Z^\alpha_\infty$ is also $\P^y$-a.s. increasing.
  Hence $\lim_{\alpha \to \infty} Z_\infty^\alpha$ is well-defined $\P_y\text{-a.s.}$.
  
  Next, using \eqref{eqn:formulaForMaxDep}, we observe that
  \begin{equation}
    \label{eqn:unifUb}
    \bar{M}^+ \coloneqq \sup_{t \geq 0} \sup_{u \in \mathcal{N}^+_t} X_t(u) - \sqrt{2}t < \infty \quad \text{$\P^y$-a.s.}
  \end{equation}
  For all $\alpha > \bar{M}^+$ and $t \geq 0$, we have
  \begin{align*}
    Z^\alpha_t &= \sum_{u \in \mathcal{N}^+_t} \big[\sqrt{2}t+\alpha-X_t(u) \big] Y_t(u)\e^{\sqrt{2} X_t(u) - 2 t}\\
               &= \sum_{u \in \mathcal{N}^+_t}\big[\sqrt{2}t-X_t(u) \big] Y_t(u)\e^{\sqrt{2} X_t(u) - 2 t} + \alpha \sum_{u \in \mathcal{N}^+_t} Y_t(u)\e^{\sqrt{2} X_t(u) - 2 t} = Z_t + \alpha W_t.
  \end{align*}
  Taking $t \to \infty$ and using Proposition~\ref{prop:additive}, we conclude that $\P^y$-a.s., we have
  \begin{equation*}
    \lim_{t \to \infty} Z_t = Z^\alpha_\infty \ForAll \alpha > \bar{M}^+.
  \end{equation*}
  In particular, the family $(Z_\infty^\al)_{\al > 0}$ stabilizes once $\al$ exceeds the (random) threshold $\bar{M}^+$.
  Since $\bar{M}^+$ is almost surely finite,
  \begin{equation*}
    Z_\infty \coloneqq \lim_{t \to \infty} Z_t = \lim_{\al \to \infty} Z^\al_\infty.
    \qedhere
  \end{equation*}
\end{proof}

\subsection{Supercritical additive martingales}
\label{subsec:wlambdamu}

We now turn to the supercritical additive martingales
\begin{equation*}
  W_t^{\lambda,\mu} = \sum_{u \in \mathcal{N}^+_t} \e^{\lambda X_t(u)} \sinh[\mu Y_t(u)] \e^{-(\lambda^2/2 + \mu^2/2 + 1)t}
\end{equation*}
parameterized by $\lambda, \mu > 0$.
Using the many-to-one lemma and the branching property, one can readily check that $W^{\lambda, \mu}$ is a martingale; this is sufficiently similar to our previous arguments that we omit the proof.
Because $W^{\lambda,\mu}$ is nonnegative, it has an almost sure limit $W_\infty^{\lambda, \mu} \geq 0$.
In this subsection we prove the following dichotomy for $W^{\lambda,\mu}_\infty$.
\begin{lemma}
  \label{lem:additiveMartingale}
  If $\lambda^2 +\mu^2 < 2$, then the martingale $W^{\lambda, \mu}$ is uniformly integrable and $W_\infty^{\lambda,\mu} \gneqq 0$.
  Otherwise, $W_\infty^{\lambda, \mu} = 0$ $\P_y$-a.s.
\end{lemma}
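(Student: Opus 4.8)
The plan is to mimic the spine analyses of Sections~\ref{sec:cvAdd} and~\ref{sec:cvShaved}. Since $W^{\lambda,\mu}$ is a nonnegative martingale with $W^{\lambda,\mu}_0 = \sinh(\mu y)$, we introduce the tilted law $\bar\P^{\lambda,\mu}_y \coloneqq \sinh(\mu y)^{-1}\, W^{\lambda,\mu}\, \P_y$ and establish the attendant spine decomposition exactly as in Proposition~\ref{prop:addMart}: under $\bar\P^{\lambda,\mu}_y$ the BBM carries a distinguished spine $\xi$ that branches at rate $2$, with $\bar\P^{\lambda,\mu}_y(\xi_t = u \mid \m{F}_t)$ proportional to $\e^{\lambda X_t(u)}\sinh(\mu Y_t(u))$, and whose position is $(X_t(\xi_t), Y_t(\xi_t)) = (B_t + \lambda t, \Sigma_t)$. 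Here $B$ is a standard Brownian motion and $\Sigma$ is the Doob $h$-transform of Brownian motion killed at $0$ by $h(y) = \sinh(\mu y)$ --- a transient diffusion with generator $\tfrac12\partial_{yy} + \mu\coth(\mu y)\,\partial_y$, started from $y$. The splitting here is the expected one: $\e^{\lambda X_t - \lambda^2 t/2}$ tilts the horizontal motion, while $\sinh(\mu Y_t)\e^{-\mu^2 t/2}$ is space-time harmonic for killed Brownian motion and vanishes on the boundary, so the Radon--Nikodym computation merely transcribes the proof of Proposition~\ref{prop:addMart}. Two soft facts about $\Sigma$ will drive the estimates. From $\Sigma_t = \mu t + B'_t + \mu\int_0^t(\coth(\mu\Sigma_s) - 1)\,ds$ with $\coth - 1 > 0$ we read off the lower bound $\Sigma_t \ge \mu t + B'_t$ for all $t$; moreover $\Sigma$ is Bessel-$3$--like near the origin, hence a.s. bounded away from $0$, so $\coth(\mu\Sigma_s)-1$ is a.s. integrable and $\Sigma_t = \mu t + B'_t + O(1)$ a.s. In particular $\Sigma_t = \mu t + o(t)$, so $\sinh(\mu\Sigma_t) = \e^{\mu\Sigma_t}(1 + o(1))$, and by Proposition~\ref{prop:Durrett} the dichotomy is reduced to the behavior of $\limsup_{t\to\infty} W^{\lambda,\mu}_t$ under $\bar\P^{\lambda,\mu}_y$.

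For the subcritical regime $\lambda^2 + \mu^2 < 2$, let $\m{Y}$ be the $\sigma$-algebra generated by the spine trajectory and the birth times $(\tau_n)_{n\ge1}$ of the spine, which under $\bar\P^{\lambda,\mu}_y$ form a rate-$2$ Poisson process. Applying the martingale property of $W^{\lambda,\mu}$ to each subtree branching off the spine, as in the proof of Lemma~\ref{lem:cvShavedMartingale}, gives
\begin{equation*}
  \E_{\bar\P^{\lambda,\mu}_y}\!\big(W^{\lambda,\mu}_t \mid \m{Y}\big) = \e^{\lambda X_t(\xi_t)}\sinh(\mu Y_t(\xi_t))\,\e^{-(\lambda^2/2 + \mu^2/2 + 1)t} + \sum_{n \ge 1} \e^{\lambda X_{\tau_n}(\xi_{\tau_n})}\sinh(\mu Y_{\tau_n}(\xi_{\tau_n}))\,\e^{-(\lambda^2/2 + \mu^2/2 + 1)\tau_n}\,\ind{\tau_n \le t}.
\end{equation*}
Substituting $X_t(\xi_t) = \lambda t + o(t)$ and $\Sigma_t = \mu t + o(t)$, each summand equals $\exp\!\big((\tfrac{\lambda^2 + \mu^2}{2} - 1)\tau_n + o(\tau_n)\big)$; since $\lambda^2 + \mu^2 < 2$ and $\tau_n/n \to 1/2$, the series converges a.s., and the spine term tends to $0$. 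Conditional Fatou then yields $\E_{\bar\P^{\lambda,\mu}_y}(\liminf_t W^{\lambda,\mu}_t \mid \m{Y}) < \infty$ a.s., hence $W^{\lambda,\mu}_\infty < \infty$ $\bar\P^{\lambda,\mu}_y$-a.s.; Proposition~\ref{prop:Durrett} gives uniform integrability, and then $\E_y W^{\lambda,\mu}_\infty = \sinh(\mu y) > 0$, so $W^{\lambda,\mu}_\infty \gneqq 0$.

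For $\lambda^2 + \mu^2 \ge 2$ we discard all but the spine: $W^{\lambda,\mu}_t \ge \e^{\lambda X_t(\xi_t)}\sinh(\mu\Sigma_t)\,\e^{-(\lambda^2/2 + \mu^2/2 + 1)t}$. Using $\Sigma_t \ge \mu t + B'_t$, the bound $\sinh z \ge \tfrac14\e^z$ valid for $z$ large, and $X_t(\xi_t) = \lambda t + B_t$, this is at least $\tfrac14\exp\!\big((\tfrac{\lambda^2 + \mu^2}{2} - 1)t + \lambda B_t + \mu B'_t\big)$ for all large $t$. When $\lambda^2 + \mu^2 > 2$ the linear term dominates and the bound diverges; when $\lambda^2 + \mu^2 = 2$ it reduces to $\tfrac14\e^{\lambda B_t + \mu B'_t}$, and $\lambda B_t + \mu B'_t$ is a Brownian motion of variance-rate $\lambda^2 + \mu^2 > 0$, so its $\limsup$ is $+\infty$ a.s. Either way $\limsup_t W^{\lambda,\mu}_t = \infty$ $\bar\P^{\lambda,\mu}_y$-a.s., so taking $E = \Omega$ in Proposition~\ref{prop:Durrett} forces $\E_y W^{\lambda,\mu}_\infty = 0$, hence $W^{\lambda,\mu}_\infty = 0$ $\P_y$-a.s.

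The only work beyond routine bookkeeping is the vertical spine process $\Sigma$: one must construct the $h$-transform cleanly, control its degeneration to a Bessel-$3$ process near $0$, and extract the growth $\Sigma_t \sim \mu t$ with the one-sided control used above. I expect this to be the main (if modest) obstacle; everything else is a faithful repetition of the spine computations already carried out for $W$ and for the shaved martingales $Z^\alpha$.
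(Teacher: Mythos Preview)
Your proof is correct and follows essentially the same spine-decomposition route as the paper. The only cosmetic difference is in how the vertical spine is described: you present $\Sigma$ as the Doob $h$-transform of killed Brownian motion by $h(y)=\sinh(\mu y)$ and analyze its drift via the SDE, whereas the paper realizes the same process as a Brownian motion with drift $\mu$ conditioned to stay positive (using Girsanov and the $h$-function $1-\e^{-2\mu y}$); a quick computation of the generator shows these coincide, and both arguments then proceed identically through Proposition~\ref{prop:Durrett}.
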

\begin{proof}
  We again employ a spine decomposition.
  Given $\lambda, \mu, y > 0$, let $\superspine$ denote the law of the following BBM with spine $\xi$.
  The spine particle moves according to a process $(X_t,Y_t)$, where $X$ is a Brownian motion with drift $\lambda$ started from $0$ and $Y$ is an independent Brownian motion with drift $\mu$ started from $y$ and conditioned to stay positive.
  The spine particle branches at the accelerated rate 2 and non-spine particles behave as independent BBMs in $\H.$

  Adapting the proof of Proposition~\ref{prop:addMart}, one can show that $\superspine$ coincides with the tilted measure $\supertilt = \frac{W^{\lambda,\mu}}{\sinh(\mu y)} \P_y$ on $\m F_\infty$.
  We omit most of the repetitive details, but highlight one new feature.
  In analogy with the reasoning following \eqref{eq:RN}, we must show that
  \begin{equation}
    \label{eq:RN-drift}
    \frac{\sinh[\mu Y_t(\xi_t)]}{\sinh(\mu y)} \e^{-\frac{\mu^2 t}{2}} \ind{Y_s(\xi_s) \geq 0 \midsemi s \leq t}
  \end{equation}
  is the Radon--Nikodym derivative of a Brownian motion with drift $\mu$ conditioned to stay positive relative to the Wiener measure.
  To see this, let $B$ be a standard Brownian motion and $V$ be a Brownian motion with drift $\mu$, both started from $y$.
  Given $t \geq 0$, let $B_{[0, t]}$ denote the process $B$ on the time interval $[0, t]$.
  For any $f \in \m{C}_c\big(\m{C}([0, t])\big)$, we use the Girsanov transformation to write
  \begin{equation}
    \label{eq:Girsanov}
    \hspace*{-5pt}\E \left[\frac{\sinh(\mu B_t)}{\sinh(\mu y)} \e^{-\frac{\mu^2 t}{2}} \ind{B_{[0,t]}\geq 0} f(B_{[0,t]})\right]\! = \E\left[\frac{1 - \e^{-2\mu V_t}}{1 - \e^{-2\mu y}} \ind{V_{[0,t]}\geq 0} f(V_{[0,t]})\right]\!.
  \end{equation}
  Recall that under $\superspine$, $Y(\xi)$ is a Brownian motion with drift $\mu$ conditioned to stay positive.
  Since
  \begin{equation*}
    \P(V_t \geq 0 \text{ for all } t \geq 0) = 1 - \e^{-2\mu y},
  \end{equation*}
  Doob's $h$-transform theory implies that the right side of \eqref{eq:Girsanov} is $\h{\E}_y^{\lambda, \mu}f\big(Y_{[0,t]}(\xi_t)\big)$.
  This confirms that the expression in \eqref{eq:RN-drift} is the claimed Radon--Nikodym derivative and justifies this form of the spine decomposition theorem.

  Observe that $\superspine$-a.s., we have
  \begin{equation*}
    W_t^{\lambda,\mu} \geq \e^{\lambda X_t(\xi_t) - \lambda^2t/2} \sinh[\mu Y_t(\xi_t)] \e^{-\mu^2t/2 } \e^{-t}.
  \end{equation*}
  Recalling that $X(\xi)$ and $Y(\xi)$ have drift $\lambda$ and $\mu$, respectively, we have
  \begin{equation*}
    \liminf_{t \to \infty} \frac{1}{t} \log W_t^{\lambda,\mu} \geq \frac{\lambda^2 + \mu^2}{2} - 1 \quad \superspine\text{-a.s.}    
  \end{equation*}
  Hence, if $\lambda^2 + \mu^2 > 2$ (or if $\lambda^2 + \mu^2= 2$ by the law of iterated logarithm), we have $\supertilt(W_\infty^{\lambda,\mu} < \infty) = 0$.
  Then Proposition~\ref{prop:Durrett} implies that $W_\infty^{\lambda,\mu} = 0$ $\P_y$-a.s.

  If $\lambda^2 + \mu^2 < 2$, we condition with respect to the spine's position and branching times $(\tau_n)_{n \in \N}$.
  The martingale property for $W^{\lambda, \mu}$ yields the  $\superspine$-a.s. bound
  \begin{equation*}
    \hat{\E}^{\lambda,\mu}_y(W_\infty^{\lambda,\mu} \mid \mathcal{Y}) \leq \frac{1}{2}\sum_{n = 1}^\infty \e^{\lambda X_{\tau_n} + \mu Y_{\tau_n} -(\lambda^2/2 + \mu^2/2+1) \tau_n.}
  \end{equation*}
  This is $\hat{\P}^{\lambda,\mu}$-almost surely finite, so Proposition~\ref{prop:Durrett} implies that $W^{\lambda,\mu}$ is uniformly integrable under $\P_y$.
  It follows that $\E_y W_\infty^{\lambda,\mu} = \sinh(\mu y) > 0$, so $W_\infty^{\lambda,\mu} \gneqq 0$.
\end{proof}
\begin{proof}[Proof of Proposition~\textup{\ref{prop:mg-convergence}}]
  The proposition unites Proposition~\ref{prop:derivativeMartingale} and Lemma~\ref{lem:additiveMartingale}.
\end{proof}

\section{Constructions of KPP traveling waves}
\label{sec:construction}

We now use the nondegenerate martingale limits $Z_\infty$ and $W^{\lambda,\mu}_\infty$ from Proposition~\ref{prop:mg-convergence} to construct traveling waves for the KPP equation in $\H$.
We rely on McKean's link between BBM and the KPP equation, as well as ``smoothing equations'' satisfied in law by the martingale limits.

\subsection{A minimal-speed wave}

Recall from Theorem \ref{thm:construction} the definition \eqref{eq:defTW} of $\Phi$:
\begin{equation*}
  \Phi(x,y) = 1 - \E_y \exp\left(-\e^{-\sqrt{2}x} Z_\infty\right).
\end{equation*}
We show that $\Phi$ is a traveling wave on $\H$ of speed $c_* = \sqrt{2}$ in the sense of Definition~\ref{def:TW}.
Our main tool is the McKean representation of solutions of \eqref{eq:KPP}.
This connection between BBM and the KPP equation was first observed by McKean \cite{McKean75} in one dimension.
Here, we state a straightforward analogue valid in $\H$.
\begin{proposition}[McKean representation]
  \label{prop:McK}
  If $\phi \in L^\infty(\H)$ satisfies $0 \leq \phi \leq 1$, then
  \begin{equation*}
    u(t,x,y) \coloneqq \E_y\left(1 - \prod_{v \in \mathcal{N}_t^+} \big[1 -\phi\big(x - X_t(v),Y_t(v)\big) \big] \right)
  \end{equation*}
  is the unique solution of \eqref{eq:KPP} with initial condition $u(0, \anon) = \phi$.
\end{proposition}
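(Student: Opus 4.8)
The plan is to reduce the proposition to a standard parabolic well-posedness statement by deriving, via McKean's first-branching (renewal) decomposition, the mild integral equation solved by $u$. Throughout, let $(P^D_t)_{t \geq 0}$ denote the Dirichlet heat semigroup of $\tfrac12 \Delta$ on $\H$: for bounded $\psi \colon \H \to \R$,
\[
  (P^D_t \psi)(x,y) \coloneqq \E_y\big[\psi(x - X_t, Y_t)\, \ind{T_0 > t}\big], \qquad T_0 \coloneqq \inf\{s \geq 0 : Y_s = 0\},
\]
where $(X,Y)$ is a planar Brownian motion from $(0,y)$; this is the semigroup governing the linear Dirichlet problem $\partial_t v = \tfrac12 \Delta v$ on $\H$. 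Since $0 \leq \phi \leq 1$ and an empty product equals $1$, the quantity $u(t,x,y)$ is the expectation of a $[0,1]$-valued random variable, hence well defined with $0 \leq u \leq 1$; moreover $\mathcal{N}_t^+$ is $\P_y$-a.s.\ finite, so the product is honest and the branching manipulations below carry no integrability subtleties.

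The core of the argument is the renewal decomposition. Condition on the time $\tau \sim \mathrm{Exp}(1)$ of the first branching event (independent of the motion) and on whether the initial particle is absorbed on $\partial\H$ before $\tau$. On $\{\tau > t\}$, the population $\mathcal{N}_t^+$ is either the lone surviving particle at $(X_t, Y_t)$ (if $T_0 > t$) or empty (if $T_0 \leq t$), and the contribution to $u(t,x,y)$ is $e^{-t}(P^D_t \phi)(x,y)$. On $\{\tau = s \leq t\}$: if $T_0 \leq s$ the particle is killed before it can branch, contributing $0$; if $T_0 > s$ it branches at $(X_s, Y_s)$ into two particles that initiate independent half-plane BBMs from that site, so by the branching property and the strong Markov property each subtree's product has $\mathcal{F}_s$-conditional expectation $1 - u(t-s, x - X_s, Y_s)$, whence the contribution is the $P^D_s$-average of $1 - \big(1 - u(t-s,\cdot)\big)^2 = 2u(t-s,\cdot) - u(t-s,\cdot)^2$. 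Summing the pieces and substituting $r = t - s$ yields
\[
  u(t) = e^{-t} P^D_t \phi + \int_0^t e^{-(t-r)}\, P^D_{t-r}\big[2u(r) - u(r)^2\big] \, dr .
\]
Writing $\tfrac12 \Delta u + u - u^2 = \big(\tfrac12 \Delta - 1\big) u + (2u - u^2)$, this is exactly the Duhamel formula for \eqref{eq:KPP} relative to the semigroup $e^{-t} P^D_t$ of $\tfrac12 \Delta - 1$; that is, $u$ is a bounded mild solution of \eqref{eq:KPP} with datum $\phi$.

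It remains to pass from mild to classical and to argue uniqueness, both standard. Boundedness of the solutions and local Lipschitz continuity of $s \mapsto 2s - s^2$ give, by a Gronwall estimate on the integral equation, uniqueness within the class of bounded mild solutions; since any bounded solution of \eqref{eq:KPP} lies in $[0,1]$ by the maximum principle and satisfies the same integral equation, this is the asserted uniqueness. Smoothing of the explicit half-space Dirichlet heat kernel upgrades $u$ to a classical solution on $(0,\infty) \times \H$ that vanishes on $\partial\H$ for $t > 0$ and attains $\phi$ as $t \downarrow 0$ (locally uniformly when $\phi \in \m{C}(\overline{\H})$ with $\phi|_{\partial\H} = 0$, and in $L^\infty$ weak-$\ast$ in general). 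The one point requiring genuine care---the feature absent from the classical whole-line representation---is the bookkeeping of killing: a particle absorbed on $\partial\H$ leaves the empty product $1$, hence contributes $0$ to $u$, and this is precisely what replaces the free heat semigroup by the Dirichlet semigroup $P^D$ in the renewal equation.
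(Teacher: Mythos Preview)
Your proof is correct and shares the paper's core ingredient---the first-branching renewal decomposition---but the two arguments diverge in how they pass from the resulting integral identity to the PDE. The paper first uses the renewal at the initial branching only to compute $\partial_t q_\phi|_{t=0}$ (via It\^o and dominated convergence), and then invokes a \emph{second} branching-property identity at a fixed time $h$, namely $q_\phi(t+h,\cdot) = \E_y\big[\prod_{u \in \mathcal{N}_h^+} q_\phi(t,\cdot - X_h(u), Y_h(u))\big]$, differentiating at $h=0$ to obtain the PDE for all $t$; the extension from $\mathcal{C}^2$ to $L^\infty$ data is then handled by the semigroup property of the already-existing solution. Your route is more functional-analytic: you read the full renewal equation as the Duhamel formula for \eqref{eq:KPP} relative to $e^{-t}P^D_t$ and appeal directly to the mild-solution theory (Gronwall uniqueness, kernel smoothing). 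Your approach is a bit more streamlined and sidesteps the two-step differentiation, at the cost of citing standard parabolic machinery; the paper's approach is more self-contained and hands-on. Both are standard, and the differences are presentational rather than substantive.
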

\begin{proof}
  First suppose that $\phi \colon \R \to [0, 1]$ is additionally $\mathcal{C}^2$.
  We define
  \begin{equation*}
    q_\phi (t,x,y) = \E_y\left( \prod_{u \in \mathcal{N}_t} \big[1 - \phi\big(x - X_t(u), Y_t(u)\big)\big] \right).
  \end{equation*}
  By dominated convergence,
  \begin{equation}
    \label{eq:McKean-boundary}
    \lim_{t \to 0} q_\phi(t,x,y) = 1 - \phi(x,y) \And \lim_{y \to 0} q_\phi(t,x,y) = 1.
  \end{equation}
  To begin, we compute $\partial_tq_\phi|_{t=0}$.
  Applying the branching property at the first branching time of the BBM, we have
  \begin{align*}
    q_\phi(t,x,y) = &\e^{-t}\E_y[1 - \phi(x - X_t,Y_t)]\\
                    &\hspace{10pt}+ \int_0^t \e^{-s} \E_y\left(\E_{Y_s}\big[q_\phi(t-s,x - X_s,Y_s) \mid X_s,Y_s\big]^2 \ind{\inf_{[0,s]} Y > 0}\right) \d s.
  \end{align*}
  Thus Itô's formula, dominated convergence, and \eqref{eq:McKean-boundary} yield
  \begin{equation*}
    \partial_t q_\phi(0,x,y) = -q_\phi(0,x,y) + \frac{1}{2} \Delta q_\phi(0,x,y) + q_\phi(0,x,y)^2.
  \end{equation*}
  Now fix $t, h > 0$.
  Given $u \in \m{N}_h^+$, let $\m{N}_{t+h}^+(u)$ denote the set of descendants of $u$ alive at time $t + h$.
  Applying the branching property at time $h$, we have
  \begin{align*}
    q_\phi(t+h,x,y) &= \E_y \Bigg[\E_y\Bigg(\prod_{u \in \mathcal{N}_h^+} \prod_{v \in \mathcal{N}_{t+h}^+(u)} \big[1 - \phi\big(x - X_{t+h}(v),Y_{t+h}(v)\big)\big] \; \Big| \; \m{F}_h\Bigg)\Bigg]\\
                    &= \E_y\Bigg[\prod_{u \in \mathcal{N}_h^+} q_\phi\big(t,x - X_h(u),Y_h(u)\big)\Bigg].
  \end{align*}
  Our earlier computation allows us to differentiate this expression at $h = 0$ to find
  \begin{equation*}
    \partial_t q_\phi = \frac{1}{2} \Delta q_\phi + q_\phi^2 - q_\phi.
  \end{equation*}
  So $1-q_\phi$ solves \eqref{eq:KPP}.
  
  To extend this result to $\phi \in L^\infty(\H)$, let $u$ denote the unique solution of \eqref{eq:KPP} with initial data $\phi$.
  For all $\eps > 0$, $u(\eps, \anon) \in \m{C}^2$.
  Hence we have just shown that
  \begin{equation*}
    u^\eps(t,x,y) \coloneqq 1 - \E_y\left(\prod_{u \in \mathcal{N}_t} \big[1 - u\big(\eps, x - X_t(u), Y_t(u)\big)\big]\right)
  \end{equation*}
  solves \eqref{eq:KPP} with initial data $u(\eps, \anon)$.
  By the semigroup property, $u^\eps(t, \anon) = u(t + \eps, \anon)$ for all $t \geq 0$.
  We conclude by taking $\eps \to 0$ and using the fact that $u(\eps, \anon) \to \phi$ weakly in $L^\infty$.
\end{proof}
We now prove that the limit of the derivative martingale satisfies a recursive equation in distribution.
More precisely, using the branching property of the BBM, we show that the law of $Z_\infty$ is a fixed point of a multitype version of the so-called smoothing transform.
In the following, for fixed $t,s \geq 0$, we write $u \preceq v$ when a particle $v \in \m{N}_{t + s}^+$ is a descendant of $u \in \m{N}_t^+$.
\begin{lemma}
  \label{lem:smoothing}
  For all $y > 0$ and $t > 0$, we have
  \begin{equation}
    \label{eqn:smoothing}
    Z_\infty = \sum_{u \in \mathcal{N}_t^+} \e^{\sqrt{2} X_t(u) - 2 t} Z_\infty(u),
  \end{equation}
  where
  \begin{equation*}
    Z_\infty(u) \coloneqq \lim_{s \to \infty} \sum_{u \preceq v \in \mathcal{N}^{+}_{t+s}} \big[\sqrt{2} s - X_{t+s}(v)\big] \e^{\sqrt{2} X_{t+s}(v) - 2s} \For u \in \m{N}_t.
  \end{equation*}
  Moreover, conditionally on $\mathcal{F}_t$, the random variables $\big(Z_\infty(u) \midsemi u \in \mathcal{N}_t^+\big)$ are independent and $Z_\infty(u)$ has the distribution of $Z_\infty$ under law $\P_{Y_t(u)}$.
\end{lemma}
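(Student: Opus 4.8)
The plan is to derive the recursion \eqref{eqn:smoothing} from the branching property of the BBM at time $t$, combined with the a.s.\ convergence of the critical additive and derivative martingales established in Propositions~\ref{prop:additive} and~\ref{prop:derivativeMartingale}. Nothing about the KPP equation itself is needed here; the statement is purely probabilistic.

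First I would regroup the particles of $\mathcal{N}_{t+s}^+$ according to their ancestor at time $t$. A particle $v \in \mathcal{N}_{t+s}$ lies in $\mathcal{N}_{t+s}^+$ exactly when its time-$t$ ancestor $u$ lies in $\mathcal{N}_t^+$ and, in addition, the trajectory of $v$ stays positive on $[t, t+s]$; thus the killing event ``factors'' across time $t$. Using this partition together with the elementary identities $\sqrt{2}(t+s) - X_{t+s}(v) = [\sqrt{2}t - X_t(u)] + [\sqrt{2}s - (X_{t+s}(v) - X_t(u))]$ and $\e^{\sqrt{2}X_{t+s}(v) - 2(t+s)} = \e^{\sqrt{2}X_t(u) - 2t}\,\e^{\sqrt{2}(X_{t+s}(v) - X_t(u)) - 2s}$, I would rewrite, for each fixed $s \geq 0$,
\begin{equation*}
  Z_{t+s} = \sum_{u \in \mathcal{N}_t^+} \e^{\sqrt{2}X_t(u) - 2t}\Big( \big[\sqrt{2}t - X_t(u)\big]\,\mathcal{W}_s(u) + \mathcal{Z}_s(u) \Big),
\end{equation*}
where $\mathcal{W}_s(u)$ and $\mathcal{Z}_s(u)$ are, respectively, the critical additive and derivative sums over the descendants of $u$ alive at time $t+s$ and staying positive on $[t,t+s]$, with horizontal displacements measured relative to $u$'s position at time $t$ and heights taken absolutely.

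Next I would invoke the branching property at time $t$: conditionally on $\mathcal{F}_t$, the subtrees rooted at the distinct particles $u \in \mathcal{N}_t^+$ are independent half-plane BBMs with killing, the one rooted at $u$ being distributed as the process under $\P_{Y_t(u)}$ (the relative horizontal displacement starts at $0$ and the height starts at $Y_t(u)$, which is all the killing sees). Hence, conditionally on $\mathcal{F}_t$, $\big(\mathcal{W}_s(u), \mathcal{Z}_s(u)\big)$ is a copy of $(W_s, Z_s)$ under $\P_{Y_t(u)}$, and these copies are conditionally independent across $u$. Proposition~\ref{prop:additive} applied to each subtree gives $\mathcal{W}_s(u) \to 0$ a.s.\ as $s \to \infty$, and Proposition~\ref{prop:derivativeMartingale} gives $\mathcal{Z}_s(u) \to Z_\infty(u)$ a.s., where $Z_\infty(u)$ has, conditionally on $\mathcal{F}_t$, the law of $Z_\infty$ under $\P_{Y_t(u)}$, with the $Z_\infty(u)$ conditionally independent. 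Since $\mathcal{N}_t^+$ is a.s.\ finite, the displayed sum has finitely many terms, so I may let $s \to \infty$ term by term; as $Z_{t+s} \to Z_\infty$ a.s.\ by Proposition~\ref{prop:derivativeMartingale}, this yields \eqref{eqn:smoothing} together with the asserted conditional independence and distributional identities.

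The step needing the most care is the passage to the limit $s \to \infty$ inside the sum. One must know that each summand converges---precisely where Propositions~\ref{prop:additive} and~\ref{prop:derivativeMartingale} enter---and that the ``error'' contributions $[\sqrt{2}t - X_t(u)]\,\mathcal{W}_s(u)$, which carry the indefinite sign of $\sqrt{2}t - X_t(u)$, genuinely vanish; this is immediate once $\mathcal{W}_s(u) \to 0$ and only finitely many ancestors $u$ are present, but it is the only place the indefinite sign of $Z$ could cause trouble. A secondary bookkeeping point is the careful verification that the killing event $\{v \in \mathcal{N}_{t+s}^+\}$ splits as claimed, so that $\mathcal{W}_s(u)$ and $\mathcal{Z}_s(u)$ are truly the martingale sums of independent \emph{killed} BBMs and Propositions~\ref{prop:additive}--\ref{prop:derivativeMartingale} apply verbatim.
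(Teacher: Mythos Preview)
Your proposal is correct and follows essentially the same approach as the paper: decompose $Z_{t+s}$ by ancestor at time $t$ into a finite sum of terms $\e^{\sqrt{2}X_t(u)-2t}\big([\sqrt{2}t-X_t(u)]\,W_s(u)+Z_s(u)\big)$, invoke the branching property to identify the law of $(W_s(u),Z_s(u))$ conditionally on $\mathcal{F}_t$, and pass to the limit $s\to\infty$ using the a.s.\ convergence of the additive and derivative martingales in each subtree. The paper's proof is organized identically, citing Proposition~\ref{prop:mg-convergence} (which packages Propositions~\ref{prop:additive} and~\ref{prop:derivativeMartingale}) for the limit step.
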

\begin{proof}
  Given $s, t \geq 0$ and $u \in \m{N}_t^+$, we define $\mathcal{N}_{t+s}^{+}(u) = \{v \in \mathcal{N}_{t+s}^{+} : u \preceq v\}$ and
  \begin{align*}
    W_s(u) &\coloneqq \sum_{v \in \mathcal{N}^{+}_{t,s}(u)} Y_{t+s}(v) \e^{\sqrt{2} [X_{t+s}(v) - X_t(u)] - 2 s},\\
    Z_s(u) &\coloneqq \sum_{v \in \mathcal{N}^{+}_{t,s}(u)} \big(\sqrt{2}s - [X_{t+s}(v) - X_t(u)]\big) Y_{t+s}(v) \e^{\sqrt{2} [X_{t+s}(v) - X_t(u)] - 2 s}.
  \end{align*}
  With this notation, we can write
  \begin{align}
    Z_{t+s} &= \sum_{u \in \mathcal{N}_t^+} \sum_{ v \in \mathcal{N}_{t+s}^{+}(u)}\big[\sqrt{2}(t+s)-X_{t+s}(v)\big] Y_{t+s}(v)\e^{\sqrt{2} X_{t+s}(v) - 2 (t+s)}\nonumber\\
            &= \sum_{u \in \mathcal{N}_t^+} \big[\sqrt{2}t - X_t(u)\big] \e^{\sqrt{2}X_t(u) - 2t} W_s(u) + \sum_{u \in \mathcal{N}_t^+} \e^{\sqrt{2}X_t(u) - 2t} Z_s(u).\label{eq:smoothing-prelim}
  \end{align}  
  The branching property implies that conditionally on $\mathcal{F}_t$, $\big(W_s(u), Z_s(u)\big)_{u \in \m{N}_t^+}$ are independent random pairs and $\big(W_s(u), Z_s(u)\big)$ has the law of $(W_s,Z_s)$ under $\P_{Y_t(u)}$.
  In particular, taking $s \to \infty$ and using Proposition~\ref{prop:mg-convergence}, we have
  \begin{equation*}
    \lim_{s \to \infty} \big(W_s(u), Z_s(u)\big) = \big(0,Z_\infty(u)\big) \quad \text{$\P_y$-a.s.}
  \end{equation*}
  The limits $\big(Z_\infty(u) \midsemi u \in \m{N}_t^+\big)$ are independent conditionally on $\mathcal{F}_t$ and share the law of $Z_\infty$ under $\P_{Y_t(u)}$.
  Moreover, \eqref{eqn:smoothing} follows from \eqref{eq:smoothing-prelim}.
\end{proof}
Using the branching property, we can check that $Z_\infty$ is positive precisely on the survival set of the BBM.
\begin{corollary}
  \label{cor:survival}
  For all $y > 0$, we have
  \begin{equation*}
    \{Z_\infty > 0\} = \{\mathcal{N}_t^+ \neq \emptyset \text{ for all } t \geq 0\} \quad \text{$\P_y$-a.s.}
  \end{equation*}
\end{corollary}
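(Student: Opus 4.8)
The plan is to show that both distribution-level quantities $y\mapsto\P_y(Z_\infty=0)$ and $y\mapsto\P_y(\text{extinction})$ are fixed points of the \emph{stationary} McKean equation, hence---by the classification of bounded steady states in \cite[Theorem~1.1(A)]{BeG}---equal to an explicit steady state; comparing the two then yields the corollary. One inclusion is immediate: if $\m{N}_{t_0}^+=\emptyset$ for some $t_0$ then $Z_t=0$ for all $t\ge t_0$, so the extinction event $\{\exists\,t:\m{N}_t^+=\emptyset\}$ is contained in $\{Z_\infty=0\}$; equivalently $\{Z_\infty>0\}\subseteq\{\m{N}_t^+\neq\emptyset\text{ for all }t\}$. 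It remains to prove the reverse inclusion, which amounts to showing that these two events carry the same $\P_y$-mass.

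To this end, set $p(y)\coloneqq\P_y(Z_\infty=0)$. Since $Z_\infty\ge 0$ $\P_y$-a.s.\ (Proposition~\ref{prop:derivativeMartingale}), the same holds for each $Z_\infty(u)$, so in the identity $Z_\infty=\sum_{u\in\m{N}_t^+}\e^{\sqrt{2}X_t(u)-2t}Z_\infty(u)$ of Lemma~\ref{lem:smoothing} the summands are nonnegative and the coefficients strictly positive. Hence $\{Z_\infty=0\}=\bigcap_{u\in\m{N}_t^+}\{Z_\infty(u)=0\}$ (an empty intersection being the full space), and, using the conditional independence and conditional laws of the $Z_\infty(u)$,
\begin{equation*}
  \P_y(Z_\infty=0\mid\m{F}_t)=\prod_{u\in\m{N}_t^+}p\big(Y_t(u)\big),\qquad\text{hence}\qquad p(y)=\E_y\prod_{u\in\m{N}_t^+}p\big(Y_t(u)\big)\quad\text{for all }t>0.
\end{equation*}
Writing $v\coloneqq 1-p\in[0,1]$, this becomes $v(y)=\E_y\big(1-\prod_{u\in\m{N}_t^+}[1-v(Y_t(u))]\big)$ for every $t$. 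The right-hand side is exactly the McKean representation of Proposition~\ref{prop:McK} applied to the $x$-independent initial datum $\phi(x,y)\coloneqq v(y)$, so the KPP flow fixes $v$; since bounded solutions of \eqref{eq:KPP} are smooth for positive times, $v$ is a classical, bounded, nonnegative steady state, and \cite[Theorem~1.1(A)]{BeG} forces $v\in\{0,\varphi\}$. To exclude $v\equiv 0$, recall from Lemma~\ref{lem:cvShavedMartingale} that $\E_y Z^\alpha_\infty=\alpha y>0$; since $0\le Z^\alpha_\infty\le Z_\infty$ (the map $\alpha\mapsto Z^\alpha_\infty$ increases to $Z_\infty$, by the proof of Proposition~\ref{prop:derivativeMartingale}), we get $\P_y(Z_\infty>0)>0$. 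Therefore $v=\varphi$, i.e.\ $\P_y(Z_\infty>0)=\varphi(y)$.

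The identical reasoning handles the extinction probability $e(y)\coloneqq\P_y(\exists\,t:\m{N}_t^+=\emptyset)$: the branching property yields $e(y)=\E_y\prod_{u\in\m{N}_t^+}e\big(Y_t(u)\big)$ for all $t$, so $1-e$ is again a bounded nonnegative steady state and $1-e\in\{0,\varphi\}$. But by the trivial inclusion above, $1-e(y)=\P_y(\m{N}_t^+\neq\emptyset\text{ for all }t)\ge\P_y(Z_\infty>0)=\varphi(y)>0$, hence $1-e=\varphi$, that is $e=1-\varphi=p$. Since $\{\text{extinction}\}\subseteq\{Z_\infty=0\}$ and these events carry equal $\P_y$-mass, they coincide up to a $\P_y$-null set; taking complements (legitimate because $Z_\infty\ge 0$ a.s.) gives $\{Z_\infty>0\}=\{\m{N}_t^+\neq\emptyset\text{ for all }t\}$ $\P_y$-a.s.

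The one genuinely delicate point is the passage from Lemma~\ref{lem:smoothing} to the fixed-point equation for $p$: it hinges on $Z_\infty$ being truly nonnegative, so that a sum of (conditionally independent) copies of it vanishes exactly when every copy does. After that, everything rests on recognizing the stationary McKean equation and invoking the steady-state dichotomy of \cite{BeG}; in particular, no tail estimate or further spine decomposition is required.
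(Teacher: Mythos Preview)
Your proof is correct and follows essentially the same route as the paper: both show that $y\mapsto\P_y(Z_\infty>0)$ and $y\mapsto\P_y(\text{survival})$ satisfy the stationary McKean equation via the smoothing identity and the branching property, then invoke the uniqueness of the positive bounded steady state from \cite{BeG} to identify each with $\varphi$. Your write-up is slightly more explicit about why the sum in Lemma~\ref{lem:smoothing} vanishes iff every term does (using $Z_\infty\ge 0$) and about ruling out the zero steady state via $Z^\alpha_\infty$, but the argument is the same.
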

\begin{proof}
  Let $S \coloneqq \{\mathcal{N}_t^+ \neq \emptyset \text{ for all } t \geq 0\}$ denote the survival event of the BBM on $\H$.
  The definition of $Z_t$ immediately implies that
  \begin{equation}
    \label{eq:subset}
    \{ Z_\infty > 0\} \subset S.
  \end{equation}
  Hence it suffices to show that $\P_y(S) = \P_y(Z_\infty > 0)$ for all $y > 0$.
  We note that Proposition~\ref{prop:derivativeMartingale} implies that $q$ is positive on $\R_+$, so \eqref{eq:subset} yields $p \geq q > 0$.
  
  Given $y > 0$, we define the functions $p$ and $q$ by
  \begin{equation*}
    p(y) \coloneqq \P_y(S^c) \And q(y) \coloneqq \P_y(Z_\infty = 0).
  \end{equation*}
  Using the branching property at time $t$, we see that
  \begin{equation*}
    \P_y(S^c \mid \mathcal{F}_t) = \prod_{u \in \mathcal{N}_t^+} \P_{Y_t(u)}(S^c).
  \end{equation*}
  It follows that $p$ satisfies the recursive identity
  \begin{equation}
    \label{eq:p-recursive}
    p(y)= \E_y \prod_{u \in \mathcal{N}_t^+} p\big(Y_t(u)\big)
  \end{equation}
  for all $t \geq 0$.
  Similarly, using Lemma~\ref{lem:smoothing}, we have
  \begin{equation}
    \label{eq:q-recursive}
    q(y) = \E_y\big[\P_y(Z_\infty(u) = 0 \text{ for all } u \in \mathcal{N}_t^+ \mid \mathcal{F}_t)\big] =\E_y \prod_{u \in \mathcal{N}_t^+} q\big(Y_t(u)\big)
  \end{equation}
  for all $t \geq 0$.
  Combining \eqref{eq:p-recursive}, \eqref{eq:q-recursive}, and Proposition~\ref{prop:McK}, we see that $1-p$ and $1-q$ are both stationary solutions of \eqref{eq:KPP} on the Dirichlet half-line.
  That is, both are positive bounded solutions of \eqref{eq:steady}.
  Lemma~6.1 of \cite{BeG} states that there is only one such solution; we have previously denoted it by $\varphi$.
  It follows that $p = 1 - \varphi = q$, which completes the proof.
\end{proof}
Armed with Lemma~\ref{lem:smoothing} and Proposition~\ref{prop:McK} we are now able to show that $\Phi$ is a minimal-speed traveling wave.
\begin{lemma}
  \label{lem:PhiSatisfiesTW}
  The function $\Phi$ defined in \eqref{eq:defTW} is a traveling wave of speed $\sqrt{2}$.
\end{lemma}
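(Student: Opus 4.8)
The plan is to run the McKean representation (Proposition~\ref{prop:McK}) with initial datum $\phi = \Phi$ and show that it reproduces the traveling profile $\Phi(x - \sqrt 2\, t, y)$; the traveling-wave equation and boundary behavior then follow. First I would note that $\Phi\colon\H\to[0,1]$ is well defined and satisfies $0\le\Phi\le1$, since $Z_\infty\ge0$ $\P_y$-a.s.; thus Proposition~\ref{prop:McK} produces a solution $u$ of \eqref{eq:KPP} with $u(0,\anon)=\Phi$.

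The heart of the matter is the identity $u(t,x,y) = \Phi(x - \sqrt 2\,t, y)$. To prove it, fix $t>0$ and condition on $\m{F}_t$. By the definition \eqref{eq:defTW} of $\Phi$, for each $v\in\m{N}_t^+$ we have $1 - \Phi\big(x - X_t(v), Y_t(v)\big) = \E_{Y_t(v)}\exp\!\big(-\e^{-\sqrt 2 (x - X_t(v))} Z_\infty\big)$, which by Lemma~\ref{lem:smoothing} equals $\E\big[\exp(-\e^{-\sqrt 2 (x - X_t(v))} Z_\infty(v)) \mid \m{F}_t\big]$, where $Z_\infty(v)$ is the descendant martingale limit attached to $v$. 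Using the conditional independence of $\big(Z_\infty(v) \midsemi v \in \m{N}_t^+\big)$ from the same lemma, the conditional expectation of the product factors (all factors lie in $[0,1]$, so the interchange is legitimate), and the tower property gives
\begin{equation*}
  \E_y\Big[\prod_{v \in \m{N}_t^+}\big(1 - \Phi(x - X_t(v), Y_t(v))\big)\Big] = \E_y\exp\!\Big(-\sum_{v \in \m{N}_t^+} \e^{-\sqrt 2 (x - X_t(v))} Z_\infty(v)\Big).
\end{equation*}
The smoothing identity \eqref{eqn:smoothing} reads $\sum_{v \in \m{N}_t^+} \e^{\sqrt 2 X_t(v)} Z_\infty(v) = \e^{2t} Z_\infty$, so the exponent above equals $\e^{-\sqrt 2 (x - \sqrt 2 t)} Z_\infty$, whence $u(t,x,y) = \Phi(x - \sqrt 2\,t, y)$, as claimed.

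Given this identity, $\Phi(x - \sqrt 2\, t, y)$ solves the parabolic problem \eqref{eq:KPP}; parabolic regularity makes it $\m{C}^2$ in $(x,y)$ for $t>0$, and substituting the traveling ansatz (so $\partial_t \mapsto -\sqrt2\,\partial_x$) shows that $\Phi$ satisfies the elliptic equation \eqref{eq:main} with $c=\sqrt2$. For the Dirichlet condition, Corollary~\ref{cor:survival} gives $\P_y(Z_\infty>0)=\P_y(\m{N}_t^+\neq\emptyset\ \forall t)=\varphi(y)$, which tends to $\varphi(0)=0$ as $y\to0$; hence $\E_y\exp(-\e^{-\sqrt2 x}Z_\infty)\to1$ and $\Phi(x,y)\to0$, so $\Phi$ extends continuously by $0$ to $\partial\H$, giving $\Phi\in\m{C}^2(\H)\cap\m{C}(\closure)$. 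Finally, to see that $\Phi$ is neither $0$ nor $\varphi$ (Definition~\ref{def:TW}): as $x\to-\infty$ dominated convergence yields $\Phi(x,y)\to\P_y(Z_\infty>0)=\varphi(y)>0$, so $\Phi\not\equiv0$; and $\Phi(x,y)\to0$ as $x\to+\infty$, so $\Phi$ genuinely depends on $x$ and thus differs from the $x$-independent steady state $\varphi$.

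The main obstacle is the bookkeeping in the key identity: one must correctly match the ``restart'' expectations $\E_{Y_t(v)}$ appearing in the definition of $\Phi$ with the conditionally independent descendant limits $Z_\infty(v)$ supplied by Lemma~\ref{lem:smoothing}, and pull the countable product of conditional expectations inside a single exponential. Once that lemma is invoked, the remaining steps—the product/expectation interchange (justified by boundedness in $[0,1]$), interior regularity, the boundary trace, and the $x\to\pm\infty$ limits—are routine.
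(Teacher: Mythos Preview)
Your proof is correct and follows essentially the same approach as the paper: both combine the smoothing identity (Lemma~\ref{lem:smoothing}) with the McKean representation (Proposition~\ref{prop:McK}) via the tower property to show that $\Phi(x-\sqrt{2}t,y)$ solves \eqref{eq:KPP}, whence $\Phi$ solves \eqref{eq:main}. Your treatment is more detailed than the paper's---you explicitly verify the Dirichlet trace via Corollary~\ref{cor:survival} and the $x\to\pm\infty$ limits---while the paper simply notes that $Z_\infty$ is not identically zero to conclude $\Phi$ is nonconstant in $x$; but the substance is identical.
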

\begin{proof}
  Fix $(x, y) \in \H$ and $t > 0$.
  Using every part of Lemma~\ref{lem:smoothing}, the tower property for $\m{F}_t$ yields
  \begin{align*}
    \Phi(x,y) &= 1 - \E_y \exp\left( - \sum_{u \in \mathcal{N}_t^+} \e^{\sqrt{2} (X_t(u) - \sqrt{2}t - x)} Z_\infty(u) \right)\\
              &= 1 - \E_y\prod_{u \in \mathcal{N}_{t}^+} \big[1 - \Phi\big(\sqrt{2} t + x - X_t(u),Y_t(u)\big) \big].
  \end{align*}
  Now Proposition~\ref{prop:McK} implies that the function
  \begin{equation*}
    \Phi(x - \sqrt{2} t,y) = 1 - \E_y\prod_{u \in \mathcal{N}_{t}^+} \big[1 - \Phi\big(x - X_t(u),Y_t(u)\big) \big]
  \end{equation*}
  solves the KPP equation \eqref{eq:KPP} with initial data $\Phi(x,y)$.
  It follows that $\Phi$ solves \eqref{eq:main}.
  Clearly, $\Phi$ is bounded.
  Moreover, because $Z_\infty$ is not identically zero, $\Phi$ is nonconstant in $x$, and thus neither $0$ nor $\varphi$.
  By Definition~\ref{def:TW}, $\Phi$ is a traveling wave of speed $\sqrt{2}$.
\end{proof}

\subsection{Higher-speed waves}
\label{sec:supercritical}
We now construct traveling waves with speeds $c > c_*$.
As for $\Phi$, we use the Laplace transform of the martingale limits $W^{\lambda,\mu}_\infty$.
The proof is very similar to that presented above, so we omit some repeated details.

Fix $t > 0$.
As with $Z_\infty$, the branching property of the BBM implies that the random variable $W_\infty^{\lambda, \mu}$ satisfies the smoothing transform
\begin{equation}
  \label{eq:newSmoothing}
  W_\infty^{\lambda, \mu} = \sum_{u \in \mathcal{N}_t^+} \e^{\lambda X_t(u) - (\lambda^2 + \mu^2 + 2)t/2} W_\infty^{\lambda, \mu}(u),
\end{equation}
where, conditionally on $\mathcal{F}_t$, the random variables $\big(W_\infty^{\lambda, \mu}(u) \midsemi u \in \mathcal{N}_t^+\big)$ are independent and share the distribution of $W_\infty^{\lambda, \mu}$ under $\P_{Y_t(u)}$.
The proof is analogous to that of \eqref{eqn:smoothing}.
In fact, when $\lambda^2 + \mu^2 < 2$, \eqref{eq:newSmoothing} and Proposition~\ref{prop:mg-convergence} imply that
\begin{equation}
  \label{eq:super-survival}
  \{ W_\infty^{\lambda, \mu} > 0 \} = \{\mathcal{N}_t^+ \neq \emptyset \text{ for all }  t \geq 0\} \quad \P_y\text{-a.s.}
\end{equation}
Using the fixed point equation \eqref{eq:newSmoothing}, we show that the Laplace transform of $W_\infty^{\lambda, \mu}$ corresponds to a traveling wave of \eqref{eq:KPP}.
\begin{proposition}
  \label{prop:super}
  For all $\lambda,\mu > 0$ such that $\lambda^2 + \mu^2 < 2$, the function
  \begin{equation*}
    \Phi_{\lambda,\mu} (x,y) \coloneqq 1 - \E_y \exp\left(-\e^{-\lambda x} W_\infty^{\lambda, \mu}\right)
  \end{equation*}
  is a traveling wave of \eqref{eq:KPP} with speed $\frac{\lambda^2 + \mu^2 + 2}{2 \lambda} > \sqrt{2}$.
\end{proposition}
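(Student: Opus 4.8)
The plan is to transcribe the proof of Lemma~\ref{lem:PhiSatisfiesTW} almost verbatim, replacing the derivative-martingale identity \eqref{eqn:smoothing} by the smoothing transform \eqref{eq:newSmoothing} and carrying the $(\lambda,\mu)$-dependent exponents along. Set $c \coloneqq (\lambda^2 + \mu^2 + 2)/(2\lambda)$, so that $(\lambda^2+\mu^2+2)/2 = \lambda c$; this is the one algebraic fact that makes the construction work. Feeding \eqref{eq:newSmoothing} into the definition of $\Phi_{\lambda,\mu}$ gives
\begin{equation*}
  \e^{-\lambda(x - ct)} W_\infty^{\lambda,\mu} = \sum_{u \in \mathcal{N}_t^+} \e^{-\lambda(x - X_t(u))} W_\infty^{\lambda,\mu}(u),
\end{equation*}
and then the tower property for $\mathcal{F}_t$, together with the conditional independence of the $W_\infty^{\lambda,\mu}(u)$ and the fact that $W_\infty^{\lambda,\mu}(u)$ has the law of $W_\infty^{\lambda,\mu}$ under $\P_{Y_t(u)}$, yields
\begin{equation*}
  \Phi_{\lambda,\mu}(x - ct, y) = 1 - \E_y \prod_{u \in \mathcal{N}_t^+}\big[1 - \Phi_{\lambda,\mu}\big(x - X_t(u), Y_t(u)\big)\big].
\end{equation*}

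Next I would invoke Proposition~\ref{prop:McK} with $\phi = \Phi_{\lambda,\mu}$, which is admissible since $\Phi_{\lambda,\mu} \in L^\infty(\H)$ with $0 \le \Phi_{\lambda,\mu} \le 1$ directly from its Laplace-transform form. The displayed identity then says that $(t,x,y) \mapsto \Phi_{\lambda,\mu}(x - ct, y)$ is the unique solution of \eqref{eq:KPP} with initial data $\Phi_{\lambda,\mu}$; differentiating in $t$ shows that $\Phi_{\lambda,\mu}$ solves the elliptic equation \eqref{eq:main} with speed $c$. Parabolic regularity for $t > 0$ upgrades $\Phi_{\lambda,\mu}$ to $\m{C}^2(\H) \cap \m{C}(\overline{\H})$, and the Dirichlet condition on $\partial\H$ is already encoded in the killed population $\mathcal{N}_t^+$ (equivalently, $W_\infty^{\lambda,\mu} = 0$ $\P_0$-a.s.\ because $\sinh 0 = 0$). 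Thus $\Phi_{\lambda,\mu}$ is a nonnegative bounded solution of \eqref{eq:main}, and $c > \sqrt{2}$ because $2\lambda c - (\lambda^2 + \mu^2 + 2) = -(\lambda - \sqrt{2})^2 - \mu^2 < 0$ whenever $\mu > 0$.

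It remains to check that $\Phi_{\lambda,\mu}$ is a traveling wave in the sense of Definition~\ref{def:TW}, i.e.\ neither $0$ nor $\varphi$. This is the one place where the hypothesis $\lambda^2 + \mu^2 < 2$ is essential: Proposition~\ref{prop:mg-convergence}(ii) then gives $W_\infty^{\lambda,\mu} \gneqq 0$ under each $\P_y$, so $\e^{-\lambda x} W_\infty^{\lambda,\mu}$ is strictly positive with positive probability, hence $\E_y \exp(-\e^{-\lambda x} W_\infty^{\lambda,\mu}) < 1$ and $\Phi_{\lambda,\mu}(x,y) > 0$ for all $(x,y) \in \H$; in particular $\Phi_{\lambda,\mu} \not\equiv 0$. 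Meanwhile dominated convergence (the integrand is bounded by $1$ and tends to $1$ as $x \to +\infty$, since $W_\infty^{\lambda,\mu} < \infty$ a.s.) gives $\Phi_{\lambda,\mu}(x,y) \to 0$ as $x \to +\infty$, whereas $\varphi(y) > 0$ for $y > 0$, so $\Phi_{\lambda,\mu} \neq \varphi$. I do not expect a genuine obstacle here: the argument is a bookkeeping variant of Lemma~\ref{lem:PhiSatisfiesTW}, and the only points deserving attention are that the smoothing identity \eqref{eq:newSmoothing} and the conditional law of the $W_\infty^{\lambda,\mu}(u)$ be justified exactly as for \eqref{eqn:smoothing} (via the branching property, as indicated in the text), and that the dichotomy in Proposition~\ref{prop:mg-convergence}(ii) be used to exclude the trivial solutions.
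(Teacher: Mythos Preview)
Your proof is correct and follows essentially the same approach as the paper's: apply the smoothing identity \eqref{eq:newSmoothing}, the tower property, and Proposition~\ref{prop:McK} to see that $\Phi_{\lambda,\mu}(x-ct,y)$ solves \eqref{eq:KPP}, then exclude the trivial solutions via $W_\infty^{\lambda,\mu} \gneqq 0$. One small slip: in your verification that $c>\sqrt{2}$, the leftmost $c$ should be $\sqrt{2}$, so that the computation reads $2\sqrt{2}\lambda-(\lambda^2+\mu^2+2)=-(\lambda-\sqrt{2})^2-\mu^2<0$.
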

\begin{proof}
  For all $(x, y) \in \H$ and $t > 0$, \eqref{eq:newSmoothing} and the tower property for $\m{F}_t$ yield
  \begin{equation*}
    \Phi_{\lambda,\mu}(x,y) = 1 - \E_y\left(\prod_{u \in \mathcal{N}_t^+} \left[ 1 - \Phi_{\lambda,\mu}\big(c t + x - X_t(u),Y_t(u)\big)\right] \right),
  \end{equation*}
  where $c = \frac{\lambda^2 + \mu^2 + 2}{2\lambda}$.
  Note that this speed is supercritical:
  \begin{equation*}
    c > \inf_{\R_+} \frac{\lambda^2 + 2}{2 \lambda} = \sqrt{2}.
  \end{equation*}
  Using Proposition~\ref{prop:McK}, we deduce that $\Phi_{\lambda,\mu}(x - c t,y)$ solves \eqref{eq:KPP}, so $\Phi$ itself solves \eqref{eq:main}.
  Moreover, $W_\infty^{\lambda,\mu}$ is not identically zero, so $\Phi_{\lambda, \mu}$ is neither $0$ nor $\varphi$.
  Therefore $\Phi_{\lambda, \mu}$ is a traveling wave of speed $c$.
\end{proof}
Provided Theorem~\ref{thm:unique} holds, the proof of Theorem~\ref{thm:construction} is now complete.
\begin{proof}[Proof of Theorem~\textup{\ref{thm:construction}}]
  Using Lemma~\ref{lem:PhiSatisfiesTW}, we observe that $\Phi$ is a traveling wave on $\H$ with speed $c_*$.
  Using Theorem~\ref{thm:unique}, we deduce that it is the unique minimal-speed traveling wave, up to translation in $x$.
  The second part of Theorem~\ref{thm:construction} is Proposition~\ref{prop:super}.
\end{proof}
Recall the quarter-disk
\begin{equation*}
  \m{Q} \coloneqq \{(\lambda, \mu) \in \R_+^2 : \lambda^2 + \mu^2 < 2\}.
\end{equation*}
We have now associated a traveling wave $\Phi_{\lambda, \mu}$ with every point $(\lambda, \mu) \in \m{Q}$.
This correspondence extends in some manner to the boundary $\partial \m{Q}$.
This is not the main aim of the paper, so we describe the extension informally.

For every $(\lambda,\mu) \in \bar{\m{Q}}$, one can associate a random variable $H^{\lambda,\mu}$ such that
\begin{equation*}
  v_{\lambda,\mu} \colon (t,x,y) \mapsto 1 - \E_y \exp\left(- \e^{-\lambda x + \frac{\lambda^2 + \mu^2 + 2}{2}t} H^{\lambda,\mu}\right)
\end{equation*}
solves the KPP equation \eqref{eq:KPP}.
When $(\lambda,\mu) \in \m{Q}$, Proposition~\ref{prop:super} allows us to take $H^{\lambda,\mu} = W_\infty^{\lambda,\mu}$.

If $\mu \in (0, \sqrt{2})$ and $\lambda = 0$, we can extend the definition of $W^{\lambda, \mu}$ to $\lambda = 0$ and set $H^{0,\mu} = W_\infty^{0,\mu}$.
Then $v_{0,\mu}$ is an entire solution of \eqref{eq:KPP} depending on $t$ and $y$ alone.
At large negative times, this solution resembles a one-dimensional traveling wave in $y$ of speed $\tfrac{\mu^2 + 2}{2 \mu}$ moving down from a large height toward the $x$-axis.
The wave ``reaches'' the $x$-axis at unit time and converges to the steady state $\varphi$ uniformly in $y$ as $t \to \infty$.

If $\lambda \in [0, \sqrt{2})$ and $\mu = 0$, we can construct
\begin{equation*}
  H^{\lambda,\mu} \coloneqq \lim_{t \to \infty} \sum_{u \in \mathcal{N}_t^+} Y_t(u) \e^{\lambda X_t(u) - (\lambda^2 + 2)t/2} \quad \text{a.s. and in $L^1.$}
\end{equation*}
Then if $\lambda \in (0, \sqrt{2})$, $v_{\lambda,0}(0, x, y)$ is a traveling wave in $\H$.
One can can show that its level sets behave similarly to those of $\Phi$ described in Theorem~\ref{thm:asymptotics}.
That is, as $y \to \infty$, the level sets become asymptotically vertical with a logarithmic offset.

In the degenerate case $\lambda = 0$, $v_{0,0}$ shares some qualitative properties with the solutions $v_{0,\mu}$ described above.
It is an entire solution of \eqref{eq:KPP} depending on $t$ and $y$ alone.
At large negative times, $v_{0,0}$ resembles an exponentially-stretched profile moving down from a great height at an exponential rate.
The profile reaches the $x$-axis at unit time and converges uniformly to $\varphi$ as $t \to \infty$.
We can thus view $v_{0,0}$ as an ``infinite-speed'' limit of $v_{0, \mu}$ as $\mu \searrow 0$.

Finally, for $(\lambda,\mu)$ on the circle $\lambda^2 + \mu^2 = 2$, the additive martingale $W^{\lambda,\mu}$ must be replaced by a derivative-type martingale.
This corresponds to a traveling wave whose levels sets are inclined at angle $\arctan(\mu/\lambda)$ far from the boundary.
In this regime, the wave resembles a rotation of  the minimal-speed one-dimensional wave $w_{c_*}$.
The minimal-speed wave $\Phi$ constructed above corresponds to the special case $\lambda = \sqrt{2}$ and $\mu = 0$.

\section{Structure and tameness for minimal-speed waves}
\label{sec:PDE}

In this section, we use analytic methods to constrain an arbitrary minimal-speed traveling wave $\Psi$ on $\H^d$.
We first show that $\Psi$ is decreasing in $x$, increasing in $y$, and constant in $\tbf{x}'$.
It follows that $\Psi$ is essentially two-dimensional and we can restrict our attention to the half-\emph{plane} $\H^2$.
We then prove a sharp upper bound on the tail of $\Psi$ where $x \gg 1$.
This bound is termed ``tameness'' in the probabilistic literature; it plays a crucial role in our subsequent probabilistic arguments.

We collect the main results of this section in the following proposition.
Recall that we denote coordinates on $\H^d = \R \times \R^{d-2} \times \R_+$ by $(x, \tbf{x}', y)$.
\begin{proposition}
  \label{prop:a-priori}
  Let $\Psi$ be a traveling wave on $\H^d$ of speed $c_*$.
  Then $\Psi$ is independent of $\tbf{x}'$ and satisfies $0 < \Psi < \varphi$, $\partial_x \Psi < 0$, and $\partial_y \Psi > 0$.
  The limits $\Psi(-\infty, \anon) = \varphi$ and $\Psi(+\infty, \anon) = 0$ hold locally uniformly in $y$.
  Moreover, there exists $C > 0$ such that 
  \begin{equation}
    \label{eq:tame}
    \Psi(x, \tbf{x}', y) \leq C (1 + x_+)y \e^{-\sqrt{2} x} \ForAll (x,\tbf{x}', y) \in \H^d.
  \end{equation}
\end{proposition}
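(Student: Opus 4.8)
The plan is to dispatch the qualitative assertions — sign, strict monotonicity, independence of $\tbf{x}'$, and the limits at $x=\pm\infty$ — by sliding-method and Liouville-type arguments, and then to devote the bulk of the work to the tail bound \eqref{eq:tame}, which I would obtain from a comparison principle against explicit ``resonant'' barriers for the linearized operator, fed by a potential-theoretic a priori decay estimate.

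For the qualitative part I would argue as follows. The maximum principle for \eqref{eq:main} with the concave KPP nonlinearity gives $0\le\Psi\le1$; the strong maximum principle and the Hopf lemma upgrade this to $\Psi>0$ in $\H^d$ with $\partial_y\Psi>0$ on $\partial\H^d$. Interior elliptic estimates make the family of $x$-translates $\tbf{x}\mapsto\Psi(x+s,\tbf{x}',y)$ precompact, and any subsequential limit as $s\to\pm\infty$ solves \eqref{eq:main} on all of $\H^d$; being invariant under $x$-translation, such a limit is a bounded steady state, hence $0$ or $\varphi$ by the Liouville-type dichotomy of \cite[Theorem~1.1(A)]{BeG}. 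Discarding the degenerate cases (a traveling wave is neither $0$ nor $\varphi$) yields $\Psi(-\infty,\anon)=\varphi$ and $\Psi(+\infty,\anon)=0$ locally uniformly in $y$, and these ordered end states are exactly what lets the sliding method of \cite{BN} run on the non-compact half-space: sliding in $x$ gives $\partial_x\Psi\le0$; sliding in $x$ against translates in each $\tbf{x}'$-direction forces $\Psi$ to agree with all of its $\tbf{x}'$-translates, so $\Psi$ depends only on $(x,y)$; and sliding in $y$ (translating the half-space and comparing on the new boundary, where $\Psi=0<\Psi(\anon,s)$) gives $\partial_y\Psi\ge0$. The strict inequalities $0<\Psi<\varphi$, $\partial_x\Psi<0$, $\partial_y\Psi>0$ then follow from the strong maximum principle and Hopf lemma applied to $\varphi-\Psi$ and to $\partial_x\Psi$, $\partial_y\Psi$, each of which solves a linear equation with bounded coefficients. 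From here on $\Psi$ lives on the half-plane $\H=\R\times\R_+$.

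The tail bound \eqref{eq:tame} rests on the fact that the nonlinearity is a favorable perturbation near the leading edge: since $\Psi^2\ge0$, $\Psi$ is a subsolution of the linearized operator $\m{L}\coloneqq\tfrac12\Delta+\sqrt2\,\partial_x+1$. This operator is \emph{critical} in $x$ at rate $\sqrt2$, and correspondingly
\begin{equation*}
  \bar\psi(x,y)\coloneqq\varphi'(0)\,(1+x)\,y\,\e^{-\sqrt2 x}
\end{equation*}
is an exact, strictly positive solution of $\m{L}\bar\psi=0$ on $\{x>0\}$ — a ``resonant'' solution carrying the polynomial factor $1+x$. It dominates $\Psi$ on the two finite boundary pieces: on $\{y=0\}$ both vanish, while on $\{x=0\}$ one uses $\Psi\le\varphi$ together with the concavity of $\varphi$ (immediate from $\varphi''=-2\varphi(1-\varphi)\le0$ and $\varphi(0)=0$) to get $\Psi(0,y)\le\varphi(y)\le\varphi'(0)\,y=\bar\psi(0,y)$. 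For $x\le0$ the bound \eqref{eq:tame} is trivial from $\Psi\le\varphi\le\varphi'(0)\,y\le\varphi'(0)\,y\,\e^{-\sqrt2 x}$, so it remains to prove $\Psi\le\bar\psi$ on $\{x>0\}$. The ground-state substitution $w\coloneqq\Psi/\bar\psi$ converts the subsolution inequality into $\tfrac12\Delta w+\tfrac1{1+x}\,\partial_x w+\tfrac1y\,\partial_y w\ge0$, an operator with no zeroth-order term whose $y$-drift is of Bessel-$3$ type, so that $\{y=0\}$ is inaccessible and imposes no boundary condition. One then wants $w\le1$ on $\{x>0\}$ given $w\le1$ on $\{x=0\}$; the only missing ingredient is a bound on the growth of $w$ as $x\to+\infty$.

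That growth bound is the crux, and I expect it to be the main obstacle. Criticality of $\m{L}$ rules out a cheap separated barrier — there is no positive separated sub- or supersolution of $\m{L}$ at any $x$-rate other than $\sqrt2$ — and $\Psi$ does not decay as $y\to\infty$, so a naive Green representation on $\{x>0\}$ diverges. The way through is potential theory: in the Green function of $\m{L}$ on a half-space the growth factor from the zeroth-order term cancels exactly against the Gaussian drift in $\e^{t}p_t$ (criticality again), leaving a kernel comparable to $y\,z_y\,\e^{\sqrt2(x-z_x)}\bigl(1+\norm{\tbf{x}-\tbf{z}}^2\bigr)^{-1}$, with the characteristic $y\,z_y$ factor coming from the Dirichlet condition; feeding the a priori bounds $\Psi^2\le\Psi\le\varphi$ into the associated Green and Poisson representations — truncated in $y$ to cope with the lack of decay there, then optimized — and bootstrapping yields a preliminary estimate $\Psi(x,y)\le P(x)\,y\,\e^{-\sqrt2 x}$ for some polynomial $P$. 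This in turn bounds $w$ polynomially at $x=+\infty$, which is enough for a Phragm\'en--Lindel\"of-type maximum principle for the transformed operator to conclude $w\le1$, i.e.\ \eqref{eq:tame} with $C=\varphi'(0)$ on $\{x>0\}$; together with the elementary case $x\le0$ this completes the proof. The delicate points are exactly the criticality of $\m{L}$ (forcing the resonant, polynomially-corrected barriers and the precise heat-kernel cancellation) and the need to control the two non-compact directions simultaneously — the Dirichlet condition at $y=0$ helping through the $y z_y$ factor, but the absence of decay as $y\to\infty$ forcing the $y$-truncation in the potential-theoretic step.
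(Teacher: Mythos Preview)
Your qualitative programme differs from the paper's and carries a logical risk. You try to read off the limits $\Psi(\pm\infty,\cdot)\in\{0,\varphi\}$ from precompactness of $x$-translates \emph{before} establishing monotonicity, and then feed those limits into a sliding argument \`a la \cite{BN}. But without monotonicity the full limits need not exist, and even granting they do, identifying which end is $\varphi$ and which is $0$ is not automatic; moreover the sliding method of \cite{BN} is written for cylinders with compact cross-section and does not transplant to $\H^d$ without substantial work. The paper instead follows Hamel--Nadirashvili \cite{HN}: it studies $v=\partial_\theta\log\Psi$ directly, locates a minimizing sequence, and extracts a contradiction via the strong maximum principle (with a delicate boundary analysis when $y_n\to 0$). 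This yields $\partial_x\Psi<0$, $\partial_y\Psi>0$, $\nabla_{\tbf{x}'}\Psi=0$, \emph{and} the quantitative fact $\partial_x\log\Psi\ge-\sqrt 2$ in one stroke; the limits then follow from monotonicity.

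The tail bound is where your proposal has a genuine gap. You correctly identify that $\Psi$ is a subsolution of $\m L=\tfrac12\Delta+\sqrt 2\,\partial_x+1$, that $\bar\psi=\varphi'(0)(1+x)y\e^{-\sqrt 2 x}$ is an exact $\m L$-harmonic barrier dominating $\Psi$ on $\partial\{x>0\}$, and that the comparison $\Psi\le\bar\psi$ would follow from Phragm\'en--Lindel\"of \emph{once} you know $\Theta\coloneqq\e^{\sqrt 2 x}\Psi$ grows at most polynomially. But your route to that polynomial bound --- ``feed $\Psi^2\le\Psi\le\varphi$ into Green/Poisson representations, truncate in $y$, bootstrap'' --- does not close. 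After conjugation the problem is $-\tfrac12\Delta\Theta=-F$ on the quarter-plane with $\Theta(0,y)\lesssim y$, $\Theta(x,0)=0$, and the only a priori interior bound is the trivial $\Theta\le\e^{\sqrt 2 x}$. On an unbounded domain the representation $\Theta\le(\text{Poisson extension of }\Theta|_{\partial})$ is only valid under a growth hypothesis, which is precisely what you are trying to prove; truncating in $y$ does not help because on the top edge $\{y=L\}$ you still only have $\Theta\le\e^{\sqrt 2 x}$, and the harmonic extension on the strip inherits that exponential growth. There is no bootstrap starting point.

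The paper supplies exactly the missing ingredient, and it is the most substantial idea in the section. First, a soft Liouville-type argument (Lemma~\ref{lem:level}, again via \cite{HN}) shows the $\tfrac12$-level set $\{x=\sigma(y)\}$ is asymptotically vertical, hence $\sigma(y)=o(y)$. Second --- and this is the counterintuitive step --- the paper deploys a compactly supported, exponentially growing \emph{sub}solution sliding leftward at speed $c_*-\varepsilon$ (Section~4.3) to show that to the right of the level set $\Psi$ genuinely decays at rate $\sqrt 2-\varepsilon$; equivalently $\Theta$ is subexponential there (Lemma~\ref{lem:Theta-exp}). Only with this in hand does potential theory bite: a conformal map from $\{x>6\sigma_+(y)\}$ to the quarter-plane (with $r^{o(1)}$ distortion because $\sigma$ is sublinear) plus the Herglotz representation gives $\Theta\lesssim\langle\tbf x\rangle^{2+\varepsilon}$; this in turn forces $\sigma(y)\lesssim\log y$, which sharpens the conformal distortion to bounded and yields $\Theta\lesssim\langle\tbf x\rangle^{2}$; a final Phragm\'en--Lindel\"of on the two octants $\{x\gtrless y\}$ upgrades this to $\Theta\lesssim(1+x)y$. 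Your barrier $\bar\psi$ is morally the right object, but the a priori subexponential control of $\Theta$ --- obtained via the moving subsolution --- is the step you are missing.
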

\noindent
In light of Theorem~\ref{thm:tail}, the tail bound \eqref{eq:tame} is sharp up to the constant factor wherever $x > \tfrac{1}{\sqrt{2}} \log_+ y$.

In the following, we use the notation $f \lesssim g$ when $f \leq C g$ for some universal constant $C \in \R_+$.
Likewise, $f \lesssim_\al g$ indicates that the constant $C$ can depend on the parameter $\al$.

\subsection{Strategy}
We begin by adapting maximum-principle arguments of Hamel and Nadirashvili~\cite{HN} to show that $\Psi$ has the expected monotonicity: $\partial_x\Psi < 0$, $\partial_y \Psi > 0$, and $\nab_{\tbf{x}'} \Psi = 0$.
As a result, $\Psi$ does not depend on $\tbf{x}'$ and the problem reduces to two dimensions.
It also follows that $\Psi$ approaches $\varphi$ on the left and $0$ on the right.
These regimes are separated by a smooth level set $\{\Psi = 1/2\}$ that coincides with the graph of a uniformly smooth function $x = \sigma(y)$, at least away from $\partial \H$.
Using a uniqueness result on the whole plane $\R^2$, we can show that $\sigma' \to 0$ as $y \to \infty$.
That is, the level set $\{\Psi = 1/2\}$ is asymptotically vertical far from $\partial \H$.

The remainder of the argument combines comparison methods with potential theory.
Counterintuitively, the comparison portion is based on a family of compactly supported \emph{subsolutions} that ensure that $\Psi$ roughly decays like $\e^{-\sqrt{2} x}$ where $x > \sigma(y)$.
We exploit this loose form of regularity in our potential theoretic arguments.
In the following discussion, we focus on $x,y>1$; the rest of the half-plane can be handled easily.

Using the aforementioned exponential decay, we show that $\Theta \coloneqq \e^{\sqrt{2} x}\Psi$ is nearly harmonic on the domain $\{x > 6 \sigma(y)\}$.
Because $\sigma$ is sublinear, this domain is similar to a quarter-plane and can be conformally mapped there with $r^{\smallO(1)}$ distortion, where $r \coloneqq \sqrt{x^2 + y^2}$ denotes the radial coordinate.
On the quarter-plane, explicit analysis based on the Herglotz representation theorem shows that positive harmonic functions with suitable boundary data grow at most quadratically in $r$.
Composing with our conformal map, we see that $\Theta$ grows at most like $r^{2 + \smallO(1)}$.
This polynomial bound implies that $\sigma$ grows no faster than logarithmically in $y$.

This additional quantitative information implies that $\{x > 6 \sigma(y)\}$ can be conformally mapped to the quarter-plane with \emph{bounded} distortion.
Using our quadratic bound on the quarter-plane, we find $\Theta \lesssim r^2$.
To conclude, we observe that $\Theta \lesssim (x+1)y$ on the rays $\{x=0\},$ $\{x = y\}$, and $\{y = 0\}$ while $\Theta$ grows no more than quadratically in the interior of the acute sectors $\{x > y\}$ and $\{x < y\}$.
The Phragm\'en--Lindel\"of principle thus allows us to extend the estimate $\Theta \lesssim (x+1)y$ from the boundaries of the sectors to the sectors themselves, and hence to the entire quarter-plane.

\subsection{Monotonicity and structure}
We begin with a general traveling wave $\Psi$ on $\H^d$.
We show that $\Psi$ lies between the two one-dimensional steady states.
\begin{lemma}
  \label{lem:order}
  If $\Psi$ is a traveling wave on $\H^d$, then $0 < \Psi < \varphi$.
\end{lemma}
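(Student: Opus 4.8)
The plan is to establish the two-sided bound in three independent pieces: $\Psi>0$, then $\Psi\le\varphi$, then strictness of the latter, each via the (strong) maximum principle, with one input drawn from the parabolic picture.

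\textbf{Step 1: $\Psi>0$.}
Recall that, by the maximum principle, every bounded solution of \eqref{eq:main} satisfies $0\le\Psi\le1$, so the reaction term obeys $\Psi-\Psi^2=\Psi(1-\Psi)\ge0$. Rewriting \eqref{eq:main} as $\tfrac12\Delta\Psi+c\,\partial_x\Psi=-\Psi(1-\Psi)\le0$, we see that $\Psi$ is a nonnegative supersolution of the uniformly elliptic operator $\tfrac12\Delta+c\,\partial_x$ (which has vanishing zeroth-order coefficient) on the connected domain $\H^d$. The strong maximum principle then forces either $\Psi\equiv0$ or $\Psi>0$ throughout $\H^d$; since a traveling wave is by Definition~\ref{def:TW} not identically $0$, we conclude $\Psi>0$ in $\H^d$.

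\textbf{Step 2: $\Psi\le\varphi$.}
Let $u$ be the solution of the parabolic problem \eqref{eq:KPP} on $\H^d$ with initial datum $u(0,\anon)\equiv1$. Comparison with the constant supersolution $1$ gives $u\le1$; comparison with the stationary subsolution $\varphi\le1$ gives $u\ge\varphi>0$; and since $u(0,\anon)\equiv1\ge u(h,\anon)$ for every $h>0$, the semigroup property shows $t\mapsto u(t,\anon)$ is nonincreasing. Because both the initial datum and the equation are invariant under translation in $x$ (and $\tbf x'$), $u(t,x,\tbf x',y)$ depends only on $(t,y)$. Hence $u(t,\anon)$ decreases to a bounded steady state that is nonzero (it dominates $\varphi$), so by the classification of bounded steady states in Theorem~1.1(A) of \cite{BeG} this limit is exactly $\varphi$. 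Now regard the traveling wave as the parabolic solution $(t,x,\tbf x',y)\mapsto\Psi(x-ct,\tbf x',y)$; since $\Psi\le1=u(0,\anon)$, the comparison principle for bounded solutions of \eqref{eq:KPP} on $\H^d$ yields $\Psi(x-ct,\tbf x',y)\le u(t,y)$ for all $t\ge0$. Fixing a point $(x_0,\tbf x',y_0)$ and choosing $x=x_0+ct$, we obtain $\Psi(x_0,\tbf x',y_0)\le u(t,y_0)\to\varphi(y_0)$ as $t\to\infty$, so $\Psi\le\varphi$.

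\textbf{Step 3: strictness and the main obstacle.}
The difference $z\coloneqq\varphi-\Psi\ge0$ satisfies, upon subtracting \eqref{eq:main} for $\Psi$ from the ODE \eqref{eq:steady} for $\varphi$ (noting $\partial_x\varphi=\partial_{\tbf x'}\varphi=0$), the linear equation $\tfrac12\Delta z+c\,\partial_x z+(1-\varphi-\Psi)z=0$ with $z=0$ on $\partial\H^d$. With $g\coloneqq1-\varphi-\Psi\in L^\infty(\H^d)$ we have $\tfrac12\Delta z+c\,\partial_x z-\|g\|_{L^\infty}z=-(g+\|g\|_{L^\infty})z\le0$, so $z$ is a nonnegative supersolution of a uniformly elliptic operator with nonpositive zeroth-order term; the strong maximum principle gives $z>0$ in $\H^d$ unless $z\equiv0$, and $z\equiv0$ would mean $\Psi=\varphi$, contradicting Definition~\ref{def:TW}. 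Hence $\Psi<\varphi$. The only nontrivial ingredient is Step~2: one needs that the parabolic flow from constant data $1$ converges (monotonically, and independently of $x$ and $\tbf x'$) to $\varphi$, which combines the steady-state classification of \cite{BeG} with the standard monotonicity and comparison theory for bounded solutions of \eqref{eq:KPP} on the unbounded domain $\H^d$; Steps~1 and~3 are then routine applications of the strong maximum principle.
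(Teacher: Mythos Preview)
Your proof is correct and follows essentially the same approach as the paper: both compare $\Psi$, viewed as a parabolic solution $\Psi(x-ct,\tbf{x}',y)$ of \eqref{eq:KPP}, with the one-dimensional parabolic flow starting from a constant (you use $1$, the paper uses the a priori bound $M$), show this flow decreases to $\varphi$ via the steady-state classification from \cite{BeG}, and then deduce strict inequalities from the strong maximum principle. Your Steps~1 and~3 spell out explicitly what the paper compresses into a single closing sentence.
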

\begin{proof}
  Let $(\s{P}_t)_{t \geq 0}$ denote the semigroup corresponding to the parabolic evolution
  \begin{equation}
    \label{eq:parabolic-half-line}
    \partial_t W = \frac{1}{2} \Delta W + W - W^2
  \end{equation}
  on $\R_+$ with Dirichlet boundary data.
  That is, if $W(t, y)$ solves \eqref{eq:parabolic-half-line} on $\R_+$ with $W|_{\partial \R_+} = 0$ and $W(0, \anon) = W_0$, then $(\s{P}_tW_0)(y) \coloneqq W(t, y)$.
  
  By Definition~\ref{def:TW}, there exists $M \in \R_+$ such that
  \begin{equation}
    \label{eq:bounded}
    0 \leq \Psi \leq M
  \end{equation}
  Recall that $\varphi$ from \eqref{eq:steady} is the unique bounded positive Dirichlet steady state on $\R_+$.
  Because $M$ is a supersolution of \eqref{eq:parabolic-half-line}, its evolution $\s{P}_t M$ is decreasing in $t$ and thus has a nonnegative bounded limit $\s{P}_\infty M$ solving \eqref{eq:steady}.
  Comparison and the hair trigger effect from~\cite[Theorem~1.3(A)]{BeG} imply that $\s{P}_\infty M \geq \s{P}_\infty 1 = \varphi.$
  Since $\varphi$ is the unique positive bounded solution of \eqref{eq:steady}, we in fact have $\s{P}_\infty M = \varphi$.
  (We do not apply Theorem~1.3(A) of~\cite{BeG} directly to $M$ because the theorem assumes $u_0 \leq 1$.
  As the above argument shows, this hypothesis can be relaxed to boundedness.)
  Using \eqref{eq:bounded}, the comparison principle thus implies that
  \begin{equation*}
    0 \leq \Psi \leq \s{P}_\infty M = \varphi.
  \end{equation*}
  Since $\Psi$ is neither $0$ nor $\varphi$, the lemma follows from the strong maximum principle.
\end{proof}
We now show that \emph{minimal-speed} waves have the expected monotonicity.
Our argument follows the proof of Lemma~5.1 in~\cite{HN}, which establishes the analogous result in the whole space.
\begin{proposition}
  \label{prop:monotone}
  Let $\Psi$ be a traveling wave on $\H^d$ of speed $c_* = \sqrt{2}$.
  Then $\partial_x \Psi < 0$, $\partial_y \Psi > 0$, and $\nab_{\tbf{x}'} \Psi = 0$.
  Moreover, $\partial_x \log \Psi \geq -\sqrt{2}$.
\end{proposition}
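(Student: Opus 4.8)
The plan is to adapt the sliding and maximum-principle arguments of Hamel and Nadirashvili \cite[Lemma~5.1]{HN} to the Dirichlet half-space; the genuinely new feature is the boundary $\partial\H^d$, which forbids translation in $y$ and forces every comparison function to vanish on $\partial\H^d$. The minimal speed enters through the degeneracy of the linearization of \eqref{eq:main} at the unstable state $0$: the symbol of $\tfrac12\Delta + \sqrt2\,\partial_x + 1$ is $\tfrac12|p|^2 - \sqrt2\,p_1 + 1 = \tfrac12|p - \sqrt2\,e_1|^2$, whose only real zero is $p = \sqrt2\,e_1$. I would first record the analytic expression of this rigidity: a one-line substitution into \eqref{eq:main} shows that $w \coloneqq e^{\sqrt2 x}\Psi$ satisfies $\tfrac12\Delta w = e^{-\sqrt2 x}w^2 \ge 0$, so $w$ is nonnegative and subharmonic in $\H^d$ and vanishes on $\partial\H^d$; this identity fails for any other speed.

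The heart of the matter is the steepness bound $\partial_x\log\Psi \ge -\sqrt2$, equivalently $\omega \coloneqq \partial_x w \ge 0$. Differentiating the identity above in $x$ shows that $\omega$ is a supersolution of the operator $\mathcal L \coloneqq \tfrac12\Delta - 2\Psi$, whose zeroth-order coefficient $-2\Psi$ is nonpositive; hence $\mathcal L$ obeys the maximum principle. Moreover $\omega$ vanishes on $\partial\H^d$ and tends to $0$ as $x \to -\infty$, since $0 < \Psi < \varphi$ (Lemma~\ref{lem:order}) forces $w \to 0$ there and elliptic estimates then control $\nabla w$. It remains to prevent $\omega$ from becoming negative in the region $x \gg 1$, which I would do by comparing $\omega$ with carefully chosen nonnegative supersolutions of $\mathcal L$ on large truncations of $\H^d$, using the a priori bound $w \le \|\varphi\|_\infty\,e^{\sqrt2 x}$, following the barrier constructions in \cite{HN}. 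This gives $\omega \ge 0$, i.e.\ $\partial_x\log\Psi \ge -\sqrt2$.

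Granted the steepness bound, the rest is standard. Sliding the shifts $\Psi(\,\cdot + \tau e_1)$, $\tau \ge 0$, which solve the same equation, yields $\Psi(\,\cdot + \tau e_1) \ge \Psi$, i.e.\ $\partial_x\Psi \le 0$; the strong maximum principle applied to $\tfrac12\Delta(\partial_x\Psi) + \sqrt2\,\partial_x(\partial_x\Psi) + (1 - 2\Psi)\partial_x\Psi = 0$ upgrades this to $\partial_x\Psi < 0$ since $\partial_x\Psi \not\equiv 0$. Sliding instead along a direction parallel to $\partial\H^d$ and orthogonal to $e_1$ shows $\Psi$ is monotone, hence (being bounded) constant, in that direction, so $\nab_{\tbf{x}'}\Psi = 0$ and the problem reduces to $\H^2$. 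Because $y$-translations do not preserve $\H^d$, I would instead obtain $\partial_y\Psi > 0$ by a moving-plane argument in $y$: comparing $\Psi(x,y)$ with $\Psi(x, 2\lambda - y)$ on $\{0 < y < \lambda\}$, starting from small $\lambda$ where the Hopf lemma at $\partial\H$ supplies the comparison, sliding $\lambda$ upward, and excluding a finite critical value via the strong maximum principle and the Hopf lemma.

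The main obstacle — and the point at which the argument must genuinely follow \cite{HN} rather than routine sliding — is the control of $\Psi$ as $x \to +\infty$. At this stage we do not yet know $\Psi(+\infty, \anon) = 0$; that limit is extracted only afterward, from the monotonicity. Hence neither the $x$-sliding nor the barrier step for the steepness bound may invoke decay at $x = +\infty$, and closing this gap requires combining the subharmonicity of $w$, the bound $w \lesssim e^{\sqrt2 x}$, and comparison functions tailored to the degenerate operator $\mathcal L$. This is where the bulk of the technical effort lies.
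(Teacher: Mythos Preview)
Your strategy differs substantially from the paper's, and the pieces you label ``standard'' are where the real difficulties hide.

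The paper never separates the steepness bound from the monotonicity. Instead it fixes a direction $\theta$ with $\theta_y\ge 0$, sets $v\coloneqq\partial_\theta\log\Psi$, and shows directly that $v_-$ is uniformly bounded (Schauder in the interior, Hopf plus boundary Schauder near $\partial\H^d$). Taking a sequence $(\mathbf{x}_n)$ along which $v\to\inf v=-m$, it passes to subsequential limits of the shifts $\Psi(\,\cdot+\mathbf{x}_n)$. If the limit $\Psi_\infty$ is nontrivial, the equation $\tfrac12\Delta v_\infty+(\nabla\Psi_\infty/\Psi_\infty)\cdot\nabla v_\infty+c_*\partial_x v_\infty-\Psi_\infty v_\infty=0$ forces a contradiction at the interior minimum (with a careful desingularization when $y_n\to 0$). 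If $\Psi_n\to 0$, the normalized limit $\Psi_n/\Psi(\mathbf{x}_n)\cdot e^{\sqrt2 x}$ is positive harmonic with Dirichlet data, hence a multiple of $y$, and evaluating $\partial_\theta$ at the origin gives $m\le\sqrt2\,\theta_x$. All four conclusions---$\partial_x\Psi<0$, $\partial_y\Psi>0$, $\nabla_{\mathbf{x}'}\Psi=0$, and $\partial_x\log\Psi\ge-\sqrt2$---fall out of this single dichotomy. No barriers, no sliding, no moving planes.

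Your route has two genuine gaps. First, the barrier step for $\omega=\partial_x w$: you correctly derive $\tfrac12\Delta\omega-2\Psi\,\omega=-\sqrt2\,e^{\sqrt2 x}\Psi^2\le0$, but the only a priori control as $x\to+\infty$ is $|\omega|\lesssim e^{\sqrt2 x}$ from Schauder, and the zeroth-order coefficient $-2\Psi$ may vanish there. You give no concrete barrier that dominates this growth, and \cite{HN} does not supply one---Lemma~5.1 there is precisely the minimizing-sequence argument the paper follows, not a barrier construction. Second, even granting $\omega\ge0$, your sliding and moving-plane steps are posed on strips unbounded in $x$ without any limit information at $x=\pm\infty$; starting the slide requires an initial ordering you do not have (and there is a sign slip: $\Psi(\,\cdot+\tau e_1)\ge\Psi$ for $\tau\ge0$ would give $\partial_x\Psi\ge0$, not $\le0$). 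The claim that monotonicity in an $\mathbf{x}'$-direction plus boundedness yields constancy also needs justification. The paper's ratio argument sidesteps all of this because it is purely local: boundedness of $v_-$ and compactness replace global comparison.
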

\begin{proof}
  Take $\theta = (\theta_x, \theta_{\tbf{x}'}, \theta_y) \in S^{d-1}$ such that $\theta_y \geq 0$.
  We write $\partial_\theta \coloneqq \theta \cdot \nab$ for the derivative in direction $\theta$.
  We suppose that $\inf \partial_\theta \Psi < 0$.
  We show that this implies that $\theta_x > 0$; the desired bounds follow.

  Define
  \begin{equation*}
    v \coloneqq \frac{\partial_\theta \Psi}{\Psi} = \partial_\theta \log \Psi.
  \end{equation*}
  When $y \geq 1$, Schauder estimates imply that $v$ is uniformly bounded.
  On $\partial\H^d$, we have $\Psi = 0$ and hence $\partial_x \Psi = \nab_{\tbf x'} \Psi = 0$ while the Hopf lemma yields $\partial_y \Psi > 0$.
  Because $\theta_y \geq 0$, it follows from elliptic estimates up to the boundary that $v_-$ is uniformly bounded where $0 < y < 1$.
  Thus $v_-$ is uniformly bounded.
  Because $\partial_\theta \Psi < 0$ somewhere, we have
  \begin{equation*}
    \inf v = -m
  \end{equation*}
  for some $m \in \R_+$.
  Thus there exists $(\tbf{x}_n)_{n \in \N} \subset \H^d$ such that $v(\tbf{x}_n) \to - m$ as $n \to \infty$.
  Define $\Psi_n \coloneqq \Psi(\anon + \tbf{x}_n)$ and $v_n \coloneqq v(\anon + \tbf{x}_n)$ for $n \in \N$.

  We consider several cases.
  First suppose $\Psi_n$ does not vanish locally uniformly as $n \to \infty$ and $\limsup_{n \to \infty} y_n > 0$.
  We restrict to a subsequence with $\inf_n y_n > 0$.
  Schauder estimates allow us to extract subsequential limits $y_\infty \in (0, \infty]$, $\Psi_\infty \not\equiv 0$, and $v_\infty$ of $(y_n, \Psi_n, v_n)_{n \in \N}$ such that $v_\infty(0) = -m = \min v_\infty$.
  One can easily check that these satisfy
  \begin{equation}
    \label{eq:ratio}
    \frac{1}{2} \Delta v_\infty + \frac{\nab \Psi_\infty}{\Psi_\infty} \cdot \nab v_\infty + c_* \partial_x v_\infty - \Psi_\infty v_\infty = 0
  \end{equation}
  in the domain $\H^d - y_\infty \tbf{e}_y$ under the convention that $\H^d - \infty \tbf{e}_y = \R^d$.
  Since $v_\infty$ achieves its minimum at the origin, $\nab v_\infty(0) = 0$ and $\Delta v_\infty(0) \geq 0$.
  Then $v_\infty(0) < 0$ and \eqref{eq:ratio} imply that $\Psi_\infty(0) = 0$, contradicting the strong maximum principle.

  Still assuming $\Psi_n$ does not vanish locally uniformly in the limit, suppose $y_n \to 0$.
  Boundary elliptic estimates imply that $v \to \infty$ locally uniformly as $y \to 0$ if $\theta_y > 0$.
  From the definition of $\tbf{x}_n$, we must have $\theta_y = 0$ in this case.
  
  Now $v_\infty$ satisfies \eqref{eq:ratio} on $\H^d$.
  This case is more delicate because $\Psi_\infty|_{\partial \H^d} = 0$, so \eqref{eq:ratio} seems singular at the boundary.
  To resolve this, let $\Lambda \coloneqq \partial_y \Psi_\infty|_{y = 0} > 0$, which is a function of $(x, \tbf{x}')$.
  Evaluating \eqref{eq:main} at $y = 0$ and taking a limit, we see that $\partial_y^2 \Psi_\infty|_{y = 0} = 0$.
  By Taylor's theorem, there exist a nonempty connected neighborhood $U$ of $0$ in $\closure$ and $\Gamma \in \m{C}^\infty(U)$ such that
  \begin{equation}
    \label{eq:Taylor}
    \Psi_\infty = \Lambda y + \Gamma y^3 \quad \text{in } U.
  \end{equation}
  We now consider the advection term in \eqref{eq:ratio}.
  The ratio $(\nab_{x,\tbf{x}'}\Psi_\infty)/\Psi_\infty$ is bounded on $U$, so the only component of concern is $\partial_y \Psi_\infty/\Psi_\infty$.
  Indeed, $\partial_y \Psi_\infty > 0$ while $\Psi_\infty = 0$ on $\partial\H^d$.
  However, we can use \eqref{eq:Taylor} to compute
  \begin{equation}
    \label{eq:deriv-ratio}
    \partial_y v_\infty = \partial_y\left(\frac{\partial_\theta\Psi_\infty}{\Psi_\infty}\right) = 2 \Lambda^{-2}(\Lambda \partial_\theta \Gamma - \Gamma \partial_\theta \Lambda) y + \m{O}(y^2) \quad \text{in }U.
  \end{equation}
  We then find
  \begin{equation*}
    \frac{\partial_y \Psi_\infty}{\Psi_\infty} \partial_y v_\infty = 2 \Lambda^{-2}(\Lambda \partial_\theta \Gamma - \Gamma \partial_\theta \Lambda) + \m{O}(y) \quad \text{in }U.
  \end{equation*}
  Likewise,
  \begin{equation*}
    \partial_y^2 v_\infty = 2 \Lambda^{-2}(\Lambda \partial_\theta \Gamma - \Gamma \partial_\theta \Lambda) + \m{O}(y) \quad \text{in }U.
  \end{equation*}
  We thus conclude that
  \begin{equation*}
    \frac{\partial_y \Psi_\infty}{\Psi_\infty} \partial_y v_\infty = \partial_y^2 v_\infty + \m{O}(y) \quad \text{in }U
  \end{equation*}
  Thus this singular first-order term acts like a regular second-order term.
  Perhaps after shrinking $U$, we are free to assume that $\abs{v_\infty} \geq m/2$ on $U$.
  Then we can write \eqref{eq:ratio} as
  \begin{equation}
    \label{eq:ratio-desingular}
    \left(\frac{1}{2} \Delta + \partial_y^2\right) v_\infty + \left(\frac{\partial_x \Psi_\infty}{\Psi_\infty} + c_*\right) \partial_x v_\infty + \frac{\nab_{\tbf{x}'} \Psi_\infty}{\Psi_\infty} \cdot \nab_{\tbf{x}'} v_\infty - \big[\Psi_\infty  + \m{O}(y)\big]v_\infty = 0.
  \end{equation}
  The coefficients in this operator are bounded and $\frac{1}{2}\Delta + \partial_y^2$ is (uniformly) elliptic.
  Consider \eqref{eq:ratio-desingular} at the origin.
  There $v_\infty$ achieves its negative minimum and \eqref{eq:deriv-ratio} implies that $\partial_y v_\infty(0) = 0$.
  By the Hopf lemma, $v \equiv -m$ in $U$.
  Using this in \eqref{eq:ratio}, we obtain $\Psi_\infty = 0$ in $U$, which contradicts the strong maximum principle, as $\Psi_\infty \not \equiv 0$ by hypothesis.

  The above contradictions imply that $\Psi$ vanishes locally uniformly along $(\tbf{x}_n)_{n \in \N}$.
  Again suppose $\limsup_{n \to \infty} y_n > 0$.
  We define
  \begin{equation*}
    w_n \coloneqq \frac{\Psi(\anon + \tbf{x}_n)}{\Psi(\tbf x_n)} \e^{\sqrt{2} x}
  \end{equation*}
  and extract subsequential limits $y_\infty \in (0, \infty]$ and $w_\infty$.
  The latter satisfies $w_\infty(0) = 1$ by construction.
  Because $\Psi(\tbf{x}_n) \to 0$, we are in a linear regime and $w_\infty$ is harmonic:
  \begin{equation*}
    \Delta w_\infty = 0 \quad \text{in } \H^d - y_\infty \tbf{e}_y.
  \end{equation*}
  Moreover, when $y_\infty < \infty$, $w_\infty = 0$ on the boundary $y = -y_\infty$.
  Positive harmonic functions satisfying the Dirichlet condition are unique up to scaling (this follows from the representation formula (13) in~\cite{Rudin}, for example).
  Since ${w_\infty(0) = 1}$, we can identify $w_\infty = \frac{y + y_\infty}{y_\infty}$ if $y_\infty < \infty$ and $w_\infty = 1$ if $y_\infty = \infty$.
  In each case, $\partial_\theta w_\infty(0) \geq 0$ because $\theta_y \geq 0$.
  On the other hand, $v(\tbf{x}_n) \to -m$ implies that
  \begin{equation*}
    0 \leq \partial_\theta w_\infty(0) = -m + \sqrt{2} \theta_x.
  \end{equation*}
  We conclude that $\theta_x > 0$ and $m \leq \sqrt{2} \theta_x$.

  Finally, suppose $y_n \to 0$.
  Then we define
  \begin{equation*}
    \ti w_n \coloneqq \frac{\Psi(\anon + \tbf{x}_n)}{\Psi(\tbf{x}_n + \tbf{e}_y)} \e^{\sqrt{2}x}
  \end{equation*}
  and extract a subsequential limit as above.
  We have $\ti w_\infty(\tbf{e}_y) = 1$, $\Delta \ti w_\infty = 0$, and $\ti w_\infty|_{\partial \H^d} = 0$.
  It follows that $\ti w_\infty = y$ and $\partial_\theta \ti w_\infty(0) \geq 0$.
  Reasoning as above, we again obtain $\theta_x > 0$ and $m \leq \sqrt{2} \theta_x$.

  We have now shown that $\partial_\theta \Psi \geq 0$ whenever $\theta_y \geq 0$ and $\theta_x \leq 0$.
  It follows that $\partial_x \Psi \leq 0$ and $\partial_y\Psi \geq 0$.
  If we take $\theta_x = \theta_y = 0$,
  \begin{equation*}
    0 \leq \partial_{-\theta} \Psi = -\partial_\theta \Psi \leq 0.
  \end{equation*}
  That is, $\partial_\theta \Psi = 0$, meaning $\nab_{\tbf{x}'} \Psi = 0$.
  Finally, $\Psi$ cannot be constant in $y$, so the strong maximum principle implies that $\partial_y \Psi > 0$.
  Similarly, if $\partial_x \Psi = 0$, then it is a bounded positive steady state of the KPP equation on the half-line.
  The unique such solution is $\varphi$, and we have assumed that $\Psi \not\equiv \varphi$.
  It follows that $\Psi$ is nonconstant in $x$ as well, so $\partial_x \Psi < 0$.

  Finally, we showed above that $m = m(\theta) \leq (\sqrt{2} \theta_x)_+$.
  Taking $\theta = \tbf{e}_x$ so $\theta_x = 1$, we see that $m \leq \sqrt{2}$.
  Recalling the definition of $m$, we have
  \begin{equation*}
    \inf \partial_x \log \Psi \geq -\sqrt{2}
  \end{equation*}
  as claimed.
\end{proof}
Since $\Psi$ is constant in $\tbf{x}'$, we can drop those variables.
In the remainder of the paper, we assume $d = 2$, so $\Psi$ is a traveling wave on $\H \coloneqq \R \times \R_+$ and thus a function of $(x,y)$.
\begin{corollary}
  \label{cor:limits}
  The following limits hold locally uniformly in $\m{C}^1$ in $y \in [0, \infty)$\textup{:}
  \begin{equation*}
    \lim_{x \to -\infty} \Psi(x, y) = \varphi(y) \And  \lim_{x \to +\infty} \Psi(x, y) = 0.
  \end{equation*}
\end{corollary}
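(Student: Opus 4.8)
The plan is to combine the monotonicity established in Proposition~\ref{prop:monotone}, the two-sided bound $0 < \Psi < \varphi$ from Lemma~\ref{lem:order}, and a routine translation--compactness argument. Since $\partial_x \Psi < 0$ and $\Psi$ is trapped between $0$ and $\varphi$, for each fixed $y > 0$ the function $x \mapsto \Psi(x, y)$ is decreasing and bounded, so the pointwise limits $\Psi(\pm\infty, y) \coloneqq \lim_{x \to \pm\infty} \Psi(x, y)$ exist and satisfy $0 \leq \Psi(+\infty, y) \leq \Psi(-\infty, y) \leq \varphi(y)$.

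To identify these limits and upgrade the convergence, I would fix a sequence $x_n \to +\infty$ (the case $x_n \to -\infty$ being symmetric) and set $\Psi_n(x, y) \coloneqq \Psi(x + x_n, y)$. Because \eqref{eq:main} is invariant under translation in $x$, each $\Psi_n$ solves \eqref{eq:main} with $c = c_*$, vanishes on the flat boundary $\partial \H = \R \times \{0\}$, and satisfies $0 \leq \Psi_n \leq 1$. Interior and boundary Schauder estimates therefore bound $(\Psi_n)$ in $\m{C}^{2,\alpha}$ on every compact subset of $\overline{\H}$, uniformly in $n$. By Arzel\`a--Ascoli, every subsequence of $(\Psi_n)$ has a further subsequence converging in $\m{C}^2_{\mathrm{loc}}(\overline{\H})$ to a bounded solution $\Psi_\infty$ of \eqref{eq:main}. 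Since $\Psi_n(x, y) \to \Psi(+\infty, y)$ pointwise, the limit $\Psi_\infty(x, y) = \Psi(+\infty, y)$ is independent of $x$; the drift term in \eqref{eq:main} then drops, so $\Psi_\infty$ is a nonnegative bounded solution of the steady ODE \eqref{eq:steady} on $\R_+$. By \cite[Theorem~1.1(A)]{BeG}, $\Psi_\infty$ is either $0$ or $\varphi$.

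It then remains to determine which, and this is the only step that uses anything specific to $\Psi$. If $\Psi(+\infty, \anon) = \varphi$, monotonicity in $x$ would give $\Psi(x, y) \geq \Psi(+\infty, y) = \varphi(y)$ everywhere, contradicting $\Psi < \varphi$; hence $\Psi(+\infty, \anon) = 0$. Symmetrically, $\Psi(-\infty, \anon) = 0$ would force $\Psi \leq 0$, contradicting $\Psi > 0$; hence $\Psi(-\infty, \anon) = \varphi$. Because every subsequential $\m{C}^2_{\mathrm{loc}}$-limit of $(\Psi_n)$ is thereby pinned to the \emph{same} function---$0$ if $x_n \to +\infty$ and $\varphi$ if $x_n \to -\infty$---the full sequence $(\Psi_n)$ converges in $\m{C}^2_{\mathrm{loc}}(\overline{\H})$, in particular in $\m{C}^1$ uniformly on compact subsets of $\overline{\H}$; evaluating at $x = 0$ yields $\Psi(x_n, \anon) \to \varphi$ (resp. $0$) in $\m{C}^1([0, R])$ for every $R > 0$. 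Since $x_n$ was arbitrary, this gives the stated locally uniform $\m{C}^1$ convergence as $x \to -\infty$ (resp. $x \to +\infty$). I do not expect a real obstacle here: the argument rests only on standard elliptic regularity together with the monotonicity of Proposition~\ref{prop:monotone} and the classification of bounded steady states of~\cite{BeG}.
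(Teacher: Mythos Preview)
Your proposal is correct and follows essentially the same approach as the paper: monotone pointwise limits in $x$ exist and must be bounded steady states on $\R_+$, the classification of \cite{BeG} forces these to be $0$ or $\varphi$, and strict inequalities pin down which is which; your translation--compactness/Schauder argument simply spells out in detail what the paper compresses into the phrase ``the corollary follows from the uniform continuity of $\Psi$.'' The only cosmetic difference is that the paper distinguishes the two limits via $\Psi(-\infty,\anon) > \Psi(+\infty,\anon)$ (from $\partial_x\Psi<0$) whereas you use the two-sided bound $0<\Psi<\varphi$ from Lemma~\ref{lem:order}; both work.
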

\begin{proof}
  Because $\Psi$ is bounded and monotone in $x$, the limits $\lim_{x \to \pm \infty} \Psi$ exist and are bounded steady states of the KPP equation on $\R_+$.
  Moreover, $\Psi|_{x = -\infty} > \Psi|_{x = +\infty}$.
  The only two bounded steady states on $\R_+$ are $\varphi$ and $0$.
  The corollary follows from the uniform continuity of $\Psi$.
\end{proof}
We next consider the behavior far from the boundary.
In the following, we use the notation $\smallO_s(y)$ to indicate a function $f(s, y)$ such that $f(s, y)/y \to 0$ as $y \to \infty$ pointwise (but not necessarily uniformly) in $s$.
Recall that $w_{c_*}$ is the unique minimal-speed one-dimensional traveling wave satisfying \eqref{eq:wave-constant}.
\begin{lemma}
  \label{lem:level}
  For all $s \in (0, 1)$, the level set $\Psi^{-1}(s)$ can be expressed as the (rotated) graph $\{x = \sigma_s(y)\}$ of a locally smooth and increasing function $\sigma_s \colon (\varphi^{-1}(s), \infty) \to \R$.
  This function satisfies $\partial_y^k \sigma_s \to 0$ as $y \to \infty$ for all $k \geq 1$; in particular, $\sigma_s(y) = \smallO_s(y)$ as $y \to \infty$.
  Moreover, for all $k \geq 0$,
  \begin{equation*}
    \lim_{\ell \to \infty} \norm{\Psi(x, y) - w_{c_*}\big(x - \sigma_s(y) + w_{c_*}^{-1}(s)\big)}_{\m{C}^k(\R \times [\ell, \infty))} = 0.
  \end{equation*}
\end{lemma}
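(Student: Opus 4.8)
The plan is to produce the level set as a graph using the strict monotonicity already established, and then to determine its asymptotics by a blow‑down argument anchored by the Hamel--Nadirashvili classification of critical‑speed fronts in the plane. For the graph: fix $y>0$. Proposition~\ref{prop:monotone} gives $\partial_x\Psi(\anon,y)<0$, and Corollary~\ref{cor:limits} gives $\Psi(-\infty,y)=\varphi(y)$, $\Psi(+\infty,y)=0$. Since $\varphi$ is a strictly increasing bijection $[0,\infty)\to[0,1)$ (a standard phase‑plane analysis of \eqref{eq:steady}), $\Psi(\anon,y)$ attains the value $s$ exactly once, precisely when $\varphi(y)>s$, i.e.\ $y>\varphi^{-1}(s)$. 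The implicit function theorem then yields a locally smooth $\sigma_s\colon(\varphi^{-1}(s),\infty)\to\R$ with $\Psi(\sigma_s(y),y)=s$; differentiating this relation gives $\sigma_s'=-\partial_y\Psi/\partial_x\Psi>0$, so $\sigma_s$ is increasing, and each $\sigma_s^{(k)}$ is a rational expression in the derivatives of $\Psi$ along the level set whose denominator is a power of $\partial_x\Psi\neq0$.

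Next I would blow down. Set $v_y(a,b):=\Psi(a+\sigma_s(y),b+y)$, which solves \eqref{eq:main} on $\{b>-y\}$ with $v_y(0,0)=s$, $\partial_a v_y\le0$, $\partial_b v_y\ge0$. Interior Schauder estimates make $\{v_y\}$ precompact in $\m{C}^m_{\mathrm{loc}}(\R^2)$ for every $m$ as $y\to\infty$; any subsequential limit $v_\infty$ solves \eqref{eq:main} on all of $\R^2$, is monotone, has $0<v_\infty<1$ by the strong maximum principle, and satisfies $v_\infty(0,0)=s$. Its limits $v_\infty(\mp\infty,\anon)$ exist by monotonicity and solve $\tfrac12 g''+g-g^2=0$ on $\R$; the only bounded $[0,1]$‑valued solutions of that ODE are $0$ and $1$, so $v_\infty(0,0)\in(0,1)$ forces the left limit to be $1$ and the right limit $0$. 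The classification of critical‑speed fronts in $\R^2$~\cite{HN} then forces $v_\infty$ to be planar; since its profile solves the one‑dimensional traveling‑wave equation at speed $c_*e_1$ (with $e$ the front normal), and such an equation supports a nonconstant bounded front only when $c_*e_1\ge\sqrt2=c_*$, the front must be horizontal, whence $v_\infty(a,b)=w_{c_*}(a+w_{c_*}^{-1}(s))$. The limit being subsequence‑independent, $v_y\to w_{c_*}(\anon+w_{c_*}^{-1}(s))$ in $\m{C}^m_{\mathrm{loc}}(\R^2)$ as $y\to\infty$. Evaluating the formulas for $\sigma_s^{(k)}$ at the origin and using $\partial_a v_y(0,0)=\partial_x\Psi(\sigma_s(y),y)\to w_{c_*}'(w_{c_*}^{-1}(s))\neq0$ while the remaining derivatives converge to the ($b$‑independent) derivatives of the limit, an induction on $k$ gives $\sigma_s^{(k)}(y)\to0$ for every $k\ge1$; a Cesàro argument then yields $\sigma_s(y)=\smallO_s(y)$.

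Finally I would upgrade this to $\m{C}^k(\R\times[\ell,\infty))$. Writing the difference in the statement, at height $y$, as $v_y(x-\sigma_s(y),0)-w_{c_*}(x-\sigma_s(y)+w_{c_*}^{-1}(s))$ and expanding its derivatives, every term carrying a factor $\sigma_s^{(j)}$ with $j\ge1$ is uniformly small once $\ell$ is large, by the previous paragraph; so it suffices to show that $v_y\to w_{c_*}(\anon+w_{c_*}^{-1}(s))$ in $\m{C}^k(\R\times[0,1])$ \emph{uniformly} as $y\to\infty$. On a bounded strip $\{|a|\le R\}$ this is the locally uniform convergence just established. Outside it, $x$‑monotonicity takes over: for $a\ge R$ one has $0\le v_y(a,b)\le v_y(R,b)\to w_{c_*}(R+w_{c_*}^{-1}(s))$, which tends to $0$ as $R\to\infty$, and symmetrically $1-v_y(a,b)\to0$ for $a\le-R$, while the matching tails of the limit profile and all its derivatives are small because $w_{c_*}$ and its derivatives converge to their endpoint limits (cf.\ \eqref{eq:wave-constant} at $+\infty$ and the exponential approach to $1$ at $-\infty$). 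Uniform $\m{C}^{k+1,\gamma}$ bounds from Schauder then upgrade this $\m{C}^0$ smallness on the tails to $\m{C}^k$ smallness, which completes the proof.

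The main obstacle is this last step: passing from locally uniform to uniform‑in‑$x$ convergence forces one to control $\Psi$ as $x\to\pm\infty$ using its $x$‑monotonicity and to trade $\m{C}^0$ smallness for $\m{C}^k$ smallness through elliptic compactness. The essential non‑elementary ingredient throughout is the Hamel--Nadirashvili Liouville theorem~\cite{HN}: it is what guarantees that the blow‑down limit $v_\infty$ is planar and hence, together with the minimal‑speed constraint, exactly the one‑dimensional wave $w_{c_*}$.
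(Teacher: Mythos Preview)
Your proposal is correct and follows essentially the same route as the paper: implicit function theorem for the graph, a blow-down along $(\sigma_s(y),y)$ combined with the Hamel--Nadirashvili classification to identify the limit as $w_{c_*}$, and gradient convergence along the level set to force $\sigma_s'\to 0$. The one noteworthy difference is in the passage from locally uniform to uniform-in-$x$ convergence: you do this by hand (monotonicity in $x$ to control the tails, then Schauder to upgrade $\m{C}^0$ to $\m{C}^k$), whereas the paper invokes Dini's second theorem on the compactification $[-\infty,\infty]$, checking the endpoint conditions $\Psi(\pm\infty,y)\to w_{c_*}(\pm\infty)$ via Corollary~\ref{cor:limits} and $\varphi(\infty)=1$; this dispatches the uniform $\m{C}^0$ convergence in one line, after which Schauder gives the $\m{C}^k$ statement.
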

\begin{proof}
  Fix $s \in (0, 1)$.
  Due to $\partial_x\Psi < 0$ (from Proposition~\ref{prop:monotone}) and Corollary~\ref{cor:limits}, ${\Psi^{-1}(s) \subset \{y > \varphi^{-1}(s)\}}$ and for each $y \in (\varphi^{-1}(s), \infty)$, there is a unique $x \in \R$ such that $\Psi(x, y) = s$.
  Let $\sigma_s(y)$ denote this value of $x$.
  Because $\partial_x\Psi < 0,$ the implicit function theorem ensures that $\sigma_s$ is locally smooth.
  We note that the ``local'' qualifier is necessary because $\sigma_s \to -\infty$ as $y \searrow \varphi^{-1}(s)$.
  In any case, $\partial_y \Psi > 0$ implies that $\sigma_s' > 0$.

  We now consider the limiting behavior of $\sigma_s$ at infinity.
  Given a sequence $(y_n)_{n \in \N}$ tending to infinity, define $\Psi_n \coloneqq \Psi\big(\anon + (\sigma_s(y_n), y_n)\big)$.
  Then $\Psi_n(0) = s$ for all $n \in \N$.
  Taking $n \to \infty$, we can extract a locally uniform subsequential limit $\Psi_\infty$ that solves the traveling wave PDE \eqref{eq:main} on the whole space $\R^2$.
  The limit satisfies $\Psi_\infty(0) = s$, so $\Psi_\infty$ is neither identically $0$ nor $1$.
  By Theorem~1.7(i-c) in~\cite{HN}, $\Psi_\infty$ is a function of $x$ alone.
  It follows that
  \begin{equation*}
    \Psi_\infty(x, y) = w_{c_*}\big(x + w_{c_*}^{-1}(s)\big).
  \end{equation*}
  Because the limit is unique, we have $\Psi\big(x + \sigma_s(y'), y + y'\big) \to w_{c_*}\big(x + w_{c_*}^{-1}(s)\big)$ locally uniformly in $(x, y)$ as $y' \to \infty$.
  In fact, Schauder estimates imply that this convergence holds locally uniformly in $\m{C}^k$ for every $k \geq 1$.
  Hence
  \begin{equation*}
    \nab \Psi\big(\sigma_s(y), y\big) \to w_{c_*}'\big(w_{c_*}^{-1}(s)\big) \tbf{e}_x \quad \text{as } y \to \infty.
  \end{equation*}
  Since $\{x = \sigma_s(y)\}$ is a level set of $\Psi$, the above gradient is orthogonal to the tangent vector $\big(\sigma_s'(y), 1\big)$.
  Because $w_{c_*}'\big(w_{c_*}^{-1}(s)\big) \neq 0$, it follows that $\sigma_s' \to 0$ as $y \to \infty$.
  Uniform smoothness then implies that $\partial_y^k \sigma_s \to 0$ for every $k \geq 1$.
  
  We have now shown that
  \begin{equation*}
    \lim_{y \to \infty} \norm{\Psi(x, y) - w_{c_*}\big(x - \sigma_s(y) + w_{c_*}^{-1}(s)\big)}_{\m{C}_x([-L, L])} = 0
  \end{equation*}
  for all $L > 0$.
  Dini's second theorem allows us to upgrade the uniformity in $x$ from local to global~\cite[pp. 81, 270]{PS}.
  We apply it on the compactification $[-\infty, \infty]$, and must thus verify that
  \begin{equation}
    \label{eq:endpoint-limits}
    \Psi(\pm \infty, y) \to w_{c_*}(\pm \infty) \quad \text{as } y \to \infty.
  \end{equation}
  Corollary~\ref{cor:limits} states that $\Psi(-\infty, y) = \varphi(y)$ and $\Psi(+\infty, y) = 0$.
  Moreover, $\varphi(y) \to 1$ as $y \to \infty$, while $w_{c_*}(-\infty) = 1$ and $w_{c_*}(+\infty) = 0$.
  This confirms the endpoint convergence \eqref{eq:endpoint-limits}, so Dini yields
  \begin{equation*}
    \lim_{y \to \infty} \norm{\Psi(x, y) - w_{c_*}\big(x - \sigma_s(y) + w_{c_*}^{-1}(s)\big)}_{\m{C}_x(\R)} = 0.
  \end{equation*}
  Taking the limit superior in $y$, we are free to write this as
  \begin{equation*}
    \lim_{\ell \to \infty} \norm{\Psi(x, y) - w_{c_*}\big(x - \sigma_s(y) + w_{c_*}^{-1}(s)\big)}_{\m{C}(\R \times [\ell, \infty))} = 0.
  \end{equation*}
  The higher-regularity statements then follow from Schauder estimates.
\end{proof}
In the remainder of the section, we take $s = 1/2$ and let $\sigma \coloneqq \sigma_{1/2}$.
We let $\sigma_+$ denote the positive part of $\sigma$.

\subsection{A subsolution}
Our analysis of $\Psi$ hinges on the heuristic that $\Psi$ roughly decays like $\e^{-\sqrt{2} x}$ to the right of its $1/2$-level set $\{x = \sigma(y)\}$.
This decay was foreshadowed in Proposition~\ref{prop:monotone}, which states that $\partial_x \log \Psi \geq -\sqrt{2}.$
We would like to prove an almost-matching upper bound, which would state that $\Psi$ cannot decay at a rate much slower than $\sqrt{2}$.
This is impossible globally, as $\Psi$ is nearly constant in $x$ far on the left.
Thus our bounds will only hold where $\Psi$ is somewhat small.

Our key tool is a compactly-supported subsolution that varies in time.
It will move to the left while growing exponentially.
By deploying this subsolution beneath $\Psi$, we will find that $\Psi$ cannot decay too slowly, for otherwise its level set will be far to the right of the true location $\sigma$.
Thus, somewhat counterintuitively, we use a subsolution to prove an upper bound.

Our traveling wave $\Psi$ can be viewed as a solution of the KPP equation \eqref{eq:KPP} moving with velocity $c_* \tbf{e}_x$.
That is, $\Psi(\tbf{x} - c_* t \tbf{e}_x)$ solves \eqref{eq:KPP}.
Our left-moving subsolution is based on a compactly-supported subsolution of \eqref{eq:KPP} that moves at a slower speed but grows exponentially in time.

Let
\begin{equation*}
  \m{A} \coloneqq \partial_t - \frac{1}{2} \Delta - 1
\end{equation*}
denote the parabolic operator corresponding to the linearization of \eqref{eq:KPP} about $0$.
Given $c > 0$, let $(\m{S}_c z)(t, \tbf{x}) \coloneqq \e^{-c(x-ct)} z(t, \tbf{x} - c t \tbf{e}_x)$ denote an exponential tilt followed by a shift into the frame moving at velocity $c\tbf{e}_x$.
Then one can check that
\begin{equation}
  \label{eq:conjugate}
  \m{S}_c^{-1} \m{A} \m{S}_c = \partial_t - \frac{1}{2} \Delta + \frac{c^2 - c_*^2}{2}.
\end{equation}
Thus this tilt and shift merely change $\m{A}$ by a multiple of the identity.

Let $\psi > 0$ denote the principal Dirichlet eigenfunction of $-\frac{1}{2}\Delta$ on the unit ball $B_1 \subset \R^2$ normalized by $\psi(0) = \norm{\psi}_\infty = 1$.
We extend $\psi$ by $0$ to the entire plane $\R^2$.
Let $\mu$ denote the corresponding principal eigenvalue.
Given $R > 0$, the dilation $\psi(\anon/R)$ is the principal eigenfunction on the $R$-ball $B_R$ with principal eigenvalue $\mu/R^2$.

Recall that we are looking for a compactly-supported subsolution that moves at a speed $c < c_*$ and grows in time.
In this spirit, we will choose $c < c_*$, $\lambda > 0$, and $R > 0$ such that
\begin{equation*}
  \e^{\lambda t} \phi\left(\frac{\tbf{x}}{R}\right)
\end{equation*}
lies in the nullspace of $\m{S}_c^{-1} \m{A} \m{S}_c$.
In light of \eqref{eq:conjugate}, this is equivalent to
\begin{equation}
  \label{eq:dispersion}
  \lambda + \frac{\mu}{R^2} - \frac{c_*^2 - c^2}{2} = 0.
\end{equation}
Ultimately, we wish to show that solutions $\Psi$ of \eqref{eq:main} decay like $\e^{-\sqrt{2} x}$.
When we deploy our subsolution in \eqref{eq:main}, it will move to the left at the relative speed $\eps \coloneqq c_* - c > 0$.
In time $1$, it will move distance $\eps$ to the left and grow by a factor of $\e^{\lambda}$.
We can interpret this as \emph{spatial} decay at rate $\lambda/\eps$.
If we want to prove exponential decay of rate $\sqrt{2}$, we want $\lambda/\eps$ to be close to $\sqrt{2}$.
Rearranging the dispersion relation \eqref{eq:dispersion}, we want
\begin{equation}
  \label{eq:sub-condition}
  1 \gg \sqrt{2} - \frac{\lambda}{\eps} = \frac{\mu}{\eps R^2} + \frac{\eps}{2}.
\end{equation}
Thus to obtain the bounds we desire, we must use a large radius (and a very flat eigenfunction), weak exponential growth, and a speed slightly slower than $c_*$.
We choose $R$ and $\lambda$ so that the two terms on the right of \eqref{eq:sub-condition} are equal.
Expressing our parameters in terms of $\eps$, we choose
\begin{equation}
  \label{eq:params}
  c_\eps \coloneqq c_* - \eps, \quad \lambda_\eps \coloneqq \eps (c_*- \eps) = \eps c_\eps, \And R_\eps \coloneqq \frac{\sqrt{2\mu}}{\eps}.
\end{equation}
Applying $\m{S}_c$, we see that
\begin{equation*}
  v_\eps(t, \tbf{x}) \coloneqq \exp\left[\lambda_\eps t - c_\eps(x - c_\eps t + R_\eps)\right] \phi\left(\frac{\tbf{x} - c_\eps t \tbf{e}_x}{R_\eps}\right)
\end{equation*}
satisfies $\m{A}v_\eps = 0.$
That is, $v_\eps$ solves the linearization of \eqref{eq:KPP} about $0$.
We must now account for the nonlinear absorption in the full equation.
Since our solutions of \eqref{eq:KPP} lie between $0$ and $1$, $v_\eps$ is certainly a poor approximate solution when it exceeds $1$.
Thus in practice, we use a small multiple $\al v_\eps$ on a time interval that ensures that $\al v_\eps \leq 1$, namely $0 \leq t \leq \lambda_\eps^{-1} \log \al^{-1}$.
To handle the nonlinear absorption on this interval, we multiply $\al v_\eps$ by a time-dependent factor $b(t) \leq 1$.
A simple computation shows that $\al b v_\eps$ is a subsolution of the full equation \eqref{eq:KPP} provided
\begin{equation*}
  \dot b \leq - \al b^2 \e^{\lambda_\eps t}.
\end{equation*}
Taking $b(0) = 1$ and solving the corresponding ODE, we choose
\begin{equation*}
  b_\eps^\al(t) \coloneqq \big[1 + \al \lambda_\eps^{-1} \big(\e^{\lambda_\eps t} - 1\big)\big]^{-1}.
\end{equation*}
We observe that
\begin{equation*}
  b_\eps^\al(t) > \frac{1}{1 + \lambda_\eps^{-1}} \quad \text{when } \al \e^{\lambda_\eps t} = 1.
\end{equation*}
It only remains to shift into the $c_* \tbf{e}_x$-moving frame.
We define
\begin{equation*}
  w_\eps^\al(t, \tbf{x}) \coloneqq \al b_\eps^\al(t) \exp\left[\lambda_\eps t - c_\eps(x + \eps t + R_\eps)\right] \phi\left(\frac{\tbf{x} + \eps t \tbf{e}_x}{R_\eps}\right).
\end{equation*}
Then for all $\al \in [0, 1]$ and $t \in [0, \lambda_\eps^{-1} \log \al^{-1}]$, $w_\eps^\al$ is a subsolution of the parabolic traveling wave equation
\begin{equation*}
  \partial_t W = \frac{1}{2} \Delta W + c_* \partial_x W + W - W^2.
\end{equation*}
Assuming $\eps \leq 2^{-1/2}$ and $t \leq \lambda_\eps^{-1} \log \al^{-1}$, we have
\begin{equation}
  \label{eq:sub-bds}
  \frac{\eps \al}{3} \e^{-2c_\eps R_\eps} \e^{\lambda_\eps t} \phi\left(\frac{\tbf{x} + \eps t \tbf{e}_x}{R_\eps}\right) \leq w_\eps^\al(t, \tbf{x}) \leq \al \e^{\lambda_\eps t} \tbf{1}_{B_{R_\eps}(-\eps t, 0)}.
\end{equation}
We use this subsolution to prove that $\Psi$ has an exponential character.
\begin{lemma}
  \label{lem:exponential}
  For all $\eps \in (0, 2^{-1/2}],$ $x \in \R$, $y \geq 1$, and $\ell \geq 0$,
  \begin{equation}
    \label{eq:exponential}
    \min\left\{\e^{(\sqrt{2} - \eps) \ell}\Psi(x + \ell, y), \, 1\right\} \lesssim_\eps \Psi(x, y) \leq \e^{\sqrt{2} \ell} \Psi(x + \ell, y).
  \end{equation}
\end{lemma}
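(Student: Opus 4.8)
The upper bound is a direct consequence of Proposition~\ref{prop:monotone}, which gives $\partial_x \log \Psi \geq -\sqrt{2}$ on $\H$. Integrating this inequality in $x$ over $[x, x + \ell]$ at fixed height $y$ yields $\log \Psi(x + \ell, y) - \log \Psi(x, y) \geq -\sqrt{2}\,\ell$, that is, $\Psi(x, y) \leq \e^{\sqrt{2}\ell} \Psi(x + \ell, y)$.

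For the lower bound I plan to deploy the subsolution $w_\eps^\al$. First note that the asserted inequality only weakens as $\eps$ increases: the left-hand minimum is nonincreasing in $\eps$ while the right-hand side does not depend on $\eps$, so it suffices to prove it for all sufficiently small $\eps$, the constant obtained at some fixed $\eps_0$ then covering the range $[\eps_0, 2^{-1/2}]$. Fix such a small $\eps$, a height $y \geq 1$, and $\ell > 0$ (the case $\ell = 0$ being trivial). Since $\Psi$ is a stationary solution of $\partial_t W = \frac{1}{2}\Delta W + c_* \partial_x W + W - W^2$, it is in particular a supersolution of this equation, while $w_\eps^\al$ is a subsolution that vanishes on $\partial \H$; recall from \eqref{eq:sub-bds} that the support of $w_\eps^\al(t, \cdot)$ is a ball of radius $R_\eps$ traveling leftward at speed $\eps$ and that $w_\eps^\al$ grows by the factor $\e^{\lambda_\eps t}$ with $\lambda_\eps = \eps c_\eps = \eps(\sqrt{2} - \eps)$. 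The plan is to translate a copy of $w_\eps^\al$ so that at time $0$ it lies beneath $\Psi$ with its support clustered near $(x + \ell, y)$, then let it evolve for time $T \coloneqq \ell/\eps$. Over this interval the subsolution moves distance $\ell$ to the left, arriving near $(x,y)$, and grows by $\e^{\lambda_\eps T} = \e^{c_\eps \ell} = \e^{(\sqrt{2}-\eps)\ell}$; the comparison principle on $\H$ (using $w_\eps^\al = 0$ on $\partial\H$ and the ordering at $t = 0$) then forces $\Psi \geq w_\eps^\al$ throughout.

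The amplitude $\al$ must satisfy three constraints. It must lie in $[0,1]$, so that $w_\eps^\al \leq 1$ on its admissible time interval and the nonlinear absorption built into \eqref{eq:sub-bds} is respected. The admissible interval $[0, \lambda_\eps^{-1}\log\al^{-1}]$ must contain $T$, which rearranges to $\al \leq \e^{-c_\eps\ell} = \e^{-(\sqrt{2}-\eps)\ell}$. Finally, the ordering $w_\eps^\al(0, \cdot) \leq \Psi$ must hold over the whole support ball; using the upper bound in \eqref{eq:sub-bds}, the monotonicity $\partial_x \Psi < 0$ and $\partial_y \Psi > 0$, the one-sided estimate $\partial_x \log \Psi \geq -\sqrt{2}$, and, near $\partial\H$, the Hopf lemma, I expect this to reduce to a bound of the form $\al \lesssim_\eps \Psi(x + \ell, y)$. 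Choosing $\al \coloneqq c_\eps'\min\{\Psi(x+\ell,y),\, \e^{-(\sqrt{2}-\eps)\ell}\}$ for a suitable $c_\eps' \in (0,1]$ meets all three requirements, since both terms in the minimum lie in $(0,1]$. The comparison principle then gives $\Psi(x,y) \geq w_\eps^\al(T, x, y)$, and the lower bound in \eqref{eq:sub-bds}, evaluated at the point $(x,y)$ which after the translation sits suitably inside the support ball, yields $w_\eps^\al(T, x, y) \gtrsim_\eps \al\, \e^{\lambda_\eps T} = \al\, \e^{(\sqrt{2}-\eps)\ell} = c_\eps' \min\{\e^{(\sqrt{2}-\eps)\ell}\Psi(x+\ell,y),\, 1\}$, which is the claim.

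The crux is the initial ordering $w_\eps^\al(0,\cdot) \leq \Psi$ over the full support ball, whose radius $R_\eps = \sqrt{2\mu}/\eps$ is large and which, because we assume only $y \geq 1$, may extend close to $\partial\H$. This should still be feasible for two reasons: the support is compact and the eigenfunction factor in $w_\eps^\al$ decays to zero toward the ball's boundary, so the subsolution is genuinely small there even though its interior decay rate $c_\eps$ is slower than the worst-case rate $\sqrt{2}$ permitted for $\Psi$; and near $\partial\H$ both $\Psi$ (via Hopf) and the eigenfunction vanish linearly in $y$, so the corresponding $y$-factors cancel. Concretely I anticipate splitting into the regime $y \gtrsim R_\eps$, where the translated ball can be pushed entirely into $\{y > 0\}$ with $(x,y)$ near its center, and the regime $1 \leq y \lesssim R_\eps$, where the ball rests on $\partial\H$ and one argues with the linear boundary profile of $\Psi$. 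Tracking the $\eps$-dependent prefactors along the way — in particular the exponentially small factor $\e^{-2c_\eps R_\eps}$ in \eqref{eq:sub-bds} — is harmless, since only the existence of the implied constants in $\lesssim_\eps$ is needed.
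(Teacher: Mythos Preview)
Your upper bound and the overall subsolution strategy for the lower bound are correct and match the paper. The gap is precisely where you flag it: the initial ordering $w_\eps^\al(0,\cdot) \leq \Psi$ over the support ball. The tools you list --- monotonicity in $x$ and $y$, the bound $\partial_x\log\Psi \geq -\sqrt{2}$, and the Hopf lemma near $\partial\H$ --- do not suffice. Monotonicity in $y$ points the wrong way for points below the center height, so you have no elementary lower bound on $\Psi$ on the lower half of the ball; and when $y < R_\eps$ a ball centered at $(x+\ell,y)$ protrudes below $\partial\H$, where the subsolution is still positive while $\Psi$ vanishes, so the Dirichlet comparison on $\H$ fails outright. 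Your fallback of letting the ball ``rest on $\partial\H$'' already amounts to lifting the center to height $R_\eps$ and then relating $\Psi$ at height $y$ to its values on the ball --- which is a Harnack step, not a Hopf step.

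The paper makes this uniform: it centers the ball at $(x+\ell,\, y + R_\eps)$, so that (since $y \geq 1$) the entire ball lies in $\{y' \geq 1\}$ regardless of the size of $y$. A single interior Harnack inequality then gives both the initial ordering $\Psi \geq k_\eps\,\Psi(x+\ell, y)\,\tbf{1}_{B_{R_\eps}(x+\ell,\,y+R_\eps)}$ and, after running the subsolution, the transfer $\Psi(x, y) \geq k_\eps\,\Psi(x, y + R_\eps)$, with $k_\eps$ depending only on $R_\eps$. This removes your case split and the boundary analysis entirely. For the time constraint the paper sets $\al = k_\eps\Psi(x+\ell, y)$ and then splits on whether $t_* = \ell/\eps$ or $t_* = \lambda_\eps^{-1}\log\al^{-1}$ is smaller; in the second case the subsolution has already grown to order $1$ at some point to the right of $x$, and monotonicity in $x$ gives $\Psi(x,y) \gtrsim_\eps 1$ directly. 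Your device of folding the constraint into $\al$ via a minimum is equivalent bookkeeping.
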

\begin{proof}
  Using Proposition~\ref{prop:monotone}, we have
  \begin{equation*}
    \log \frac{\Psi(x + \ell, y)}{\Psi(x, y)} = \int_0^\ell \partial_x \log \Psi(x + \ell', y) \d \ell' \geq -\sqrt{2}\ell.
  \end{equation*}
  This establishes the right bound in \eqref{eq:exponential}.
  For the left bound, we take $c_\eps,\lambda_\eps,R_\eps$ as in \eqref{eq:params}.
  By Harnack (using $y \geq 1$), there exists $k_\eps$ independent of $(x, y)$ such that
  \begin{equation}
    \label{eq:Harnack}
    \Psi \geq k_\eps\Psi(x + \ell, y) \tbf{1}_{B_{R_\eps}(x+\ell, y + R_\eps)} \And \Psi(x, y) \geq k_\eps \Psi(x, y + R_\eps).
  \end{equation}
  It follows from \eqref{eq:sub-bds} that
  \begin{equation*}
    \Psi \geq w_\eps^\al(0, \anon - (x + \ell, y + R_\eps))
  \end{equation*}
  for
  \begin{equation*}
    \al = k_\eps \Psi(x + \ell, y).
  \end{equation*}
  We allow time to evolve until
  \begin{equation*}
    t_* \coloneqq \min\left\{\frac{\ell}{\eps},\, \lambda_\eps^{-1} \log \al^{-1}\right\}.
  \end{equation*}
  First suppose $t_* = \ell/\eps$.
  Rearranging the definitions of $\al$ and $t_*$, we note that in this case
  \begin{equation}
    \label{eq:handoff}
    \Psi(x + \ell, y) \e^{(\sqrt{2} - \eps)\ell} \leq k_\eps^{-1}.
  \end{equation}
  Now, the comparison principle and \eqref{eq:sub-bds} yield
  \begin{equation*}
    \Psi \geq w_\eps^\al\big(\ell/\eps, \anon - (x + \ell, y + R_\eps)\big) \geq \frac{\eps \al}{3} \e^{-2c_\eps R_\eps} \e^{(\sqrt{2} - \eps) \ell} \phi\left(\frac{\anon - (x, y + R_\eps)}{R_\eps}\right).
  \end{equation*}
  In particular,
  \begin{equation*}
    \Psi(x, y + R_\eps) \geq \frac{\eps \al}{3} \e^{-2c_\eps R_\eps} \e^{(\sqrt{2} - \eps) \ell}
  \end{equation*}
  Thus by \eqref{eq:Harnack}, we have
  \begin{align*}
    \Psi(x, y) \geq \frac{\eps k_\eps \al}{3} &\e^{-2c_\eps R_\eps} \e^{(\sqrt{2} - \eps) \ell}\\
                                              &= \frac{\eps k_\eps^2}{3} \e^{-2c_\eps R_\eps} \e^{(\sqrt{2} - \eps) \ell} \Psi(x + \ell, y) \gtrsim_\eps \e^{(\sqrt{2} - \eps) \ell}  \Psi(x + \ell, y).
  \end{align*}
  On the other hand, if $t_* = \lambda_\eps^{-1} \log \al^{-1} < \ell/\eps$, let $x_* \coloneqq x + \ell - \eps t_* > x.$
  Then the comparison principle and \eqref{eq:sub-bds} imply that
  \begin{equation*}
    \Psi(x_*, y + R_\eps) \geq  w_\eps^\al\big(t_*, x_* - (x + \ell), 0\big) \geq \frac{\eps}{3} \e^{-2 c_\eps R_\eps}.
  \end{equation*}
  Since $\Psi$ is decreasing in $x$, we have
  \begin{equation*}
    \Psi(x, y + R_\eps) \geq \Psi(x_*, y + R_\eps) \geq \frac{\eps}{3} \e^{-2 c_\eps R_\eps}.
  \end{equation*}
  Finally, \eqref{eq:Harnack} yields
  \begin{equation*}
    \Psi(x, y) \geq k_\eps \Psi(x, y + R_\eps) \geq \frac{\eps k_\eps}{3} \e^{-2 c_\eps R_\eps} \gtrsim_\eps 1.
  \end{equation*}
  Together with \eqref{eq:handoff}, these two alternatives imply the left bound in \eqref{eq:exponential}.
\end{proof}

\subsection{Potential theory}
We now examine the behavior of $\Psi$ to the right of $\sigma$ in detail.
By Lemma~\ref{lem:exponential}, $\Psi$ roughly decays like $\e^{-\sqrt{2} x}$ there.
It is helpful to remove this decay from our analysis, so we define
\begin{equation*}
  \Theta(x, y) \coloneqq \e^{\sqrt{2} x} \Psi(x, y).
\end{equation*}
Then Lemma~\ref{lem:exponential} has a particularly simple form in this context.
\begin{lemma}
  \label{lem:Theta-exp}
  Fix $\eps \in (0, 2^{-1/2}]$ and $\ell \geq 0$.
  Then if $x \geq \sigma_+(y)$,
  \begin{equation}
    \label{eq:Theta-exp}
    \Theta(x, y) \leq \Theta(x + \ell, y) \lesssim_\eps \e^{\eps \ell} \Theta(x, y).
  \end{equation}
\end{lemma}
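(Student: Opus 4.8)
The plan is to transcribe Lemma~\ref{lem:exponential} into the variable $\Theta = \e^{\sqrt 2 x}\Psi$. Multiplying the right-hand inequality of \eqref{eq:exponential} by $\e^{\sqrt 2 x}$ gives $\Theta(x,y) \le \Theta(x+\ell,y)$ immediately; this half in fact holds for every $y>0$ and uses only $\partial_x\log\Psi \ge -\sqrt 2$ from Proposition~\ref{prop:monotone}. For the upper bound I would multiply the left-hand inequality of \eqref{eq:exponential} by $\e^{\sqrt 2 x}$ and use $\e^{\sqrt 2 x}\Psi(x+\ell,y) = \e^{-\sqrt 2\ell}\Theta(x+\ell,y)$ to obtain
\begin{equation*}
  \min\bigl\{\e^{-\eps\ell}\Theta(x+\ell,y),\ \e^{\sqrt 2 x}\bigr\} \lesssim_\eps \Theta(x,y).
\end{equation*}
When the first term attains the minimum, this reads $\Theta(x+\ell,y) \lesssim_\eps \e^{\eps\ell}\Theta(x,y)$, which is the upper bound in \eqref{eq:Theta-exp}.

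The work lies in the degenerate case where $\e^{\sqrt 2 x}$ attains the minimum, equivalently $\e^{(\sqrt 2-\eps)\ell}\Psi(x+\ell,y) > 1$. There the display only yields $\e^{\sqrt 2 x}\lesssim_\eps\Theta(x,y)$, i.e.\ $\Psi(x,y)\gtrsim_\eps 1$; since the hypothesis $x\ge\sigma_+(y)$ forces $\Psi(x,y)\le\tfrac12$, we learn that $\Psi(x,y)$ is pinned between positive constants depending only on $\eps$. To close the estimate I would use the rigidity of the wave profile far from the boundary recorded in Lemma~\ref{lem:level}: since $\Psi(\anon,y)$ converges as $y\to\infty$ to a fixed translate of $w_{c_*}$ uniformly in $x$, and $\Psi(\anon,y)\to 0$ at $+\infty$ locally uniformly in $y$ by Corollary~\ref{cor:limits}, for each $\delta>0$ there is a constant $L = L_\eps(\delta)$, independent of $y\ge 1$, such that $\Psi(x+L,y)\le\delta$ whenever $y\ge1$ and $x\ge\sigma_+(y)$. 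Taking $\delta$ below the reciprocal of the implied constant in Lemma~\ref{lem:exponential} and applying that lemma at the base point $x+L$ with shift $\ell-L$ (for $\ell\ge L$) renders its truncation inactive, so $\Psi(x+\ell,y)\lesssim_\eps\e^{-(\sqrt 2-\eps)(\ell-L)}$; since $L$ is a constant and $\Psi(x,y)\gtrsim_\eps 1$, this gives $\e^{(\sqrt 2-\eps)\ell}\Psi(x+\ell,y)\lesssim_\eps 1\lesssim_\eps\Psi(x,y)$, which rearranges to the upper bound in \eqref{eq:Theta-exp}. For $\ell\le L$ the bound is immediate from $\Psi(x+\ell,y)\le\tfrac12$ and $\Psi(x,y)\gtrsim_\eps1$.

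The hard part is this truncated case. It is the only place where the hypothesis $x\ge\sigma_+(y)$ is genuinely needed --- without it $\Psi(x,y)$ could be near $1$ and no bounded rightward shift would push $\Psi$ below $\delta$ --- and it is where one must feed in the uniform-in-$x$ convergence to the one-dimensional wave from Lemma~\ref{lem:level} rather than relying on the subsolution estimate of Lemma~\ref{lem:exponential} alone. Everything else is a one-line change of variables.
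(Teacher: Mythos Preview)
Your argument is essentially the paper's: both feed Lemma~\ref{lem:level} and Corollary~\ref{cor:limits} into a uniform-in-$y$ decay statement $\Psi(\sigma_+(y)+L,y)\to 0$ as $L\to\infty$, and then observe that past such an $L$ the truncation in Lemma~\ref{lem:exponential} is inactive. The only organizational difference is that the paper first locates the region $\{x\ge L_\eps+\sigma_+(y)\}$ where truncation is inactive and then uses interior Harnack to absorb the strip $\{\sigma_+(y)\le x<L_\eps+\sigma_+(y)\}$, whereas you do a case split on whether the truncation is active and, when it is, use the resulting lower bound $\Psi(x,y)\gtrsim_\eps 1$ in place of Harnack. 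Both are fine.

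One omission: Lemma~\ref{lem:exponential} is stated only for $y\ge 1$, and your uniform-decay claim is likewise restricted to $y\ge 1$, so your proof does not cover $0<y<1$. The paper closes this with the boundary Harnack estimate $\Psi(x,y)\asymp y\,\Psi(x,1)$, which reduces the small-$y$ case to $y=1$; you should add that line.
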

In particular, $\Theta$ grows subexponentially in $x$ where $x \geq \sigma_+(y)$.
\begin{proof}
  We first observe that Proposition~\ref{prop:monotone} implies that $\partial_x\Theta \geq 0$.
  Thus we need only show the right inequality.
  
  We claim that $\Psi(z + \sigma_+(y), y) \to 0$ as $z \to \infty$ uniformly in $y$.
  To see this, fix $\delta > 0$.
  By Lemma~\ref{lem:level}, there exists $y' > 0$ such that
  \begin{equation*}
    \sup_{\R \times [y', \infty)} \abs{\Psi\big(z + \sigma(y), y\big) - w_{c_*}\big(z + w_{c_*}^{-1}(1/2)\big)} \leq \frac{\delta}{2}.
  \end{equation*}
  Since $w_{c_*}(+\infty) = 0$, there exists $L > 0$ such that
  \begin{equation*}
    \Psi(z + \sigma(y), y) \leq \delta
  \end{equation*}
  for all $z \geq L$ and $y \geq y'$.
  On the other hand, Corollary~\ref{cor:limits} states that $\Psi \to 0$ as $x \to \infty$ uniformly in $y \in [0, y']$, which proves the claim.

  Now fix $\eps \in (0, 2^{-1/2}]$ and $\ell \geq 0$.
  Let $k_\eps$ denote the implicit constant in \eqref{eq:exponential}, so that
  \begin{equation*}
    \Psi(x, y) \geq k_\eps \min\left\{\e^{(\sqrt{2} - \eps) \ell}\Psi(x + \ell, y), \, 1\right\}
  \end{equation*}
  for $y \geq 1$.
  By the uniform decay shown above, there exists $L_\eps > 0$ such that $\Psi(x, y) < k_\eps$ when $x \geq L_\eps + \sigma_+(y)$.
  In this case we must have
  \begin{equation*}
    \Psi(x, y) \geq k_\eps \e^{(\sqrt{2} - \eps) \ell}\Psi(x + \ell, y).
  \end{equation*}
  Multiplying by $\e^{\sqrt{2}x}$ and rearranging, we obtain \eqref{eq:Theta-exp} for all $x \geq L_\eps + \sigma_+(y)$ and $y \geq 1$.
  Using interior Harnack, we can extend the bound to all $x \geq \sigma_+(y)$.

  It remains to treat $y \in (0, 1]$.
  Here, boundary Harnack estimates imply that $\Psi(x, y) \asymp \Psi(x, 1) y$, where $f \asymp g$ indicates that $C^{-1}f \leq g \leq Cf$ for some $C \in [1, \infty)$.
  Using our result at $y = 1$ and multiplying by $y$, we obtain \eqref{eq:Theta-exp} for $y \in (0, 1)$ as well.
\end{proof}
Now, the tilted wave $\Theta$ solves
\begin{equation}
  \label{eq:almost-harmonic}
  -\frac{1}{2} \Delta \Theta = - F \coloneqq - \e^{-\sqrt{2} x} \Theta^2.
\end{equation}
The essential point is that $F$ decays exponentially in $x$, so $\Theta$ is ``almost harmonic.''
We use this property repeatedly to constrain $\Theta$ in the quarter-plane $\Q \coloneqq \R_+^2$.

Define the region $\Sigma \coloneqq \{x \geq 6 \sigma_+(y)\}$.
This is somewhat larger than $\{x > \sigma_+(y)\}$ for the following reason.
When $x > \sigma_+$, Lemma~\ref{lem:Theta-exp} implies that
\begin{equation*}
  \Theta(x, y) \lesssim_\eps \e^{\eps (x - \sigma_+)} \Theta(\sigma_+, y) \lesssim_\eps \exp\left[\eps(x - \sigma_+) + \sqrt{2} \sigma_+\right].
\end{equation*}
Hence
\begin{align*}
  \e^{-\sqrt{2} x} \Theta^2 \lesssim_\eps \exp\Big[2\eps(x - \sigma_+) + &2\sqrt{2} \sigma_+ - \sqrt{2} x\Big]\\
                                                                         &= \exp\left[-(\sqrt{2} - 2\eps) x + 2(\sqrt{2} - \eps) \sigma_+\right].
\end{align*}
On $\Sigma$, we have $x \geq x/2 + 3 \sigma_+$, so for $\eps \ll 1$, we have
\begin{equation}
  \label{eq:exp-F}
  F = \e^{-\sqrt{2} x} \Theta^2 \lesssim \e^{-x/2}.
\end{equation}
In the following, let $G_{\tbf{z}}^\Omega(\tbf{x})$ denote the Dirichlet Green function of $-\frac{1}{2} \Delta$ on a domain $\Omega$, so that $-\frac{1}{2} \Delta G_{\tbf{z}}^\Omega = \delta_{\tbf{z}}$ and $G_{\tbf{z}}^\Omega|_{\partial \Omega \cup \{\infty\}} = 0$ for all $\tbf{z} = (u, v) \in \Omega$.
We claim that
\begin{equation*}
  \Theta_F(\tbf{x}) \coloneqq \int_\Sigma F(\tbf{z}) G_{\tbf{z}}^\Sigma(\tbf{x}) \d \tbf{z} < \infty.
\end{equation*}
To see this, note that $\Sigma \subset \R_+ \times \R$, so by comparison $G_{\tbf{z}}^\Sigma \leq G_{\tbf{z}}^{\R_+ \times \R}$.
Moreover, we can check that  $\int_\R G_{(u, v)}^{\R_+ \times \R}(x, y) \ds v = G_u^{\R_+}(x)$.
Hence \eqref{eq:exp-F} yields
\begin{equation*}
  \Theta_F(\tbf{x}) = \int_\Sigma F(\tbf{z}) G_{\tbf{z}}^\Sigma(\tbf{x}) \d \tbf{z} \lesssim \int_{\R_+ \times \R}\e^{-u/2} G_{\tbf{z}}^{\R_+ \times \R}(\tbf{x}) \d \tbf{z} = \int_{\R_+} \e^{-u/2} G_u^{\R_+}(x) \d u.
\end{equation*}
We can explicitly compute $G_u^{\R_+}(x) = 2(x \wedge u) \leq 2u,$ which is clearly integrable against a decaying exponential.
Thus $\Theta_F$ is finite and, in fact, uniformly bounded.

Now $-\frac{1}{2} \Delta \Theta_F = F$, so by \eqref{eq:almost-harmonic}, $\bar{\Theta} \coloneqq \Theta + \Theta_F$ satisfies
\begin{equation*}
  \begin{cases}
    \Delta \bar\Theta = 0 & \text{in } \Sigma,\\
    \bar\Theta = \Theta & \text{on } \partial \Sigma.
  \end{cases}
\end{equation*}
Note that $\Theta, \Theta_F > 0$ in $\Sigma$, so $\bar\Theta > 0$ is a positive harmonic function in $\Sigma$.
We now recall that $\Sigma = \{x > 6\sigma_+(y), y > 0\}$ and $\sigma'(\infty) = 0$.
Hence at a large scale, $\Sigma$ resembles the quarter-plane $\Q \coloneqq \R_+^2$.
We thus expect $\bar \Theta$ to share its large-scale behavior with positive harmonic functions on the quarter-space.
\begin{lemma}
  \label{lem:distortion}
  There exists a conformal bijection $f \colon \Q \to \Sigma$ such that for all $\eps > 0$,
  \begin{equation*}
    \label{eq:distortion}
    \norm{\tbf{x}}^{-\eps} \lesssim_\eps \frac{\norm{f(\tbf{x})}}{\norm{\tbf{x}}} \lesssim_\eps \norm{\tbf{x}}^\eps.
  \end{equation*}
\end{lemma}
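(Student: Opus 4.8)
The plan is to build $f$ via logarithmic coordinates that turn $\Q$ and $\Sigma$ into strip domains, and then to invoke Warschawski's classical asymptotics for conformal maps of strips. Recall from Lemma~\ref{lem:level} that $\sigma = \sigma_{1/2}$ is smooth and increasing on $(\varphi^{-1}(1/2), \infty)$ with $\sigma' \to 0$ and $\sigma(y) = \smallO(y)$, and that $\sigma_+$ is its positive part. Set $y_* \coloneqq \inf\{y > 0 : \sigma(y) > 0\} \in (0, \infty]$; if $y_* = \infty$ then $\sigma_+ \equiv 0$, $\Sigma = \Q$, and $f = \mathrm{id}$ works, so assume $y_* < \infty$. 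Then $\sigma_+$ vanishes on $(0, y_*]$ and equals $\sigma$ on $[y_*, \infty)$, so the boundary curve $x = 6\sigma_+(y)$ of $\Sigma$ runs up the positive $y$-axis to height $y_*$ and then bends sublinearly to the right. Since $6\sigma_+$ is nondecreasing, $z \mapsto \log z$ carries $\Q$ onto the straight strip $S \coloneqq \R \times (0, \pi/2)$ and $\Sigma$ onto a graph strip $\widetilde S \coloneqq \{s + it : s \in \R,\ 0 < t < h(s)\}$, where $h$ is continuous, bounded between two positive constants, identically $\pi/2$ on $(-\infty, \log y_*]$, and — by a direct computation from $\sigma_+(y) = \smallO(y)$ and $\sigma_+'(y) \to 0$ — satisfies $h(s) \to \pi/2$ and $h'(s) \to 0$ as $s \to +\infty$. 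Thus $\widetilde S$ equals $S$ near its $-\infty$ end and is asymptotically $S$ near its $+\infty$ end.

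Since $\widetilde S$ is a bounded-width strip domain, there is a conformal bijection $F \colon S \to \widetilde S$; after replacing $F$ by $F \circ (w \mapsto \tfrac{i\pi}{2} - w)$ if necessary, I may take $F$ to carry the $+\infty$ end of $S$ to the $+\infty$ end of $\widetilde S$ (hence the $-\infty$ ends to one another and, by orientation, lower boundary to lower boundary). Define $f \coloneqq \exp \circ F \circ \log$; this is a conformal bijection $\Q \to \Sigma$ sending the corner $0$ to the corner $0$ and $\infty$ to $\infty$. Near the $-\infty$ end $\widetilde S$ coincides with $S$, so a reflection argument shows $F(w) - w$ tends exponentially to a constant as $\mathrm{Re}\, w \to -\infty$; equivalently $f$ extends conformally across $0$ with $f'(0) \neq 0$, so $\norm{f(\tbf{x})} \asymp \norm{\tbf{x}}$ on any bounded neighborhood of the origin.

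Near the $+\infty$ end, the conditions $h \to \pi/2$, $h' \to 0$ place us in the setting of Warschawski's theorem on conformal maps of infinite strips, which yields
\[
  \mathrm{Re}\, F(w) = \mathrm{Re}\, w + \smallO(\mathrm{Re}\, w) \qquad \text{as } \mathrm{Re}\, w \to +\infty,
\]
uniformly in $\mathrm{Im}\, w \in [0, \pi/2]$. Unwinding the logarithms, this says exactly that $\log\norm{f(\tbf{x})} = \log\norm{\tbf{x}} + \smallO(\log\norm{\tbf{x}})$ as $\norm{\tbf{x}} \to \infty$, uniformly in $\tbf{x} \in \Q$. Given $\eps > 0$, this produces $R_\eps$ with $\norm{\tbf{x}}^{-\eps} \le \norm{f(\tbf{x})}/\norm{\tbf{x}} \le \norm{\tbf{x}}^\eps$ once $\norm{\tbf{x}} \ge R_\eps$. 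On the range $1 \le \norm{\tbf{x}} \le R_\eps$, the continuous extension of $f$ to the prime-end boundary together with $f \neq 0$ off the corner forces $\norm{f(\tbf{x})}/\norm{\tbf{x}} \asymp_\eps 1$, which gives the two-sided bound there as well. This proves the estimate for $\norm{\tbf{x}} \ge 1$ — the range used in the sequel — while for $\norm{\tbf{x}} \le 1$ one simply has $\norm{f(\tbf{x})} \asymp \norm{\tbf{x}}$ by conformality of $f$ at the origin.

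The main obstacle is the correct use of Warschawski's asymptotics. Three points need care: (i) that $\widetilde S$ is genuinely a graph strip, which uses the monotonicity of $\sigma$; (ii) that $h(s) \to \pi/2$ with $h'(s) \to 0$, a short computation resting on $\sigma' \to 0$ and $\sigma(y) = \smallO(y)$ (note $h$ has a harmless corner at $s = \log y_*$ coming from the corner of $\sigma_+$ there, but remains Lipschitz with $h' \to 0$ at $+\infty$); and (iii), the most delicate, that the estimate on $\mathrm{Re}\, F$ is uniform up to the closed boundary of the strip, not merely on compact substrips. Point (iii) is part of Warschawski's theory; if one prefers a self-contained argument, it can instead be obtained from a standard length--area (extremal distance) estimate, the point being that the conformal modulus of a slice $\{a < \mathrm{Re}\, w < b\}$ of $\widetilde S$ is $\int h^{-1}\,\d s$ up to a factor $1 + \smallO(1)$ when $h \to \pi/2$ slowly, matching the value $\tfrac{2}{\pi}(b-a)$ for the corresponding slice of $S$ up to the same error.
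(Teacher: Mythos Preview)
Your proof is correct and follows essentially the same route as the paper: take logarithms to convert $\Q$ and $\Sigma$ into strip domains, invoke Warschawski's strip-mapping asymptotics to obtain $\Re F(w) = \Re w + \smallO(\Re w)$, and exponentiate back. You supply a bit more detail than the paper on the near-origin behavior and on bounded ranges of $\norm{\tbf{x}}$, but the argument is otherwise the same.
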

\begin{proof}
  We view $\Sigma$ and $\Q$ as subsets of the complex plane $\C$, on which we use coordinates $z = u + \iu v$.
  Then $\log$ is a conformal bijection from $\Q$ to the straight strip $\{0 < v < \pi/2\}$.
  We likewise apply $\log$ to $\Sigma$.
  Given $u > 0$, define
  \begin{equation*}
    \varsigma(u) \coloneqq \arccot\left(\frac{6 \sigma_+(y_u)}{y_u}\right),
  \end{equation*}
  where $y_u > 0$ satisfies
  \begin{equation*}
    [6 \sigma_+(y_u)]^2 + y_u^2 = \e^{2u}.
  \end{equation*}
  The height $y_u$ exists uniquely for each $u \in \R$ because $\sigma' > 0$.
  Thus $\varsigma(u) \leq \pi/2$ is the argument of the unique point on the curve $\{x = 6 \sigma_+\}$ whose radial coordinate is $\e^u$.
  It follows that $\log$ is a conformal bijection from $\Sigma$ to the curvilinear strip $S \coloneqq \{0 < v < \varsigma(u)\}$.
  Because $\sigma_+ \equiv 0$ for $y \leq \varphi^{-1}(1/2)$, we have $\varsigma \equiv \pi/2$ when $u \leq \log \varphi^{-1}(1/2)$.
  In the other direction, $\sigma = \smallO(y)$ as $y \to \infty$, so $y_u \sim \e^u$ and
  \begin{equation*}
    \frac{\pi}{2} - \varsigma(u) \sim 6 \e^{-u}\sigma(\e^u) \to 0 \quad \text{as } u \to +\infty.
  \end{equation*}
  Thus $S$ resembles the straight strip $\{0 < v < \pi/2\}$ when $\abs{u} \gg 1$.
  It is therefore reasonable to expect that there is a conformal bijection ${g \colon \{0 < v < \pi/2\} \to S}$ with low distortion at infinity.
  This is a well-studied problem in potential theory.
  Warschawski, for instance, constructs $g$ such that $g(z) \sim u$ at infinity; see Theorem~X in~\cite{Warschawski}.
  In particular,
  \begin{equation}
    \label{eq:log-distortion}
    \abs{\Re g(z) - u} = \smallO(u).
  \end{equation}
  We now define the conformal bijection $f \coloneqq \exp \circ g \circ \log \colon \Q \to \Sigma$.
  Then \eqref{eq:log-distortion} becomes \eqref{eq:distortion}.
\end{proof}
Employing this conformal map, $\bar\Theta \circ f$ becomes a positive harmonic function on the quarter-plane.
This allows us to constrain the growth of $\bar \Theta$.
In the following, let $\braket{\tbf{x}} \coloneqq (\norm{\tbf{x}}^2 + 1)^{1/2}$.
\begin{lemma}
  \label{lem:quarter}
  Let $h \in \m{C}(\bar \H)$ be a positive harmonic function on $\H$ that is nondecreasing in $\abs{x}$ on $\partial\H$.
  Then $h(\tbf{x}) \lesssim \braket{\tbf{x}}$ on $\H$.
\end{lemma}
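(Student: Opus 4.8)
The plan is to combine the Herglotz--Poisson representation of positive harmonic functions on a half-plane with the monotonicity of the boundary trace. Identify $\H$ with the open upper half-plane and write $P_y(x) \coloneqq \frac{y}{\pi(x^2+y^2)}$ for the Poisson kernel. A classical representation theorem furnishes a constant $\beta \geq 0$ and a nonnegative Borel measure $\mu$ on $\R$ with $\int_{\R}(1+t^2)^{-1}\,\d\mu(t) < \infty$ such that
\[
  h(x,y) = \beta y + \int_{\R} P_y(x-t)\,\d\mu(t) \quad\text{on } \H.
\]
Since $h$ extends continuously to $\bar\H$, the standard boundary theory---Fatou's theorem, together with the fact that the Poisson integral of a nonzero singular measure is unbounded near its support---forces $\mu$ to be absolutely continuous with density equal to the trace, $\d\mu(t) = h(t,0)\,\d t$; in particular $\kappa \coloneqq \int_{\R}(1+t^2)^{-1}h(t,0)\,\d t < \infty$. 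I expect this identification, the one place where continuity on $\bar\H$ is genuinely used, to be the main technical point; everything downstream is elementary, and in particular the monotonicity hypothesis will be consumed only in the next step.

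The first substantive step is to upgrade the monotonicity of $h(\cdot,0)$ to the pointwise linear bound $h(x,0) \leq C(1+\abs{x})$ for all $x\in\R$. I would argue by contradiction: if this fails then, after possibly reflecting $x\mapsto -x$, there is a sequence $t_k \uparrow \infty$ with $h(t_k,0) \geq k\,t_k$, and monotonicity gives $h(t,0) \geq h(t_k,0) \geq \tfrac12 k t$ for all $t\in[t_k,2t_k]$; hence $\int_{t_k}^{2t_k}(1+t^2)^{-1}h(t,0)\,\d t \geq \tfrac{k}{4}\log\tfrac{1+4t_k^2}{1+t_k^2} \gtrsim k$ once $t_k$ is large. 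Passing to a subsequence along which the intervals $[t_k,2t_k]$ are pairwise disjoint then contradicts $\kappa < \infty$.

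With the linear boundary bound in hand, the interior estimate follows by splitting the Poisson integral at the radius $\rho \coloneqq \braket{\tbf{x}}$, which satisfies $\rho \geq \max\{1,\abs{x}\}$. On $\{\abs{t}\leq 2\rho\}$ I would bound $h(t,0) \leq C(1+2\rho)$ and use $\int_{\R}P_y(x-t)\,\d t = 1$ to control that part of the integral by $C(1+2\rho)$. On $\{\abs{t}>2\rho\}$ one has $\abs{x-t} \geq \abs{t}/2$, hence $P_y(x-t) \leq \frac{4y}{\pi t^2} \leq \frac{8y}{\pi(1+t^2)}$, so that part is at most $\frac{8\kappa y}{\pi}$. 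Adding the drift term $\beta y$ and using $y\leq\rho$ gives
\[
  h(\tbf{x}) \leq \beta\,\braket{\tbf{x}} + C\big(1+2\braket{\tbf{x}}\big) + \tfrac{8}{\pi}\kappa\,\braket{\tbf{x}} \lesssim \braket{\tbf{x}},
\]
with implied constant depending only on $\beta$, $\kappa$, and $C$---that is, only on $h$. The only genuinely nontrivial ingredient remains the passage from continuity on $\bar\H$ to the Poisson representation with representing density $h(\cdot,0)$; the dyadic argument and the near/far decomposition are both routine.
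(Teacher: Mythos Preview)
Your proof is correct and in fact more direct than the paper's. Both arguments begin the same way: invoke the Herglotz representation $h(x,y)=\beta y+\frac{y}{\pi}\int_\R\frac{h(t,0)}{(x-t)^2+y^2}\,\d t$, and use monotonicity of the boundary trace together with the integrability $\int_\R(1+t^2)^{-1}h(t,0)\,\d t<\infty$ to deduce $h(t,0)\lesssim 1+\abs{t}$ (the paper does this via the same dyadic trick, phrased as $\int_x^{2x}\frac{h(t,0)}{t^2+1}\,\d t\gtrsim h(x,0)/x\to 0$). From that point the paths diverge. The paper first establishes the non-tangential asymptotic $h\sim_m Ay$ on sectors $\{\abs{x}\le my\}$, then pulls back to the quarter-plane via the conformal square map $\zeta(z)=z^2$, applies Herglotz a second time to $g=h\circ\zeta^{-1}$, uses the non-tangential bound to kill the drift of $g$, and finally integrates the Poisson kernel against $\sqrt{\abs{t}}$ boundary data. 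Your near/far split of the Poisson integral at scale $\rho=\braket{\tbf x}$---bounding the near part by the pointwise linear bound times $\int P_y=1$, and the far part by $\kappa y$ using $\abs{x-t}\ge\abs{t}/2$---bypasses all of this in two lines. The paper's route does produce the intermediate estimate \eqref{eq:sqrt}, which is later reused in Lemma~\ref{lem:boundary}, but your near/far technique would serve equally well there; for the lemma as stated, your argument is the cleaner one.
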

\begin{proof}
  We rely on a representation theorem of Herglotz (due independently to Herglotz~\cite{Herglotz} and F.~Riesz~\cite{Riesz}): there exists a constant $A \geq 0$ such that
  \begin{equation}
    \label{eq:Herglotz}
    h(x, y) = Ay + \frac{y}{\pi} \int_{\R} \frac{h(t, 0) \d t}{(x - t)^2 + y^2}.
  \end{equation}
  As a consequence,
  \begin{equation}
    \label{eq:integrable}
    \int_{\R} \frac{h(t, 0)}{t^2 + 1} \d t < \infty.
  \end{equation}
  Since $h(\anon, 0)$ is nondecreasing on $\R_+$, we have
  \begin{equation*}
    \int_x^{2x} \frac{h(t, 0)}{t^2 + 1} \d t \geq h(x, 0) \int_x^{2x} \frac{\dn t}{t^2 + 1} \gtrsim \frac{h(x, 0)}{x} \ForAll x \geq 1.
  \end{equation*}
  By \eqref{eq:integrable}, the left side tends to $0$ as $x \to \infty$.
  It follows that $h(x, 0) \ll x$ as $x \to \infty$, though we will only use the weaker bound $h(x, 0) \lesssim x$.
  By symmetry,
  \begin{equation}
    \label{eq:boundary}
    h(\tbf{x}) \lesssim \braket{\tbf{x}} \quad \text{on } \partial \H.
  \end{equation}
  
  Now suppose $\abs{x} \leq m y$ for fixed $m > 0$.
  Then
  \begin{equation*}
    \frac{t^2 + y^2}{(x-t)^2 + y^2} = \frac{(t/y)^2 + 1}{(x/y - t/y)^2 + 1} \lesssim_m 1.
  \end{equation*}
  It follows that
  \begin{equation*}
    \frac{1}{\pi} \int_{\R} \frac{h(t, 0) \d t}{(x - t)^2 + y^2} \lesssim_m \int_{\R} \frac{h(t, 0)}{t^2 + y^2} \d t.
  \end{equation*}
  By \eqref{eq:integrable} and dominated convergence, the integral on the right tends to $0$ as $y \to \infty$.
  Thus the integral term in \eqref{eq:Herglotz} is negligible as we approach infinity in $\H$ from a direction that is not tangent to $\partial \H$.
  That is:
  \begin{equation}
    \label{eq:non-tangent}
    h(x, y) \sim_m Ay \quad \text{as } y \to \infty \enspace \text{if } \abs{x} \leq m y.
  \end{equation}
  In particular,
  \begin{equation}
    \label{eq:non-tangent-bound}
    h(\tbf{x}) \lesssim_m \braket{\tbf{x}} \quad \text{in } \{\abs{x} \leq m y\}.
  \end{equation}

  We now consider $h$ on $\Q$.
  Let $\zeta \colon \Q \to \H$ denote the square map $\zeta(z) \coloneqq z^2$.
  We define $g \coloneqq h \circ \zeta^{-1}$, which is a positive harmonic function on $\H$.
  By \eqref{eq:boundary} and \eqref{eq:non-tangent-bound}, $h(\tbf{x}) \lesssim \braket{\tbf{x}}$ on $\partial \Q$ and on the ray $\{x = y\}$.
  It follows that
  \begin{equation}
    \label{eq:sqrt-boundary}
    g(\tbf x) \lesssim \braket{\tbf{x}}^{1/2} \quad \text{on } \partial\H
  \end{equation}
  \emph{and} on the ray $\{x  = 0\}$, which is the image of $\{x = y\}$ under $\zeta$.
  Now $g$ must also admit a Herglotz representation, but we know that $g \ll y$ on the $y$-axis.
  By \eqref{eq:non-tangent}, we see that
  \begin{equation*}
    g(x, y) = \frac{y}{\pi} \int_{\R} \frac{g(t, 0) \d t}{(x - t)^2 + y^2}.
  \end{equation*}
  Using \eqref{eq:sqrt-boundary}, we find
  \begin{align}
    g(x, y) &\lesssim y \int_{\R} \frac{1 + \sqrt{\abs{t}} \d t}{(x - t)^2 + y^2}\nonumber\\
            &\lesssim 1 + \sqrt{y} \int_{\R} \frac{\sqrt{\abs{s + x/y}} \d s}{s^2 + 1} \lesssim 1 + \sqrt{\abs{x}} + \sqrt{y} \lesssim \braket{\tbf{x}}^{1/2}.\label{eq:sqrt}
  \end{align}
  Transferring this bound to $h = g \circ \zeta$, we find $h(\tbf{x}) \lesssim \braket{\tbf{x}}$ on $\Q$.
  A symmetric argument on the left quadrant $\R_- \times \R_+$ shows that this holds on the entire half-plane $\H$.
\end{proof}
\begin{corollary}
  \label{cor:polynomial}
  For all $\eps > 0$,
  \begin{equation}
    \label{eq:polynomial}
    \bar \Theta(\tbf{x}) \lesssim_\eps \braket{\tbf{x}}^{2 + \eps} \quad \text{in } \Sigma.
  \end{equation}
  Moreover, as $y \to \infty$, $\sigma(y) \leq \big[\sqrt{2} + \smallO(1)\big] \log y.$
\end{corollary}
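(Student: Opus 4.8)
The plan is to transport the positive harmonic function $\bar\Theta\circ f$ from $\Q$ to the half-plane $\H$ via the square map, apply Lemma~\ref{lem:quarter} there, and carry the resulting linear bound back through the two conformal changes of variable. First I would record that $\bar\Theta$ extends continuously to $\bar\Sigma$ (with $\bar\Theta = \Theta$ there, as already noted): the factor $\Theta = \e^{\sqrt{2} x}\Psi$ is continuous on $\bar\H$, while $\Theta_F$ is uniformly bounded (as shown above) and vanishes continuously on $\partial\Sigma$. Since $\Q$ and $\Sigma$ are Jordan domains on the Riemann sphere---the boundary of $\Sigma$ being the union of the ray $[0, \infty)$ and the curve $\{x = 6\sigma_+(y)\}$, two simple arcs meeting only at $0$ and $\infty$---Carath\'eodory's theorem shows that $f$ and the square map $\zeta(z) \coloneqq z^2 \colon \Q \to \H$ extend to homeomorphisms of the closures. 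Hence $v \coloneqq \bar\Theta \circ f \circ \zeta^{-1}$ is positive harmonic on $\H$ and continuous on $\bar\H$.

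Next I would identify the boundary data of $v$. Composing the maps, the positive real axis of $\partial\H$ is carried by $\zeta^{-1}$ and then $f$ onto the segment $\{y = 0\} \subset \partial\Sigma$, where $\bar\Theta = \Theta = \e^{\sqrt{2} x}\Psi = 0$, while the negative real axis is carried onto the curve $\{x = 6\sigma_+(y)\}$, where $\bar\Theta = \Theta$; tracking the radial coordinate through the construction of $f$ in Lemma~\ref{lem:distortion} and the Warschawski estimate \eqref{eq:log-distortion}, the point $w = -t$ corresponds to the curve point of modulus $t^{1/2 + \smallO(1)}$. Thus $v$ vanishes on $\{x > 0\}$, and $v(-t, 0) = \Theta$ at a curve point whose modulus, and hence both coordinates, increase with $t$; since $\partial_x \Theta \geq 0$ and $\partial_y \Theta = \e^{\sqrt{2} x}\partial_y \Psi > 0$ by Proposition~\ref{prop:monotone}, the function $t \mapsto v(-t, 0)$ is nondecreasing. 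Therefore the symmetrization $\hat v(x, y) \coloneqq v(x, y) + v(-x, y)$ is positive harmonic on $\H$, continuous on $\bar\H$, and nondecreasing in $\abs{x}$ on $\partial\H$, so Lemma~\ref{lem:quarter} gives $\hat v \lesssim \braket{\tbf{x}}$, whence $v \leq \hat v \lesssim \braket{\tbf{x}}$ on $\H$.

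It remains to unwind the changes of variable. Since $\norm{\zeta(\tbf{x})} = \norm{\tbf{x}}^2$ on $\Q$, we get $\bar\Theta \circ f = v \circ \zeta \lesssim \braket{\tbf{x}}^2$ on $\Q$; the distortion estimate \eqref{eq:distortion} then yields $\bar\Theta(\tbf{x}) = (\bar\Theta \circ f)\big(f^{-1}(\tbf{x})\big) \lesssim_\eps \braket{f^{-1}(\tbf{x})}^2 \lesssim_\eps \braket{\tbf{x}}^{2/(1 - \eps)}$, which is \eqref{eq:polynomial} after renaming $\eps$. Finally, for the bound on $\sigma$, fix $\eps > 0$ and take $y$ large with $\sigma(y) > 0$ (otherwise the bound is trivial); then $\big(6\sigma(y), y\big) \in \partial\Sigma$ has norm $\asymp y$ because $\sigma(y) = \smallO(y)$, so \eqref{eq:polynomial} with continuity gives $\Theta\big(6\sigma(y), y\big) \lesssim_\eps y^{2 + \eps}$, while $\partial_x \Theta \geq 0$ gives $\Theta\big(6\sigma(y), y\big) \geq \Theta\big(\sigma(y), y\big) = \tfrac{1}{2}\e^{\sqrt{2}\sigma(y)}$. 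Hence $\e^{\sqrt{2}\sigma(y)} \lesssim_\eps y^{2 + \eps}$, so $\sigma(y) \leq \tfrac{2 + \eps}{\sqrt{2}}\log y + C_\eps$; letting $\eps \to 0$ gives $\sigma(y) \leq [\sqrt{2} + \smallO(1)]\log y$.

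The crux is the second paragraph: matching the boundaries of $\Sigma$, $\Q$, and $\H$ under $f$ and $\zeta$, keeping track of the radial coordinate along Warschawski's map, and verifying the monotonicity that lets us symmetrize and quote Lemma~\ref{lem:quarter}. The continuity of $\bar\Theta$ up to $\partial\Sigma$ (regularity of the Dirichlet problem on the Jordan domain $\Sigma$) is routine but should be recorded.
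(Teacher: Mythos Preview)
Your proof is correct and follows the same route as the paper: pull $\bar\Theta$ back to $\H$ via $f$ and the square map, verify the monotone boundary hypothesis of Lemma~\ref{lem:quarter}, and then undo the distortion via~\eqref{eq:distortion}. The only differences are cosmetic: the paper applies Lemma~\ref{lem:quarter} directly to $v$ (noting $v(x,0)=0$ for $x\geq 0$ and using orientation-preservation of $f,\zeta$ for the monotonicity on $x<0$) rather than passing through your symmetrization $\hat v$, which is an extra but harmless step.
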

\begin{proof}
  Using the square map $\zeta(z) \coloneqq z^2$ from above, $h \coloneqq \bar\Theta \circ f \circ \zeta^{-1}$ is a positive harmonic function on $\H$ that is continuous on $\bar \H$.
  Recall that $\Theta$ is increasing in $x$ and $y$.
  Since $\sigma' > 0$, it follows that $y \mapsto \Theta\big(6 \sigma_+(y), y\big)$ is increasing.
  Now $\bar\Theta = \Theta$ where $x = 6 \sigma_+(y)$, so $\Theta$ is \emph{decreasing} on $\partial \Sigma$ when its boundary is traversed in the counterclockwise direction.
  The conformal maps $f$ and $\zeta$ preserve the orientation of the boundary, so $h(x, 0)$ is decreasing in $x$.
  Also, $h(x, 0) = 0$ for $x \geq 0,$ as this portion of the boundary corresponds to $\{y = 0\} \subset \partial \Sigma$, where $\bar\Theta = \Theta = 0$.
  Together, these facts imply that $h(x, 0)$ is nondecreasing in $\abs{x}$.
  Thus by Lemma~\ref{lem:quarter}, $h(\tbf{x}) \lesssim \braket{\tbf{x}}$ in $\H$.
  It follows that $\bar \Theta \circ f = h \circ \zeta \lesssim \braket{\tbf{x}}^2$.
  Finally, Lemma~\ref{lem:distortion} yields \eqref{eq:polynomial}.

  We now turn to $\sigma$.
  By the definition of $\sigma$, Lemma~\ref{lem:Theta-exp}, and \eqref{eq:polynomial}, we have
  \begin{equation*}
    \frac{1}{2} \e^{\sqrt{2} \sigma(y)} = \Theta\big(\sigma(y), y\big) \leq \Theta\big(6 \sigma(y), y\big) \leq \bar\Theta\big(6 \sigma_+(y), y\big) \lesssim_\eps y^{2 + \eps}
  \end{equation*}
  for $y$ sufficiently large.
  It follows that $\sigma(y) \leq \big(\sqrt{2} + \eps\big)\log y + \m{O}_\eps(1)$ as $y \to \infty$ for all $\eps > 0$.
\end{proof}
Soft arguments implied that $\sigma$ is sublinear.
Using potential theory, we have now improved this to a logarithmic upper bound.
In turn, this quantitative sublinearity allows us to refine Corollary~\ref{cor:polynomial}.
\begin{lemma}
  \label{lem:quadratic}
  We have $\Theta(\tbf x) \lesssim \braket{\tbf x}^2$ on $\Q$.
\end{lemma}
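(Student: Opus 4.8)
The plan is to rerun the argument behind Corollary~\ref{cor:polynomial}, but now exploiting the logarithmic bound $\sigma(y) \lesssim \log y$ to sharpen the conformal-distortion estimate of Lemma~\ref{lem:distortion} so that the $\eps$-loss disappears; a short monotonicity argument then transfers the resulting bound from $\Sigma$ to all of $\Q$.

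First I would revisit the conformal map. Recall from the proof of Lemma~\ref{lem:distortion} that $\log$ carries $\Sigma$ onto the curvilinear strip $S = \{0 < v < \varsigma(u)\}$ with $\pi/2 - \varsigma(u) \sim 6\, \e^{-u} \sigma(\e^u)$ as $u \to +\infty$. By Corollary~\ref{cor:polynomial}, $\sigma(y) \lesssim \log y$, so $\pi/2 - \varsigma(u) \lesssim u\,\e^{-u}$; moreover, since $\sigma$ is smooth with $\partial_y^k \sigma \to 0$ (Lemma~\ref{lem:level}) and $\int^\infty y^{-1}\sigma'(y)\,\d y \lesssim \int^\infty y^{-2}\log y\,\d y < \infty$, one checks that $\varsigma'$ lies in $L^1 \cap L^2$ near $+\infty$. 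This integrability puts us in the regime of the quantitative form of Warschawski's theorem (cf.~\cite{Warschawski}), which produces a conformal bijection $g \colon \{0 < v < \pi/2\} \to S$ with $\Re g(z) = u + \m{O}(1)$ --- bounded distortion, rather than the $\smallO(u)$ error used before. Composing as in Lemma~\ref{lem:distortion}, the map $f \coloneqq \exp \circ\, g \circ \log \colon \Q \to \Sigma$ then satisfies $\norm{f(\tbf{x})} \asymp \norm{\tbf{x}}$ on $\Q$.

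With this improved $f$ in hand, the argument is exactly that of Corollary~\ref{cor:polynomial}: with $\zeta(z) \coloneqq z^2$, the function $h \coloneqq \bar\Theta \circ f \circ \zeta^{-1}$ is positive and harmonic on $\H$, continuous on $\bar\H$, and its boundary trace is nondecreasing in $\abs{x}$ (because $\Theta$ is monotone in $x$ and $y$, $\sigma' > 0$, $\bar\Theta = \Theta$ on $\{x = 6\sigma_+(y)\}$, and $\bar\Theta = \Theta = 0$ on $\{y = 0\}$). Lemma~\ref{lem:quarter} gives $h(\tbf{x}) \lesssim \braket{\tbf{x}}$, hence $\bar\Theta \circ f = h \circ \zeta \lesssim \braket{\tbf{x}}^2$, and the bounded distortion of $f$ now upgrades this to $\bar\Theta(\tbf{x}) \lesssim \braket{\tbf{x}}^2$ on $\Sigma$. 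Since $\Theta_F \geq 0$ we have $\Theta \leq \bar\Theta$, so $\Theta(\tbf{x}) \lesssim \braket{\tbf{x}}^2$ on $\Sigma$. Finally, for $\tbf{x} = (x, y) \in \Q \setminus \Sigma$, i.e.\ $0 \leq x < 6\sigma_+(y)$, the monotonicity $\partial_x\Theta \geq 0$ from Proposition~\ref{prop:monotone} together with $\sigma_+(y) \lesssim \log y \ll y$ yields
\begin{equation*}
  \Theta(x, y) \leq \Theta\big(6\sigma_+(y), y\big) \lesssim \braket{\big(6\sigma_+(y),\, y\big)}^2 \asymp 1 + y^2 \leq \braket{\tbf{x}}^2,
\end{equation*}
which completes the estimate on all of $\Q$.

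The main obstacle is the conformal-mapping input: one must invoke a quantitative version of Warschawski's theorem, controlling the map up to $\m{O}(1)$ rather than $\smallO(u)$, and verify that the boundary function $\varsigma$ of $S$ meets its integrability and regularity hypotheses --- this is where the logarithmic bound on $\sigma$ and the derivative estimates of Lemma~\ref{lem:level} enter. Everything downstream is a faithful repetition of Corollary~\ref{cor:polynomial} together with the elementary monotonicity bound near $\partial\Q$.
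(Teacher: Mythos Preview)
Your approach is correct and follows the same bootstrap strategy as the paper: use the logarithmic bound on $\sigma$ from Corollary~\ref{cor:polynomial} to upgrade the conformal distortion from $\norm{\tbf{x}}^{\pm\eps}$ to bounded, rerun the Herglotz/Lemma~\ref{lem:quarter} argument to get $\braket{\tbf{x}}^2$ on the conformal image, and then extend to $\Q$ by monotonicity in $x$.

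The one place where the paper proceeds differently is in how it obtains bounded distortion. Rather than reusing the domain $\Sigma = \{x > 6\sigma_+(y)\}$ and verifying that the transformed boundary $\varsigma$ satisfies the integrability and derivative hypotheses of the sharper Warschawski theorem (which, as you note, requires invoking the derivative decay of $\sigma$ from Lemma~\ref{lem:level}), the paper simply discards $\Sigma$ and passes to an \emph{explicit} domain $\Omega \coloneqq \{x > 10\log_+ y\}$. Corollary~\ref{cor:polynomial} guarantees $\Sigma \setminus \Omega$ is bounded, so nothing is lost; and for $\Omega$ the strip function $\theta(u)$ and its derivative are computable up to constants ($\pi/2 - \theta(u) \sim 10 u\e^{-u}$, $\theta'(u) \sim 10 u\e^{-u}$), which makes the application of Warschawski's Theorem~IX immediate. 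This sidesteps exactly the obstacle you flag at the end --- no regularity of $\sigma$ beyond the logarithmic upper bound is needed. Your route works but carries the extra bookkeeping; the paper's explicit domain is a small but clean simplification.
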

\begin{proof}
  Let $\Omega \coloneqq \{x > 10 \log_+y\}$.
  By Corollary~\ref{cor:polynomial}, $\bar \Sigma \setminus \Omega$ is a bounded, and thus compact, region.
  Thus \eqref{eq:exp-F} yields $F \lesssim \e^{-x/2}$ on $\Omega$.
  We define $\Omega_F^\Omega \coloneqq \int_\Omega F(\tbf{z}) G_{\tbf{z}}^\Omega$, which is positive and uniformly bounded on $\Omega$ by the reasoning following \eqref{eq:exp-F}.
  Let $\omharm \coloneqq \Theta + \Theta_F^\Omega$, which is a positive harmonic function on $\Omega$.
  
  Following the proof of Lemma~\ref{lem:distortion}, the logarithm maps $\Omega$ to a curvilinear strip ${S = \{0 < v < \theta(u)\}}$, where $\theta(u)$ is the argument of the unique point on the curve $\{x = 10 \log_+ y\}$ of radius $\e^u$.
  Now, the logarithmic boundary of $\Omega$ allows us to conclude that
  \begin{equation*}
    \frac{\pi}{2} - \theta(u) \sim 10 u \e^{-u} \And \theta'(u) \sim 10 u \e^{-u} \quad \text{as } u \to \infty
  \end{equation*}
  while $\theta(u) \equiv \pi/2$ and $\theta'(u) \equiv 0$ for sufficiently negative $u$.
  Then Theorem~IX of~\cite{Warschawski} provides a conformal bijection $g \colon \{0 < v < \pi/2\} \to S$ such that
  \begin{equation*}
    g(z) = z + \log \lambda + \smallO(1) \quad \text{as } u \to \infty
  \end{equation*}
  for some $\lambda \in \R_+$.
  It follows that $f^\Omega \colon \Q \to \Omega$ given by $f^\Omega \coloneqq \exp \circ g \circ \log$ satisfies
  \begin{equation}
    \label{eq:low-distortion}
    \norm{f(\tbf{x})} \sim \lambda \norm{\tbf{x}} \quad \text{as } \norm{\tbf{x}} \to \infty.
  \end{equation}

  Now let $h \coloneqq \omharm \circ f^\Omega \circ \zeta^{-1}$.
  Following the proof of Corollary~\ref{cor:polynomial}, we see that $h$ satisfies the hypotheses of Lemma~\ref{lem:quarter}.
  Hence $h(\tbf{x}) \lesssim \braket{\tbf x}$.
  It again follows that $\omharm \circ f^\Omega = h \circ \zeta \lesssim \braket{\tbf{x}}^2$.
  Then \eqref{eq:low-distortion} implies that $\Theta \leq \omharm \lesssim \braket{\tbf{x}}^2$ on $\Omega$.
  Using Lemma~\ref{lem:Theta-exp}, we further have
  \begin{equation*}
    \Theta(x, y) \leq \Theta(10 \log_+y, y) \lesssim \braket{y}^2
  \end{equation*}
  on $\Q \setminus \Omega = \{0 < x < 10 \log_+ y\}$.
  The lemma follows.
\end{proof}
We can finally establish the main result of the section.
\begin{proof}[Proof of Proposition~\textup{\ref{prop:a-priori}}]
  The first parts of the proposition follow from Lemma~\ref{lem:order}, Proposition~\ref{prop:monotone}, and Corollary~\ref{cor:limits}.
  Thus it remains only to verify \eqref{eq:tame}.
  
  Lemma~\ref{lem:quadratic} and the $\m{C}^1$ regularity of $\Psi$ near $\partial \H$ imply that for all $x,y > 0$,
  \begin{align}
    \Theta(x, x) &\lesssim (x+1)x,\label{eq:mid}\\
    \Theta(0, y) &\lesssim y \wedge 1,\label{eq:Theta-left}\\
    \Theta(x, 0) &= 0.\label{eq:bottom}
  \end{align}  
  We divide $\Q$ into two sectors $\Gamma_1 \coloneqq \{x > y > 0\}$ and $\Gamma_2 \coloneqq \{0 < x < y\}$ each of opening angle $\frac{\pi}{4}$.
  Using \eqref{eq:mid}--\eqref{eq:bottom}, there exists $C > 0$ such that
  \begin{equation*}
    \Theta(x, y) \leq C(x+1)y \quad \text{on } \partial \Gamma_1 \cup \partial \Gamma_2.
  \end{equation*}
  Let $\ti\Theta(x, y) \coloneqq \Theta(x, y) - C(x+1)y$, so $\ti\Theta \leq 0$ on $\partial \Gamma_1 \cup \partial \Gamma_2$.
  By \eqref{eq:almost-harmonic}, $\ti \Theta$ is subharmonic on $\Q$.
  Moreover, Lemma~\ref{lem:quadratic} implies that $\ti\Theta \lesssim \braket{x}^2$.
  Now, there exist positive harmonic functions $h_i$ on $\Gamma_i$ such that $h_i(\tbf x) \gg \braket{\tbf x}^2$ on $\bar \Gamma_i$.
  For instance, we can use a suitable rotation of $\Re z^\al$ for any $\al \in (2, 4)$.
  Thus by the Phragm\'en--Lindel\"of principle~\cite{PL} for subharmonic functions, $\ti \Theta \leq 0$ on $\Gamma_i$ for each $i \in \{1, 2\}$.
  That is,
  \begin{equation*}
    \Theta(x, y) \leq C(x+1)y \quad \text{on } \Q.
  \end{equation*}
  Recalling that $\Psi = \e^{-\sqrt{2} x} \Theta$, we obtain \eqref{eq:tame} on $\Q$.
  On the other hand, boundary Schauder estimates imply that $\Psi \lesssim y \wedge 1$ on $\R_- \times \R_+$
  Hence $\Psi(x, y) \lesssim y \e^{-\sqrt{2} x}$ on $\R_- \times \R_+$.
  This completes the proof of the proposition.
\end{proof}

\section{Uniqueness of minimal-speed traveling waves}
\label{sec:uniqueness}

We can now establish the uniqueness portion of Theorem~\ref{thm:unique}.
\begin{proposition}
  \label{prop:uniquenessRephrased}
  If $\Psi$ is a traveling wave in $\H$ of speed $c_* = \sqrt{2}$, then there exists $\kappa>0$ such that $\Psi(x,y) = 1 - \E_y\exp \left(-\kappa \e^{-\sqrt{2}x}Z_\infty\right)$.
  In particular,
  \begin{equation*}
    \Psi(x, y) = \Phi\big(x - \tfrac{1}{\sqrt{2}} \log \kappa, y\big) \quad \text{on } \H.
  \end{equation*}
\end{proposition}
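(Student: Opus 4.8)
The plan is to run McKean's correspondence in reverse, representing $\Psi$ as a Laplace transform of the \emph{same} derivative-martingale limit $Z_\infty$ that produced $\Phi$. Since $\Psi(x-\sqrt2\,t,y)$ solves the parabolic equation \eqref{eq:KPP}, Proposition~\ref{prop:McK} shows that for each fixed $(x,y)\in\H$ the process
\[
  M_t \coloneqq \prod_{v\in\m{N}_t^+}\bigl[\,1-\Psi\bigl(x+\sqrt2\,t-X_t(v),Y_t(v)\bigr)\bigr]
\]
is a $[0,1]$-valued $\P_y$-martingale with mean $1-\Psi(x,y)$; it therefore converges $\P_y$-a.s.\ and in $L^1$ to some $M_\infty(x)$ with $\E_y M_\infty(x)=1-\Psi(x,y)$. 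The branching property at a time $t$ yields the disintegration $M_\infty(x)=\prod_{u\in\m{N}_t^+}M^{(u)}_\infty\bigl(x+\sqrt2\,t-X_t(u),Y_t(u)\bigr)$, where, conditionally on $\m{F}_t$, the $M^{(u)}_\infty(\cdot)$ are independent and each has the law of $M_\infty(\cdot)$ under $\P_{Y_t(u)}$.

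Next I would pin down the exponential scale of $-\log M_\infty$ using the tameness bound \eqref{eq:tame}. Combining it with the elementary inequality $-\log(1-s)\le 2s$ for $s\le\tfrac12$, the uniform control \eqref{eqn:unifUb} on extremal particles, and the vanishing $W_t\to0$ of the critical additive martingale (Proposition~\ref{prop:additive}), one checks that
\[
  \sum_{v\in\m{N}_t^+}\Psi\bigl(x+\sqrt2\,t-X_t(v),Y_t(v)\bigr)\lesssim \e^{-\sqrt2 x}\bigl[(1+x_+)W_t+Z_t+\bar M^{+}W_t\bigr],
\]
whose right-hand side tends to a constant multiple of $\e^{-\sqrt2 x}Z_\infty$ as $t\to\infty$. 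Hence $-\log M_\infty(x)\lesssim \e^{-\sqrt2 x}Z_\infty$ $\P_y$-a.s., so the random field $V_x\coloneqq\e^{\sqrt2 x}\bigl(-\log M_\infty(x)\bigr)$ is finite and dominated by a fixed multiple of $Z_\infty$. Scaling the disintegration by $\e^{\sqrt2 x}$ recasts it as the multitype smoothing relation $V_x=\sum_{u\in\m{N}_t^+}\e^{\sqrt2 X_t(u)-2t}\,V^{(u)}_{x+\sqrt2\,t-X_t(u)}$, with the $(V^{(u)}_z)_z$ conditionally independent copies of $(V_z)_z$ under $\P_{Y_t(u)}$.

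The core of the argument, following the disintegration technique of~\cite{AlM22}, is to control $V_x$ as $x\to\infty$. Since $\liminf_{t\to\infty}(\sqrt2\,t-M_t)=+\infty$ by \eqref{eqn:formulaForMaxDep}, for fixed $t$ the shifted arguments $x+\sqrt2\,t-X_t(u)$ all tend to $+\infty$ uniformly in $u$ as $x\to\infty$; together with the $Z_\infty$-domination this lets one pass to the limit in the smoothing relation and conclude that $V\coloneqq\lim_{x\to\infty}V_x$ exists $\P_y$-a.s., is $\m{F}_\infty$-measurable, satisfies $0\le V\lesssim Z_\infty$, is positive precisely on the survival event (Corollary~\ref{cor:survival}), and obeys the \emph{same} fixed-point equation $V=\sum_{u\in\m{N}_t^+}\e^{\sqrt2 X_t(u)-2t}\,V^{(u)}$ as $Z_\infty$ (Lemma~\ref{lem:smoothing}). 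The principal difficulty — the step I expect to be hardest — is the resulting rigidity: any solution of this smoothing equation squeezed between $0$ and a multiple of $Z_\infty$ must equal a \emph{deterministic} multiple $\kappa Z_\infty$ of it. I would establish this by a change-of-measure comparison of $V$ with $Z_\infty$ along a common spine (comparing second moments), or equivalently by showing that $V/Z_\infty$ is $\P_y$-a.s.\ constant on survival via a zero--one argument exploiting the ergodicity of the BBM genealogy; this forces $V=\kappa Z_\infty$ for some $\kappa\in(0,\infty)$.

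Finally I would upgrade the limit to an identity. Applying the previous step to the sub-BBM rooted at each $u\in\m{N}_t^+$ gives $V^{(u)}_z\to\kappa Z_\infty^{(u)}$ as $z\to\infty$; substituting into the smoothing relation for $V_x$ at a fixed $x$ and letting $t\to\infty$ — using $\sum_{u\in\m{N}_t^+}\e^{\sqrt2 X_t(u)-2t}Z_\infty^{(u)}=Z_\infty$ from Lemma~\ref{lem:smoothing} as a dominating sum — yields $V_x=\kappa Z_\infty$ for every $x$, i.e.\ $M_\infty(x)=\exp\bigl(-\kappa\,\e^{-\sqrt2 x}Z_\infty\bigr)$ $\P_y$-a.s. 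Taking $\E_y$ gives $1-\Psi(x,y)=\E_y\exp\bigl(-\kappa\,\e^{-\sqrt2 x}Z_\infty\bigr)$; comparing with \eqref{eq:defTW} identifies $\Psi(x,y)=\Phi\bigl(x-\tfrac1{\sqrt2}\log\kappa,y\bigr)$, with $\kappa>0$ because $\Psi\not\equiv0$, and the same $\kappa$ serves all $y$ since $Z_\infty$ was taken under $\P_y$ throughout.
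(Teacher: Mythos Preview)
Your setup (the disintegration martingale $M_t$, the tameness bound yielding $-\log M_\infty(x)\lesssim \e^{-\sqrt2 x}Z_\infty$) matches the paper's opening, but you diverge at the decisive step and leave two genuine gaps. First, you assert that $V\coloneqq\lim_{x\to\infty}V_x$ exists, but the smoothing relation you write for $V_x$ is not self-improving: boundedness by $CZ_\infty$ gives $\limsup$ and $\liminf$, not convergence, and the argument ``the shifted arguments all tend to $+\infty$'' is circular (you need the limit for the sub-copies $V^{(u)}_z$ to conclude it for $V_x$). Second, and more seriously, the rigidity claim---that any solution of the smoothing equation dominated by a multiple of $Z_\infty$ must equal $\kappa Z_\infty$---is exactly the hard uniqueness statement for a \emph{multitype} boundary-case smoothing transform, and your proposed methods (``second moments along a common spine'', ``ergodicity of the BBM genealogy'') are not proofs. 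This is the step you yourself flag as hardest, and you have not supplied it.

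The paper avoids both issues entirely by a different route. It passes to \emph{shaved} versions $T_t^\alpha,\,Z_t^\alpha$ (so that $-\log T_\infty^\alpha$ is integrable) and then takes the \emph{expectation} $F^\alpha(x,y)\coloneqq\E_y[-\log T_\infty^\alpha(x)]$ rather than arguing pathwise. The recursion for $T_\infty^\alpha$ plus many-to-one and a Girsanov shift show that $G^\alpha(x,y)\coloneqq\e^{\sqrt2 x}F^\alpha(x,y)$ is a nonnegative harmonic function on the quarter-plane $\{x>-\alpha,\,y>0\}$ vanishing on the boundary; Herglotz then forces $G^\alpha(x,y)=\kappa_\alpha(x+\alpha)y$, so $F^\alpha$ is completely explicit. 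Feeding this back through the closed-martingale identity $-\log T_\infty^\alpha=\lim_{t\to\infty}\E[-\log T_\infty^\alpha\mid\m F_t]$ yields $-\log T_\infty^\alpha(x)=\kappa_\alpha\e^{-\sqrt2 x}Z_\infty^\alpha$ \emph{for every} $x$, with no limit in $x$ and no pathwise rigidity needed; letting $\alpha\to\infty$ finishes. This potential-theoretic identification of the expectation on the quarter-plane is the idea your proposal is missing.
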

\noindent
That is, up to $x$-translation, $\Phi$ is the unique traveling wave of speed $c_*$.

The proof of Proposition \ref{prop:uniquenessRephrased} follows the method described in \cite{AlM22} to identify the fixed points of the smoothing transform.
We use the arbitrary traveling wave $\Psi$ to construct a product martingale, sometimes called the \emph{disintegration martingale}.
The tameness bound \eqref{eq:tame} allows us to associate this multiplicative martingale with a harmonic function in the quadrant with Dirichlet conditions.
Such functions are unique up to a multiplicative constant; this allows us to identify $\Psi$ as a shift of $\Phi$.
\begin{proof}
  Let $\Psi$ be a traveling wave in $\H$ of speed $\sqrt{2}$.
  Using the McKean representation (Proposition~\ref{prop:McK}), we observe that for all $(x,y) \in \H$ and $t \geq 0$,
  \begin{equation}
    \label{eqn:mcKeanRepTW}
    1- \Psi(x,y) = \E_y\left(\prod_{u \in \mathcal{N}^+_t} \big[1 - \Psi\big(\sqrt{2} t + x - X_t(u), Y_t(u)\big)\big]\right).
  \end{equation}
  Hence the branching property of the BBM implies that
  \begin{equation*}
    T_t(x) \coloneqq \prod_{u \in \mathcal{N}^+_t} \big[1 - \Psi\big(\sqrt{2} t + x - X_t(u), Y_t(u)\big)\big]
  \end{equation*}
  is a bounded martingale under law $\P_y$.
  We denote by $T_\infty(x)$ its almost sure limit.
  This is sometimes called the \textit{disintegration} of the function $\Psi$.
  
  We now introduce a shaved version of this martingale.
  Given $\alpha > 0$ and $x \in \R$, let
  \begin{equation*}
    \mathcal{N}_t^{+,\alpha}(x) \coloneqq \big\{u \in \mathcal{N}_t^+ : \ X_s(u) \leq \sqrt{2} s + x + \alpha\text{ for all } s \leq t\big\}.
  \end{equation*}
  It is a straightforward consequence of \eqref{eqn:mcKeanRepTW} that
  \begin{equation*}
    T_t^\alpha(x) \coloneqq \prod_{u \in \mathcal{N}^{+,\alpha}_t(x)} \big[1 - \Psi\big(\sqrt{2}t + x - X_t(u),Y_t(u)\big)\big]
  \end{equation*}
  is a bounded submartingale.
  Indeed, we take the multiplicative martingale $T$ and delete terms when the corresponding particle reaches the line $\sqrt{2} s + x + \al$.
  At such times $T^\al$ jumps up, and is thus a submartingale.
  As a result, this process converges $\P_y$-a.s. and in $L^1$ to a nondegenerate limit that we denote by $T_\infty^\alpha(x)$.
  
  Using the branching property, one can check that $T_\infty^\alpha$ satisfies the following almost sure recursion:
  \begin{equation}
    \label{eqn:multRecursionEquation}
    T_\infty^\alpha (x) = \prod_{u \in \mathcal{N}^{+,\alpha}_t(x)} T_\infty^{\alpha}[u]\big(\sqrt{2} t + x - X_t(u), Y_t(u)\big).
  \end{equation}
  The random variables $\big(T_\infty^{\alpha}[u] \midsemi u \in \mathcal{N}^{+,\alpha}_t(x)\big)$ are conditionally independent given $\m{F}_t$ and $T_\infty^\al[u]$ shares the law of $T_\infty^\alpha$ under $\P_{Y_t(u)}$.

  Now, Proposition~\ref{prop:a-priori} states that $\Psi \lesssim (x_++1)y \e^{-\sqrt{2}x}$.
  We use this to argue that for all $x \geq -\al$ and $y \geq 0$,
  \begin{equation}
    \label{eqn:aimTame}
    -\log T_\infty^\alpha(x) \leq 2 C \e^{-\sqrt{2} x} Z^\alpha_\infty ~\text{ a.s.}
  \end{equation}
  In the following, let
  \begin{equation*}
    \mathcal{B}_t \coloneqq \big\{u \in \mathcal{N}_t^{+,\alpha}(x) : \Psi\big(\sqrt{2} t + x - X_t(u),Y_t(u)\big) \geq \tfrac{1}{2}\big\}.
  \end{equation*}
  Noting that $-\log (1-a) \leq 2 a$ for all $a \leq 1/2$, \eqref{eq:tame} yields
  \begin{align*}
    -\log T_t^\alpha(x) &= -\sum_{u \in \mathcal{N}_t^{+,\alpha}(x)} \log\big[1 - \Psi\big(\sqrt{2} t + x - X_t(u),Y_t(u)\big)\big]\\
                        &\leq 2 C \e^{-\sqrt{2}x} [Z^\alpha_t + (1+x_+)W_t]\\
                        &\qquad\qquad - \sum_{u \in \mathcal{B}_t} \log\big[1 - \Psi\big(\sqrt{2} t + x - X_t(u),Y_t(u)\big)\big].
  \end{align*}
  Then \eqref{eqn:aimTame} follows from Propositions~\ref{prop:additive} and~\ref{prop:derivativeMartingale}, provided $\mathcal{B}_t = \emptyset$ for sufficiently large $t$.
  
  To show that $\m{B}_t$ is eventually empty, we observe that $Z$ is Cauchy in time because it converges (almost surely).
  It follows that the contribution of every individual particle becomes negligible as $t \to \infty$.
  Otherwise, branching events would cause $Z$ to jump non-negligibly at arbitrarily large times.
  Therefore
  \begin{equation}
    \label{eq:Z-terms-vanish}
    \sup_{u \in \mathcal{N}^+_t} \big[\sqrt{2} t - X_t(u)\big]_+ Y_t(u) \e^{\sqrt{2} X_t(u) - 2t} \to 0 \quad \text{a.s. on survival.}
  \end{equation}
  In more detail, if \eqref{eq:Z-terms-vanish} did not hold, then there would exist $\eps > 0$ such that with positive probability, the stopping times defined by $\tau_0 = 0$ and
  \begin{equation*}
    \tau_{n+1}\coloneqq \inf\big\{ t > \tau_n + 1 : \big[\sqrt{2} t - X_t(u)\big]_+ Y_t(u) \e^{\sqrt{2} X_t(u) - 2t} > 2 \eps \text{ for some }u \in \mathcal{N}_t^+ \big\}
  \end{equation*}
  are all finite.
  By Borel--Cantelli, with positive probability there exists a (random) subsequence $(n_k)_{k \in \N}$ such that a particle alive at some time $t \in [\tau_{n_k}, \tau_{n_{k+1}}]$ and located at some position $(x,y)$ satisfying
  \begin{equation*}
    (\sqrt{2} t -x)_+ y \e^{\sqrt{2} x - 2t}>\eps
  \end{equation*}
  splits into two children.
  In particular, at all such branching times, $Z_{t} > Z_{t-} + \eps$.
  This contradicts the Cauchy property of $Z$.

  Now recall that $W_t \to 0$ by Proposition~\ref{prop:additive}, so ${\sup_u Y_t(u) \e^{\sqrt{2} X_t(u) - 2t} \to 0}$ a.s. as $t \to \infty$.
  Thus \eqref{eq:tame} and \eqref{eq:Z-terms-vanish} yield
  \begin{align*}
    \sup_{u \in \mathcal{N}^+_t} \Psi\big(\sqrt{2} t + x - X_t(u), &Y_t(u)\big)\\[-2ex]
                                                                   &\lesssim \sup_{u \in \m{N}_t^+} \left[1 + x + \big[\sqrt{2}t - X_t(u)\big]_+\right] Y_t(u) \e^{\sqrt{2} X_t(u) - 2t} \to 0
  \end{align*}
  almost surely on survival.
  It follows that $\mathcal{B}_t$ is empty a.s. for sufficiently large $t$.
  
  Now, \eqref{eqn:aimTame} implies that $-\log T_\infty^\al$ has a first moment.
  We let $F^\al \colon \H \to \R_+$ denote its expectation:
  \begin{equation*}
    F^\alpha (x,y) \coloneqq \E_y\big[-\log T_\infty^\alpha(x)\big].
  \end{equation*}
  Fix $t > 0$.
  Using the almost sure recursion \eqref{eqn:multRecursionEquation} and the many-to-one lemma, we observe that $F^\alpha$ satisfies
  \begin{align*}
    F^\alpha (x,y) &= \E_y\left[\sum_{u \in \mathcal{N}^{+,\alpha}_t(x)} F^\alpha\big(\sqrt{2} t + x - X_t(u),Y_t(u)\big)\right]\\
                   &= \e^t \E_y\left[ F^\alpha(\sqrt{2} t + x + X_t,Y_t)\ind{ -X_s \leq \sqrt{2} s + x + \alpha, \, Y_s \geq 0 \midsemi s \leq t} \right],
  \end{align*}
  where $(X,Y)$ is a Brownian motion in $\R^2$ started from $(0,y)$ under $\P_y$.
  Applying the Girsanov transform for the Brownian motion, we obtain
  \begin{equation*}
    \e^{\sqrt{2} x}F^\alpha (x,y) = \E_y\left[ \e^{\sqrt{2}(X_t+x)} F^\alpha (x + X_t,Y_t) \ind{x + X_s \geq -\alpha, \, Y_s \geq  0\midsemi s \leq t}\right].
  \end{equation*}
  Writing $G^\alpha (x,y) \coloneqq \e^{\sqrt{2} x} F^\alpha (x,y)$, we see that for all $t > 0$ and $(x, y) \in \H$,
  \begin{equation*}
    G^\alpha (x,y) = \E_{y}\left[G^\alpha(x+X_t,Y_t) \ind{Y_s \geq 0, \, x+X_s \geq -\alpha \midsemi s \leq t}\right].
  \end{equation*}
  Using Itô's formula, we conclude  that $G^\alpha$ is a nonnegative harmonic function in the quarter-plane $\{x \geq -\alpha, y \geq 0\}$ with Dirichlet boundary data.
  Define $\zeta_\al(z) \coloneqq (z + \al)^2$, which biholomorphically maps this quarter-plane to the half-place $\H$.
  Then $G^\al \circ \zeta_\al^{-1}$ is a harmonic function on $\H$ that is continuous on $\bar\H$ and vanishes on $\partial\H$.
  By Herglotz's representation formula \eqref{eq:Herglotz}, there exists $\kappa_\alpha \in \R_+$ such that $G^\al \circ \zeta_\al^{-1} = \kappa_\al y/2$.
  Composing with $\zeta_\al$, we find $G^\alpha(x,y) = \kappa_\alpha (x + \alpha)y$ and hence
  \begin{equation*}
    F^\alpha(x,y) = \kappa_\alpha (x+\alpha) y \e^{-\sqrt{2} x}.
  \end{equation*}

  To complete the proof, we observe that for all $\alpha > 0$, almost surely
  \begin{align*}
    -\log T_\infty^\alpha(x) &= \lim_{t \to \infty} \E\big[ - \log T_\infty^\alpha(x) \mid \mathcal{F}_t\big]\\
                      &= \lim_{t\to \infty} \sum_{u \in \mathcal{N}^{+,\alpha}_t(x)} F^\alpha\big(\sqrt{2} t + x -X_t(u),Y_t(u)\big) = \kappa_\alpha \e^{-\sqrt{2} x} Z_\infty^\alpha.
  \end{align*}
  Using once again \eqref{eqn:unifUb}, for all $(x,y) \in \H$ there exists a random $\alpha_0 \in \R_+$ such that for all $\alpha > \alpha_0$ we have $T_\infty(x) = T_\infty^\alpha(x)$ and $Z_\infty = Z^\alpha_\infty$ under $\P_y$.
  This shows that $\kappa_\alpha$ (which is deterministic) is constant for sufficiently large $\alpha$.
  Writing $\kappa \coloneqq \lim_{\alpha \to \infty} \kappa_\alpha$, we obtain
  \begin{equation*}
    T_\infty(x) = \lim_{\alpha \to \infty} T_\infty^\alpha(x) = \exp\left(- \kappa \e^{-\sqrt{2}x} Z_\infty\right) \quad \P_y\text{-a.s.}
  \end{equation*}
  Finally, \eqref{eqn:mcKeanRepTW} yields
  \begin{equation*}
    \Psi(x,y) = 1 - \E_y T_\infty(x) = 1 - \E_y\exp\left(- \kappa \e^{-\sqrt{2}x} Z_\infty\right).
    \qedhere
  \end{equation*}
\end{proof}

\section{Traveling wave asymptotics}
\label{sec:asymptotics}

In this section, we study the behavior at infinity of the traveling waves $\Phi$ and $\Phi_{\lambda,\mu}$ constructed in Section~\ref{sec:construction}.
In particular, we prove Theorems~\ref{thm:asymptotics} and~\ref{thm:tail}.

In Section~\ref{subsec:mintw}, we consider the asymptotic behavior of the law of $Z_\infty$ as the initial height $y$ tends to infinity.
This determines the large-$y$ asymptotics of $\Phi$.
We take up the same question for $\Phi_{\lambda,\mu}$ in Section~\ref{subsec:hghtw}; this allows us to complete the proofs of Theorems~\ref{thm:unique} and \ref{thm:asymptotics}.
Finally, in Section~\ref{subsec:extendedAsymptotics} we use potential theory and Theorem~\ref{thm:asymptotics} to study the behavior of $\Phi$ as $x \to \infty$ and thereby prove Theorem~\ref{thm:tail}.

\subsection{The minimal-speed wave far from the horizontal axis}
\label{subsec:mintw}

In this subsection, we relate the minimal-speed half-space wave $\Phi$ to the corresponding one-dimensional wave $w_{c_*}$ defined \eqref{eqn:defw}.
\begin{proposition}
  \label{prop:tw}
  We have
  \begin{equation*}
    \lim_{y \to \infty} \Phi\left(x + \tfrac{1}{\sqrt{2}}\log y, y\right) = w_{c_*}(x)
  \end{equation*}
  uniformly in $x \in \R$.
\end{proposition}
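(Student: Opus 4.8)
The plan is to reduce the statement to the convergence in probability $Z_\infty(y)/y \to D_\infty$ as $y \to \infty$ recorded in \eqref{eq:1D-limit} (Proposition~\ref{prop:cvZy}), where $Z_\infty(y)$ denotes a random variable with the law of $Z_\infty$ under $\P_y$, coupled to the whole-plane derivative martingale limit $D_\infty$ through the shared horizontal BBM. Using the exact scaling built into the definition \eqref{eq:defTW},
\[
  \Phi\!\left(x + \tfrac{1}{\sqrt 2}\log y,\, y\right) = 1 - \E\exp\!\left(-\e^{-\sqrt 2 x}\,\frac{Z_\infty(y)}{y}\right).
\]
For each fixed $x$, the map $z \mapsto \exp(-\e^{-\sqrt 2 x} z)$ is bounded and continuous on $[0, \infty)$, so the continuous mapping theorem turns \eqref{eq:1D-limit} into the convergence $\exp(-\e^{-\sqrt 2 x} Z_\infty(y)/y) \to \exp(-\e^{-\sqrt 2 x} D_\infty)$ in probability. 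These variables lie in $(0, 1]$, so bounded convergence promotes this to convergence of expectations; comparing with the representation \eqref{eqn:defw} of $w_{c_*}$, we get $\Phi(x + \tfrac{1}{\sqrt 2}\log y, y) \to w_{c_*}(x)$ for every fixed $x \in \R$.

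It then remains to upgrade this to convergence that is uniform in $x$, and here I would exploit monotonicity. Each map $x \mapsto \Phi(x + \tfrac{1}{\sqrt 2}\log y, y)$ is decreasing --- immediate from \eqref{eq:defTW}, since $\e^{-\sqrt 2 x}$ is decreasing and $Z_\infty \gneqq 0$ --- as is the (continuous) limit $w_{c_*}$. Extending all of these functions to the compactified line $[-\infty, +\infty]$, one has $\Phi(+\infty, \anon) = 0 = w_{c_*}(+\infty)$, while Corollary~\ref{cor:limits} gives $\Phi(-\infty, y) = \varphi(y) \to 1 = w_{c_*}(-\infty)$ as $y \to \infty$, using $\varphi(y) \to 1$. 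Hence the pointwise convergence holds on the whole compact interval, and since the functions are monotone with a continuous limit, Dini's second theorem upgrades it to uniform convergence, which is exactly the claim.

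The substance of the proof is entirely in \eqref{eq:1D-limit}, and I expect establishing it to be the main obstacle. To prove it I would condition on the horizontal BBM, i.e.\ on the $\sigma$-algebra $\mathcal{H}_\infty$, under which the vertical displacements form Brownian motions indexed by the genealogical tree $\m{T}$. The optional-stopping identity $\E_y[Y_t \ind{\inf_{s \leq t} Y_s > 0}] = y$ for a single Brownian motion then gives the clean first-moment relation $\E[Z_t/y \mid \mathcal{H}_t] = D_t$, which tends to $D_\infty$ as $t \to \infty$. The harder half is the matching second-moment estimate: conditionally on $\mathcal{H}$, the variance of $Z_\infty(y)/y$ about $D_\infty$ should vanish as $y \to \infty$, because the particles carrying the bulk of $Z_\infty$ remain at heights of order $y$, where killing is negligible and the weights $Y_t(u)/y$ concentrate near their conditional means. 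Making this precise forces an interchange of the limits $t \to \infty$ and $y \to \infty$ --- presumably by comparing $Z_t(y)/y$ with $D_t$ uniformly in $y$ for fixed large $t$, working with the shaved martingales $Z^\al$ and $D^\al$ to sidestep the lack of uniform integrability, and only then sending $t \to \infty$. Carefully reconciling these limits, and controlling the boundary effects when a vertical ancestral path nears $0$, is the delicate core of the argument.
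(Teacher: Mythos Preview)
Your proof of Proposition~\ref{prop:tw} is correct and matches the paper's: reduce to Proposition~\ref{prop:cvZy}, obtain pointwise convergence via the Laplace-transform identity and bounded convergence, then upgrade to uniform convergence by Dini's second theorem on $[-\infty,\infty]$ after checking the endpoints exactly as you do.

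Your sketch of Proposition~\ref{prop:cvZy} is also on target for the first moment: the paper conditions on $\mathcal H$ and obtains $\E[Z_\infty(y)\mid\mathcal H]=yD_\infty$ (Lemma~\ref{lem:firstMoment}) essentially by the stopped-martingale identity you cite, passing through the shaved martingales $Z^\alpha$ to justify the $t\to\infty$ limit. For the second moment the paper is more direct than the interchange-of-limits scheme you propose: Lemma~\ref{lem:secondMoment} expands $\E[Z_t(y)^2\mid\mathcal H]$ as a double sum over pairs $u,v\in\mathcal N_t$, uses the identity $\E_y(B_{r\wedge T_0}^2)=y^2+\E_y(r\wedge T_0)\le y^2+Cy\sqrt r$ with $r=\tau_{u,v}^t$ the coalescence time, and then shows the resulting $\mathcal H$-measurable remainder $\Upsilon_\infty$ is a.s.\ finite via a spine computation. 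This yields $\E[(Z_\infty(y)/y-D_\infty)^2\mid\mathcal H]\le \Upsilon_\infty/y\to 0$ without any delicate interchange.
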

By the definition \eqref{eq:defTW} of $\Phi$, we can equivalently determine the asymptotic properties of $Z_\infty$ under $\P_y$ as $y \to \infty$.
For this purpose, it is convenient to define a consistent family $(Z(y) \midsemi y \geq 0)$ of martingales on a single probability space $(\Omega, \mathcal{F}, \P)$ such that for all $y > 0$, $Z(y)$ has the law of $Z$ under $\P_y$.
In the remainder of this subsection, let $\big(X_t(u), Y_t(u) \midsemi u \in \mathcal{N}_t\big)$ be a branching Brownian motion in $\R^2$ started from $(0,0)$.
Given $t \geq 0$ and $y > 0$, we set
\begin{equation*}
  \mathcal{N}^y_t \coloneqq \big\{u \in \mathcal{N}_t : Y_s(u) \geq - y \, \text{ for all } \, s \leq t\big\}.
\end{equation*}
We then define
\begin{align*}
  &Z_t(y) \coloneqq \sum_{u \in \mathcal{N}^y_t} \big[\sqrt{2} t - X_t(u)\big] \big(Y_t(u) + y\big) \e^{\sqrt{2} X_t(u) - 2t}\\
  \text{and} \quad &Z_\infty(y) \coloneqq \lim_{t \to \infty} Z_t(y).
\end{align*}
Since $\big((X_t(u),Y_t(u) + y) \midsemi u \in \mathcal{N}_t^y\big)$ has the law of a branching Brownian motion in $\H$ starting from $(0,y)$, we conclude that for all $y > 0$, $Z_\infty(y)$ has the same law as $Z_\infty$ under $\P_y$.

Recall from Section~\ref{sec:preliminaries} that $D_\infty$ is the a.s. limit as $t \to \infty$ of the derivative martingale of the one-dimensional BBM $(X_t(u) \midsemi u \in \mathcal{N}_t)$.
We prove the following asymptotic for $Z_\infty(y)$ as $y \to \infty$, which implies Proposition~\ref{prop:tw}.
\begin{proposition}
  \label{prop:cvZy}
  We have
  $\displaystyle
  \lim_{y \to \infty} \frac{Z_\infty(y)}{y} = D_\infty$ in probability.
\end{proposition}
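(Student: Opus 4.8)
The plan is to prove $Z_\infty(y)/y \to D_\infty$ in probability by a first- and second-moment argument carried out \emph{conditionally on the horizontal motion} of the BBM, i.e., conditionally on the filtration $\mathcal{H}_\infty$ generated by $\big(X_t(u) \midsemi u \in \mathcal{N}_t,\, t \geq 0\big)$. Recall from Section~\ref{sec:preliminaries} that, conditionally on $\mathcal{H}$, the vertical positions $\big(Y_t(u) \midsemi u \in \mathcal{N}_t\big)$ form a centered Brownian motion indexed by the genealogical tree $\mathcal{T}$, which is itself $\mathcal{H}$-measurable. The key structural observation is that, for a particle $u \in \mathcal{N}_t$, the event $\{u \in \mathcal{N}_t^y\} = \{\inf_{s \leq t} Y_s(u) \geq -y\}$ and the weight $Y_t(u) + y$ both depend only on the vertical motion along the ancestral line of $u$. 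As $y \to \infty$, the killing constraint $\inf_{s\le t} Y_s(u) \ge -y$ becomes vacuous for any fixed particle, and $\big(Y_t(u) + y\big)/y \to 1$; heuristically this is why $Z_\infty(y)/y$ should converge to $D_\infty = \lim_t \sum_{u \in \mathcal{N}_t} [\sqrt 2 t - X_t(u)] \e^{\sqrt 2 X_t(u) - 2t}$.

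First I would set up the conditional first moment. Fix a large truncation level $K$ and consider the ``$K$-shaved'' horizontal derivative sum $D_t^{(K)}$ obtained by restricting to particles with $X_s(u) \le \sqrt 2 s + K$ for all $s \le t$ (as in \eqref{eqn:shavedDerivativeMartingaleDimension1}); this makes all weights $[\sqrt 2 t + K - X_t(u)]$ nonnegative and keeps the sums finite and tractable. Using the tower property over $\mathcal{H}$ together with the conditional Gaussian law of the spine displacement $Y_t(u)$ — a centered one-dimensional Brownian motion run for time $t$ — one computes
\begin{equation*}
  \E\!\left[\frac{Y_t(u) + y}{y}\,\ind{\inf_{s \le t} Y_s(u) \ge -y} \;\Big|\; \mathcal{H}\right] \xrightarrow[y \to \infty]{} 1
\end{equation*}
for each fixed $u$, by dominated convergence (the reflection principle controls the probability of the excluded event, which vanishes as $y \to \infty$ for fixed $t$). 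Summing against the $\mathcal{H}$-measurable weights $[\sqrt 2 t + K - X_t(u)]\e^{\sqrt 2 X_t(u) - 2t}$ and using the a.s. convergence $D_t^{(K)} \to D_\infty^{(K)}$ from Section~\ref{sec:preliminaries}, I would conclude $\E[Z_\infty^{(K)}(y)/y \mid \mathcal{H}] \to D_\infty^{(K)}$ along a suitable scheme, where $Z^{(K)}$ is the correspondingly shaved half-plane derivative martingale. One then removes the shaving by sending $K \to \infty$, using $\bar{M}^+ < \infty$ from \eqref{eqn:unifUb} and $\bar M < \infty$ from \eqref{eqn:formulaForMaxDep} so that $Z_\infty^{(K)}(y) = Z_\infty(y)$ and $D_\infty^{(K)} = D_\infty$ once $K$ exceeds the (random, $\mathcal{H}$- or $\mathcal{F}$-measurable) thresholds.

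The heart of the matter — and the step I expect to be the main obstacle — is the conditional \emph{second} moment estimate showing $\operatorname{Var}\big(Z_\infty^{(K)}(y)/y \mid \mathcal{H}\big) \to 0$ as $y \to \infty$. Conditionally on $\mathcal{H}$, the vertical increments along distinct ancestral lines are independent after their branch point, so the conditional covariance of the contributions of two particles $u, v \in \mathcal{N}_t$ is governed by the vertical Brownian motion run for the time $a(u,v)$ to their most recent common ancestor. One obtains a double sum over pairs, weighted by $[\sqrt 2 t + K - X_t(u)][\sqrt 2 t + K - X_t(v)]\e^{\sqrt 2(X_t(u)+X_t(v)) - 4t}$ and a vertical factor that, after dividing by $y^2$, is bounded by something like $C\, a(u,v)/y$ (the conditional second moment of a time-$a(u,v)$ Brownian motion started near height $y$, minus the product of first moments, scales like $a(u,v)$; dividing by $y^2$ and comparing to the first-moment-squared term of order $1$ yields the gain $a(u,v)/y$ up to logarithmic corrections from the killing probability). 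The remaining task is to show that $y^{-1} \sum_{u,v} a(u,v)\, [\cdots]$ is finite for typical realizations of $\mathcal{H}$ and therefore vanishes as $y \to \infty$. This pairwise sum is exactly the kind of quantity that appears in proofs that the one-dimensional derivative (or critical additive) martingale is $L\log L$-bounded after shaving; I would control it by splitting according to the branch time $a(u,v) \in [2^k, 2^{k+1}]$, applying the many-to-two / many-to-one lemma, and using the ballot-type estimates for BBM below the line $\sqrt 2 s + K$ (again drawing on the barrier machinery underlying Section~\ref{sec:preliminaries}). Once both moments are controlled, conditional Chebyshev gives $Z_\infty^{(K)}(y)/y \to D_\infty^{(K)}$ in conditional probability, hence in probability; removing the shaving as above and noting that convergence in probability is stable under the a.s. identifications $Z_\infty^{(K)}(y) = Z_\infty(y)$, $D_\infty^{(K)} = D_\infty$ for large $K$ finishes the proof. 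Finally, interpreting this through \eqref{eq:defTW} and \eqref{eqn:defw}, together with the continuity and boundedness of $s \mapsto 1 - \e^{-s}$, upgrades the convergence to the uniform-in-$x$ statement of Proposition~\ref{prop:tw}.
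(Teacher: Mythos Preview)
Your overall architecture---compute $\E[Z_\infty(y)\mid\mathcal H]$ and $\E[Z_\infty(y)^2\mid\mathcal H]$, then conclude by conditional Chebyshev---is exactly the paper's. The first-moment step actually simplifies: because $t\mapsto B_{t\wedge T_0}$ is a martingale under $\P_y$, one has the \emph{exact} identity $\E[Z^\alpha_t(y)\mid\mathcal H]=yD^\alpha_t$ for all $y>0$, so no $y\to\infty$ limit or interchange of limits is needed; one just passes $t\to\infty$ using the uniform integrability of the shaved martingales and then removes the shaving by monotone convergence.

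The second-moment step, however, has a genuine quantitative gap. After conditioning on $\mathcal H$, the covariance contribution of a pair $(u,v)$ is governed by $G_y(\tau_{u,v})\coloneqq\E_y\big[B_{\tau_{u,v}\wedge T_0}^2\big]=y^2+\E_y(\tau_{u,v}\wedge T_0)$, where $\tau_{u,v}$ is the MRCA time. Your proposal uses the naive bound $\E_y(r\wedge T_0)\le r$, giving a variance term proportional to $\sum_{u,v}\tau_{u,v}\,[\sqrt2 t-X_t(u)][\sqrt2 t-X_t(v)]\e^{\sqrt2(X_t(u)+X_t(v))-4t}$. Under the spine measure associated with $D^\alpha$, this sum becomes (up to constants) $\int_0^t s\,\hat\E^\alpha\big[R_s\e^{-\sqrt2 R_s}\big]\,ds$ with $R$ a Bessel-3 process. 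Since $\hat\E^\alpha[R_s\e^{-\sqrt2 R_s}]\asymp s^{-3/2}$, the integrand is $\asymp s^{-1/2}$ and the integral \emph{diverges}: your pairwise sum is not almost surely finite, and the conditional variance does not tend to zero. The fix the paper uses is twofold. First, the hitting-time density gives the sharper bound $\E_y(r\wedge T_0)\le Cy\sqrt r$, so the correct weight is $\sqrt{\tau_{u,v}}$, not $\tau_{u,v}$; this already improves the integrand to $s^{-1}$, which is still borderline divergent. Second, one restricts to the event $G_\alpha=\{\max_u X_t(u)\le\sqrt2 t-\tfrac14\log_+ t+\alpha\ \forall t\}$ (which has probability tending to $1$ as $\alpha\to\infty$ by the upper-deviation result of Hu), so that under the spine measure one may insert the indicator $\{R_s\ge\tfrac14\log_+ s\}$; this buys an extra factor $s^{-\sqrt2/4}$ in the Bessel expectation and makes $\int\sqrt s\cdot s^{-3/2-\sqrt2/4}\log^4 s\,ds<\infty$. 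Neither ingredient alone suffices; in particular your combination of a linear-in-$a(u,v)$ covariance bound with a flat barrier $\sqrt2 s+K$ gives $\int s^{-1/2}\,ds=\infty$.
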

Recall that $\mathcal{H} = \sigma\big(X_s(u), u \in \mathcal{N}_s \midsemi s \geq 0\big)$ is the sigma-field associated to the horizontal movement of the BBM.
We prove Proposition~\ref{prop:cvZy} by controlling the first two moments of $Z_\infty(y)$ conditionally on $\mathcal{H}$.
\begin{lemma}
  \label{lem:firstMoment}
  For all $y > 0$, we have $\displaystyle \E[Z_\infty(y) \mid \mathcal{H}] = y D_\infty$ a.s.
\end{lemma}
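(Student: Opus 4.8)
The plan is to compute the conditional first moment $\E[Z_t(y)\mid\mathcal{H}]$ exactly for each finite $t$, and then pass to the limit $t\to\infty$ by a shaving argument of the type used in Section~\ref{sec:BBM}.

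For the finite-time computation, write
\begin{equation*}
  Z_t(y) = \sum_{u \in \mathcal{N}_t} \big[\sqrt{2}t - X_t(u)\big]\e^{\sqrt{2}X_t(u) - 2t}\,\big(Y_t(u) + y\big)\ind{Y_s(u) \geq -y \midsemi s \leq t}.
\end{equation*}
The horizontal weights $[\sqrt{2}t - X_t(u)]\e^{\sqrt{2}X_t(u)-2t}$ and the population $\mathcal{N}_t$ are $\mathcal{H}$-measurable, so it suffices to evaluate, for each fixed $u \in \mathcal{N}_t$, the conditional expectation of $(Y_t(u)+y)\ind{Y_s(u) \geq -y \midsemi s \leq t}$ given $\mathcal{H}$. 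By the discussion in Section~\ref{sec:preliminaries}, conditionally on $\mathcal{H}$ the trajectory $s \mapsto Y_s(u)$ is a standard Brownian motion on $[0,t]$ started from $0$; stopping the nonnegative martingale $s \mapsto Y_s(u) + y$ at the first time it hits $0$ and applying optional stopping at time $t$ gives
\begin{equation*}
  \E\big[(Y_t(u) + y)\ind{Y_s(u) \geq -y \midsemi s \leq t} \mid \mathcal{H}\big] = y.
\end{equation*}
Summing over $u$ yields $\E[Z_t(y)\mid\mathcal{H}] = y D_t$ almost surely.

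It remains to take $t \to \infty$. Since $D_t$ has mean zero and is not uniformly integrable, the limit cannot be interchanged with $\E[\,\cdot\mid\mathcal{H}]$ directly; instead I would introduce shaved martingales. For $\alpha > 0$, let $Z_t^\alpha(y)$ be defined as $Z_t(y)$ but with the sum restricted to particles additionally satisfying $X_s(u) \leq \sqrt{2}s + \alpha$ for all $s \leq t$ and with $\sqrt{2}t$ replaced by $\sqrt{2}t+\alpha$ in the weight, and let $D_t^\alpha$ be the one-dimensional shaved derivative martingale of \eqref{eqn:shavedDerivativeMartingaleDimension1}. The additional barrier is $\mathcal{H}$-measurable, so the computation above applies verbatim to give $\E[Z_t^\alpha(y)\mid\mathcal{H}] = y D_t^\alpha$. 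Both $Z^\alpha(y)$ (by Lemma~\ref{lem:cvShavedMartingale}) and $D^\alpha$ are uniformly integrable, so passing $t \to \infty$ in $L^1$ and using the $L^1$-continuity of $\E[\,\cdot\mid\mathcal{H}]$ gives $\E[Z_\infty^\alpha(y)\mid\mathcal{H}] = y D_\infty^\alpha$ almost surely. Finally, each $Z_t^\alpha(y)$ is nonnegative and nondecreasing in $\alpha$, hence so is $Z_\infty^\alpha(y)$, and $Z_\infty^\alpha(y) = Z_\infty(y)$ once $\alpha$ exceeds the a.s.-finite threshold of \eqref{eqn:unifUb} (because $Z_t^\alpha(y) = Z_t(y) + \alpha W_t(y)$ there and $W_t(y) \to 0$ by Proposition~\ref{prop:additive}); similarly $D_\infty^\alpha \uparrow D_\infty$ as recalled in Section~\ref{sec:preliminaries}. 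The conditional monotone convergence theorem then yields
\begin{equation*}
  \E[Z_\infty(y)\mid\mathcal{H}] = \lim_{\alpha \to \infty}\E[Z_\infty^\alpha(y)\mid\mathcal{H}] = \lim_{\alpha\to\infty} y D_\infty^\alpha = y D_\infty \quad \text{a.s.}
\end{equation*}

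The identity $\E[Z_t(y)\mid\mathcal{H}] = y D_t$ is the computational heart of the argument, but it is routine once one invokes the conditional independence of the vertical and horizontal motions and the elementary gambler's-ruin identity for Brownian motion. The genuine obstacle is the passage to $t = \infty$: the derivative martingale has vanishing mean, so a naive limit is not available, and the shaved martingales---uniformly integrable and sandwiching $Z_\infty(y)$ and $D_\infty$ from below---are exactly the device needed to carry the identity through to the limit.
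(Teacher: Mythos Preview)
Your proof is correct and follows essentially the same route as the paper's: compute $\E[Z_t^\alpha(y)\mid\mathcal{H}]=yD_t^\alpha$ via the independence of horizontal and vertical motion together with the optional-stopping identity $\E_y(B_{t\wedge T_0})=y$, pass to $t\to\infty$ using the uniform integrability of the shaved martingales, and then send $\alpha\to\infty$ by monotone convergence. The only cosmetic difference is that you first record the unshaved identity $\E[Z_t(y)\mid\mathcal{H}]=yD_t$ for motivation before introducing the shaving, whereas the paper works with the shaved version from the outset.
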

\begin{proof}                    
  For $y > 0$, we compute $\E[Z_\infty(y) \mid \mathcal{H}]$ using the approximation of $Z$ by shaved martingales introduced earlier.
  Given $\alpha > 0$, we define
  \begin{equation*}
    Z^\alpha_t(y) \coloneqq \sum_{u \in \mathcal{N}^y_t} \big[\sqrt{2} t + \alpha - X_t(u)\big] \big(Y_t(u) + y\big) \e^{\sqrt{2} X_t(u) - 2t} \ind{X_s(u) \leq \sqrt{2}s + \al \midsemi s \leq t}
  \end{equation*}
  and $Z_\infty^\alpha(y) \coloneqq \lim_{t \to \infty} Z^\alpha_t(y)$.
  Using the independence of the horizontal and the vertical displacement in the BBM, we see that for all $t, y > 0$,
  \begin{equation*}
    \E[Z^\alpha_t(y) \mid \mathcal{H}] = \sum_{u \in \mathcal{N}_t} \big[\sqrt{2} t + \alpha - X_t(u)\big] \e^{\sqrt{2} X_t(u) - 2t}\ind{X_s(u) \leq \sqrt{2} s + \alpha \midsemi s \leq t} \E_y(B_{t \wedge T_0})
  \end{equation*}
  almost surely, where $B$ is a Brownian motion with $B_0=y$ and $T_0$ is its hitting time with the origin.
  As $(B_{t\wedge T_0}\midsemi t\ge0)$ is a martingale, we obtain
  \begin{equation*}
    \E[Z^\alpha_t(y) \mid \mathcal{H}] = y \sum_{u \in \mathcal{N}_t} \big[\sqrt{2} t + \alpha - X_t(u)\big] \e^{\sqrt{2} X_t(u) - 2t}\ind{X_s(u) \leq \sqrt{2} s + \alpha \midsemi s \leq t} \quad \text{a.s.}
  \end{equation*}
  The sum is simply the one-dimensional shaved derivative martingale $D_t^\al$ defined in \eqref{eqn:shavedDerivativeMartingaleDimension1}, so
  \begin{equation}
    \label{eq:vertical-expectation}
    \E[Z^\alpha_t(y) \mid \mathcal{H}] = y D_t^\al \quad \text{a.s.}
  \end{equation}

  Now, Lemma~\ref{lem:cvShavedMartingale} implies that $Z^\alpha(y)$ is uniformly integrable with an almost sure $L^1$ limit $Z_\infty^\al(y)$.
  Since $D^\al$ converges in the same manner, \eqref{eq:vertical-expectation} yields
  \begin{equation*}
    \E[Z^\alpha_\infty(y) \mid \mathcal{H}] = \lim_{t \to \infty} \E[Z^\alpha_t(y) \mid \mathcal{H}] = \lim_{t \to \infty} y D_t^\al = y D_\infty^\al \quad \text{$\P_y$-a.s.}
  \end{equation*}
  Finally, $\alpha \mapsto Z^\alpha_\infty(y)$ is a.s. increasing and converges to $Z_\infty(y)$.
  Hence by monotone convergence,
  \begin{equation*}
    \E[Z_\infty(y) \mid \mathcal{H}] = \lim_{\alpha \to \infty} y D^\alpha_\infty = y D_\infty \quad \text{a.s.} \qedhere
  \end{equation*}
\end{proof}
We now turn to the second moment of $Z_\infty(y)$ conditioned on the horizontal motion.
\begin{lemma}
  \label{lem:secondMoment}
  There exists an a.s. finite $\m{H}$-measurable random variable $\Upsilon_{\infty}$ such that for all $y > 0$, 
  \begin{equation*}
    \E\left[Z_\infty(y)^2 \mid \mathcal{H}\right] \leq y^2D_\infty^2 + y \Upsilon_{\infty} \quad \text{a.s.}
  \end{equation*}
\end{lemma}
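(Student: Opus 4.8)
The plan is to condition on the horizontal filtration $\mathcal{H}$, expand the conditional second moment over ordered pairs of particles, and extract the advertised factor $y$ from a hitting estimate for the vertical Brownian motion. Write $w_t(u) \coloneqq [\sqrt{2}t - X_t(u)]\e^{\sqrt{2}X_t(u) - 2t}$, which is $\mathcal{H}$-measurable with $\sum_{u \in \mathcal{N}_t} w_t(u) = D_t$, and $g_t(u) \coloneqq \big(Y_t(u) + y\big)\ind{\inf_{r \le t} Y_r(u) \ge -y} \ge 0$, so that $Z_t(y) = \sum_{u \in \mathcal{N}_t} w_t(u)\, g_t(u)$. By \eqref{eqn:formulaForMaxDep} the $\mathcal{H}$-measurable time $\tau_0 \coloneqq \inf\{T \ge 0 : \max_{\mathcal{N}_t} X_t \le \sqrt{2}\, t \text{ for all } t \ge T\}$ is a.s.\ finite, and for $t \ge \tau_0$ every weight $w_t(u)$ — hence every summand of $Z_t(y)$ — is nonnegative. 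Conditionally on $\mathcal{H}$, the vertical motions $\big(Y_\cdot(u)\big)_u$ form a Brownian motion indexed by the (now frozen) genealogical tree: for $u,v \in \mathcal{N}_t$ with most recent common ancestor at the $\mathcal{H}$-measurable time $s = s(u,v)$, the paths $Y_\cdot(u)$ and $Y_\cdot(v)$ agree on $[0,s]$ and evolve independently afterwards.

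Since $r \mapsto Y_r(u) + y$ is a nonnegative martingale, optional stopping at the exit time of $(-y,\infty)$ gives $\E\big[(Y_t(u)+y)\ind{\inf_{[s,t]}Y(u) \ge -y} \mid Y_{[0,s]}\big] = Y_s + y$ on $\{\inf_{[0,s]}Y \ge -y\}$; squaring and integrating over the common-ancestor path yields
\[
  \E\big[g_t(u)\,g_t(v) \mid \mathcal{H}\big] = \Gamma_{s(u,v)}(y), \qquad \Gamma_s(y) \coloneqq \E_0\big[(B_s + y)^2 \ind{\inf_{r \le s} B_r \ge -y}\big],
\]
with $B$ a standard Brownian motion from $0$; note $\Gamma_s(y)$ does not depend on $t$. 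The crucial point is the estimate $\Gamma_s(y) \le y^2 + 2y\sqrt{s}$: writing $\widetilde B = B + y$ and $T_0 \coloneqq \inf\{r : \widetilde B_r = 0\}$, optional stopping for the martingale $\widetilde B_{r \wedge T_0}^2 - (r \wedge T_0)$ gives $\Gamma_s(y) = y^2 + \E[s \wedge T_0] = y^2 + \int_0^s \P(T_0 > r)\,\d r$, while the reflection principle bounds $\P(T_0 > r) = 2\P(0 \le B_r \le y) \le (y/\sqrt{r}) \wedge 1$, and integrating gives $\int_0^s \P(T_0 > r)\,\d r \le 2y\sqrt{s}$.

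Since $\mathcal{N}_t$ is a.s.\ finite, expanding the square gives $\E\big[Z_t(y)^2 \mid \mathcal{H}\big] = \sum_{u,v \in \mathcal{N}_t} w_t(u)\,w_t(v)\,\Gamma_{s(u,v)}(y)$; for $t \ge \tau_0$ all the $w_t(u)$ are nonnegative, so the bound on $\Gamma_s(y)$ and $\sum_u w_t(u) = D_t$ yield
\[
  \E\big[Z_t(y)^2 \mid \mathcal{H}\big] \le y^2 D_t^2 + 2y\, \Upsilon_t, \qquad \Upsilon_t \coloneqq \sum_{u,v \in \mathcal{N}_t} w_t(u)\,w_t(v)\sqrt{s(u,v)} \ge 0 \quad (t \ge \tau_0),
\]
an $\mathcal{H}$-measurable quantity. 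Applying conditional Fatou along $t \to \infty$ to $Z_t(y)^2 \to Z_\infty(y)^2$, and using $D_t \to D_\infty$ a.s.\ from \eqref{eqn:defDinfty} together with $\liminf_{t\to\infty} \Upsilon_t < \infty$ a.s., we obtain $\E[Z_\infty(y)^2 \mid \mathcal{H}] \le y^2 D_\infty^2 + y\,\Upsilon_\infty$ with $\Upsilon_\infty \coloneqq 2\liminf_{t\to\infty}\Upsilon_t$, which is $\mathcal{H}$-measurable, a.s.\ finite, and independent of $y$.

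The main obstacle is the last input: that $\liminf_{t\to\infty}\Upsilon_t < \infty$ almost surely. This is a second-moment estimate for the critical derivative martingale, and since $\E\,\Upsilon_t$ in fact diverges, the control must be pathwise. I would attack it by a many-to-two computation with a linear barrier, in the spirit of the spine argument for Lemma~\ref{lem:cvShavedMartingale}: the pairs that contribute are near-extremal, and such a pair has common ancestor either within $O(1)$ of the root — where $\sqrt{s(u,v)} = O(1)$, contributing $O(D_t^2)$ — or within $O(1)$ of time $t$, where the factor $\sqrt{s(u,v)} \asymp \sqrt{t}$ is offset because the total squared weight of ancestors at times close to $t$ is $o(t^{-2})$. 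Making this rigorous — e.g.\ bounding $\E[\Upsilon_t \wedge K]$ uniformly in $t$, or controlling $\Upsilon_t$ along a subsequence via Borel–Cantelli and the transience of the Bessel-$3$ processes arising after the barrier change of measure — is the technical heart of the lemma; the remainder is bookkeeping.
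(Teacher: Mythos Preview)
Your setup mirrors the paper's proof exactly: the expansion over ordered pairs, the identification of the conditional covariance as $\Gamma_{s(u,v)}(y) = y^2 + \E_y[s \wedge T_0]$, the bound $\E_y[s\wedge T_0] \lesssim y\sqrt{s}$, and the passage to the limit via Fatou all match. You also correctly isolate the only substantive difficulty as the a.s.\ finiteness of $\liminf_t \Upsilon_t$.

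The sketch you offer to close that gap, however, is not on target. A \emph{linear} barrier in the style of Lemma~\ref{lem:cvShavedMartingale} does not suffice. After the spine change of measure associated with $D^\alpha$, the process $R_s = \sqrt{2}s + \alpha - X_s(\xi_s)$ is Bessel-$3$, and the branching contribution to $\E(\Upsilon_t \indset{G_\alpha})$ reduces to an integral of the form $\int_0^t \sqrt{s}\, \hat{\E}^\alpha\big(R_s \e^{-\sqrt{2}R_s}\big)\,\d s$. Since $\hat{\E}^\alpha\big(R_s \e^{-\sqrt{2}R_s}\big) \asymp s^{-3/2}$ for large $s$, this integral diverges like $\log t$. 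The paper instead works on the event $G_\alpha = \big\{X_t(u) \le \sqrt{2}t - \tfrac{1}{4}\log_+ t + \alpha \text{ for all } u,t\big\}$, which by a result of Hu~\cite{Hu16} satisfies $\P(G_\alpha) \to 1$ as $\alpha \to \infty$; the resulting constraint $R_s \ge \tfrac14\log_+ s$ contributes an extra factor $s^{-\sqrt{2}/4}$ to the integrand, rendering it integrable. Your heuristic that the mass of $\Upsilon_t$ concentrates at $s \approx 0$ and $s \approx t$ misreads the situation: the integrand is $\asymp s^{-1}$ throughout, so the divergence is spread evenly across scales, and a logarithmic refinement of the barrier --- not an endpoint dichotomy --- is what is needed.
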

\begin{proof}
  Fix $y > 0$.
  In contrast to the expectation, the second moment of $Z$ reflects the correlation, and thus shared history, of particles in $\m{N}_t$.
  Given $t > 0$ and $u ,v \in \m{N}_t$, let $\tau^t_{u,v}$ be the age of the most recent common ancestor of $u$ and $v$, with the convention that $\tau_{u,u}^t = t$.
  Also, for $0 \leq r \leq t$, let
  \begin{equation*}
    G_y(r) \coloneqq \E_y\left( B_{r \wedge T_0}^2 \right),
  \end{equation*}
  where $B$ is a Brownian motion beginning at $y$ and $T_0$ is its hitting time at the origin.
  As in the previous proof, the independence between horizontal and vertical motion in the BBM and the martingale property of $B$ yield
  \begin{multline*}
    \E\left[Z_t(y)^2 \mid \mathcal{H}\right]\\
    = \sum_{u, v \in \mathcal{N}_t} \big[\sqrt{2} t- X_t(u)\big]\big[\sqrt{2}t - X_t(v)\big] \e^{\sqrt{2}[X_t(u)+X_t(v)] - 4 t} G_y(\tau^t_{u,v}) \quad \text{a.s.}
  \end{multline*} 
  Using the martingale property of $(B_t^2 - t, t \geq 0)$ and the Brownian scaling, we have
  \begin{equation*}
    G_y(r) = y^2 + \E_y(r \wedge T_0) = y^2 \left[1 + \E_1\left(\tfrac{r}{y^2} \wedge T_0\right) \right].
  \end{equation*}
  Using the explicit density for the hitting time $T_0$, we obtain the following bound:
  \begin{align*}
    \E_1\left( s \wedge T_0 \right) &= \int_0^\infty \frac{\dd u}{\sqrt{2\pi u^3}} \e^{-1/2u} s \wedge u\\
                                    &\leq \int_0^s \frac{\dd u}{\sqrt{2 \pi u}} + s\int_s^\infty \frac{\dd u}{\sqrt{2\pi u^3}} \leq \frac{4}{\sqrt{2\pi}} \sqrt{s}.
  \end{align*}
  For sufficiently large random $t$, \eqref{eqn:formulaForMaxDep} implies that $\max_{u \in \mathcal{N}_t} X_t(u) \leq \sqrt{2} t$.
  Combining the previous displays, for all such $t$ we have
  \begin{multline*}
    \E\left[Z_t(y)^2 \mid \mathcal{H} \right] - y^2 D_t^2 \\
    \leq \frac{4 y}{\sqrt{2\pi}} \sum_{u, v \in \mathcal{N}_t} \sqrt{\tau_{u,v}} \big[\sqrt{2} t- X_t(u)\big]\big[\sqrt{2}t - X_t(v)\big] \e^{\sqrt{2}[X_t(u)+X_t(v)] - 4 t} \,.
  \end{multline*}
  Define the $\m H$-measurable random variable
  \begin{equation*}
    \Upsilon_\infty \coloneqq \liminf_{t \to \infty} \frac{4}{\sqrt{2\pi}} \sum_{u, v \in \mathcal{N}_t} \sqrt{\tau_{u,v}} \big[\sqrt{2} t- X_t(u)\big]\big[\sqrt{2}t - X_t(v)\big] \e^{\sqrt{2}[X_t(u)+X_t(v)] - 4 t}.
  \end{equation*}
  Then Fatou's lemma yields
  \begin{equation*}
    \E\left[Z_\infty(y)^2 \mid \mathcal{H} \right] \leq y^2 D_\infty^2 + y \Upsilon_\infty.
  \end{equation*}
  
  To complete the proof, we must show that $\Upsilon_\infty < \infty$ almost surely.
  Given $\alpha > 0$, we define the event
  \begin{equation*}
    G_\alpha \coloneqq \Big\{\ \max_{u \in \mathcal{N}_t} X_t(u) \leq \sqrt{2} t - \tfrac{1}{4}\log_+ t + \alpha\,\text{ for all }\, t \geq 0 \Big\}.
  \end{equation*}
  It follows from the results of \cite{Hu16} that for all $\lambda < \frac{1}{2\sqrt{2}}$,
  \begin{equation*}
    \lim_{t \to \infty} \max_{u \in \mathcal{N}_t}\big[X_t(u) - \sqrt{2} t + \lambda \log t\big]  = -\infty \quad \text{a.s.}
  \end{equation*}
  Hence $\P(G_\alpha) > 0$ for all $\alpha > 0$ and $\lim_{\alpha \to \infty} \P(G_\alpha) = 1$.
  It therefore suffices to show that $\Upsilon_{\infty} < \infty$ a.s. on $G_\alpha$.
  
  By Fatou, we have
  \begin{align*}
    \frac{\sqrt{2\pi}}{4}&\E(\Upsilon_{\infty} \indset{G_\alpha})\\
    \leq&\liminf_{t \to \infty} \E\left( \sum_{u \in \mathcal{N}_t} \big[\sqrt{2} t + \alpha- X_t(u)\big] \e^{\sqrt{2} X_t(u) - 2t} \ind{X_s(u) \leq \sqrt{2} s - \tfrac{1}{4}\log_+ s +\alpha \midsemi s\le t} \Gamma_u \right),
  \end{align*}
  where $\Gamma_u \coloneqq \sum_{v \in \mathcal{N}_t} \sqrt{\tau_{u,v}} \big[\sqrt{2}t - X_t(v) + \alpha\big] \e^{\sqrt{2}X_t(v) - 2 t} \ind{X_s(v) \leq \sqrt{2}s + \al \midsemi s \leq t}$.

  We now employ a spine decomposition corresponding to the shaved derivative martingale $D^\al$.
  Let $\h\Q^\al$ denote the law of a one-dimensional BBM with spine in which $R_t \coloneqq \sqrt{2} t + \alpha - X_t(\xi_t)$ is a Bessel process of dimension $3$ started from $\alpha$, the spine branches at rate $2$, and non-spine particles perform standard BBMs.
  For all $t \geq 0$, the spine decomposition theorem allows us to write
  \begin{multline*}
    \E\left(\sum_{u \in \mathcal{N}_t} \big[\sqrt{2} t + \alpha- X_t(u)\big] \e^{\sqrt{2} X_t(u) - 2t} \ind{X_s(u) \leq \sqrt{2} s - \tfrac{1}{4}\log_+ s +\alpha \midsemi s\le t} \Gamma_u \right)\\
    = \al\hat{\E}^\alpha\left( \Gamma_{\xi_t} \ind{X_s(\xi_s) \leq \sqrt{2} s - \tfrac{1}{4} \log_+ s + \alpha \midsemi s \le t} \right).
  \end{multline*}
  We decompose $\Gamma_{\xi_t}$ as $\sqrt{t} \big[\sqrt{2}t - X_t(\xi_t) + \alpha\big] \e^{\sqrt{2} X_t(\xi_t)-2t} +\tilde{\Gamma}$.
  Let $\mathcal{Y}$ denote the filtration associated to the spine trajectory and branching times $\{\tau_k\}_{k \in \N}$.
  Using the branching property and the martingale property of $D^\alpha$, we have
  \begin{equation*}
    \hat{\E}^\alpha(\tilde{\Gamma} \mid \mathcal{Y}) \leq \sum_{k \in \N} \sqrt{\tau_k} \ind{\tau_k < t} \big[\sqrt{2}\tau_k + \alpha - X_{\tau_k}(\xi_{\tau_k})\big] \e^{\sqrt{2} X_{\tau_k}(\xi_{\tau_k}) - 2\tau_k} \eqqcolon \hat{\Gamma} \quad \text{a.s.}
  \end{equation*}
  Combining the above displays, we obtain
  \begin{align*}
    &\E\left(\sum_{u \in \mathcal{N}_t} \big[\sqrt{2} t + \alpha- X_t(u)\big] \e^{\sqrt{2} X_t(u) - 2t} \ind{X_s(u) \leq \sqrt{2} s - \tfrac{1}{4}\log_+ s +\alpha \midsemi s\le t} \Gamma_u \right)\\
    &\hspace{0.5pt}\leq\alpha \sqrt{t} \hat{\E}^\alpha\left( \big[\sqrt{2} t +\alpha - X_t(\xi_t)\big]\e^{\sqrt{2} X_t(\xi_t) - 2t} \right) + \alpha\hat{\E}^\alpha\left( \ind{X_s(\xi_s) \leq \sqrt{2} s - \tfrac{1}{4} \log_+ s + \alpha \midsemi s \le t} \hat{\Gamma} \right).
  \end{align*}  
  As $(\tau_k)_{k \in \N}$ are the atoms of a Poisson process of intensity $2$ independent of $R$, it follows that
  \begin{equation}
    \label{eq:Bessel-terms}
    \begin{aligned}
      \frac{\sqrt{2\pi}}{4\al}\E(\Upsilon_{\infty} \indset{G_\alpha})
      \leq &\liminf_{t \to \infty} \sqrt{t} \e^{\sqrt{2}\alpha}\hat{\E}^\alpha\left( R_t \e^{-\sqrt{2} R_t} \right)\\
      &+ \liminf_{t \to \infty} 2 \int_0^t \sqrt{s} \e^{\sqrt{2}\alpha} \hat{\E}^\alpha\left( R_s \e^{-\sqrt{2} R_s}\ind{R_s \geq \frac{1}{4} \log_+ s} \right) \d s.
    \end{aligned}
  \end{equation}
  Using the density of the Bessel process and dominated convergence, we can check that
  \begin{equation*}
    \lim_{t \to \infty} \sqrt{t} \h\E^\al\left( R_t \e^{-\sqrt{2} R_t} \right) = \lim_{t \to \infty} \frac{1}{2\al\sqrt{2\pi}} \int_0^\infty \e^{-\sqrt{2}y}\left[\e^{-\frac{(y-\alpha)^2}{2t}} - \e^{-\frac{(y+\alpha)^2}{2t}} \right] \dd y = 0.
  \end{equation*}
  For the second term in \eqref{eq:Bessel-terms}, we note that $\h\Q^\alpha(R_s \in [n, n+1]) \lesssim_\al (n+1)^3s^{-3/2}$ for all $n \in \N$ and $s \geq 1$.
  It follows that
  \begin{equation*}
    \hat{\E}^\alpha\left( R_s \e^{-\sqrt{2} R_s}\ind{R_s \geq \frac{1}{4} \log_+ s} \right) \lesssim_\al s^{-\frac{3}{2}} \hspace{-6pt}\sum_{n > \frac{1}{4}\log_+ s} (n+1)^4 \e^{-\sqrt{2}n} \lesssim s^{-\big(\frac{3}{2}+\frac{\sqrt{2}}{4}\big)} \log_+^4 s.
  \end{equation*}
  Because this is integrable against the weight $\sqrt{s}$, the second term in \eqref{eq:Bessel-terms} is finite.
  Together, these bounds show that $\E(\Upsilon_{\infty} \indset{G_\alpha}) < \infty$.
  Hence $\Upsilon_\infty < \infty$ a.s. on $G_\al$, as desired.
\end{proof}
We can now complete the proofs of Propositions~\ref{prop:cvZy} and \ref{prop:tw}.
\begin{proof}[Proof of Proposition~\textup{\ref{prop:cvZy}}]
  Using Lemma~\ref{lem:firstMoment} and \ref{lem:secondMoment}, we observe that for all $y > 0$, we have
  \begin{equation*}
    \E\left(\left[\frac{Z_\infty(y)}{y} - D_\infty\right]^2 \; \Big|\; \mathcal{H}\right) \leq \frac{\Upsilon_{\infty}}{y} \quad \text{a.s.}
  \end{equation*}
  Thus conditioned on $\m{H},$ $Z_\infty(y)/y$ converges in $L^2$ to $D_\infty$ as $y \to \infty$.
  This implies convergence in probability, completing the proof.
\end{proof}
\begin{proof}[Proof of Proposition~\textup{\ref{prop:tw}}]
  By Proposition~\ref{prop:cvZy}, $Z_\infty(y)/y \to D_\infty$ in probability and hence in distribution as $y \to \infty$.
  In turn, this implies convergence of the Laplace transforms.
  Recalling \eqref{eq:defTW}, we find
  \begin{align*}
    1 - \Phi\left(x+\tfrac{1}{\sqrt{2}}\log y,y\right)
    &= \E \exp\left[-\e^{-\sqrt{2}x}\frac{Z_\infty(y)}{y}\right]
      \to \E \exp\left(-\e^{-\sqrt{2}x}D_\infty\right)
  \end{align*}
  as $y \to \infty$.
  Given the definition of $w_{c_*}$ in \eqref{eqn:defw}, we conclude that $\Phi\big(x + \tfrac{1}{\sqrt{2}}\log y,y\big)$ converges pointwise to $w_{c_*}(x)$ as $y \to \infty$.
  Applying Dini's second theorem on the compactification $[-\infty, \infty]$ as in the proof of Lemma~\ref{lem:level}, we see that the convergence is in fact uniform.
\end{proof}

\subsection{The higher-speed waves far from the boundary}
\label{subsec:hghtw}

In Section~\ref{sec:supercritical}, we constructed supercritical traveling waves via Laplace transforms of limits of the additive martingales
\begin{equation*}
  W_t^{\lambda,\mu} \coloneqq \sum_{u \in \mathcal{N}_t^{+}} \e^{\lambda X_t(u) } \sinh[\mu Y_t(u)] \e^{-(\lambda^2/2 + \mu^2/2 + 1)t}.
\end{equation*}
More precisely, for all $\lambda,\mu > 0$ with $\lambda^2 +\mu^2 < 2$, this martingale converges almost surely to a nondegenerate limit $W_\infty^{\lambda,\mu}$, and the function
\begin{equation*}
  \Phi_{\lambda,\mu} (x,y) \coloneqq 1-\E_y \exp\big(-\e^{-\lambda x} W^{\lambda,\mu}_\infty\big)
\end{equation*}
is a traveling wave with speed $c = \frac{\lambda^2 + \mu^2 + 2}{2\lambda}$. 

In this subsection, we study the asymptotic behavior of this traveling wave as $y \to \infty$.
As above, we focus on the martingale $W^{\lambda,\mu}$.
To begin, we construct a consistent family $\big(W_\infty^{\lambda,\mu}(y) \midsemi y \geq 0\big)$ of random variables on a single probability space.
Let
\begin{align*}
  &W_t^{\lambda,\mu}(y) \coloneqq \sum_{u \in \mathcal{N}_t^y} \e^{\lambda X_t(u)} \sinh\big(\mu [Y_t(u)+y]\big)\e^{-(\lambda^2/2 + \mu^2/2 + 1)t}\\
  \text{and} \quad &W_\infty^{\lambda,\mu}(y) \coloneqq \lim_{t \to \infty} W_t^{\lambda,\mu}(y).
\end{align*}
We relate $W_\infty^{\lambda,\mu}(y)$ to the following additive martingale associated to the BBM in~$\R^2$:
\begin{align*}
  &A^{\lambda,\mu}_t \coloneqq \sum_{u \in \mathcal{N}_t} \e^{\lambda X_t(u) + \mu Y(u) - (\lambda^2/2 + \mu^2/2+1)t}\\
  \text{and} \quad & A^{\lambda,\mu}_\infty \coloneqq \lim_{t \to \infty} A^{\lambda,\mu}_t.
\end{align*}

We intend to show that $\Phi_{\lambda,\mu}$ asymptotically resembles a one-dimensional wave rotated by angle $\theta(\lambda, \mu) \coloneqq \arctan(\mu/\lambda)$.
As in the introduction, let $R_{\lambda, \mu}$ denote \emph{clockwise} rotation by angle $\theta(\lambda,\mu)$.
In a certain sense, $A^{\lambda,\mu}_t$ is related to a one-dimensional additive martingale by the rotation $R_{\lambda, \mu}$.
Given $\rho \in (0, \sqrt{2})$, let
\begin{equation*}
  A_t^\rho \coloneqq \sum_{u \in \mathcal{N}_t} \e^{\rho X_t(u) - (\rho^2/2 +1)t}
\end{equation*}
denote said martingale, which has a nondegenerate limit $A_\infty^\rho$.
For each $c > c_* = \sqrt{2}$, there is a unique $\rho \in (0, \sqrt{2})$ such that $c = \frac{\rho^2 + 2}{2\rho}$.
Taking this value of $\rho$, we define
\begin{equation}
  \label{eq:supercritical-1D-wave}
  w_c(x) \coloneqq 1 - \E \exp \left(-\e^{-\rho x} A_\infty^\rho\right).
\end{equation}
This is a one-dimensional traveling wave of speed $c$.
By the rotational invariance in law of BBM in $\R^2$, $A^{\lambda,\mu}_\infty \overset{\text{(d)}}{=} A_\infty^{\rho(\lambda, \mu)}$ for $\rho(\lambda, \mu) \coloneqq \sqrt{\lambda^2 + \mu^2}$.
It follows that
\begin{equation}
  \label{eq:super-wave-add}
  w_{c(\lambda, \mu)}(x) = 1 - \E \exp \left(-\e^{-\rho(\lambda, \mu) x} A_\infty^{\lambda, \mu}\right)
\end{equation}
for $c(\lambda, \mu)$ given by \eqref{eq:speed}.

In this subsection, we prove the following analogue of Propositions~\ref{prop:tw} and \ref{prop:cvZy}.
Recall $\m{Q} \coloneqq \{(\lambda, \mu) \in \R_+^2 : \lambda^2 + \mu^2 < 2\}$.
\begin{proposition}
  \label{prop:wave}
  For all $\lambda,\mu \in \m{Q}$, we have $\e^{-\mu y}W_\infty^{\lambda,\mu}(y) \to A_\infty^{\lambda,\mu}$ in probability as $y \to \infty$.
  Moreover,
  \begin{equation}
    \label{eq:incline}
    \Phi_{\lambda,\mu} \circ R_{\lambda, \mu}(x, y) \to w_{c(\lambda,\mu)}(x)
  \end{equation}
  uniformly in $x \in \R$ as $y \to \infty$.
\end{proposition}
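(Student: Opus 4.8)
The plan is to follow the two-stage template of Propositions~\ref{prop:cvZy} and~\ref{prop:tw}: first determine the law of $W^{\lambda,\mu}_\infty(y)$ as $y\to\infty$, then transfer this to the wave $\Phi_{\lambda,\mu}$ through the rotation $R_{\lambda,\mu}$. The supercritical case should be technically lighter than the minimal-speed one, because the additive martingales in play are uniformly integrable (Lemma~\ref{lem:additiveMartingale}); in particular no shaving is needed, a first-moment estimate will suffice, and there is no need to condition on $\m{H}$.

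For the first stage, the guiding principle is that once the killing line $\{Y=-y\}$ has receded, $W^{\lambda,\mu}(y)$ should collapse onto the \emph{free} additive martingale $A^{\lambda,\mu}$ of the BBM on $\R^2$. To implement this, I would use $2\sinh z=\e^{z}-\e^{-z}$ to write, for each $t$,
\begin{equation*}
  2\,\e^{-\mu y}\,W^{\lambda,\mu}_t(y)=P_t(y)-Q_t(y),
\end{equation*}
where $P_t(y)\coloneqq\sum_{u\in\m{N}^y_t}\e^{\lambda X_t(u)+\mu Y_t(u)-(\lambda^2+\mu^2+2)t/2}$ is exactly $A^{\lambda,\mu}$ with the killed particles deleted, and $Q_t(y)\coloneqq\e^{-2\mu y}\sum_{u\in\m{N}^y_t}\e^{\lambda X_t(u)-\mu Y_t(u)-(\lambda^2+\mu^2+2)t/2}$. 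Since $P_t(y)$ arises from the martingale $A^{\lambda,\mu}$ by deleting nonnegative terms it is a nonnegative supermartingale and therefore converges a.s.; and since $W^{\lambda,\mu}(y)$ converges a.s., so does $Q_t(y)$, to some $Q_\infty(y)\ge0$. The many-to-one lemma, a Girsanov shift absorbing the weight $\e^{\lambda X+\mu Y-(\cdots)t}$, and the fact that the horizontal factor is a mean-one martingale give
\begin{equation*}
  \E\!\left[A^{\lambda,\mu}_t-P_t(y)\right]=\P\!\left(\text{drift-}\mu\text{ BM from }0\text{ reaches }-y\text{ before time }t\right)\le\e^{-2\mu y},
\end{equation*}
so Fatou yields $\E[A^{\lambda,\mu}_\infty-P_\infty(y)]\le\e^{-2\mu y}$; as the integrand is nonnegative, $P_\infty(y)\to A^{\lambda,\mu}_\infty$ in $L^1$, hence in probability, as $y\to\infty$. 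The same computation gives $\E Q_t(y)=\e^{-2\mu y}\,\P(\text{drift-}(-\mu)\text{ BM from }0\text{ stays above }-y\text{ up to time }t)$, which tends to $0$ as $t\to\infty$ for each fixed $y$ because that motion is transient to $-\infty$; by Fatou, $\E Q_\infty(y)=0$ and hence $Q_\infty(y)=0$ a.s. Passing to the limit $t\to\infty$ in the split then yields $2\,\e^{-\mu y}W^{\lambda,\mu}_\infty(y)=P_\infty(y)\to A^{\lambda,\mu}_\infty$ in probability, which is the convergence claimed in the first assertion.

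For the second stage, I would repeat the change of variables from the proof of Proposition~\ref{prop:tw}. Writing $\rho\coloneqq\sqrt{\lambda^2+\mu^2}$ and $\tilde y\coloneqq(\lambda y-\mu x)/\rho$, one has $R_{\lambda,\mu}(x,y)=\bigl((\lambda x+\mu y)/\rho,\ \tilde y\bigr)$, and the consistent family together with the identity $\e^{-\lambda(\lambda x+\mu y)/\rho}=\e^{-\rho x}\,\e^{-\mu\tilde y}$ gives
\begin{equation*}
  \Phi_{\lambda,\mu}\circ R_{\lambda,\mu}(x,y)=1-\E\exp\!\bigl(-\e^{-\rho x}\,\e^{-\mu\tilde y}\,W^{\lambda,\mu}_\infty(\tilde y)\bigr),
\end{equation*}
valid once $y$ is large enough that $\tilde y>0$. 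As $y\to\infty$ we have $\tilde y\to\infty$, so $\e^{-\mu\tilde y}W^{\lambda,\mu}_\infty(\tilde y)$ converges in distribution to $A^{\lambda,\mu}_\infty$; the Laplace transforms therefore converge, and~\eqref{eq:super-wave-add} identifies the limit as $w_{c(\lambda,\mu)}(x)$, with $c(\lambda,\mu)$ as in~\eqref{eq:speed}. To upgrade this pointwise convergence to uniform convergence in $x$, I would note that $x\mapsto\Phi_{\lambda,\mu}\circ R_{\lambda,\mu}(x,y)$ is nonincreasing---$\partial_x\Phi_{\lambda,\mu}<0$ and $\partial_y\Phi_{\lambda,\mu}>0$ are immediate from~\eqref{eq:def-super} and the monotonicity of $y\mapsto W^{\lambda,\mu}_t(y)$, and both coordinate changes of $R_{\lambda,\mu}$ decrease $\Phi_{\lambda,\mu}$---and that the endpoint values at $x=\pm\infty$ match those of the continuous wave $w_{c(\lambda,\mu)}$ (using the zero-extension of $\Phi_{\lambda,\mu}$ for $x$ large). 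Dini's second theorem on $[-\infty,\infty]$, exactly as in the proof of Lemma~\ref{lem:level}, then gives the uniform statement.

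I expect the crux to be the first stage, and within it the careful bookkeeping of the two nested limits: for each fixed $y$ one sends $t\to\infty$ using the a.s.\ and $L^1$ convergence of $P_t(y)$, $Q_t(y)$, and $A^{\lambda,\mu}_t$, and only afterward lets $y\to\infty$. The delicate points are that $P_t(y)$ is merely a supermartingale rather than a martingale, and that the error $Q_t(y)$ carries the ``wrong-sign'' vertical weight $\e^{-\mu Y}$ corresponding to a spine pushed into the boundary, so the moment bounds must be arranged so that Fatou is applied in the right direction and the transience of that reflected motion is used correctly. The remaining ingredients---the many-to-one/Girsanov computations and the Dini endpoint check---should be routine given the machinery already developed.
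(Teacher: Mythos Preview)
Your proposal is correct and follows essentially the same route as the paper. Both arguments rest on the identity $2\e^{-\mu y}\sinh\bigl(\mu[Y_t(u)+y]\bigr)=\e^{\mu Y_t(u)}\bigl(1-\e^{-2\mu[Y_t(u)+y]}\bigr)$ and on matching expectations against the free martingale $A^{\lambda,\mu}$; the second stage (rotation, Laplace transform, Dini) is identical.

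The one methodological difference is minor: the paper observes directly that $y\mapsto \e^{-\mu y}W_t^{\lambda,\mu}(y)$ is \emph{nondecreasing} and bounded above by $A_t^{\lambda,\mu}$ (up to a constant), so the limit in $y$ exists almost surely and a single expectation computation ($\e^{-\mu y}\sinh(\mu y)\to\tfrac12$) forces equality with $A_\infty^{\lambda,\mu}$. Your split $P-Q$ reaches the same conclusion via Fatou on each piece, obtaining $L^1$ convergence rather than a.s.\ convergence. Both suffice for the Laplace-transform step; the paper's monotonicity observation is a touch shorter and yields the slightly stronger almost-sure statement. (Your factor of $2$ is correct; the proposition as stated and the paper's proof both carry a harmless factor-of-$2$ slip, which is immaterial for the wave asymptotics since it only shifts $w_{c(\lambda,\mu)}$.)
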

\begin{proof}
  For all $t, y > 0$, we have
  \begin{align*}
    \e^{-\mu y} W_t^{\lambda,\mu}(y) = \sum_{u \in \mathcal{N}_t(y)} \e^{\lambda X_t(u) + \mu Y(u) - (\lambda^2/2 + \mu^2/2+1)t}\left(1  -\e^{-2\mu [Y_t(u)+y]}\right).
  \end{align*}
  Therefore, $y \mapsto \e^{-\mu y} W_t^{\lambda,\mu}(y)$ is nondecreasing in $y$ and bounded by $A^{\lambda,\mu}_t$ almost surely.
  As a consequence, $y \mapsto \e^{-\mu y} W_\infty^{\lambda,\mu}(y)$ is a.s. nondecreasing in $y$ and
  \begin{equation*}
    \lim_{y \to \infty} \e^{-\mu y} W_\infty^{\lambda,\mu}(y) \leq A^{\lambda,\mu}_\infty \quad \text{a.s.}
  \end{equation*}
  By uniform integrability and monotone convergence, we have
  \begin{equation*}
    \E\big[\lim_{y \to \infty} \e^{-\mu y} W_\infty^{\lambda,\mu}(y)\big] = \lim_{y \to \infty} \e^{-\mu y} \sinh(\mu y) = 1.
  \end{equation*}
  Therefore $\E\big[A^{\lambda,\mu}_\infty - \lim_{y \to \infty} \e^{-\mu y} W_\infty^{\lambda,\mu}(y)\big] \leq 0$, and we conclude that
  \begin{equation}
    \label{eq:super-mart-asymp}
    \lim_{y \to \infty} \e^{-\mu y} W_\infty^{\lambda,\mu}(y) = A^{\lambda,\mu}_\infty \quad \text{a.s.}
  \end{equation}

  Turning to the asymptotic behavior of $\Phi_{\lambda,\mu}$, we compute
  \begin{equation*}
    R_{\lambda, \mu}(x, y) = \big((\lambda x + \mu y)/\rho, \, (- \mu x + \lambda y)/\rho\big) \eqqcolon (\ti x, \ti y).
  \end{equation*}
  Note in particular that $\lambda \ti x = \mu \ti y + \rho x$, where $\rho = \rho(\lambda, \mu) = \sqrt{\lambda^2 + \mu^2}$.
  Now take $(x, y) \in R_{\lambda,\mu}^{-1}\H$.
  It follows that
  \begin{equation*}
    \Phi_{\lambda, \mu} \circ R_{\lambda, \mu}(x, y) = 1 - \E_{\ti y}\exp\left(-\e^{-\lambda \ti x} W_\infty^{\lambda, \mu}\right) = 1 - \E_{\ti y} \exp\left(-\e^{-\rho x} \e^{-\mu \ti y} W_\infty^{\lambda, \mu}\right).
  \end{equation*}
  If we fix $x \in \R$ and take $y \to \infty$, we also have $\ti y \to \infty$.
  Thus \eqref{eq:super-mart-asymp} and \eqref{eq:super-wave-add} yield
  \begin{equation}
    \label{eq:rotated-convergence}
    \Phi_{\lambda, \mu} \circ R_{\lambda, \mu}(x, y) \to 1 - \E \exp\left(-\e^{-\rho x} A_\infty^{\lambda, \mu}\right) = w_{c(\lambda, \mu)}(x) \quad \text{as } y \to \infty.
  \end{equation}
  We extend $\Phi_{\lambda, \mu}$ by $0$ to the entire plane $\R^2$.
  Because $\Phi_{\lambda, \mu}$ is decreasing in $x$ and increasing in $y$, one can check that $\Phi_{\lambda, \mu} \circ R_{\lambda, \mu}(\anon, y)$ is a nonincreasing function for each $y > 0$ fixed.
  Moreover, for all $y > 0$,
  \begin{equation*}
    \Phi_{\lambda, \mu} \circ R_{\lambda, \mu}(-\infty, y) = 1 = w_{c(\lambda, \mu)}(-\infty) \And \Phi_{\lambda, \mu} \circ R_{\lambda, \mu}(+\infty, y) = 0 = w_{c(\lambda, \mu)}(+\infty).
  \end{equation*}
  Applying Dini's second theorem on the compactification $[-\infty, \infty]$, we see that the limit \eqref{eq:rotated-convergence} in fact holds uniformly in $x$.
\end{proof}
We can now describe the limits of our waves in every direction.
\begin{proof}[Proof of Theorem~\textup{\ref{thm:asymptotics}}]
  Take $\Phi^* \in \{\Phi, \Phi_{\lambda,\mu}\}_{(\lambda, \mu) \in \m{Q}}$.
  Our traveling-wave constructions automatically imply that $0 < \Phi^* < 1$, $\partial_x \Phi^* \leq 0$, and $\partial_y \Phi^* \geq 0$.
  Combining the bounded convergence theorem with (the proof of) Corollary~\ref{cor:survival} and \eqref{eq:super-survival}, we find
  \begin{equation}
    \label{eq:proof-limits}
    \Phi^*(-\infty, y) = \P_y(\m{N}_t^+ \neq \emptyset \text{ for all } t \geq 0) = \varphi(y) \And \Phi^*(+\infty, y) = 0
  \end{equation}
  for all $y > 0$.
  Uniform continuity implies that this convergence is locally uniform in $y$.
  Moreover, joint monotonicity implies that $\Phi^*(x, +\infty) \to 1 = \varphi(+\infty)$ as $x \to -\infty$.
  (Alternatively, Propositions~\ref{prop:tw} and~\ref{prop:wave} imply that $\Phi^*(x, +\infty) = 1$ for all $x \in \R$.)
  Applying Dini's theorem on the compactification $[0, \infty]$, we see that the first limit in \eqref{eq:proof-limits} holds uniformly in $y$.
  Because the left and right limits are distinct, the strong maximum principle implies that $\partial_x \Phi^* < 0$.
  Similarly, because $\Phi^* > 0$ in $\H$ but $\Phi^* = 0$ on $\partial \H$, we have $\partial_y \Phi^* > 0$.
  Finally, \eqref{eq:y-limits} combines Propositions~\ref{prop:tw} and~\ref{prop:wave}.
\end{proof}
According to \eqref{eq:incline}, the level sets of $\Phi_{\lambda, \mu}$ are asymptotically inclined at angle $\arctan(\mu/\lambda)$ relative to vertical.
For a given speed $c > c_*$, this angle varies strictly monotonically along $\m{P}_c$.
It follows that the waves in $\m{P}_c$ are distinct modulo translation.
Using Proposition~\ref{prop:uniquenessRephrased} and the above observation, we can now complete the proof of Theorem~\ref{thm:unique} and thus bridge the gap in the proof of Theorem~\ref{thm:construction}.
\begin{proof}[Proof of Theorem~\textup{\ref{thm:unique}}]
  By Proposition~\ref{prop:monotone}, any KPP traveling wave on $\H^d$ with minimal speed is a function of $(x, y)$ alone.
  Thus it suffices to prove uniqueness in two dimensions, i.e., on $\H$.
  Given a minimal-speed wave $\Phi$ on $\H$, Proposition~\ref{prop:uniquenessRephrased} provides a constant $\eta \in \R$ such that $\Psi(x, y) = \Phi(x - \eta, y)$, where $\Phi$ is the wave defined by \eqref{eq:defTW}.
  Hence there is precisely one traveling wave on $\H^d$, modulo translation.

  Now fix $c > c_*$ and recall the set $\m{P}_c$ from \eqref{eq:speed-set}.
  For all $(\lambda,\mu) \in \mathcal{P}_c$, $\Phi_{\lambda,\mu}$ defined in \eqref{eq:def-super} is a traveling wave of speed $c$ (Proposition~\ref{prop:super}).
  Moreover, Proposition~\ref{prop:wave} implies that distinct values of $(\lambda,\mu)$ produce distinct waves.
  Thus there are infinitely many traveling waves of speed $c$ that are distinct modulo translation.
\end{proof}

\subsection{Minimal-speed tail asymptotics}
\label{subsec:extendedAsymptotics}

We now examine the asymptotic behavior of $\Phi$ as $x \to \infty$.
Let $\omega(y) \coloneqq \tfrac{1}{\sqrt{2}} \log y$.
From Proposition~\ref{prop:tw}, we know that the level sets of $\Psi$ follow the curve $x = \omega(y)$ as $y \to \infty$.
Thus $\Phi$ decays to the right of this curve.
The following result controls this decay.

Recall the constant $K_* > 0$ from \eqref{eq:wave-constant}, which governs the tail of the one-dimensional wave.
We can state Theorem~\ref{thm:tail} as
\begin{proposition}
  \label{prop:tail-asymp}
  There exists $E \in L^\infty(\H)$ such that if $x > \omega_+(y)$,
  \begin{equation}
    \label{eq:tail-asymp}
    \Phi(x, y) = K_*\big[x - \tfrac{1}{\sqrt{2}}\log_+\norm{\tbf{x}} + E(x, y)\big] y \e^{-\sqrt{2} x}.
  \end{equation}
\end{proposition}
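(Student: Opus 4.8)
The plan is to refine the potential-theoretic analysis of Section~\ref{sec:PDE}, now extracting the exact constant and the logarithmic correction rather than a mere upper bound. As there, set $\Theta \coloneqq \e^{\sqrt 2 x}\Phi$, so that $\Delta\Theta = 2\e^{-\sqrt 2 x}\Theta^2 \eqqcolon 2F$ in $\H$ and $\Theta = 0$ on $\partial\H$. First, Theorem~\ref{thm:asymptotics} upgrades Corollary~\ref{cor:polynomial}: since $\Phi(\anon + \omega(y), y) \to w_{c_*}$ uniformly, the $\tfrac12$-level curve satisfies $\sigma(y) = \omega(y) + \m{O}(1)$. Consequently, on a domain $\Omega$ of the form $\{x > \omega_+(y) + L\}$ with $L$ a large constant, one has $F \lesssim y\,\e^{-\delta(x - \omega_+(y))}$ — combining Lemma~\ref{lem:Theta-exp} with the one-dimensional tail \eqref{eq:wave-constant} — so the Green potential $\Theta_F^\Omega \coloneqq \int_\Omega F(\tbf{z})\,G^\Omega_{\tbf{z}}$ obeys $0 \le \Theta_F^\Omega \lesssim y$. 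Hence $\bar\Theta \coloneqq \Theta + \Theta_F^\Omega$ is a \emph{positive harmonic function} on $\Omega$ agreeing with $\Theta$ on $\partial\Omega$, and since $\Theta_F^\Omega$ is absorbed into $E$, it suffices to prove
\begin{equation*}
  \bar\Theta(x,y) = K_*\big[x - \tfrac{1}{\sqrt 2}\log_+\norm{\tbf{x}}\big]\,y + \m{O}(y) \quad \text{on } \Omega;
\end{equation*}
the thin strip $\omega_+(y) < x < \omega_+(y)+L$ and the region of bounded $y$ are then dispatched by Theorem~\ref{thm:asymptotics} and boundary Harnack, exactly as in Lemma~\ref{lem:Theta-exp}.

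The key point is that this claim is holomorphic in nature. On the upper half-plane a direct computation gives $\Im(z^2) = 2xy$ and $\Im(z\log z) = y\log\norm{\tbf{x}} + x\arg z$, with $0 \le x\arg z \le y$. So, setting $P(z) \coloneqq \tfrac{K_*}{2}z^2 - \tfrac{K_*}{\sqrt 2} z\log z$, the estimate above is equivalent to $\abs{\Theta(x,y) - \Im P(x+\iu y)} \lesssim y$ on $\Omega$. The harmonic function $\Im P$ is the correct two-dimensional lift of the one-dimensional asymptotic $\e^{\sqrt2 s}w_{c_*}(s) = K_*(s+a) + \m{O}(\e^{-\delta s})$: the $z^2$ reflects the conformal square map between the quarter-plane and the half-plane that organizes Section~\ref{sec:PDE}, while the $z\log z$ reflects the logarithmically-shaped boundary $\{x = \omega_+(y)\}$ of $\Omega$. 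One verifies the two consistency checks $\Im P(x+\iu y)/y = K_*(x - \omega(y)) + \smallO(1)$ when $x/y \to 0$ (matching $\Theta/y$ in the bulk via Theorem~\ref{thm:asymptotics}) and $\Im P(x+\iu y)/y = K_*[x - \tfrac{1}{\sqrt2}\log x] + \m{O}(1)$ when $y/x \to 0$ (matching the deep-tail behavior recorded after Theorem~\ref{thm:tail}).

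To establish $\abs{\Theta - \Im P} \lesssim y$ I would transport $\bar\Theta$ to the half-plane and apply the Herglotz representation, as in Lemmas~\ref{lem:distortion}--\ref{lem:quadratic} but quantitatively. Composing a Warschawski conformal map $\Q \to \Omega$ with the square map $z \mapsto z^2$ turns $\bar\Theta$ into a positive harmonic function on $\H$ vanishing on the positive real axis, whose boundary data on the negative real axis is determined — via Theorem~\ref{thm:asymptotics} and the distortion estimates — by $\e^{\sqrt2\,\cdot}w_{c_*}$; the representation \eqref{eq:Herglotz} then yields $\bar\Theta$ in every direction, with the $z\log z$ contribution emerging from the $\tfrac{1}{\sqrt 2}u\,\e^{-u}$ dip of the strip boundary, and the constant $K_*$ forced by matching with \eqref{eq:wave-constant}. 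To confirm that the remainder is \emph{uniformly} $\m{O}(y)$ rather than merely $\smallO(\norm{\tbf{x}}^2)$, I would combine this with the comparison device of Proposition~\ref{prop:a-priori}: $\Theta - \Im P$ is subharmonic (since $\Delta\Theta = 2F \ge 0$), is $\lesssim y$ on $\partial\Omega$ by the boundary analysis above, and grows like $\smallO(\norm{\tbf{x}}^4)$ (from $\Theta \lesssim (1+x_+)y$ and $\abs{\Im P} \lesssim \norm{\tbf{x}}^2\log_+\norm{\tbf{x}}$); so the Phragm\'en--Lindel\"of principle on the two $\tfrac{\pi}{4}$-sectors $\{x > y\}$ and $\{x < y\}$ — with the value on the separating ray $\{x = y\}$ supplied by the Herglotz computation — gives $\Theta - \Im P \lesssim y$. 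The reverse inequality follows symmetrically, working with the honestly harmonic $\bar\Theta$ and a superharmonic majorant built from the same representation. Finally $\abs{\Theta - \bar\Theta} = \Theta_F^\Omega \lesssim y$, so the claim follows.

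The main obstacle is the precision of the conformal and potential-theoretic estimates. The crude bound $g(z) = z + \log\lambda + \smallO(1)$ invoked in Section~\ref{sec:PDE} loses exactly what we now need: to see the coefficient $-\tfrac{1}{\sqrt 2}$ of $\log_+\norm{\tbf{x}}$ one must track how the logarithmic boundary dip propagates into the conformal parametrization — this calls for the quantitative form of Warschawski's theorem (error $\m{O}(u\,\e^{-u})$ in the strip coordinate) — and then carry out a careful dominated-convergence analysis of the Herglotz kernel against the resulting boundary data. Arranging that the leftover error is uniformly $\m{O}(y)$ across both the non-tangential region $\{x \lesssim y\}$, where $\log_+\norm{\tbf{x}} \approx \log y$, and the deep tail $\{x \gg y\}$, where $\log_+\norm{\tbf{x}} \approx \log x$, is the delicate part; the remainder is a variation on estimates already carried out in Sections~\ref{sec:PDE} and~\ref{subsec:mintw}.
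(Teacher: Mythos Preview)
Your overall strategy matches the paper's: decompose $\Theta = \e^{\sqrt 2 x}\Phi$ into a harmonic part plus an anharmonic Green potential of size $\m{O}(y)$, analyze the harmonic part via Herglotz after a conformal map to the quarter-plane, close with Phragm\'en--Lindel\"of on the two $\pi/4$-sectors, and identify the leading constant by matching against \eqref{eq:wave-constant} along $x = \omega(y)$. The difference lies in how the logarithmic correction is extracted.

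You plan to remain in $\Omega = \{x > \omega_+(y)+L\}$ and compare $\bar\Theta$ with the explicit harmonic function $\Im P$, $P(z) = \tfrac{K_*}{2}z^2 - \tfrac{K_*}{\sqrt 2}z\log z$, while invoking a quantitative Warschawski theorem to control the conformal transport; you correctly flag this last step as the main obstacle. The paper sidesteps it entirely by writing down the explicit biholomorphism
\[
  \eta^{-1}(z) = z - \tfrac{1}{\sqrt 2}\log(z+1)
\]
from a region $\Lambda$ within $\m{O}(1)$ of $\{x > \omega_+(y)\}$ onto $\Q$. After pulling back, the target on $\Q$ is simply $(Ax + \m{O}(1))y$ with \emph{no} logarithm: the Green function on $\Q$ is explicit, the anharmonic bound $\Theta_F^\eta \lesssim y$ is a direct integral estimate, and the Herglotz step reduces to showing that a positive harmonic function on $\H$ with boundary data $\lesssim \sqrt{x_-}$ has the form $Ay + \m{O}(\braket{\tbf x}^{1/2})$. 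The term $-\tfrac{1}{\sqrt 2}\log_+\norm{\tbf x}$ reappears only when one undoes $\eta$, since $\Re\eta^{-1}(z) = x - \tfrac{1}{\sqrt 2}\log|z+1|$ and $\Im\eta^{-1}(z) = y + \m{O}(1)$. Your comparison function is essentially this map in disguise: $\tfrac{K_*}{2}\big[\eta^{-1}(z)\big]^2$ agrees with $P(z)$ up to terms whose imaginary part is $\m{O}(y)$, so you have found the right holomorphic object but are using it as a barrier rather than as a change of variables. Switching to the latter removes the need for any Warschawski precision and makes both the Green-potential estimate and the Phragm\'en--Lindel\"of step cleaner, since they now take place on $\Q$ with straight boundaries.
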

To prove this, we return to conformal mappings and potential theory.
We focus on the function $\Theta \coloneqq \e^{\sqrt{2} x} \Phi$, which is nearly harmonic in $\{x > \omega_+(y)\}$.
We begin by constructing an explicit holomorphism mapping a domain similar to $\{x > \omega_+\}$ to the quarter-plane $\Q$.
This allows us to use various explicit formul\ae~on the quarter-plane.
The distortion induces by this holomorphism leads to the $\log \norm{\tbf{x}}$ term in \eqref{eq:tail-asymp}.

As a first application, we use the Phragm\'en--Lindel\"of principle to improve our tail bound from $\Theta \lesssim (x + 1)y$ to $\Theta \lesssim (x - \omega_+ + 1) y$ on $\{x > \omega_+\}$.
This allows us to argue that the ``anharmonic'' part of $\Theta$ is negligible---it can be absorbed in the error $E$ in \eqref{eq:tail-asymp}.
We are thus left with the analysis of a positive harmonic function on the quarter-plane.
From this point, the Herglotz representation formula is strong enough to complete the proof of Proposition~\ref{prop:tail-asymp}.

\subsubsection*{Conformal map to quarter-plane}
To begin, we construct a conformal map $\eta$ from $\Q$ to a domain similar to $\{x > \omega_+\}$.
We define $\eta$ through its inverse
\begin{equation*}
  \eta^{-1}(z) \coloneqq z - \tfrac{1}{\sqrt{2}} \log (z + 1).
\end{equation*}
Throughout this section, we frequently identify $z = x + \iu y \in \C$ with $(x, y) \in \R^2$.
Solving the equation $\Re \eta^{-1}(z) = 0$, we can readily check that $\eta$ maps $\Q$ to the region
\begin{equation*}
  \Lambda \coloneqq \left\{(x, y) \in \H : 0 < y < \sqrt{\e^{2 \sqrt{2} x} - 1 - x^2}\,\right\} \subset \{x > \omega_+(y)\}.
\end{equation*}
Note that
\begin{equation*}
  (\eta^{-1})'(z) = 1 - \frac{1}{\sqrt{2}(z + 1)}
\end{equation*}
satisfies $1 - \tfrac{1}{\sqrt{2}} \leq |(\eta^{-1})'| \leq 1 + \tfrac{1}{\sqrt{2}}$ on $\Q$.
It follows that $\eta \colon \Q \to \Lambda$ is a biholomorphism.

Next, we can write $\{x > \omega_+\} = \big\{x > 0,\, 0 < y < \e^{\sqrt{2} x}\big\}$.
Because
\begin{equation*}
  \sqrt{\e^{2 \sqrt{2} x} - 1 - x^2} = \e^{\sqrt{2} x} + \m{O}(1) \quad \text{on } \R_+,
\end{equation*}
the difference $\{x > \omega_+(y)\} \setminus \Lambda$ lies a bounded distance from $\Lambda$.
Thus by the Harnack inequality, it suffices to prove \eqref{eq:tail-asymp} on $\Lambda$.

Although $\eta$ itself has no simple explicit expression, we can easily construct an approximation
\begin{equation*}
  \varpi(z) \coloneqq z + \tfrac{1}{\sqrt{2}} \log(z + 1)
\end{equation*}
Indeed,
\begin{equation*}
  \eta^{-1}\circ \varpi(z) = z - \frac{1}{\sqrt{2}}\left[1 + \frac{\log(z+1)}{\sqrt{2}(z + 1)}\right] = z + \m{O}(1).
\end{equation*}
Since $\eta$ is uniformly Lipschitz, this yields
\begin{equation*}
  \eta(z) = \varpi(z) + \m{O}(1).
\end{equation*}
Using this approximation, we establish two bounds that will be useful in subsequent calculations.
\begin{equation}
  \label{eq:boundary-eta}
  \eta(0, y) = (\omega_+(y), y) + \m{O}(1)
\end{equation}
and
\begin{equation}
  \label{eq:exp-eta}
  \e^{-\sqrt{2}\Re \eta(z)} \asymp \abs{z+1}^{-1}\e^{-\sqrt{2}x}.
\end{equation}

Now recall $\Theta(x, y) \coloneqq \e^{\sqrt{2} x} \Phi(x, y)$, which satisfies
\begin{equation*}
  -\frac{1}{2}\Delta \Theta = - \Theta^2 \e^{-\sqrt{2}x} \eqqcolon -F.
\end{equation*}
We define
\begin{equation*}
  \Theta^\eta \coloneqq \Theta \circ \eta \colon \Q \to \R_+.
\end{equation*}
This function satisfies
\begin{equation*}
  -\frac{1}{2}\Delta \Theta^\eta = -\abs{\eta}^2 F \circ \eta \eqqcolon -F^\eta.
\end{equation*}

We have shown that $\Phi(\omega(y), y) \asymp 1$ when $y \geq 1$.
Hence $\Theta(\omega(y), y) \asymp y$ there.
Using \eqref{eq:boundary-eta}, Harnack estimates up to the boundary imply that
\begin{equation*}
  \Theta^\eta(0, y) \asymp y \ForAll y \geq 0.
\end{equation*}
Moreover, the tameness bound in Proposition~\ref{prop:a-priori} and the boundedness of $\eta'$ imply that
\begin{equation*}
  \Theta^\eta \lesssim 1 + x^2 + y^2 \quad \text{on }\Q
\end{equation*}
as well as
\begin{equation*}
  \Theta^\eta(x, x) \lesssim 1 + x^2 \And \Theta^\eta(0, x) = 0.
\end{equation*}
Note that $\Theta^\eta$ is subharmonic.
We can thus apply the Phragm\'en--Lindel\"of principle on the sectors $\{0 < \theta < \pi/4\}$ and $\{\pi/4 < \theta < \pi/2\}$ as in the proof of Proposition~\ref{prop:a-priori} to conclude that
\begin{equation}
  \label{eq:improved}
  \Theta^\eta \lesssim (x + 1) y \quad \text{on } \Q.
\end{equation}
We further use this to bound $F^\eta$.
Noting that $\abs{\eta'}^2 \asymp 1$, \eqref{eq:exp-eta} and \eqref{eq:improved} yield
\begin{equation}
  \label{eq:F-eta}
  F^\eta \lesssim (x + 1)^2y^2 \e^{-\sqrt{2} \Re \eta} \lesssim \frac{(x + 1)^2 y^2}{\abs{z + 1}} \e^{-\sqrt{2} z} \lesssim (x + 1)^2 y \eta^{-\sqrt{2}x}.
\end{equation}

\subsubsection*{Anharmonic estimates}
We now control the ``anharmonic'' component of $\Theta^\eta$ generated by $F^\eta$.
Let $G_{\tbf{z}}$ denote the Dirichlet Green function of $-\frac{1}{2} \Delta$ on $\Q$ centered at $\tbf{z} \in \Q$.
We formally define
\begin{equation}
  \label{eq:anharmonic-int}
  \Theta_F^\eta(\tbf{x}) \coloneqq \int_{\Q} F^\eta(\tbf{z}) G_{\tbf{z}}(\tbf{x}) \d \tbf{z},
\end{equation}
which satisfies $-\frac{1}{2} \Delta \Theta_F^\eta = F^\eta$.
We refer to $\Theta_F^\eta$ as the ``anharmonic component'' of $\Theta^\eta$ because $\Theta^\eta + \Theta_F^\eta$ is harmonic.
To make this decomposition rigorous, we must show that the integral in \eqref{eq:anharmonic-int} is finite.

Let $\tau_x$ and $\tau_y$ denote reflection in $\{x = 0\}$ and $\{y = 0\}$, respectively, in $[0, \infty)$.
Then the method of images yields an explicit formula for $G$:
\begin{equation*}
  G_{\tbf{z}}(\tbf{x}) = \frac{1}{\pi} \log\left(\frac{\norm{\tbf{x} - \tau_x\tbf{z}}\|\tbf{x} - \tau_y\tbf{z}\|}{\norm{\tbf{x} - \tbf{z}}\norm{\tbf{x} + \tbf{z}}}\right).
\end{equation*}
We make use of the following asymptotics:
\begin{lemma}
  \label{lem:Green}
  Fix $\tbf{z} \coloneqq (u, v) \in \Q$.
  Then for all $\tbf{x} \in \Q$,
  \begin{equation*}
    G_{\tbf{z}}(\tbf{x}) \asymp
    \begin{cases}
      \log \frac{u \wedge v}{\norm{\tbf{x} - \tbf{z}}} & \text{in } B_{(u \wedge v)/10}(\tbf{z}),\\[1ex]
      \frac{uv}{\norm{\tbf z}^2} \frac{xy}{\norm{\tbf{x} - \tbf{z}}^2} & \text{in } B_{2 \norm{\tbf{z}}} \setminus B_{(u \wedge v)/10}(\tbf{z}),\\[1.6ex]
      \frac{uv xy}{\norm{\tbf x}^4} & \text{in } B_{2 \norm{\tbf{z}}}^c.
    \end{cases}
  \end{equation*}
\end{lemma}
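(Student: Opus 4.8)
The plan is to use the explicit image formula for $G_{\tbf z}$ and estimate it in each of the three regions by tracking which of the four distances in the formula dominate. Recall that with $\tbf z = (u,v)$ fixed in $\Q$, the reflected points $\tau_x \tbf z = (-u, v)$, $\tau_y \tbf z = (u, -v)$, and $-\tbf z = (-u,-v)$ all have Euclidean norm $\norm{\tbf z}$, and that the numerator of the argument of the logarithm is $\norm{\tbf x - \tau_x \tbf z}\,\norm{\tbf x - \tau_y \tbf z}$ while the denominator is $\norm{\tbf x - \tbf z}\,\norm{\tbf x + \tbf z}$. Writing $G_{\tbf z}(\tbf x) = \tfrac{1}{\pi}\log(1 + N(\tbf x))$ where $N(\tbf x) \coloneqq \tfrac{\norm{\tbf x - \tau_x\tbf z}\norm{\tbf x - \tau_y\tbf z} - \norm{\tbf x - \tbf z}\norm{\tbf x + \tbf z}}{\norm{\tbf x - \tbf z}\norm{\tbf x + \tbf z}}$, the estimates will follow from the elementary dichotomy $\log(1+s) \asymp s$ for $s \lesssim 1$ and $\log(1+s) \asymp \log s$ for $s \gtrsim 1$, so the key is to pin down the size of $N(\tbf x)$ in each region.

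First I would treat the near-diagonal region $B_{(u\wedge v)/10}(\tbf z)$. There $\norm{\tbf x - \tbf z}$ is small, while each of the other three distances is comparable to $u\wedge v$ (hence also to $\norm{\tbf z}$, up to the geometry of the quadrant): indeed $\norm{\tbf x - \tau_x \tbf z} \asymp u$, $\norm{\tbf x - \tau_y\tbf z}\asymp v$, and $\norm{\tbf x + \tbf z}\asymp \norm{\tbf z}$. Thus $N(\tbf x) \asymp \tfrac{uv\,(u\wedge v)^{\,0}}{\norm{\tbf x - \tbf z}\,\norm{\tbf z}} \cdot (u\wedge v)^{?}$—more carefully, the numerator is $\asymp uv$ minus a term of comparable or smaller size, and a short computation (expanding the square of the numerator minus the square of the denominator and factoring) gives $N(\tbf x)\asymp \tfrac{(u\wedge v)^2}{\norm{\tbf x - \tbf z}\,\norm{\tbf x + \tbf z}}\gtrsim 1$, whence $G_{\tbf z}(\tbf x)\asymp \log\tfrac{u\wedge v}{\norm{\tbf x - \tbf z}}$. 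For the far region $B_{2\norm{\tbf z}}^c$, all four distances are comparable to $\norm{\tbf x}$ to leading order, and the correction is second order: a Taylor expansion of $\norm{\tbf x - \tbf w}$ in $\tbf w/\norm{\tbf x}$ for $\tbf w \in \{\pm\tbf z, \tau_x\tbf z, \tau_y\tbf z\}$ shows the linear terms cancel in $N$ (because $\tau_x\tbf z + \tau_y\tbf z = -2\,\mathrm{something}$—precisely, the $\tbf x$-odd cross terms combine so that only the product $x u \cdot y v$ survives), giving $N(\tbf x)\asymp \tfrac{uv\,xy}{\norm{\tbf x}^4}\lesssim 1$ and hence $G_{\tbf z}(\tbf x)\asymp N(\tbf x)\asymp \tfrac{uvxy}{\norm{\tbf x}^4}$.

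The intermediate annulus $B_{2\norm{\tbf z}}\setminus B_{(u\wedge v)/10}(\tbf z)$ is the main obstacle, since there is no single dominant scale: $\tbf x$ can be close to $\partial\Q$ (making one of $x,y$ small), close to one of the reflected points, or of order $\norm{\tbf z}$. The cleanest route is to invoke the standard Green-function estimate for a Lipschitz (here, smooth) domain, e.g. the boundary-Harnack-type bound $G_{\tbf z}(\tbf x)\asymp \tfrac{\delta(\tbf x)\,\delta(\tbf z)}{\norm{\tbf x - \tbf z}^2}$ when $\norm{\tbf x - \tbf z}\gtrsim \delta(\tbf x)\vee\delta(\tbf z)$ and $\asymp \log\tfrac{\delta(\tbf x)\wedge\delta(\tbf z)}{\norm{\tbf x - \tbf z}} + 1$ otherwise, where $\delta(\cdot)$ is distance to $\partial\Q$; for the quarter-plane $\delta(\tbf x)\asymp x\wedge y \asymp \tfrac{xy}{\norm{\tbf x}}$, and on this annulus $\norm{\tbf x}\asymp\norm{\tbf z}$ so $\delta(\tbf x)\asymp\tfrac{xy}{\norm{\tbf z}}$ and $\delta(\tbf z)\asymp\tfrac{uv}{\norm{\tbf z}}$, reproducing the claimed $\tfrac{uv}{\norm{\tbf z}^2}\tfrac{xy}{\norm{\tbf x - \tbf z}^2}$. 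Alternatively one can derive the annulus bound directly from the image formula by a more careful case analysis on the position of $\tbf x$ relative to $\tbf z$ and the three reflected points, but the domain-theoretic estimate is shorter and the constants are uniform. I expect the bookkeeping in verifying the cancellation of lower-order terms in $N$—both in the near and far regions—to be the only genuinely delicate part; everything else is routine comparison of distances.
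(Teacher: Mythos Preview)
Your outline for the near and far regions is on the right track, though the bookkeeping is loose: in $B_{(u\wedge v)/10}(\tbf z)$ the numerator of the ratio inside the logarithm is $\asymp uv$ while the denominator is $\asymp \|\tbf x-\tbf z\|\cdot\|\tbf z\|$, so $1+N\asymp (u\wedge v)/\|\tbf x-\tbf z\|$, not the $(u\wedge v)^2/(\|\tbf x-\tbf z\|\|\tbf x+\tbf z\|)$ you wrote. The conclusion there is still recoverable.

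The genuine gap is your treatment of the intermediate annulus. First, the claim $\|\tbf x\|\asymp\|\tbf z\|$ is false: $B_{2\|\tbf z\|}$ is centred at the origin, so $\tbf x$ can sit arbitrarily close to the corner while $\tbf z$ is far from it (take $\tbf z=(100,100)$, $\tbf x=(0.1,0.1)$). Second, and more seriously, the ``standard Lipschitz-domain'' estimate $G\asymp\delta(\tbf x)\delta(\tbf z)/\|\tbf x-\tbf z\|^2$ is a $C^{1,1}$ result; at a right-angle corner the Green function decays like the minimal positive harmonic function $xy$, not like $\delta(\tbf x)=x\wedge y$, so the estimate as you state it is simply wrong for $\Q$. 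In the example just given, your $\delta$-formula gives order $10/\|\tbf x-\tbf z\|^2$, whereas the correct value is of order $1/\|\tbf x-\tbf z\|^2$ smaller by a factor $\sim 10^3$. Your final expression comes out right only because the two errors happen to cancel.

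The paper bypasses all of this by staying with the image formula. The key observation is the exact algebraic identity
\[
G_{\tbf z}(\tbf x)=\frac{1}{2\pi}\log\frac{q+s}{q+r},\qquad q+r=\|\tbf x-\tbf z\|^2\|\tbf x+\tbf z\|^2,\qquad s-r=16\,xyuv,
\]
with $q,s,r$ explicit quartics in the coordinates. One then checks directly (a short case split) that $xyuv\lesssim\|\tbf x-\tbf z\|^2\|\tbf x+\tbf z\|^2$ everywhere outside $B_{(u\wedge v)/10}(\tbf z)$, so the logarithm linearises and gives $G\asymp xyuv/(\|\tbf x-\tbf z\|^2\|\tbf x+\tbf z\|^2)$ uniformly on the annulus \emph{and} the far region. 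The two stated forms then follow from $\|\tbf x+\tbf z\|\asymp\|\tbf z\|$ on $B_{2\|\tbf z\|}$ (which holds because $\tbf x,\tbf z\in\Q$ forces $\|\tbf x+\tbf z\|\ge\|\tbf z\|$) and $\|\tbf x+\tbf z\|\asymp\|\tbf x\|$ outside. This handles the annulus and the far region in one stroke, with no appeal to abstract Green-function theory and no case analysis on the position of $\tbf x$ relative to the reflected points.
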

\begin{proof}
  Define
  \begin{equation*}
    \al \coloneqq \frac{\norm{\tbf{x} - \tau_x \tbf{z}}}{2u}, \quad \beta \coloneqq \frac{\|\tbf{x} - \tau_y \tbf{z}\|}{2v}, \quad \gamma \coloneqq \frac{\norm{\tbf{x} + \tbf{z}}}{2 \norm{\tbf{z}}}, \And \delta \coloneqq \frac{\norm{\tbf{z}}}{2(u \vee v)}.
  \end{equation*}
  Then the identity $\frac{uv}{u \wedge v} = u \vee v$ yields
  \begin{equation}
    \label{eq:in}
    G_{\tbf{z}}(\tbf{x}) = \frac{1}{\pi} \log \frac{u \wedge v}{\norm{\tbf{x} - \tbf{z}}} + \frac{1}{\pi} \log \frac{\al\beta}{\gamma \delta}.
  \end{equation}
  Let $B \coloneqq B_{(u \wedge v)/10}(\tbf{z})$ and suppose $\tbf{x} \in B$.
  Then $\al,\beta,\gamma \in [19/20, 21/20]$ and $\delta \in [1/2, \sqrt{2}/2]$.
  Hence
  \begin{equation*}
    \frac{u \wedge v}{\norm{\tbf{x} - \tbf{z}}} \geq 10 \quad \text{while} \quad \frac{\al\beta}{\gamma \delta} \in [1.2, 2.4],
  \end{equation*}
  so the first term in \eqref{eq:in} dominates the second.
  
  Now suppose $\norm{\tbf x - \tbf z} \geq (u \wedge v)/10$.
  A brief algebraic computation yields
  \begin{equation}
    \label{eq:algebra}
    G_{\tbf{z}}(\tbf{x}) = \frac{1}{2\pi} \log \left(\frac{\norm{\tbf{x} - \tau_x\tbf{z}}^2\|\tbf{x} - \tau_y\tbf{z}\|^2}{\norm{\tbf{x} - \tbf{z}}^2\norm{\tbf{x} + \tbf{z}}^2}\right) = \frac{1}{2 \pi} \log \frac{q + s}{q + r}
  \end{equation}
  for
  \begin{align*}
    q &\coloneqq (x^2 - u^2)^2 + (y^2 - v^2)^2,\\
    s &\coloneqq (x + u)^2(y + v)^2 + (x - u)^2(y - v)^2,\\
    r &\coloneqq (x - u)^2(y + v)^2 + (x + u)^2(y - v)^2.
  \end{align*}
  Note that
  \begin{equation}
    \label{eq:quartic-diff}
    s - r = \big[(x+u)^2 - (x - u)^2\big]\big[(y + v)^2 - (y - v)^2\big] = 16xyuv > 0.
  \end{equation}
  We claim that the ratio $(q + s)/(q + r)$ is uniformly bounded on $B^c$.
  By \eqref{eq:quartic-diff}, it suffices to show that
  \begin{equation}
    \label{eq:diff-cond}
    xyu v = \frac{s - r}{16} \lesssim q + r = \norm{\tbf{x} - \tbf{z}}^2\norm{\tbf{x} + \tbf{z}}^2.
  \end{equation}
  This always holds when $\norm{\tbf{x}} \geq 2 \norm{\tbf{z}}$, for then the right side is of order $\norm{\tbf x}^4$ while the left side is at most of order $\norm{\tbf{x}}^2\norm{\tbf{z}}^2$.
  So we can assume that $\norm{\tbf{x}} \leq 2 \norm{\tbf{z}}$ and without loss of generality that $u \leq v$.
  Then $\norm{\tbf{x} + \tbf{z}} \asymp v$ and $y \lesssim v$, so it suffices to show that
  \begin{equation}
    \label{eq:1D-cond}
    xu \lesssim \norm{\tbf{x} - \tbf{z}}^2 \quad \text{on } B_{u/10}^c(\tbf{z}).
  \end{equation}
  We break this into two cases.
  If $x \leq 2u$, then indeed
  \begin{equation*}
    xu \leq 2 u^2 \leq 200(u^2/100) \leq 200 \norm{\tbf{x} - \tbf{z}}^2.
  \end{equation*}
  Otherwise if $x > 2u$, we have $(x - u)^2 \geq x^2/4$, so
  \begin{equation*}
    xu \leq x^2/2 \leq 2(x^2/4) \leq 2\norm{\tbf{x} - \tbf{z}}^2.
  \end{equation*}
  Having confirmed \eqref{eq:1D-cond} in each case, we have verified \eqref{eq:diff-cond}.
  Therefore
  \begin{equation*}
    1 < \frac{q + s}{q + r} \lesssim 1 \quad \text{on } B^c.
  \end{equation*}
  Then \eqref{eq:algebra} and \eqref{eq:quartic-diff} yield
  \begin{equation*}
    G_{\tbf{z}}(\tbf{x}) \asymp \frac{q + s}{q + r} - 1 = \frac{s - r}{q + r} \asymp \frac{xyuv}{\norm{\tbf{x} - \tbf{z}}^2\norm{\tbf{x} + \tbf{z}}^2}.
  \end{equation*}
  Now $\norm{\tbf{x} + \tbf{z}} \asymp \norm{\tbf{z}}$ on $B_{2\norm{\tbf{z}}}$ while $\norm{\tbf{x} - \tbf{z}} \asymp \norm{\tbf{x} + \tbf{z}} \asymp \norm{\tbf{x}}$ on $B_{2\norm{\tbf{z}}}^c$.
  The lemma follows.
\end{proof}
We combine this with \eqref{eq:F-eta} to bound $\Theta_F^\eta$.
\begin{lemma}
  \label{lem:anharmonic}
  The integral in \eqref{eq:anharmonic-int} is well-defined and $0 < \Theta_F^\eta(x, y) \lesssim y$ on $\Q$.
\end{lemma}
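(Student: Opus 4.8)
The plan is to bound the integrand in \eqref{eq:anharmonic-int} by decomposing the domain of integration $\Q$ (in the variable $\tbf{z} = (u, v)$) according to the three regimes of Lemma~\ref{lem:Green}. Since the Green function is symmetric, $G_{\tbf{z}}(\tbf{x}) = G_{\tbf{x}}(\tbf{z})$, we may apply that lemma with the roles of $\tbf{x}$ and $\tbf{z}$ exchanged: for fixed $\tbf{x} = (x, y) \in \Q$, the function $\tbf{z} \mapsto G_{\tbf{x}}(\tbf{z})$ is comparable to $\log\frac{x \wedge y}{\norm{\tbf{x} - \tbf{z}}}$ on the near-diagonal ball $B_{(x\wedge y)/10}(\tbf{x})$, to $\frac{xy}{\norm{\tbf{x}}^2}\cdot\frac{uv}{\norm{\tbf{x} - \tbf{z}}^2}$ on the annulus $B_{2\norm{\tbf{x}}} \setminus B_{(x\wedge y)/10}(\tbf{x})$, and to $\frac{xy\,uv}{\norm{\tbf{z}}^4}$ on the far region $\{\norm{\tbf{z}} > 2\norm{\tbf{x}}\}$. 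Positivity of $\Theta_F^\eta$ is immediate: $F^\eta > 0$ in $\Q$ by \eqref{eq:F-eta} (recall $\Theta = \e^{\sqrt{2} x}\Phi > 0$), and $G_{\tbf{x}} > 0$ in the interior, so $\Theta_F^\eta > 0$ wherever the integral converges. Thus the lemma reduces to the upper bound $\Theta_F^\eta(x, y) \lesssim y$, which in particular certifies that the integral is finite.

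The engine of the estimate is the bound \eqref{eq:F-eta}, namely $F^\eta(\tbf{z}) \lesssim (u + 1)^2 v\,\e^{-\sqrt{2} u}$. The exponential decay in $u$ is decisive: it absorbs every power of $u$ (so one may replace the bound by $v\,\e^{-u}$), concentrates the integral near the axis $\{u = \m{O}(1)\}$, and, together with the $\norm{\tbf{z}}^{-4}$ decay of $G_{\tbf{x}}$, settles the convergence at infinity. On the near-diagonal ball one has $u \asymp x$ and $v \asymp y$, so $F^\eta \lesssim (x + 1)^2 y\,\e^{-\sqrt{2} x}$ there; combined with the elementary estimate $\int_{B_\rho(\tbf{x})} \log\frac{\rho}{\norm{\tbf{x} - \tbf{z}}}\,\d\tbf{z} \lesssim \rho^2$ at $\rho = (x\wedge y)/10$, this piece is at most of order $(x + 1)^2 y\,\e^{-\sqrt{2} x}(x\wedge y)^2 \lesssim (x + 1)^4\e^{-\sqrt{2} x}\,y \lesssim y$. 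On the far region, bounding $v^2 \le \norm{\tbf{z}}^2$ in the numerator and integrating out $v$ reduces the contribution to $xy$ times $\int_{\{\norm{\tbf{z}} > 2\norm{\tbf{x}}\}} u\,\e^{-u}\norm{\tbf{z}}^{-2}\,\d\tbf{z}$, where the $u$ in the numerator cancels the $u^{-1}$ produced by the $v$-integral; this integral is $\lesssim 1$, and in fact $\lesssim \norm{\tbf{x}}^{-1}$ when $\norm{\tbf{x}} \geq 1$, so the far contribution is $\lesssim y$ in either case since $x \le \norm{\tbf{x}}$.

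The delicate case, and the main obstacle, is the intermediate annulus. There the prefactor $\frac{xy}{\norm{\tbf{x}}^2}$ must be played off against the singular kernel $\norm{\tbf{x} - \tbf{z}}^{-2}$, which is only cut off at scale $x \wedge y$ by the removal of the near-diagonal ball. Using $v \le \norm{\tbf{z}} \le 2\norm{\tbf{x}}$ to bound $v^2 \lesssim \norm{\tbf{x}}^2$ and absorbing $u\,\e^{-u}$ into $\e^{-u/2}$, the contribution is controlled by $xy \int \norm{\tbf{x} - \tbf{z}}^{-2}\e^{-u/2}\,\d\tbf{z}$, taken over $\{\norm{\tbf{x} - \tbf{z}} \ge (x\wedge y)/10\}$. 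Carrying out the $v$-integration produces a factor comparable to $\min\!\big(|x - u|^{-1}, (x\wedge y)^{-1}\big)$, and one then splits the remaining $u$-integral into the ranges $u \lesssim x$, $u \approx x$, and $u \gtrsim x$; the exponential weight makes the first range dominant and yields a bound of order $x^{-1} + \e^{-x/2}$, so the whole contribution is $\lesssim y\,(1 + x\,\e^{-x/2}) \lesssim y$ when $x \geq 1$, while for bounded $x$ one has $xy \lesssim y$ and exploits the cutoff $(x\wedge y)^{-1}$ directly. Assembling the three pieces gives $\Theta_F^\eta \lesssim y$ on $\Q$ and, a fortiori, the finiteness of the integral, completing the proof.
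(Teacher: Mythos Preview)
Your approach is essentially the paper's: you exploit the symmetry $G_{\tbf z}(\tbf x)=G_{\tbf x}(\tbf z)$, decompose $\Q$ into the three regimes of Lemma~\ref{lem:Green}, feed in \eqref{eq:F-eta}, and treat the near-diagonal and far pieces just as the paper does. The one substantive difference is in the annulus estimate, and there your reduction has a gap.

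After bounding $v^{2}\lesssim\norm{\tbf x}^{2}$ on $D$, you are left with $xy\int \e^{-u/2}\norm{\tbf x-\tbf z}^{-2}\,\d\tbf z$ over $\{\norm{\tbf x-\tbf z}\ge m/10\}$, $m=x\wedge y$. Integrating out $v$ does give a factor $\min(|x-u|^{-1},m^{-1})$, but the remaining $u$-integral is \emph{not} $\lesssim x^{-1}+\e^{-x/2}$: the range $m\lesssim|x-u|\lesssim x$ contributes an extra $\e^{-cx}\log(x/m)$ term. When $m=y\ll 1$ and $x\asymp 1$ this yields a total of order $xy\log(1/y)$, which is not $\lesssim y$. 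Your fallback ``for bounded $x$ one has $xy\lesssim y$'' does not save this, because the integral you are multiplying by itself diverges like $\log(1/y)$.

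The information you threw away is the vanishing of the integrand at $v=0$: both the Green-function factor $uv/\norm{\tbf x-\tbf z}^{2}$ and $F^\eta\lesssim(u+1)^{2}v\,\e^{-\sqrt 2 u}$ carry a factor of $v$, and the crude bound $v^{2}\lesssim\norm{\tbf x}^{2}$ forgets this. The paper instead keeps the $v^{2}$ and effectively uses
\[
v^{2}\;\lesssim\;(v-y)^{2}+y^{2}\;\le\;\norm{\tbf z-\tbf x}^{2}+y^{2},
\]
which splits the annulus integrand into (i) a piece with no kernel singularity, giving $\tfrac{xy}{\norm{\tbf x}^{2}}\int_{D}\e^{-u/2}\,\d\tbf z\lesssim\tfrac{xy}{\norm{\tbf x}}\le m\le y$, and (ii) a piece with prefactor $\tfrac{xy^{3}}{\norm{\tbf x}^{2}}$ times the rectangular integral $\int_{D}\e^{-u/2}(\norm{\tbf z-\tbf x}\vee m)^{-2}\,\d\tbf z\lesssim m^{-1}$, which one checks is $\lesssim y$ in both cases $m=x$ and $m=y$. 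With this refinement the annulus bound closes and the remainder of your argument goes through unchanged.
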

\begin{proof}
  Because $F > 0$ in $\H$, we automatically have $\Theta_F^\eta > 0$.
  Recall that the Green function is symmetric: $G_{\tbf{z}}(\tbf{x}) = G_{\tbf{x}}(\tbf{z})$.
  Here, it is convenient to use the symmetric formulation
  \begin{equation*}
    \Theta_F^\eta(\tbf x) = \int_{\Q} G_{\tbf{x}}(\tbf{z}) F(\tbf{z})  \d \tbf{z}.
  \end{equation*}
  Writing $\tbf{z} = (u, v)$, \eqref{eq:F-eta} yields
  \begin{equation}
    \label{eq:int-upper}
    \Theta_F^\eta(\tbf x) \lesssim \int_{\Q} G_{\tbf{x}}(\tbf{z}) (u + 1)^2 v \e^{-\sqrt{2} u} \d \tbf{z}.
  \end{equation}
  In the following, define $m \coloneqq x \wedge y$, $B \coloneqq B_{m/10}(\tbf{x})$, and $D \coloneqq B_{2\norm{\tbf{x}}}$.
  We divide the right side of \eqref{eq:int-upper} into three integrals $I_1,I_2$, $I_3$ over the regions $B$, $D \setminus B$, $D^c$, respectively.
  We bound these contributions separately.

  Using Lemma~\ref{lem:Green} on $B$, we have
  \begin{align}
    I_1 \lesssim \int_B \log\left(\frac{m}{\norm{\tbf{z} - \tbf{x}}}\right) (u + 1)^2 v \e^{-\sqrt{2} u} \d \tbf{z} &\lesssim y \e^{-\sqrt{2}x/2} \int_B \log\left(\frac{m}{\norm{\tbf{z} - \tbf{x}}}\right) \ds \tbf{z}\nonumber\\
                                                                                                                    &\lesssim m^2 y \e^{-\sqrt{2} x/2} \lesssim y.\label{eq:I1}
  \end{align}
  Next, on $D \setminus B$, we use Lemma~\ref{lem:Green} and $\frac{xy}{\norm{\tbf{x}}^2} \lesssim \frac{m}{y}$ to write
  \begin{align}
    I_2 &\lesssim \frac{xy}{\norm{\tbf{x}}^2} \int_{D \setminus B} u(u + 1)^2 \e^{-\sqrt{2} u} v^2 \norm{\tbf{z} - \tbf{x}}^{-2} \ds \tbf{z}\nonumber\\
        &\lesssim m y \int_D \e^{-\sqrt{2} u/2} \big(\norm{\tbf{z} - \tbf{x}} \vee m\big)^{-2} \ds \tbf{z}.\label{eq:middle}
  \end{align}
  Making a ``rectangular'' approximation,
  \begin{equation*}
    \int_D \e^{-\sqrt{2} u/2} \big(\norm{\tbf{z} - \tbf{x}} \vee m\big)^{-2} \ds \tbf{z} \lesssim \int_{\R_+} \e^{-\sqrt{2}u/2} \d u \cdot \int_{\R} \frac{\dn v}{(v - y)^2 + m^2}\lesssim \frac{1}{m}.
  \end{equation*}
  Thus \eqref{eq:middle} yields
  \begin{equation}
    \label{eq:I2}
    I_2 \lesssim y.
  \end{equation}
  Finally, on $D^c$, Lemma~\ref{lem:Green} yields
  \begin{equation*}
    I_3 \lesssim xy \int_{D^c} \frac{v^2}{(u^2 + v^2)^2} \e^{-\sqrt{2}u /2} \d \tbf{z}.
  \end{equation*}
  Integrating first in $v$, we find
  \begin{equation}
    \label{eq:I3}
    I_3 \lesssim xy \int_{\R_+} \frac{\e^{-\sqrt{2} u/2}}{u \vee \norm{\tbf{x}}} \d u \lesssim y.
  \end{equation}
  Using \eqref{eq:I1}, \eqref{eq:I2}, and \eqref{eq:I3} in \eqref{eq:int-upper}, we obtain
  \begin{equation*}
    \Theta_F^\eta(\tbf x) \lesssim \int_{\Q} G_{\tbf{x}}(\tbf{z}) (u + 1)^2 v \e^{-\sqrt{2} u} \d \tbf{z} = I_1 + I_2 + I_3 \lesssim y.
    \qedhere
  \end{equation*}
\end{proof}

\subsubsection*{Boundary estimates}
We now analyze the ``harmonic component'' of $\Theta^\eta$, namely $\bar\Theta^\eta \coloneqq \Theta^\eta + \Theta_F^\eta$.
This is a positive harmonic function on the quarter-plane.
Because $\Theta_F^\eta = 0$ on $\partial \Q$, \eqref{eq:improved} yields the following estimate on the boundary:
\begin{equation}
  \label{eq:boundary-harmonic}
  \bar\Theta^\eta(0, y) \lesssim y \And \bar \Theta^\eta(x, 0) = 0.
\end{equation}
\begin{lemma}
  \label{lem:boundary}
  There exists $A \geq 0$ such that
  \begin{equation*}
    \bar\Theta^\eta(x, y) = \big[Ax + \m{O}(1)\big]y \quad \text{on } \Q.
  \end{equation*}
\end{lemma}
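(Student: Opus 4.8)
The plan is to transfer $\bar\Theta^\eta$ to the half-plane by the square map, apply the Herglotz representation there, and read off the asymptotics by pulling back. First I would set $\zeta(z) \coloneqq z^2$, which biholomorphically maps $\Q$ onto $\H$, and define $h \coloneqq \bar\Theta^\eta \circ \zeta^{-1}$. Since $\bar\Theta^\eta = \Theta^\eta + \Theta_F^\eta$ is continuous on $\bar\Q$ (the first summand because $\Theta$ is continuous up to $\partial\H$ and $\eta$ extends continuously to $\bar\Q$; the second by Lemma~\ref{lem:anharmonic} and standard potential theory), the function $h$ is positive harmonic on $\H$ and continuous on $\bar\H$. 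On the positive real axis — the image under $\zeta$ of the segment $\{y=0\}\subset\partial\Q$ — we have $h = 0$, because $\Theta^\eta = \e^{\sqrt2 x}\Phi$ vanishes there and $\Theta_F^\eta$ vanishes on all of $\partial\Q$. On the negative real axis, the estimate $\Theta^\eta(0,v)\asymp v$ established above gives $h(-v^2,0) = \bar\Theta^\eta(0,v)\lesssim v$, that is, $h(t,0)\lesssim\sqrt{|t|}\,\ind{t<0}$.

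Next I would invoke the Herglotz representation \eqref{eq:Herglotz}: there exists $A\geq 0$ with
\[
  h(X,Y) = AY + E(X,Y), \qquad E(X,Y) \coloneqq \frac{Y}{\pi}\int_{-\infty}^0 \frac{h(t,0)\,\d t}{(X-t)^2+Y^2}.
\]
Because the Poisson kernel is nonnegative and $h(t,0)\lesssim\sqrt{|t|}\,\ind{t<0}$, the term $E$ is controlled by the Poisson extension of $\sqrt{|t|}\,\ind{t<0}$. A direct check identifies this extension as $\Im\big((X+\iu Y)^{1/2}\big)$: this is a positive harmonic function on $\H$, it is $o(Y)$ at infinity, and its boundary values are $\sqrt{|t|}$ on $\{t<0\}$ and $0$ on $\{t>0\}$. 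Writing $X+\iu Y = \rho\e^{\iu\theta}$ with $\rho = \sqrt{X^2+Y^2}$ and $\theta\in(0,\pi)$, we have
\[
  \Im\big((X+\iu Y)^{1/2}\big) = \sqrt{\rho}\,\sin(\theta/2) = \sqrt{\tfrac{\rho - X}{2}},
\]
so $E(X,Y)\lesssim\sqrt{\rho-X}$ on $\H$.

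Finally I would pull back along $\zeta$. With $X = x^2-y^2$ and $Y = 2xy$ one computes $\rho = \sqrt{X^2+Y^2} = x^2+y^2$, hence $\rho - X = 2y^2$ and $\tfrac12 AY = Axy$. Therefore
\[
  \bar\Theta^\eta(x,y) = h(X,Y) = 2Axy + \m{O}(y) = \big[2Ax + \m{O}(1)\big]y \quad \text{on } \Q,
\]
and relabeling $2A$ as $A$ gives the lemma (with $A\geq 0$).

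The main obstacle is the bound on $E$: one must majorize the Poisson extension of $\sqrt{|t|}\,\ind{t<0}$ and extract its precise rate of decay $\sqrt{\rho-X}$ as $\theta\to 0$, since it is exactly this $\sqrt{\rho-X} = \sqrt2\,y$ behavior that forces the error in the lemma to be genuinely $\m{O}(1)$ rather than merely sublinear in $x$. The remaining points — continuity of $h$ up to $\partial\H$, transfer of the boundary estimates through $\zeta$, and the elementary identity $\rho - X = 2y^2$ — are routine.
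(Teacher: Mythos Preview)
Your proof is correct and follows the same scaffolding as the paper's: push $\bar\Theta^\eta$ to $\H$ via the square map, apply Herglotz, and control the Poisson-integral piece. The difference lies in how that piece is bounded. The paper first uses the crude estimate $g_\partial \lesssim \braket{\tbf x}^{1/2}$ from \eqref{eq:sqrt}, pulls it back to get $g_\partial\circ\zeta \lesssim \braket{\tbf x}$ on $\Q$, and then invokes the Phragm\'en--Lindel\"of principle on the quarter-plane (using the boundary bound $g_\partial\circ\zeta \lesssim y$ on $\partial\Q$) to upgrade to $g_\partial\circ\zeta \lesssim y$ everywhere. You instead identify the majorizing Poisson integral explicitly as $\Im\sqrt{z}$, compute $\Im\sqrt{z} = \sqrt{(\rho-X)/2}$, and observe that $\rho - X$ pulls back to $2y^2$. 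This gives $E\circ\zeta \lesssim y$ in one stroke, with no appeal to Phragm\'en--Lindel\"of. Your route is more direct and gives slightly more information (the exact harmonic majorant rather than just a bound); the paper's route is more robust in that it would work with any boundary majorant of polynomial growth below $|t|$, not just one admitting a closed-form extension.
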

\begin{proof}
  Recall the square map $\zeta \coloneq \Q \to \H$ and define $g \coloneqq \bar\Theta^\eta \circ \zeta^{-1}$, which is a positive harmonic function on the half-plane.
  By the Herglotz representation theorem, there exists $A \geq 0$ such that
  \begin{equation*}
    g(x, y) = A y + g_\partial(x, y) \quad \text{for} \quad g_{\partial}(x, y) \coloneqq \frac{y}{\pi}   \int_{\R}  \frac{g(t, 0) \d t}{(x - t)^2 + y^2}\,.
  \end{equation*}
  Composing \eqref{eq:boundary-harmonic} with the square-root $\zeta^{-1}$, we have
  \begin{equation*}
    g(x, 0) \lesssim \sqrt{x_-}.
  \end{equation*}
  In \eqref{eq:sqrt}, we showed that this implies that $g_\partial \lesssim \braket{\tbf{x}}^{1/2}$.
  Hence $g_\partial \circ \zeta \lesssim \braket{\tbf{x}}$ while
  \begin{equation*}
    g_\partial \circ \zeta = \bar\Theta^\eta \lesssim y \quad \text{on } \partial \Q.
  \end{equation*}
  By the Phragm\'en--Lindel\"of principle, we obtain $g_\partial \circ \zeta \lesssim y$ on $\Q$.
  On the other hand, $y \circ \zeta = xy$, so
  \begin{equation*}
    \bar\Theta^\eta = g \circ \zeta = Axy + g_\partial \circ \zeta = A\big[x + \m{O}(1)\big] y \quad \text{on } \Q.
    \qedhere
  \end{equation*}
\end{proof}
Combining Lemmas~\ref{lem:anharmonic} and \ref{lem:boundary}, we have shown that
\begin{equation}
  \label{eq:total}
  \Theta^\eta(x, y) = A\big[x + \m{O}(1)\big]y \quad \text{on } \Q
\end{equation}
for some $A \geq 0$.
We now translate this bound back to the region $\Lambda = \eta(\Q)$.
\begin{lemma}
  \label{lem:composition}
  If $x > \omega_+(y)$, we have $\Theta(x, y) = A\big[x - \tfrac{1}{\sqrt{2}} \log_+ \norm{\tbf{x}} + \m{O}(1)\big] y$.
\end{lemma}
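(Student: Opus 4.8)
The plan is to transfer the quarter-plane estimate \eqref{eq:total}, namely $\Theta^\eta(x,y) = A[x + \m{O}(1)]y$ on $\Q$, back to $\Lambda = \eta(\Q)$ by tracking how the biholomorphism $\eta$ distorts the real and imaginary parts of a point. Since $\Theta^\eta = \Theta \circ \eta$, for any $z = x + \iu y \in \Lambda$ we have $\Theta(x,y) = \Theta^\eta\big(\eta^{-1}(z)\big) = A\big[\Re \eta^{-1}(z) + \m{O}(1)\big]\Im \eta^{-1}(z)$, so everything reduces to understanding $\Re\eta^{-1}(z)$ and $\Im\eta^{-1}(z)$ accurately enough.

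From the explicit formula $\eta^{-1}(z) = z - \tfrac{1}{\sqrt{2}}\log(z+1)$ I read off
\[
  \Re \eta^{-1}(z) = x - \tfrac{1}{\sqrt{2}}\log\abs{z+1}, \qquad \Im\eta^{-1}(z) = y - \tfrac{1}{\sqrt{2}}\arg(z+1).
\]
I would then record three elementary estimates valid on $\Lambda$, where $x > 0$ and $0 < y < \e^{\sqrt{2} x}$ (so in particular $z + 1 = (x+1) + \iu y$ has real part at least $1$). First, since $\abs{z+1} \geq 1$ always and $\abs{z+1} \asymp \norm{\tbf x}$ once $\norm{\tbf x} \geq 1$, one gets $\log\abs{z+1} = \log_+\norm{\tbf x} + \m{O}(1)$; this is the sole origin of the $\log_+\norm{\tbf x}$ term. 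Second, $0 \leq y - \Im\eta^{-1}(z) = \tfrac{1}{\sqrt{2}}\arg(z+1) \leq \tfrac{\pi}{2\sqrt{2}}$, while $\arg(z+1) = \arctan\tfrac{y}{x+1} \leq \tfrac{y}{x+1} \leq y$ gives $\Im\eta^{-1}(z) \geq (1 - \tfrac{1}{\sqrt{2}})y$; hence $\Im\eta^{-1}(z) \asymp y$ and also $\Im\eta^{-1}(z) = y + \m{O}(1)$. Third, the defining bound $y < \e^{\sqrt{2}x}$ of $\Lambda$ forces $\norm{\tbf x} \lesssim \e^{\sqrt{2} x}$, hence $\log_+\norm{\tbf x} \leq \sqrt{2}\,x + \m{O}(1)$, so that $L \coloneqq x - \tfrac{1}{\sqrt{2}}\log_+\norm{\tbf x}$ satisfies $-\m{O}(1) \leq L \leq x$ and therefore $\abs{L}/(x+1)$ is bounded on $\Lambda$.

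With these in hand I plug in: $\Theta(x,y) = A\big[L + \m{O}(1)\big]\Im\eta^{-1}(z)$, and dividing by $Ay$,
\[
  \frac{\Theta(x,y)}{Ay} = \big[L + \m{O}(1)\big]\frac{\Im\eta^{-1}(z)}{y} = \big[L + \m{O}(1)\big]\left(1 - \frac{\arctan\frac{y}{x+1}}{\sqrt{2}\,y}\right).
\]
The correction term $\big[L + \m{O}(1)\big]\tfrac{\arctan\frac{y}{x+1}}{\sqrt{2}\,y}$ is bounded in absolute value by $\tfrac{\abs{L} + \m{O}(1)}{\sqrt{2}\,(x+1)}$, which is $\m{O}(1)$ by the third estimate; hence $\Theta(x,y) = Ay\,[L + \m{O}(1)] = A\big[x - \tfrac{1}{\sqrt{2}}\log_+\norm{\tbf x} + \m{O}(1)\big]y$ on $\Lambda$. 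Finally, since $\{x > \omega_+(y)\}\setminus\Lambda$ lies within bounded distance of $\Lambda$, where $\Theta \asymp y$ and the claimed asymptotic is stable under a bounded displacement, the interior and boundary Harnack inequalities (as in the reduction recorded after the definition of $\Lambda$) upgrade the identity to all of $\{x > \omega_+(y)\}$.

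The step I expect to need the most care is the rearrangement in the last display in the regime $y \ll 1$ with $x$ large: there $\Im\eta^{-1}(z) = y + \m{O}(1)$ is useless, and one genuinely needs \emph{both} $\Im\eta^{-1}(z) \asymp y$ \emph{and} the bound $\abs{L} \lesssim x+1$ (which encodes the exponential shape of $\Lambda$) to see that the discrepancy between $[\Re\eta^{-1}(z) + \m{O}(1)]\Im\eta^{-1}(z)$ and $[x - \tfrac{1}{\sqrt{2}}\log_+\norm{\tbf x} + \m{O}(1)]y$ is only $\m{O}(1)\cdot y$, hence absorbable into the error $E$. A lesser nuisance is matching $\log\abs{z+1}$ with $\log_+\norm{\tbf x}$ near $\norm{\tbf x}\sim 1$, which is dispatched by the crude inequalities $1 \leq \abs{z+1} \leq \norm{\tbf x}+1$.
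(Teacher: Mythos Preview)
Your proposal is correct and follows essentially the same route as the paper: both compute $\Re\eta^{-1}(z) = x - \tfrac{1}{\sqrt{2}}\log|z+1|$ and $\Im\eta^{-1}(z) = y - \tfrac{1}{\sqrt{2}}\arg(z+1)$ explicitly, replace $\log|z+1|$ by $\log_+\norm{\tbf x} + \m{O}(1)$, and absorb the cross term via $x\arctan\tfrac{y}{x+1} < y$. Your write-up is somewhat more explicit than the paper's in isolating why the product $[L+\m{O}(1)]\Im\eta^{-1}(z)$ differs from $[L+\m{O}(1)]y$ by only $\m{O}(y)$, especially in the regime $y\ll 1$ with $x$ large; the paper compresses this into the single inequality $x\arg(z+1)<y$.
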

\begin{proof}
  As noted earlier, $\{x > \omega_+(y)\} \setminus \Lambda$ lies a bounded distance from $\Lambda$, so Harnack allows us to reduce the problem to $\Lambda$.
  
  Recall that $\eta^{-1}(z) = z - \tfrac{1}{\sqrt{2}}\log(z + 1)$.
  Mixing real and complex notation, we write this as
  \begin{equation*}
    \eta^{-1}(x, y) = \big(x - \tfrac{1}{\sqrt{2}} \log \abs{z + 1}, y - \tfrac{1}{\sqrt{2}}\arg(z + 1)\big)
  \end{equation*}
  for $z = x + \iu y$.
  Thus \eqref{eq:total} becomes
  \begin{equation*}
    \Theta(x, y) = \big(\Theta^\eta \circ \eta^{-1}\big)(x, y) = A \big[x -  \tfrac{1}{\sqrt{2}} \log \abs{z + 1} + \m{O}(1)\big]\big[y - \tfrac{1}{\sqrt{2}}\arg(z + 1)\big].
  \end{equation*}
  Now $ \log \abs{z + 1} = \log_+ \abs{z} + \m{O}(1)$ and
  \begin{equation*}
    x\arg(z + 1)  = x \arctan\frac{y}{x + 1} < y.
  \end{equation*}
  Therefore
  \begin{equation*}
    \Theta(x, y) = A\big[x - \tfrac{1}{\sqrt{2}} \log_+ \norm{\tbf{x}} + \m{O}(1)\big] y.
    \qedhere
  \end{equation*}
\end{proof}

\subsubsection*{Matching}
It remains only to identify the nonnegative constant $A$.
\begin{proof}[Proof of Proposition~\textup{\ref{prop:tail-asymp}} and Theorem~\textup{\ref{thm:tail}}]
  By Lemma~\ref{lem:composition}, there exist $A \geq 0,$ $M > 0$, and $E \colon \H \to \R$ such that $\abs{E} \leq M$ and
  \begin{equation}
    \label{eq:quarter}
    \Phi(x, y) = A \big[x - \tfrac{1}{\sqrt{2}} \log \norm{\tbf{x}} + E(x, y)\big] y \e^{-\sqrt{2} x} \quad \text{on } \big\{x > \tfrac{1}{\sqrt{2}} \log_+ y\big\}.
  \end{equation}
  On the other hand, we recall from Theorem~\ref{thm:asymptotics} and \eqref{eq:wave-constant} that
  \begin{equation}
    \label{eq:local}
    \Phi\big(x + \tfrac{1}{\sqrt{2}}\log y, y\big) \to w_{c_*}(x) \quad \text{as } y \to \infty
  \end{equation}
  locally uniformly in $x$ and
  \begin{equation}
    \label{eq:1D-asymp}
    w_{c_*}(x) \sim K_* x \e^{-\sqrt{2} x} \quad \text{as } x \to \infty
  \end{equation}
  for some $K_* > 0$.
  Fix $\eps > 0$.
  Then by \eqref{eq:1D-asymp}, there exists $x_0(\eps) \geq \frac{M}{\eps}$ such that
  \begin{equation}
    \label{eq:1D-ratio}
    K_* (1 - \eps) x \e^{-\sqrt{2} x} \leq w_{c_*}(x) \leq K_* (1 + \eps) x \e^{-\sqrt{2} x} \ForAll x \geq x_0.
  \end{equation}
  We evaluate $\Phi$ at $x = x_0 + \tfrac{1}{\sqrt{2}} \log y$ and take $y \to \infty$.
  Before doing so, we note that $\log\norm{\tbf{x}} - \log y \to 0$ along this sequence.
  Combining \eqref{eq:quarter} and \eqref{eq:local}, we therefore find
  \begin{equation*}
    A(x_0 - M) \e^{-\sqrt{2} x_0} \leq w_{c_*}(x_0) = \lim_{y \to \infty} \Phi\big(x_0 + \tfrac{1}{\sqrt{2}}\log y, y) \leq A(x_0 + M) \e^{-\sqrt{2} x_0}.
  \end{equation*}
  Dividing by $x_0 \e^{-\sqrt{2} x_0}$ and using $x_0 \geq M/\eps$ and \eqref{eq:1D-ratio}, we obtain
  \begin{equation*}
    A(1 - \eps) \leq K_*(1 + \eps) \And K_*(1 - \eps) \leq A(1 + \eps).
  \end{equation*}
  That is,
  \begin{equation*}
    \frac{1 - \eps}{1 + \eps} \leq \frac{A}{K_*} \leq \frac{1 + \eps}{1 - \eps}.
  \end{equation*}
  Since $\eps > 0$ is arbitrary, we have $A = K_*$ as desired.
\end{proof}

\printbibliography
\end{document}